\title{Global Parabolic Induction and Abstract Automorphicity}
\author{Gal Dor\\
Tel-Aviv University}
\date{February 2021}
\begin{document}

\maketitle

\begin{abstract}
    In \cite{abst_aut_reps_arxiv}, the author has constructed a category of \emph{abstractly automorphic representations} for $\GL(2)$ over a function field $F$. This is a symmetric monoidal Abelian category, constructed with the goal of having the irreducible automorphic representations as its simple objects. The goal of this paper is to systematically study this category.
    
    We will prove several structural theorems about this category. We will show that it admits an adjoint pair $(r^\aut,i^\aut)$ of automorphic parabolic restriction and induction functors, respectively. This will allow us to show that the category of abstractly automorphic representations decomposes into cuspidal and Eisenstein components, in analogy with the Bernstein decomposition of the category of $p$-adic representations.
    
    Moreover, along the way, we will give a new perspective on the intertwining operator of $\GL(2)$ (and on the functional equation for Eisenstein series), as a form of self-duality of the functor of parabolic induction. We will also illustrate how the role of analytic continuation in this theory can be thought of as trivializing a twist by a certain line bundle, which corresponds to an L-function via the results of \cite{modules_of_zeta_integrals_arxiv}. If one chooses to keep the twist as a part of the theory, then one avoids the need for analytic continuation.
\end{abstract}

\tableofcontents

\part{Introduction} \label{part:introduction}

Let $\GL_n$ be the general linear group over a function field $F$ of characteristic $\neq 2$. This paper is meant as a step towards answering the question ``what should be considered an automorphic representation of $\GL_n$?''. The usual definition is that an irreducible representation of $\GL_n(\AA)$ is automorphic if and only if it can be given as a subquotient of the space of smooth functions on the automorphic quotient $\GL_n(F)\backslash \GL_n(\AA)$.

In \cite{abst_aut_reps_arxiv}, the author has proposed a definition for the case of $\GL(2)$ which is no longer restricted to irreducible representations. This was done by defining the Abelian symmetric monoidal category $\Mod^\aut(\GL_2(\AA))$, the category of \emph{abstractly automorphic representations}. The category $\Mod^\aut(\GL_2(\AA))$ admits a forgetful functor
\[
    \Mod^\aut(\GL_2(\AA))\ra\Mod(\GL_2(\AA))
\]
to the usual category of smooth $\GL_2(\AA)$-modules. This functor is fully-faithful, and its essential image contains all of the irreducible automorphic (in the usual sense) representations of $\GL_2(\AA)$. The paper \cite{abst_aut_reps_arxiv} proposes that the category of abstractly automorphic representations $\Mod^\aut(\GL_2(\AA))$ is a good setting for the study of automorphic phenomena.

Unfortunately, due to space constraints, the paper \cite{abst_aut_reps_arxiv} did not go into details about the structural theory of $\Mod^\aut(\GL_2(\AA))$. The main goal of this paper is to remedy this situation, by thoroughly investigating the properties of $\Mod^\aut(\GL_2(\AA))$, and showing that it has many of the desirable properties one would want from a ``category of automorphic representations''. Essentially, this paper is a continuation of the research in \cite{abst_aut_reps_arxiv}.

Along the way, we will be able to bring to bear some highly non-trivial categorical tools, and use them to answer questions of automorphic nature, such as questions about intertwining operators and functional equations for Eisenstein series. Hopefully, this will demonstrate how this formalism enables the use of more categorical methods in the study of automorphic representations.

Before giving a more thorough introduction, let us briefly sketch an example for the favorable properties and some of the advantages of the construction $\Mod^\aut(\GL_2(\AA))$.

There is an analogy between the local theory of $p$-adic representations and the global theory of automorphic representations. Both theories admit notions of induction via parabolic subgroups (parabolic induction in the local theory and Eisenstein series in the global theory). The notion of ``parabolic induction'' naturally yields a notion of ``cuspidality'', which is associated with nice growth properties (this notion is called supercuspidality in the local theory, and cuspidality in the global theory).

However, this analogy always occurs in one lower level of categorification in the global case. Parabolic restriction and induction are functors, while constant terms and Eisenstein series are functions.

With the framework of abstractly automorphic representations, this analogy becomes much more precise. That is, it turns out that $\Mod^\aut(\GL_2(\AA))$ decomposes as a category into a cuspidal and an Eisenstein part. Instead of being mere functions, terms such as constant terms and Eisenstein series translate into a pair of adjoint functors, which we refer to as \emph{automorphic parabolic induction and restriction}. Automorphic parabolic restriction annihilates the cuspidal part of the category, and automorphic parabolic induction generates the Eisenstein part. The end result is a decomposition analogous to the Bernstein decomposition in the local case. The only additional subtlety occurs at the so-called ``anomalous spectrum'' (see Warning~\ref{warn:eisenstein_killed_by_rest} below).

The structure of this introduction is as follows. Our goal in this paper is to dissect the structure of the category $\Mod^\aut(\GL_2(\AA))$ in detail. As a result, we will start by giving detailed, clean exposition of its structure, stated without proofs. This will be done in Section~\ref{sect:GL2}, where we will give a summary of the results we intend to prove about $\Mod^\aut(\GL_2(\AA))$. In Section~\ref{sect:GLn}, we speculate on how these ideas might possibly be generalized to other general linear groups. Finally, in Section~\ref{sect:structure_of_paper}, we will give an overview of the structure of the body of this paper.

\section{Summary of Results for \texorpdfstring{$\GL(2)$}{GL(2)}} \label{sect:GL2}

In this subsection, we will summarize the properties of the category of abstractly automorphic representations $\Mod^\aut(\GL_2(\AA))$. This subsection is a list of results, stated without proofs. Some of the proofs have already appeared in \cite{abst_aut_reps_arxiv}, and the rest will appear in the body of this paper.

The goal of this chapter is to cleanly state all of the structural theorems we intend to prove about $\Mod^\aut(\GL_2(\AA))$ in one place. For the sake of organization, we further sub-divide this subsection into topics as follows. After introducing the basic properties of $\Mod^\aut(\GL_2(\AA))$ as a plain category in Subsection~\ref{subsect:basic_properties}, we describe its symmetric monoidal structure in Subsection~\ref{subsect:monoidal}, and its relation with automorphic parabolic induction and restriction (which are related to Eisenstein series) in Subsection~\ref{subsect:parabolic}. In Subsection~\ref{subsect:decomp} we show how one can use these automorphic parabolic induction and restriction functors to describe cuspidal and Eisenstein automorphic representations on the same footing as the local supercuspidal and principle series representations.

\subsection{Basic Properties} \label{subsect:basic_properties}

The first statement we make in this subsection is asserting the existence of a complete and co-complete Abelian category $\Mod^\aut(\GL_2(\AA))$, equipped with a colimit-preserving symmetric monoidal structure denoted by $\oY$.

However, by itself, this statement is fairly empty. One must relate the category $\Mod^\aut(\GL_2(\AA))$ to known categories in order to have a sensible mathematical statement. Over the rest of this subsection, we will describe various constructions and properties that the category $\Mod^\aut(\GL_2(\AA))$ possesses, which will make this statement meaningful.
\begin{theorem}[Claim~4.25, Proposition~4.29 and Remark~4.30 of \cite{abst_aut_reps_arxiv}] \label{thm:realization}
    There is a canonical realization functor
    \[
        \iota\co\Mod^\aut(\GL_2(\AA))\ra\Mod(\GL_2(\AA))
    \]
    to the category of smooth $\GL_2(\AA)$-modules, satisfying the following:
    \begin{enumerate}
        \item The functor $\iota$ is fully faithful.
        \item The functor $\iota$ respects all limits and colimits. In particular, it is exact.
        \item The essential image of $\iota$ is closed under taking subquotients and contragradients.
        \item The space $\cS_G=S(\GL_2(F)\backslash\GL_2(\AA))$ of smooth and compactly supported functions on $\GL_2(F)\backslash\GL_2(\AA)$ lies in the essential image of $\iota$.
    \end{enumerate}
\end{theorem}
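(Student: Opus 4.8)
The plan is to recall the construction of $\Mod^\aut(\GL_2(\AA))$ from \cite{abst_aut_reps_arxiv} and to isolate the single structural fact from which all four assertions follow essentially formally. Recall that that construction equips the automorphic Schwartz space $\cS_G$ with the structure of a commutative algebra object in (the ind-completion of) $\Mod(\GL_2(\AA))$ for a suitable completed tensor product $\mathbin{\widehat\otimes}$ --- the ``automorphic algebra'', whose multiplication is built from the automorphic quotient --- and realizes $\Mod^\aut(\GL_2(\AA))$, essentially, as its category of modules, with $\iota$ the forgetful functor and $\oY = \mathbin{\widehat\otimes}_{\cS_G}$. The crucial fact, which I would establish first and where essentially all the work goes, is that $\cS_G$ is \emph{idempotent}: multiplication induces an isomorphism $\cS_G \mathbin{\widehat\otimes} \cS_G \simeq \cS_G$. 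Equivalently, $\iota$ identifies $\Mod^\aut(\GL_2(\AA))$ with the full subcategory of those smooth modules $M$ for which the unit $M \to \cS_G \mathbin{\widehat\otimes} M$ is an isomorphism. Setting up the completed tensor product carefully enough for this to hold and to mean what one wants is the genuine obstacle; granting it, (1)--(4) become soft.

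Given idempotence, each point goes as follows. For (1): for an idempotent algebra the free-module functor $\cS_G \mathbin{\widehat\otimes}(-)$ is an idempotent monad, so its modules form a full reflective subcategory of $\Mod(\GL_2(\AA))$ and $\iota$ is fully faithful. For (2): the forgetful functor out of modules over any algebra object creates all colimits, so $\iota$ preserves colimits; it admits a left adjoint, namely the reflector, so it also preserves limits; exactness is the finite case of either. For (3), closure under subquotients: given a short exact sequence whose middle term is in the image, apply $\cS_G \mathbin{\widehat\otimes}(-)$ --- exact because $\cS_G$ is flat, which is a local computation --- and compare the result with the original sequence through the three unit maps, concluding by the five lemma that the outer terms are again local. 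Closure under contragredient is the only step that is not formal: I would deduce it from a self-duality of $\cS_G$ as a $\GL_2(\AA)$-module, namely the nondegeneracy, after completion, of the pairing on $\cS_G$ given by integration over the automorphic quotient, which transports the defining isomorphism across the smooth dual; this is the incarnation of Poisson summation in the present formalism. For (4): $\cS_G$ is tautologically a module over itself, hence an object of $\Mod^\aut(\GL_2(\AA))$, and $\iota$ carries it to $\cS_G$ by construction.

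If one prefers to avoid the module-theoretic packaging, the same skeleton works directly with the subcategory of smooth modules isomorphic to their own $\cS_G \mathbin{\widehat\otimes}(-)$: the only real inputs are flatness of $\cS_G$ (for exactness and subquotients), self-duality of $\cS_G$ under the automorphic pairing (for contragredients), and the absorption isomorphism $\cS_G \mathbin{\widehat\otimes} \cS_G \simeq \cS_G$ (for everything else). The hard part will be the last of these --- constructing the completion and proving absorption --- with the contragredient statement a distant second; once they are in hand, the rest of the theorem is bookkeeping.
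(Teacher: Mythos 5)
Your formal skeleton (modules over an idempotent algebra object, $\iota$ the forgetful functor, fully faithfulness from idempotence, limits/colimits from the adjunction, closure under subquotients, etc.) does match the shape of the actual construction, but you have misidentified the two inputs on which everything rests, and this is a genuine gap rather than a cosmetic one. First, the monoidal structure is not a completed tensor product $\mathbin{\widehat\otimes}$ built from the automorphic quotient: it is the structure $\oY$ defined through the kernel $Y=S(\M_2\times\GG_m)$ (Whittaker/Godement--Jacquet-theoretic), whose unit is $\one_\Ydown$, the space of $\theta$-equivariant Whittaker-type functions. Second, the algebra is not $\cS_G$ but $\cI_G\subseteq\cS_G$, the subspace orthogonal to the one-dimensional characters $\chi(\det(\cdot))$; it is $\cI_G$, not $\cS_G$, that is the unit of $\oY$ on $\Mod^\aut(\GL_2(\AA))$ (Claim~\ref{claim:unit}), and it acquires its algebra structure as a quotient of $\one_\Ydown$ via the Whittaker-expansion map $\one_\Ydown\ra\cI_G$ (this is recalled in Section~\ref{sect:i_aut_aut}). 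Your proposed key lemma, ``absorption'' $\cS_G\mathbin{\widehat\otimes}\cS_G\simeq\cS_G$ after constructing a completion, is therefore not the statement that carries the theorem: in the actual formalism the idempotence is automatic once one knows $\cI_G$ is a quotient of the unit by an ideal, and the genuine automorphic content is the existence, surjectivity and multiplicativity of the Whittaker-coefficient map $\one_\Ydown\twoheadrightarrow\cI_G$ --- not an analytic completion argument.

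Two downstream consequences of the misidentification. Your proof of item (4) is circular: since $\cS_G$ is \emph{not} the algebra (nor the unit), its membership in the essential image is not tautological; one must actually exhibit an $\cI_G$-module structure on $\cS_G$, which is a real (if not difficult) step in \cite{abst_aut_reps_arxiv}. And your flatness appeal for subquotient closure is both unsubstantiated and unnecessary: for an idempotent algebra that is a quotient of the unit, say $A=\one/I$ with $\oY$ right-exact, an object $M$ is an $A$-module precisely when $I\oY M\ra M$ vanishes, and this condition passes to subobjects and quotients formally, with no flatness of $\cS_G$ (or $\cI_G$) needed. The contragredient closure via a Petersson-type pairing compatible with the multiplication is the one place where your instinct lines up with the source (compare the use of Remark~4.21 of \cite{abst_aut_reps_arxiv} in the proof of Proposition~\ref{prop:decomp_cusp_eis}), but as stated it is attached to the wrong algebra and the wrong pairing ambient to the wrong tensor structure. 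Finally, note that the paper itself does not reprove this theorem; it is quoted from Claim~4.25, Proposition~4.29 and Remark~4.30 of \cite{abst_aut_reps_arxiv}, so any proof here must reproduce that construction, in which the hard work sits in the properties of $\one_\Ydown\ra\cI_G$ rather than where you placed it.
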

The above is still not enough to uniquely specify $\Mod^\aut(\GL_2(\AA))$ and $\iota$. However, the specific construction enjoys many more properties, which we describe below.

\begin{warning}
    Note that despite the fact that the essential image of $\iota$ is closed under subquotients, it is not closed under extensions.
\end{warning}

Observe that Theorem~\ref{thm:realization} implies that $\iota$ takes irreducible objects to irreducible objects, and moreover that any irreducible automorphic representation from $\Mod(\GL_2(\AA))$ lies in the essential image of $\iota$. We will prove that the converse also holds:
\begin{theorem}[Theorem~\ref{thm:abst_aut_irr_is_aut}] \label{thm:intro_abst_aut_irr_is_aut}
    The realization $\iota(M)$ of an irreducible object $M\in\Mod^\aut(\GL_2(\AA))$ is an irreducible automorphic representation, in the sense of being a subquotient of the contragradient $\widetilde{\cS_G}$ of $\cS_G=S(\GL_2(F)\backslash\GL_2(\AA))$.
\end{theorem}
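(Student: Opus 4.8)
The plan is to reduce the statement to a cogeneration property of the automorphic Schwartz space and then to extract that property from the construction of $\Mod^\aut(\GL_2(\AA))$ (or from its cuspidal/Eisenstein decomposition). First I would record that $\iota(M)$ is itself an irreducible smooth $\GL_2(\AA)$-module: if $0\neq N\subsetneq\iota(M)$ were a proper nonzero submodule, then by item~3 of Theorem~\ref{thm:realization} we would have $N\cong\iota(M_0)$ for some $M_0\in\Mod^\aut(\GL_2(\AA))$, and by full faithfulness the inclusion $N\hookrightarrow\iota(M)$ would be $\iota(\phi)$ for a unique $\phi\co M_0\ra M$. Since $\iota$ is exact and faithful, $\phi$ would be a nonzero monomorphism that is not an epimorphism, hence would exhibit a nonzero proper subobject of $M$, contradicting irreducibility; moreover $\iota(M)\neq 0$ as $\iota$ is faithful.

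Next, applying item~3 of Theorem~\ref{thm:realization} to $\cS_G$ (which lies in the essential image by item~4), there is an object $\cS_G^\vee\in\Mod^\aut(\GL_2(\AA))$ with $\iota(\cS_G^\vee)\cong\widetilde{\cS_G}$. Full faithfulness of $\iota$ then gives
\[
    \mathrm{Hom}_{\Mod^\aut(\GL_2(\AA))}\big(M,\cS_G^\vee\big)\;\cong\;\mathrm{Hom}_{\GL_2(\AA)}\big(\iota(M),\widetilde{\cS_G}\big),
\]
and if this group is nonzero then, since $\iota(M)$ is irreducible, any nonzero element is injective, so $\iota(M)$ embeds into $\widetilde{\cS_G}$ and is in particular a subquotient of it — which is exactly the assertion. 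Hence the theorem reduces to showing that $\mathrm{Hom}_{\Mod^\aut(\GL_2(\AA))}(M,\cS_G^\vee)\neq 0$ for every simple $M$; equivalently, dualizing, that $\cS_G$ surjects onto the realization of every simple object.

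To prove this cogeneration property I see two routes. The direct one is to go back to the construction of $\Mod^\aut(\GL_2(\AA))$ in \cite{abst_aut_reps_arxiv} and show that the automorphic Schwartz space — possibly together with a controlled family of its twists by automorphic characters, each of which is itself a subquotient of $\widetilde{\cS_G}$ via Eisenstein series — is a weak generator of the category, so that every nonzero object receives a nonzero map from it. The alternative route uses the cuspidal/Eisenstein decomposition of $\Mod^\aut(\GL_2(\AA))$ together with the functors $(r^\aut,i^\aut)$: a simple $M$ lies in one summand; if $M$ is cuspidal then $\iota(M)$ is a cuspidal representation, which over a function field already embeds into $\cS_G\subseteq\widetilde{\cS_G}$; if $M$ is Eisenstein then $M$ is a subquotient of $i^\aut(N)$ for some irreducible automorphic $N$ on $\GL_1\times\GL_1$, and the compatibility of $i^\aut$ with classical parabolic induction, together with the Eisenstein-series realization map, exhibits $\iota(M)$ as a subquotient of $\widetilde{\cS_G}$.

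The main obstacle is precisely this cogeneration statement: everything preceding it is formal manipulation of the properties in Theorem~\ref{thm:realization}, whereas the fact that the simple objects of $\Mod^\aut(\GL_2(\AA))$ are no larger than the classical automorphic spectrum is where the content of the construction in \cite{abst_aut_reps_arxiv} must be used. Along the Eisenstein route the delicate point is the interaction with poles of Eisenstein series and analytic continuation — the ``anomalous spectrum'' and the line-bundle/$L$-function twist flagged in the introduction — which must be shown not to obstruct realizing the relevant subquotients inside $\widetilde{\cS_G}$; one also needs the separately established compatibility between $\iota\circ i^\aut$ and the classical parabolic induction functor.
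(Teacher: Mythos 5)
Your ``alternative route'' --- decompose $\Mod^\aut(\GL_2(\AA))$ into cuspidal and Eisenstein components via $(r^\aut,i^\aut)$ and treat each separately --- is the strategy the paper actually follows, and your Eisenstein case is essentially the paper's: there, the action of the center shows $\iota(M)$ is a constituent of some $\widehat{i}^\aut(\chi)$ with $\chi$ an irreducible automorphic character, and one quotes the classical fact that such constituents are automorphic. But your formal reduction in the second paragraph overshoots. You reduce the theorem to $\Hom(M,\cS_G^\vee)\neq 0$ for every simple $M$, i.e.\ to the claim that every simple object \emph{embeds} into $\widetilde{\cS_G}$. The theorem only asserts a subquotient, and the embedding statement is precisely what the anomalous spectrum threatens: the paper warns (Warning~\ref{warn:eisenstein_killed_by_rest}, Remark~\ref{remark:subquot_of_eis_not_quot}) that certain irreducible automorphic representations occur only as subquotients and never as sub-representations of the relevant spaces. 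Your own decomposition argument produces subquotients, not embeddings, so it does not discharge the cogeneration claim you reduced to; the reduction should simply be dropped.

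The more serious gap is the cuspidal case, which as written begs the question. That $M$ lies in a cuspidal component $c\in C^\cusp(G)$ does not by itself imply that $\iota(M)$ \emph{is} a cuspidal automorphic representation: a priori $\iota(M)$ could agree with the cuspidal representation $W$ attached to $c$ at the level of the center of the category (hence lie in the same component) while differing at finitely many places --- the dangerous case being the replacement of a local Steinberg factor by the corresponding one-dimensional representation, which leaves the action of the center unchanged but destroys automorphicity. The paper rules this out by using that $M$ is a $\oY$-module over the algebra $\Ind_{G^1}^G W$ of the component and proving Lemma~\ref{lemma:module_is_isom}: place by place, every irreducible $\oY$-module over a generic irreducible $W_v$ is isomorphic to $W_v$, the Steinberg case being the nontrivial one (it rests on the computations $\St\oY\binom{\triv_G}{\St}\cong 0$ and $\St\oY\binom{\St}{\triv_G}\cong\St$). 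Your proposal contains no mechanism for this step, and it is exactly where the content of the construction enters.
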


In particular, the functor $\iota$ induces a bijection between irreducible objects of $\Mod^\aut(\GL_2(\AA))$ and irreducible automorphic representations in $\Mod(\GL_2(\AA))$ in the classical sense.

Recall that the category $\Mod(\GL_2(\AA))$ has a large center $Z$ acting on it.
\begin{claim}[Follows from Lemma~2.14 of \cite{abst_aut_reps_arxiv}] \label{claim:cent}
    The center $Z$ has a unique action on $\Mod^\aut(\GL_2(\AA))$ which is compatible with the realization functor
    \[
        \iota\co\Mod^\aut(\GL_2(\AA))\ra\Mod(\GL_2(\AA)).
    \]
\end{claim}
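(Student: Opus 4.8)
The statement is formal: it is a transport of structure along the fully faithful functor $\iota$ of Theorem~\ref{thm:realization}(1). Recall that an action of $Z$ on a category $\mathcal C$ amounts to a unital ring homomorphism $Z\ra\mathrm{End}(\mathrm{Id}_{\mathcal C})$ into the (automatically commutative) ring of natural endomorphisms of the identity functor, and that the hypothesis furnishes such a homomorphism for $\mathcal C=\Mod(\GL_2(\AA))$. The plan is to show that, once ``compatible with $\iota$'' is spelled out as the requirement that $\iota(z_M)$ be the action of $z$ on $\iota(M)$ for all $z\in Z$ and $M\in\Mod^\aut(\GL_2(\AA))$, there is a unique ring homomorphism $Z\ra\mathrm{End}(\mathrm{Id}_{\Mod^\aut(\GL_2(\AA))})$ rendering $\iota$ equivariant.

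First I would construct the action. Fix $z\in Z$ and $M\in\Mod^\aut(\GL_2(\AA))$; the element $z$ acts on $\iota(M)$ by an endomorphism $z_{\iota(M)}\co\iota(M)\ra\iota(M)$, and since $\iota$ is fully faithful there is a unique $z_M\co M\ra M$ with $\iota(z_M)=z_{\iota(M)}$. That $M\mapsto z_M$ is natural---$f\circ z_M=z_N\circ f$ for every $f\co M\ra N$---follows because applying $\iota$ turns this equation into a naturality square for $z$ downstairs, which holds, and $\iota$ is faithful. The same mechanism (apply $\iota$, use that it is additive and faithful, and invoke the corresponding identity on $\Mod(\GL_2(\AA))$) shows that $z\mapsto(z_M)_M$ is a ring homomorphism: $(zz')_M=z_Mz'_M$, $(z+z')_M=z_M+z'_M$, and $1_M=\mathrm{id}_M$. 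By construction $\iota$ intertwines the resulting $Z$-action with the given one; uniqueness is then immediate, since any compatible action must satisfy $\iota(z_M)=z_{\iota(M)}$, and $\iota$ being faithful forces the value of $z_M$. Observe that the essential image of $\iota$ being a proper, non-extension-closed subcategory of $\Mod(\GL_2(\AA))$ is irrelevant here: we only ever evaluate $z$ on objects of the form $\iota(M)$, for which $\mathrm{End}_{\Mod(\GL_2(\AA))}(\iota(M))=\mathrm{End}_{\Mod^\aut(\GL_2(\AA))}(M)$ by full faithfulness.

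I do not expect a genuine obstacle: the sole substantive input is the full faithfulness of $\iota$, which Theorem~\ref{thm:realization} already provides, while Lemma~2.14 of \cite{abst_aut_reps_arxiv} supplies the concrete description of $Z$ and whatever compatibility of $\iota$ with it is needed to make the phrase ``action of $Z$'' unambiguous. The only point warranting care is cosmetic---keeping track of which endomorphism rings are meant at each stage---and, if one wants the sharper assertion that the transported $Z$-action also preserves the colimit structure of $\Mod^\aut(\GL_2(\AA))$, one simply feeds the same faithfulness argument the fact that $\iota$ respects colimits (Theorem~\ref{thm:realization}(2)).
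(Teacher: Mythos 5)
Your argument is correct and complete: since ``compatibility with $\iota$'' forces $\iota(z_M)=z_{\iota(M)}$, full faithfulness of $\iota$ (Theorem~\ref{thm:realization}) both constructs $z_M$ uniquely and, by faithfulness, transports naturality and the ring-homomorphism identities from $\Mod(\GL_2(\AA))$ to $\Mod^\aut(\GL_2(\AA))$. The paper itself gives no in-text proof of Claim~\ref{claim:cent}, deferring to Lemma~2.14 of \cite{abst_aut_reps_arxiv}; your transport-of-structure argument is exactly the intended formal content, with the only substantive input being full faithfulness of the realization functor.
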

In particular, objects in $\Mod^\aut(\GL_2(\AA))$ of finite length have decompositions into generalized Hecke eigenspaces.

\begin{remark}
    The following claim is beyond the scope of this paper, and will not be proven here. Observe that the category $\Mod^\aut(\GL_2(\AA))$ is enriched over $\Vect=\Vect_\CC$, the category of complex vector spaces. However, it is possible to show that it actually has a \emph{rational structure}. This means that $\Mod^\aut(\GL_2(\AA))$ is given by base change from a category $\Mod_\QQ^\aut(\GL_2(\AA))$ enriched over the category of rational vector spaces $\Vect_\QQ$:
    \[
        \Mod^\aut(\GL_2(\AA))=\Vect_\CC\otimes_{\Vect_\QQ}\Mod_\QQ^\aut(\GL_2(\AA)).
    \]
\end{remark}

\subsection{Symmetric Monoidal Structure} \label{subsect:monoidal}

As mentioned above, the category $\Mod^\aut(\GL_2(\AA))$ of abstractly automorphic representations carries a symmetric monoidal structure, which we denote by $\oY$. This notation was chosen deliberately in order to emphasize that this monoidal structure is \emph{not} compatible with any standard symmetric monoidal structure of $\Mod(\GL_2(\AA))$. In this subsection, we describe some of the properties of this symmetric monoidal structure.

First, we claim that:
\begin{claim}[Lemma~2.14 of \cite{abst_aut_reps_arxiv}] \label{claim:cent_monoidal}
    The symmetric monoidal structure $\oY$ is compatible with the action of the center $Z$ on $\Mod^\aut(\GL_2(\AA))$.
\end{claim}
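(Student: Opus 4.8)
The plan is to deduce the statement from the universal role of the monoidal unit $\mathbf{1}=\mathbf{1}_\oY$. Unwinding definitions, compatibility of the $Z$-action with $\oY$ asserts that for every $z\in Z$ and every pair $M,N$ of objects one has $z_{M\oY N}=z_M\oY\mathrm{id}_N$ (equivalently, by the symmetry of $\oY$, $=\mathrm{id}_M\oY z_N$). Since $\mathrm{End}(\mathbf{1})$ in $\Mod^\aut(\GL_2(\AA))$ is automatically commutative (Eckmann--Hilton), Claim~\ref{claim:cent} supplies a ring homomorphism $\rho\co Z\ra\mathrm{End}(\mathbf{1})$, $z\mapsto z_{\mathbf{1}}$, which I would use to define a candidate action $\tilde z_M:=\lambda_M\circ(\rho(z)\oY\mathrm{id}_M)\circ\lambda_M^{-1}\co M\ra M$, where $\lambda_M\co\mathbf{1}\oY M\ra M$ is the left unitor. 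A routine diagram chase with the coherence axioms (naturality of the unitor and associator, plus the identity $\lambda_{M\oY N}=(\lambda_M\oY\mathrm{id}_N)\circ\alpha_{\mathbf{1},M,N}$) shows that $\tilde z$ is a natural transformation of the identity, that $z\mapsto\tilde z$ is a ring action using that $\oY$ is bilinear and cocontinuous, and crucially that $\tilde z_{M\oY N}=\tilde z_M\oY\mathrm{id}_N$. So the entire claim reduces to the identity $\tilde z=z$ of $Z$-actions on $\Mod^\aut(\GL_2(\AA))$.

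Proving $\tilde z=z$ is where the construction of $\oY$ in \cite{abst_aut_reps_arxiv} must be invoked, and I expect this to be the real obstacle: for a generic symmetric monoidal structure on a category carrying a large center the analogous statement is false --- for instance, tensoring two representations with prescribed cuspidal supports yields one whose support is the ``product'' of the two, so the central action on the tensor is not recoverable from a single factor. By the uniqueness clause of Claim~\ref{claim:cent} it suffices to show that $\tilde z$ is compatible with the realization functor $\iota$, i.e.\ that $\iota(\tilde z_M)$ equals the image of $z$ under $Z\ra\mathrm{End}(\iota M)$. For this I would unfold, from \cite{abst_aut_reps_arxiv}, the descriptions of the unit object $\mathbf{1}_\oY$ and of the coherence isomorphism $\lambda_M$, compute $\iota(\mathbf{1}_\oY)$ and $\iota(\lambda_M)$ as an explicit $\GL_2(\AA)$-module and morphism, and use that $\iota$ is exact, fully faithful (Theorem~\ref{thm:realization}) and $Z$-equivariant (Claim~\ref{claim:cent}) to turn the desired equality into a concrete assertion about the central action of $Z$ on that explicit module --- an assertion one reads off from the bilinear data defining $\oY$.

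Finally I would assemble the pieces: $\tilde z$ is then a $Z$-action on $\Mod^\aut(\GL_2(\AA))$ compatible with $\iota$, hence equal to $z$ by uniqueness in Claim~\ref{claim:cent}, and combining this with $\tilde z_{M\oY N}=\tilde z_M\oY\mathrm{id}_N$ from the first paragraph gives precisely the claimed compatibility. I note that \cite{abst_aut_reps_arxiv} quite possibly organizes matters in the opposite order --- defining the $Z$-action on $\Mod^\aut(\GL_2(\AA))$ from the outset as $\tilde z$, built from a map $Z\ra\mathrm{End}(\mathbf{1}_\oY)$, so that compatibility with $\oY$ holds by construction and only agreement with the action on $\Mod(\GL_2(\AA))$ (our Claim~\ref{claim:cent}) has to be checked --- but either ordering establishes the same two facts, and I would be content with whichever is more convenient given the concrete construction there.
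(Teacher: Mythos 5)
This paper does not actually prove Claim~\ref{claim:cent_monoidal}: it is imported wholesale as Lemma~2.14 of \cite{abst_aut_reps_arxiv} (the same lemma cited for Claim~\ref{claim:cent}), so there is no in-text argument to compare yours against; I can only judge the proposal on its own terms. Its formal skeleton is sound and is a genuinely useful reduction: the coherence computation giving $\tilde z_{M\oY N}=\tilde z_M\oY\id_N$ is correct, and combining it with full faithfulness of $\iota$ and the uniqueness clause of Claim~\ref{claim:cent} correctly reduces the bilinear identity $z_{M\oY N}=z_M\oY\id_N$ to the single special case where one tensor factor is the unit. Your observation that the analogous statement fails for a generic monoidal structure (and for the ordinary $\otimes$) correctly identifies where the real content must lie.

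The gap is that the one step carrying that content --- the verification that $\iota(\tilde z_M)$ agrees with the Bernstein-center action on $\iota(M)$, equivalently that $Z$ acts on $\iota(\one\oY M)=\left(I_{RL}(\cI_G)\otimes I_{RL}(\iota M)\right)\otimes_{G\times G}Y$ identically through the $\cI_G$-slot and through the residual middle $G$-action on the kernel $Y$ --- is only announced, not performed. This is not something one simply ``reads off from the bilinear data'': the $S_3$-symmetry of $Y$ shows the three central actions are conjugate to one another, not that they agree on the relevant coinvariants, and their agreement is a genuine spectral statement about $Y$ (a relative of the fact that left and right convolution by the Bernstein center on $S(G)$ coincide, combined with the nontrivial middle action of Weil-representation type). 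Without that computation the proposal is a correct reduction plus an IOU for the essential fact, i.e.\ a strategy rather than a proof. A smaller caveat: your reading of ``compatible'' as $z_{M\oY N}=z_M\oY\id_N=\id_M\oY z_N$ is the one supported by Claim~\ref{claim:idem} and the surrounding remarks on spectral decomposition (it forces $M\oY N=0$ for distinct central characters, consistent with the component decomposition of Theorem~\ref{thm:decomp}), but since the whole argument hinges on this interpretation of Lemma~2.14, it should be checked against the source.
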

In fact, we can also make the following statement. Recall that an irreducible automorphic representation $V\in\Mod(\GL_2(\AA))$ is called \emph{generic} if it admits a Whittaker model.
\begin{claim}[Example~3.54 of \cite{abst_aut_reps_arxiv}] \label{claim:idem}
    Let $M\in\Mod^\aut(\GL_2(\AA))$ be an irreducible object. Then the realization $\iota(M)$ is a generic automorphic representation if and only if $M$ admits a non-zero map from the unit of the symmetric monoidal structure $\oY$. Moreover, such a map induces an isomorphism:
    \[
        M\cong M\oY M.
    \]
\end{claim}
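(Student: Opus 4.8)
The plan is to reduce the whole statement to one automorphic fact — a description of the monoidal unit — after which both halves become essentially formal. Write $\mathbf 1$ for the unit of $\oY$. The automorphic input is a natural identification
\[
    \mathrm{Hom}_{\Mod^\aut(\GL_2(\AA))}(\mathbf 1,M)\;\cong\;\mathrm{Wh}_\psi\bigl(\iota(M)\bigr)
\]
of the space of maps from $\mathbf 1$ into $M$ with the space of global $\psi$-Whittaker functionals on $\iota(M)$, valid for all $M$ in the essential image of $\iota$. Granting this, the equivalence is immediate: by Theorem~\ref{thm:intro_abst_aut_irr_is_aut} the realization of an irreducible $M$ is an irreducible automorphic representation, and such a representation is generic precisely when it carries a nonzero Whittaker functional — that is, precisely when $\mathrm{Hom}(\mathbf 1,M)\neq 0$, i.e. when $M$ admits a nonzero map from $\mathbf 1$. (Multiplicity one for Whittaker models of automorphic representations moreover makes this $\mathrm{Hom}$-space one-dimensional, though the ``moreover'' clause below will not use uniqueness.) It then remains to establish the displayed identification and to prove the self-idempotence $M\cong M\oY M$.

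To establish the identification I would unwind the construction of $\oY$ from \cite{abst_aut_reps_arxiv}: there the unit $\mathbf 1$ is set up so that its realization $\iota(\mathbf 1)$ corepresents the Whittaker-functional functor on the essential image of $\iota$. Concretely, $\iota(\mathbf 1)$ is the ``universal Whittaker (Kirillov) module'', the $\GL_2(\AA)$-submodule of $C^\infty\bigl(N(\AA)\backslash\GL_2(\AA),\psi\bigr)$ spanned by the Fourier coefficients $W_\phi(g)=\int_{N(F)\backslash N(\AA)}\phi(ng)\,\overline{\psi(n)}\,dn$ of the functions $\phi\in\cS_G$. Since $\iota$ is fully faithful (Theorem~\ref{thm:realization}),
\[
    \mathrm{Hom}_{\Mod^\aut(\GL_2(\AA))}(\mathbf 1,M)\;=\;\mathrm{Hom}_{\GL_2(\AA)}\bigl(\iota(\mathbf 1),\iota(M)\bigr),
\]
and the right-hand side is $\mathrm{Wh}_\psi\bigl(\iota(M)\bigr)$ by corepresentability together with the reconstruction of an automorphic form from its Whittaker expansion — here one uses that $\iota(M)$ is a subquotient of $\widetilde{\cS_G}$ (Theorem~\ref{thm:intro_abst_aut_irr_is_aut}).

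For the ``moreover'', let $f\co\mathbf 1\ra M$ be nonzero. Since $M$ is simple, the image of $f$ is a nonzero subobject, so $f$ is an epimorphism; write $0\ra K\ra\mathbf 1\ra M\ra 0$ with last map $f$. As $\oY$ preserves colimits it is right exact, so applying $-\oY M$ yields an exact sequence $K\oY M\ra\mathbf 1\oY M\ra M\oY M\ra 0$; call the first map $g$ and the second $h$. Under the unit isomorphism $\mathbf 1\oY M\cong M$ one has $h=f\oY\mathrm{id}_M$ and $g=(K\hookrightarrow\mathbf 1)\oY\mathrm{id}_M$. Applying instead $K\oY-$ to the same short exact sequence exhibits the map $K\cong K\oY\mathbf 1\ra K\oY M$ (namely $\mathrm{id}_K\oY f$) as the cokernel of $K\oY K\ra K\oY\mathbf 1$, hence as an epimorphism. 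Precomposing $g$ with this epimorphism and using the interchange law identifies the composite $K\cong K\oY\mathbf 1\ra K\oY M\ra\mathbf 1\oY M\cong M$ with $f\circ(K\hookrightarrow\mathbf 1)=0$; since $g$ becomes zero after precomposition with an epimorphism, $g=0$. Therefore $h\co M\cong\mathbf 1\oY M\ra M\oY M$ is an isomorphism, and by construction it is the map induced by $f$.

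The only genuinely non-formal step is the first: identifying $\iota(\mathbf 1)$ and checking that it corepresents Whittaker functionals with the correct $\psi$ versus $\psi^{-1}$ normalization (a harmless ambiguity, since genericity of an irreducible automorphic representation is insensitive to replacing $\psi$ by $\psi^{-1}$ or passing to the contragredient). This is exactly the content of Example~3.54 of \cite{abst_aut_reps_arxiv} and is dictated by the construction there, so I would quote it; everything after it — both directions of the equivalence and the self-idempotence $M\cong M\oY M$ — uses only full faithfulness of $\iota$, Theorem~\ref{thm:intro_abst_aut_irr_is_aut}, multiplicity one for Whittaker models, and right exactness of $\oY$. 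If one prefers not to re-derive the description of $\mathbf 1$, the cleanest packaging is to isolate the single lemma ``$\iota(\mathbf 1)$ corepresents $V\mapsto\mathrm{Wh}_\psi(V)$'' and feed it into the formal argument above.
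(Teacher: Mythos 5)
The paper itself does not prove this claim; it is imported wholesale from Example~3.54 of \cite{abst_aut_reps_arxiv}, so there is no in-paper argument to match your proposal against. Judged on its own terms, your ``moreover'' half is correct and entirely formal: in a symmetric monoidal abelian category whose product preserves colimits, any epimorphism $\mathbf 1\twoheadrightarrow M$ makes $f\oY\id_M\co\mathbf 1\oY M\ra M\oY M$ an isomorphism (your use of the interchange law to show $(K\hookrightarrow\mathbf 1)\oY\id_M=0$ is exactly right), and simplicity of $M$ upgrades any nonzero $f$ to an epimorphism. That part stands on its own and is a genuinely useful unpacking of what the paper leaves as a citation.

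The first half has a real problem, however. Your concrete identification of $\iota(\mathbf 1)$ as the universal Whittaker module inside the $\theta$-equivariant functions on $U(\AA)\backslash\GL_2(\AA)$ contradicts Claim~\ref{claim:unit}: the realization of the unit of $\oY$ on $\Mod^\aut(\GL_2(\AA))$ is $\cI_G\subseteq\cS_G=S(\GL_2(F)\backslash\GL_2(\AA))$, the compactly supported automorphic functions orthogonal to the characters $\chi\circ\det$. The Whittaker-type object you describe is (up to the Fourier-coefficient map) $\one_\Ydown$, which is the unit of $\oY$ on the ambient category $\Mod(\GL_2(\AA))$; it is not the unit of $\Mod^\aut$, and it only \emph{surjects} onto $\cI_G$ via $W\mapsto\sum_\gamma W(\gamma g)$. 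Consequently the corepresentability you actually need is: for irreducible automorphic $V$, one has $\Hom_G(\cI_G,V)\neq 0$ if and only if $V$ is generic --- equivalently, every generic irreducible automorphic representation is a \emph{quotient} of $\cI_G$. The cuspidal case is visible from Proposition~\ref{prop:decomp_cusp}, but the generic Eisenstein case requires an argument, and this is precisely the non-formal content of the Example~3.54 you propose to quote. Since the entire claim is already attributed to that example, quoting it for this step makes your first half a restatement of the citation rather than a proof of it; that is defensible given how the paper handles the claim, but you should say so explicitly and correct the description of the unit.
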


\begin{remark}
    Note that Claims~\ref{claim:cent_monoidal} and~\ref{claim:idem} imply that $\oY$ is well-behaved with respect to the spectral decomposition of objects in $\Mod^\aut(\GL_2(\AA))$. This is very different from the behaviour of the usual tensor product $\otimes$ on the category $\Mod(\GL_2(\AA))$, where the product of irreducible objects is typically very far from irreducible.
\end{remark}

Additionally, we can describe the realization of the unit of $\oY$. Recall that $\cS_G=S(\GL_2(F)\backslash\GL_2(\AA))$ is the space of smooth and compactly supported functions on $\GL_2(F)\backslash\GL_2(\AA)$. Let $\cI_G\subseteq\cS_G$ be the subspace consisting of functions that are orthogonal to all characters of the form $\chi(\det(g))$, for $\chi\co\AA^\times/F^\times\ra\CC^\times$ a grossencharacter.

Then we claim that:
\begin{claim}[Follows from Definition~4.26 of \cite{abst_aut_reps_arxiv}] \label{claim:unit}
    The realization of the unit of the symmetric monoidal structure $\oY$ is canonically isomorphic to $\cI_G$.
\end{claim}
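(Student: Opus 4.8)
The plan is to compute the realization of the unit of $\oY$ from the description of that unit in \cite{abst_aut_reps_arxiv}, using the favourable properties of $\iota$ to transport the whole computation into $\Mod(\GL_2(\AA))$. By Theorem~\ref{thm:realization}, $\iota$ is fully faithful and exact, preserves all limits and colimits, has essential image closed under subquotients, and carries some $X\in\Mod^\aut(\GL_2(\AA))$ to $\cS_G$. Hence $\cI_G$, being a $\GL_2(\AA)$-submodule of $\cS_G$, is of the form $\iota(U_0)$ for a uniquely determined subobject $U_0\hookrightarrow X$, and any presentation of the unit $\mathbf{1}_{\oY}$ as a kernel or image of maps between $\cS_G$-type objects may be checked after applying $\iota$. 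So the content of the claim is the identification $U_0\cong\mathbf{1}_{\oY}$ as the monoidal unit.

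The first step I would carry out is to give a clean model for $\cI_G$ inside $\Mod(\GL_2(\AA))$. The determinant descends to a surjection $\det\co\GL_2(F)\backslash\GL_2(\AA)\ra\AA^\times/F^\times$ whose fibres are translates of $\mathrm{SL}_2(F)\backslash\mathrm{SL}_2(\AA)$, hence of one fixed finite volume (reduction theory). Integration over fibres then defines a $\GL_2(\AA)$-equivariant surjection $p_!\co\cS_G\ra S(\AA^\times/F^\times)$, with $\GL_2(\AA)$ acting on the target through $\det$, and Fubini gives $\int_{\GL_2(F)\backslash\GL_2(\AA)}f\cdot(\chi\circ\det)^{-1}=\int_{\AA^\times/F^\times}p_!(f)\cdot\chi^{-1}$ for every grossencharacter $\chi$. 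Since $\AA^\times/F^\times$ is an extension of $\ZZ$ by a compact group, $p_!(f)$ is detected by finitely many of its Mellin coefficients, so $f$ is orthogonal to every character $\chi\circ\det$ exactly when $p_!(f)=0$. This produces a canonical short exact sequence of smooth $\GL_2(\AA)$-modules
\[
    0\ra\cI_G\ra\cS_G\ra S(\AA^\times/F^\times)\ra 0,
\]
with the last map $p_!$, identifying $\cI_G$ as the ``$\GL_1$-reduction'' of $\cS_G$: the functions whose fibrewise integral along $\det$ vanishes.

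The second step is to match this with the construction of the unit of $\oY$ in Definition~4.26 of \cite{abst_aut_reps_arxiv}. That construction obtains $\mathbf{1}_{\oY}$ from $\cS_G$ by stripping off precisely the contributions of the characters $\chi\circ\det$; applying the exact functor $\iota$ therefore presents $\iota(\mathbf{1}_{\oY})$ as the kernel of a $\GL_2(\AA)$-equivariant surjection of $\cS_G$ onto a module on which $\GL_2(\AA)$ acts through $\det$, and by the previous step this forces $\iota(\mathbf{1}_{\oY})\cong\cI_G$. Canonicity is then automatic: a monoidal unit is unique up to a unique isomorphism compatible with the unitors, $\iota$ is fully faithful, and the short exact sequence above supplies a distinguished comparison map, so the resulting isomorphism $\iota(\mathbf{1}_{\oY})\cong\cI_G$ is canonical.

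I expect the second step to be the main obstacle. It requires reading Definition~4.26 of \cite{abst_aut_reps_arxiv} carefully enough to be sure that the ``direction'' it excises from $\cS_G$ is exactly the image of $p_!$ — and not, say, a modulus-twisted copy of $S(\AA^\times/F^\times)$, or a coarser or finer piece — and that moving between the orthogonality description of $\cI_G$ in the statement and the fibre-integration description used above introduces no convergence subtlety or measure normalization. As a consistency check, $\cI_G$ is exactly $\cS_G$ with the one-dimensional automorphic representations $\chi\circ\det$ — i.e.\ the non-generic irreducible automorphic representations of $\GL_2$ — removed, which is precisely what Claim~\ref{claim:idem} (``a nonzero map from the unit $\iff$ generic'') leads one to expect of the realization of the unit.
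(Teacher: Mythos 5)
The decisive step of your proposal --- matching your kernel description of $\cI_G$ against ``the construction of the unit in Definition~4.26'' --- is exactly the step you defer to a careful reading of that definition, and it is never carried out; that is where all the content of the claim lives. Moreover, the scenario your second step is built to handle does not occur: Definition~4.26 of \cite{abst_aut_reps_arxiv} does not produce the unit by some independent excision from $\cS_G$ that must afterwards be reconciled with the orthogonality description. Rather (as this paper recalls in the section on parabolic induction and automorphicity), it equips $\cI_G$ --- already defined by the orthogonality condition in the statement --- with a unital commutative algebra structure in $(\Mod(\GL_2(\AA)),\oY)$ via the Whittaker unit map $\one_\Ydown\ra\cI_G$, and \emph{defines} $\Mod^\aut(\GL_2(\AA))$ to be the category of $\cI_G$-modules, with symmetric monoidal structure the relative tensor product over $\cI_G$. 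The unit of the category of modules over a commutative algebra $A$, with product $\otimes_A$, is $A$ itself; so the realization of the unit is $\cI_G$ canonically and essentially by definition. The ``main obstacle'' you identify dissolves rather than gets resolved.

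Your first step is correct but proves an auxiliary statement that the claim does not need: that $\cI_G$ is the kernel of fibrewise integration $p_!\co\cS_G\ra S(\AA^\times/F^\times)$ along $\det$. The Fubini argument and the reduction to the vanishing of $p_!(f)$ are fine (for the latter, the precise point is that for each character of the compact part of $\AA^\times/F^\times$ the Mellin transform in the unramified direction is a Laurent polynomial, so vanishing against \emph{all} grossencharacters, non-unitary ones included, forces $p_!(f)=0$; ``detected by finitely many Mellin coefficients'' is not quite the statement doing the work). As written, though, the proposal amounts to a correct lemma about $\cI_G$ plus the unproven assertion that the unit of $\oY$ is the object that lemma describes, so it does not yet prove the claim.
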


\subsection{Automorphic Parabolic Induction and Restriction} \label{subsect:parabolic}

Let $T$ be the torus $\GL_1\times\GL_1$. Denote by $\Mod^\aut(T(\AA))$ the category of smooth representations of the Abelian group $T(\AA)/T(F)$. This category admits a canonical realization functor
\[
    \iota\co\Mod^\aut(T(\AA))\ra\Mod(T(\AA))
\]
into the category of smooth $T(\AA)$-modules, given by restriction along the map
\[
    T(\AA)\ra T(\AA)/T(F).
\]
Moreover, $\Mod^\aut(T(\AA))$ admits a symmetric monoidal structure $\otimes_{T(\AA)/T(F)}$ given by the relative tensor product over $T(\AA)/T(F)$. Let us abuse notation and denote this symmetric monoidal structure by $\oY$ as well.

It is easy to see that $\Mod^\aut(T(\AA))$, along with $\iota$ and $\oY$, satisfies the appropriate analogues of Theorems~\ref{thm:realization} and~\ref{thm:intro_abst_aut_irr_is_aut}, as well as Claims~\ref{claim:cent}, \ref{claim:cent_monoidal}, \ref{claim:idem} and~\ref{claim:unit}.

Therefore, it is natural to ask about the compatibility of $\Mod^\aut(T(\AA))$ with $\Mod^\aut(\GL_2(\AA))$. Indeed, it turns out that the two structures are intimately related.

Fix a parabolic subgroup $P\subseteq\GL_2$, with Levi given by $T$. Let
\[
    i\co\Mod(T(\AA))\ra\Mod(\GL_2(\AA))
\]
be the usual (un-normalized) parabolic induction functor with respect to $P$. Our first claim is that this functor respects abstract automorphicity:
\begin{theorem}[Theorem~\ref{thm:i_hat_rlax_global} and Theorem~\ref{thm:i_aut_to_aut}] \label{thm:parabolic_ind}
    There is an essentially unique functor $i^\aut$ completing the diagram:
    \[\xymatrix{
        \Mod^\aut(T(\AA)) \ar@{-->}[d]^{i^\aut} \ar[r]^\iota & \Mod(T(\AA)) \ar[d]^{i} \\
        \Mod^\aut(\GL_2(\AA)) \ar[r]^\iota & \Mod(\GL_2(\AA)).
    }\]
    The functor $i^\aut$ is non-unital right-lax with respect to the symmetric monoidal structure $\oY$.
\end{theorem}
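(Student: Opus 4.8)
The plan is to construct $i^\aut$ via a universal property of $\Mod^\aut(\GL_2(\AA))$ rather than attempting to define it by an explicit formula. The category $\Mod^\aut(\GL_2(\AA))$ was built in \cite{abst_aut_reps_arxiv} as a localization (or a suitable completion/gluing) of a category of ``abstract Schwartz modules'' attached to $\GL_2(F)\backslash\GL_2(\AA)$; accordingly, a colimit-preserving functor out of it is determined by its behavior on a set of generators, subject to compatibility with the defining relations. Concretely, I would first produce a candidate functor at the level of the realization: given $N\in\Mod^\aut(T(\AA))$, the object $i(\iota(N))\in\Mod(\GL_2(\AA))$ is the space of functions on $P(F)\backslash\GL_2(\AA)$ with prescribed behavior, and one checks directly that this lies in the essential image of $\iota\co\Mod^\aut(\GL_2(\AA))\to\Mod(\GL_2(\AA))$ — this is precisely the content of ``Theorem~\ref{thm:i_hat_rlax_global}'' referenced above, presumably proved by exhibiting $i(\iota(N))$ as glued from (a twist of) the Schwartz space $\cS_G$ in the same way $\iota(N)$ is glued from $\cS_T$. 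Since $\iota$ is fully faithful (Theorem~\ref{thm:realization}(1)) and closed under the relevant colimits (Theorem~\ref{thm:realization}(2)), this uniquely lifts $i\circ\iota$ through $\iota$ on objects, and functoriality plus uniqueness ($i^\aut$ ``essentially unique'') then follow formally from full faithfulness of $\iota$ together with the fact that $\Mod^\aut(T(\AA))$ is generated (under colimits) by objects whose realizations generate, so the lift is forced.

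Next I would address the right-lax monoidal structure. Here the key observation is a projection-formula-type identity: for the un-normalized parabolic induction $i$ on the underlying $p$-adic categories one has natural maps $i(A)\otimes i(B)\to i(A\otimes B')$ where $B'$ records the half-sum-of-roots twist discrepancy, and in the global/automorphic setting the monoidal structure $\oY$ is designed (via Claims~\ref{claim:idem} and~\ref{claim:unit}) so that the analogous comparison map $i^\aut(N_1)\oY i^\aut(N_2)\to i^\aut(N_1\oY N_2)$ exists but need not be an isomorphism and need not send the unit to the unit — hence \emph{non-unital right-lax}. I would construct this lax structure morphism again by descent: define it on the realizations, where it is a concrete map of induced representations (essentially multiplication of functions on $P(F)\backslash\GL_2(\AA)$, using that $\oY$ on $\Mod^\aut(\GL_2(\AA))$ realizes to the appropriate relative tensor product / convolution-type operation per Claim~\ref{claim:unit}), verify the hexagon/pentagon coherence diagrams after applying $\iota$ (where everything is a statement about explicit function spaces), and then transport the coherences back using full faithfulness of $\iota$. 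The failure of unitality is pinned down by computing $i^\aut$ applied to the unit of $\Mod^\aut(T(\AA))$ and comparing with $\cI_G$ from Claim~\ref{claim:unit}: parabolic induction of the trivial-ish unit object produces the full space of Eisenstein-type functions, which is strictly larger than $\cI_G$, so there is no unit map, only the lax (non-unital) structure.

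I expect the main obstacle to be the verification that $i(\iota(N))$ genuinely lands in the essential image of $\iota$ — i.e.\ that global parabolic induction preserves ``abstract automorphicity'' — since this is where the analytic content hides: the induced space involves Eisenstein series, whose automorphic realization requires the delicate gluing/analytic-continuation mechanism of \cite{abst_aut_reps_arxiv}, and one must show the line-bundle twist (the L-function twist alluded to in the abstract, from \cite{modules_of_zeta_integrals_arxiv}) is exactly what makes the construction descend. Everything after that — functoriality, essential uniqueness, and the coherence of the lax structure — is comparatively formal, being a consequence of full faithfulness of $\iota$ and the generation properties in Theorem~\ref{thm:realization}, so I would spend the bulk of the effort on this preservation statement and treat the rest as bookkeeping with the explicit function-space models.
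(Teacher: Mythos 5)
There is a genuine gap, and it sits exactly where you predicted the difficulty would be but did not supply a mechanism. First, the lax structure: your ``projection-formula'' picture, in which $i(A)\oY i(B)$ maps to $i$ of the product up to a half-sum-of-roots character twist, is not what happens. The canonical multiplication the paper constructs lands in $i(\cL\otimes_T A\otimes_T B)$, where $\cL=S(\AA\times\AA^\times)$ is a genuinely nontrivial $T$-module (the module of zeta integrals for the relevant L-function), not a character twist; one must renormalize to $\widehat{i}(V)=i(\sHom_T(\cL,V))$ to get a map $\widehat{i}(A)\oY\widehat{i}(B)\ra\widehat{i}(A\otimes_T B)$, and only then is $i^\aut$ (non-canonically isomorphic to $\widehat{i}^\aut$) right-lax. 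Moreover $\oY$ is not realized by pointwise multiplication of functions on $P(F)\backslash\GL_2(\AA)$: the multiplication is built from an explicit map out of the kernel $Y=S(\M_2\times\GG_m)$, its well-definedness and canonicity come from an orbit-decomposition uniqueness statement, and associativity is a separate nontrivial argument (an $S_3$-symmetry/uniqueness statement after applying $\Phi^-$), not bookkeeping that can be ``verified after applying $\iota$.'' Your non-unitality argument is also a non sequitur: the fact that $i^\aut$ of the unit is larger than $\cI_G$ does not preclude a unit map $\one\ra i^\aut(\one)$; the actual obstruction is the Steinberg-type counterexample locally (all maps $\one_\Ydown\ra i(\triv_T(1,-1))$ factor through $\St$, killing the required unitality triangle) and, globally, the failure of the candidate unit to respect distinguished vectors, which is repaired only after inverting $\eta$.

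Second, and more seriously, the preservation-of-automorphicity step is not proved by exhibiting $i(\iota(N))$ as ``glued from a twist of $\cS_G$''; no such gluing argument appears, and you offer none. The paper's mechanism is that $\Mod^\aut(G)$ is the category of $\cI_G$-modules, so abstract automorphicity of the induced object is proved by \emph{using} the lax monoidal structure: after inverting $\eta$, $\widehat{i}(\cI_T[\eta^{-1}])$ becomes a unital algebra in $\Mod(G)$, and an explicit computation (the constant-term/Eisenstein sum over $U_2(F)\backslash\GL_2(F)$) shows its unit map $\one_\Ydown\ra\widehat{i}(\cI_T[\eta^{-1}])$ factors through $\one_\Ydown\ra\cI_G$, making it a $\cI_G$-algebra and hence abstractly automorphic; the general case follows because the subcategory sent into $\Mod^\aut(G)$ is closed under colimits and subobjects and $\cI_T\subseteq\cI_T[\eta^{-1}]$. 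In other words, the two halves of the theorem are not independent as your proposal treats them: the right-lax structure (with its $\cL$-twist kept explicit, precisely so that no analytic continuation is needed) is the engine that proves automorphicity of parabolic induction. Without identifying the $\cL$-normalization, the algebra structure on $\widehat{i}(\cI_T[\eta^{-1}])$, and the factorization of its unit through $\cI_G$, the central step of the proof is missing.
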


\begin{recollection} \label{recall:right_lax}
    Recall that a \emph{non-unital right lax} functor between the two monoidal categories $\C$ and $\D$ is a functor $F\co\C\ra\D$ equipped with a natural transformation (referred to as the \emph{multiplication} on $F$):
    \[
        F(A)\otimes_\D F(B)\ra F(A\otimes_\C B),
    \]
    satisfying an appropriate associativity property. Essentially, this means that $F$ sends non-unital algebras to non-unital algebras.
    
    Similarly, there is a notion of a \emph{unital right-lax functor}, which is also equipped with a unit map $\one_\D\ra F(\one_\C)$ satisfying appropriate compatibility conditions. A unital right-lax functor sends unital algebras to unital algebras.
    
    The reader can find out more about these notions in, e.g., \cite{nlab:monoidal_functor}.
\end{recollection}

\begin{remark}
    The non-unital right-lax structure on the functor $i^\aut$ is non-canonical. Rather, it turns out that there is a twist $\widehat{i}^\aut$ of $i^\aut$, which is canonically non-unital right-lax. The twist $\widehat{i}^\aut$ is isomorphic $i^\aut$, but the isomorphism between them is non-canonical.
    
    We will give a more detailed explanation of this phenomenon in Section~\ref{sect:hat_i} below. For now, we can give an informal summary. It turns out that the difference between $i^\aut$ and $\widehat{i}^\aut$ is related to the analytic continuation process usually used in defining the intertwining operator. Specifically, $\widehat{i}^\aut$ is a twist of $i^\aut$ by a (non-canonically) trivial line bundle, which has a canonical trivialization after analytic continuation.
    
    In particular, $\widehat{i}^\aut$ enjoys a canonical description of the intertwining operator which does not require analytic continuation. That is, the role of the analytic continuation in the usual theory is merely to trivialize the twist giving $\widehat{i}^\aut$ from $i^\aut$.
    
    This issue is also related to the L-function that appears in the formula for the functional equation for Eisenstein series. Specifically, it turns out that the space by which the functor $i^\aut$ needs to be twisted is precisely the space of zeta integrals for this L-function, in the sense of \cite{modules_of_zeta_integrals_arxiv}. See also Remark~\ref{remark:L_is_L_func}.
\end{remark}

\begin{remark}
    One can also obtain a treatment of the intertwining operator that does not require analytic continuation by taking Fourier transforms (essentially, Kirillov models). See, e.g., Chapter~2, Section~3, Subsection~3 of \cite{generalized_functions_aut}.
\end{remark}

\begin{remark}
    In the course of proving Theorem~\ref{thm:parabolic_ind}, we will also obtain the functional equation for Eisenstein series as an almost immediate corollary. This relative simplicity is a benefit of our approach. See Remark~\ref{remark:eisenstein_functional_eqs}.
\end{remark}

We refer to $i^\aut$ as \emph{automorphic parabolic induction}. Moreover, it turns out that while the usual parabolic restriction functor
\[
    r\co\Mod(\GL_2(\AA))\ra\Mod(T(\AA))
\]
is very badly behaved with respect to automorphic representations, one can still have a good notion of automorphic parabolic restriction:
\begin{theorem}[Remark~\ref{remark:aut_par_rest} and Theorem~\ref{thm:fin_len_rest}]
    The functor
    \[
        i^\aut\co\Mod^\aut(T(\AA))\ra\Mod^\aut(\GL_2(\AA))
    \]
    has a left adjoint $r^\aut$, referred to as \emph{automorphic parabolic restriction}.
    
    Moreover, the functor $r^\aut$ sends objects of finite length to objects of finite length.
\end{theorem}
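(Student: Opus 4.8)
The plan is to produce $r^\aut$ via the adjoint functor theorem, so the point is to verify that $i^\aut$ is accessible and preserves all small limits. In the category $\Mod(\GL_2(\AA))$ of smooth modules, (unnormalized) parabolic induction $i$ is both a left and a right adjoint — the right adjoint of the Jacquet functor by Frobenius reciprocity, and a left adjoint by Bernstein's second adjunction — so it preserves all small limits and colimits, and in particular commutes with filtered colimits. By Theorem~\ref{thm:realization} both realization functors $\iota$ are fully faithful and preserve all limits and colimits, and $\Mod^\aut(T(\AA))$ is complete; chasing the square of Theorem~\ref{thm:parabolic_ind} one gets that $i^\aut L$ computes $\lim i^\aut D$ after applying the fully faithful $\iota$, hence $i^\aut$ preserves limits (and likewise filtered colimits). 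Since $\Mod^\aut(T(\AA))$ and $\Mod^\aut(\GL_2(\AA))$ are presentable (they are Grothendieck Abelian categories, by the construction in \cite{abst_aut_reps_arxiv}), an accessible limit-preserving functor between them is a right adjoint, so a left adjoint $r^\aut$ exists.

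\textbf{Reduction of the finiteness claim.} Being a left adjoint, $r^\aut$ is right exact but not necessarily exact, so I would first set up a dévissage that only uses right exactness. If $0\ra M'\ra M\ra M''\ra 0$ is exact with $r^\aut M'$ and $r^\aut M''$ of finite length, the exact sequence $r^\aut M'\ra r^\aut M\ra r^\aut M''\ra 0$ exhibits $r^\aut M$ as an extension of $r^\aut M''$ by a quotient of $r^\aut M'$, hence of finite length. By induction on length it therefore suffices to handle a simple object $M$. By Theorem~\ref{thm:intro_abst_aut_irr_is_aut} the realization $\iota(M)$ is then an irreducible automorphic representation, and I would split into two cases according to the cuspidal/Eisenstein decomposition of Subsection~\ref{subsect:decomp}: if $M$ lies in the cuspidal part then $r^\aut M=0$ (parabolic restriction annihilates the cuspidal part), so there is nothing to do, and it remains to treat the case where $M$ is Eisenstein.

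\textbf{The Eisenstein case and the main obstacle.} Using the description of the Eisenstein part of $\Mod^\aut(\GL_2(\AA))$ from Subsection~\ref{subsect:decomp}, together with the intertwining operator relating $i^\aut(\chi_1,\chi_2)$ with $i^\aut(\chi_2,\chi_1)$ to select the favourable ordering, every Eisenstein simple $M$ is realized as a \emph{quotient} of a standard object $i^\aut(N)$ for a single character $N$ of $T(\AA)/T(F)$, which in particular has finite length in $\Mod^\aut(T(\AA))$. Right exactness of $r^\aut$ then gives a surjection $r^\aut i^\aut(N)\twoheadrightarrow r^\aut M$, so it is enough to show $r^\aut i^\aut(N)$ is of finite length whenever $N$ is. This is where the self-duality of $i^\aut$ and the resulting functional equation are used: they identify $r^\aut\circ i^\aut$, up to the line-bundle twist relating $i^\aut$ and $\widehat i^\aut$, with a functor sending $N$ to an object with a two-step filtration whose graded pieces are $N$ and its Weyl conjugate $N^w$; both have finite length when $N$ does. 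The genuinely nontrivial step — and the one I expect to be the main obstacle — is precisely this explicit computation of $r^\aut\circ i^\aut$, which is essentially the categorified constant-term formula for Eisenstein series and relies on the intertwining-operator and functional-equation results developed in the body of the paper; the delicate part is the behaviour along the "anomalous spectrum" (equivalently, at the poles of the Eisenstein series), where the two-term filtration can degenerate and one must argue with the twisted functor $\widehat i^\aut$ and its canonical trivialization after analytic continuation in order to see that finiteness persists uniformly.
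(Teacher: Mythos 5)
Your adjoint-functor-theorem argument is workable but is not what the paper does, and it gives less than what is needed later. The paper's construction (Remark~\ref{remark:aut_par_rest}) is explicit: since the fully faithful inclusion $\Mod^\aut(T(\AA))\hookrightarrow\Mod(T(\AA))$ has left adjoint given by $T(F)$-coinvariants, the left adjoint of $i^\aut$ is simply $V\mapsto \widehat r(V)_{/T(F)}$, the ordinary parabolic restriction followed by $T(F)$-coinvariants. This explicit formula is essential for the finite-length argument, so I would recommend replacing the abstract existence proof by it.

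\textbf{On finite length: there is a genuine gap in the Eisenstein case.} Two problems. First, your dévissage requires every Eisenstein simple to be a \emph{quotient} of some $i^\aut(N)$, and this fails on the anomalous spectrum (Warning~\ref{warn:eisenstein_killed_by_rest}, Remark~\ref{remark:subquot_of_eis_not_quot}): there are irreducible subquotients of $i^\aut(\chi)$ that are neither subobjects nor quotients, and since $r^\aut$ is only right exact you cannot control $r^\aut$ of a general subquotient by $r^\aut\circ i^\aut$. The paper avoids this entirely by computing $\widehat r(V)$ directly for a subquotient $V$ of $\widehat i^\aut(\chi)$, as a restricted tensor product $\bigotimes_v'\widehat r_v(V_v)$ of local Jacquet modules, each of which is at most two-dimensional with constituents $\chi_v$ and $\chi_v(w_v)$. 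Second, and more seriously, the "two-step filtration with graded pieces $N$ and $N^w$" is not a consequence of the intertwining operator or the functional equation, and the difficulty is not where you locate it (poles of Eisenstein series, analytic continuation, the twist $\widehat i^\aut$ versus $i^\aut$). The issue is that the restricted tensor product above decomposes, place by place, into $\chi_v$ or $\chi_v(w_v)$, so a priori an uncountable family of "re-mixed" characters could survive the passage to $T(F)$-coinvariants. The paper's key input is Lemma~\ref{lemma:surj_of_center}: the map $\one_T^{\{w_v=1\}}\ra\cI_T^{w=1}$ from locally Weyl-invariant test functions to globally Weyl-invariant automorphic ones is surjective — equivalently, one cannot re-mix the local components of $\chi$ and $\chi(w)$ into a different automorphic character. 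This is an arithmetic statement, proved via the Chebotarev density theorem applied to ray class groups, and it is what forces $\widehat r(V)_{/T(F)}$ to be covered by $\cI_T\otimes_{\cI_T^{w=1}}(\chi|_{\cI_T^{w=1}})$, which is at most two-dimensional since $\cI_T$ is free of rank $2$ over $\cI_T^{w=1}$. Your proposal contains no substitute for this step.
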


\begin{remark}
    Note that unlike $r^\aut$, the usual parabolic restriction functor $r$ does not send objects of finite length to objects of finite length. For example, an irreducible automorphic representation may be principle series at every place, in which case its ``usual'' parabolic restriction has infinite length.
\end{remark}

It is natural to expect that the functor $r^\aut$ kill abstractly automorphic representations that are ``cuspidal''. Indeed:
\begin{claim}[Corollary~\ref{cor:aut_par_rest_kills_cusp}] \label{claim:aut_rest_kills_cusp}
    If $M\in\Mod^\aut(\GL_2(\AA))$ is irreducible, and its realization $\iota(M)$ is a cuspidal automorphic representation, then $r^\aut(M)=0$.
\end{claim}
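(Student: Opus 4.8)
The plan is to exploit the adjunction $(r^\aut, i^\aut)$ together with the classical fact that cuspidal representations are orthogonal to all Eisenstein series. Concretely, to show $r^\aut(M) = 0$ for $M$ irreducible with $\iota(M)$ cuspidal, it suffices to show that $\Hom_{\Mod^\aut(T(\AA))}(r^\aut(M), N) = 0$ for every object $N$ in a generating class of $\Mod^\aut(T(\AA))$ — for instance, for every $N$ of finite length, since finite length objects generate (and $r^\aut(M)$ has finite length by Theorem~\ref{thm:fin_len_rest}, so it is zero iff it admits no nonzero map to any finite-length object, or dually iff it receives none). By adjunction, $\Hom(r^\aut(M), N) \cong \Hom_{\Mod^\aut(\GL_2(\AA))}(M, i^\aut(N))$. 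Since $M$ is irreducible, any nonzero such map is injective, so it suffices to show that $M$ — equivalently, via the fully faithful realization $\iota$ of Theorem~\ref{thm:realization}, the cuspidal representation $\iota(M)$ — cannot embed into $\iota(i^\aut(N)) = i(\iota(N))$, the honest parabolic induction of $\iota(N)$.

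The core step is therefore the following purely classical assertion: a cuspidal automorphic representation of $\GL_2(\AA)$ does not embed (as a $\GL_2(\AA)$-submodule) into any parabolically induced representation $i(W)$ with $W$ an automorphic representation of $T(\AA)$. First I would reduce to $N$ irreducible (hence $W = \iota(N)$ a character of $T(\AA)/T(F)$) by dévissage: an embedding of the irreducible $\iota(M)$ into $i(\iota(N))$ for $N$ of finite length lands, by irreducibility and exactness of $i$, inside $i$ of some irreducible subquotient of $\iota(N)$ appearing as a sub, so it is enough to treat $N$ irreducible. Then $i(\iota(N))$ is a (possibly reducible) principal series, and one invokes the standard computation of its Jacquet module / constant term along $P$: by Frobenius reciprocity for the \emph{usual} functors, $\Hom_{\GL_2(\AA)}(\iota(M), i(\iota(N))) \cong \Hom_{T(\AA)}(r(\iota(M)), \iota(N))$, and the constant term $r(\iota(M))$ of a cuspidal representation vanishes (this is the defining property of cuspidality — the constant term along the unique proper parabolic is zero). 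Hence the Hom-space is zero, completing the argument.

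The step I expect to be the main obstacle is making the reduction to the classical statement fully rigorous at the level of the abstractly automorphic categories, rather than the naive smooth categories. Two points require care. First, I must confirm that it genuinely suffices to test $r^\aut(M)$ against finite-length $N$ — i.e.\ that finite-length objects cogenerate (or generate) $\Mod^\aut(T(\AA))$ in the sense needed; here one uses that $\Mod^\aut(T(\AA))$ is the category of smooth $T(\AA)/T(F)$-modules, a well-understood category where this is standard, together with the finite length of $r^\aut(M)$ from Theorem~\ref{thm:fin_len_rest}. Second — and this is the delicate point — the Frobenius reciprocity step above uses the \emph{ordinary} adjunction $(r,i)$ on $\Mod(\GL_2(\AA))$, whereas the adjunction we are handed is $(r^\aut, i^\aut)$; reconciling the two requires knowing that the map $\iota(M) \hookrightarrow i(\iota(N))$ coming from an abstractly automorphic map $M \to i^\aut(N)$ is compatible with realizations, which is exactly the content of the commuting square in Theorem~\ref{thm:parabolic_ind} ($\iota \circ i^\aut \cong i \circ \iota$). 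Once that compatibility is in hand, the vanishing of the cuspidal constant term finishes it. A subtlety worth flagging in the write-up: one should make sure $\iota(N)$ is genuinely a Jacquet-module-type object for which the classical cuspidality–orthogonality holds, i.e.\ that no ``anomalous'' phenomenon (cf.\ Warning~\ref{warn:eisenstein_killed_by_rest}) interferes — but since cuspidal representations are by definition orthogonal to \emph{all} such inductions, this should not cause trouble here.
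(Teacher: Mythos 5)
Your reduction via the adjunction is fine: since $r^\aut$ is left adjoint to $i^\aut$, it suffices to show $\Hom(M,i^\aut(N))=0$ for all $N$, and by irreducibility of $M$ and full faithfulness of $\iota$ this becomes the question of whether $\iota(M)$ embeds into $i(\iota(N))$. The fatal step is the last one: you assert that ``the constant term $r(\iota(M))$ of a cuspidal representation vanishes,'' thereby identifying the Jacquet functor $r$ (coinvariants under $U(\AA)$, the left adjoint of $i$) with the constant term integral. These are not the same, and the paper explicitly warns that the analogue of the claim \emph{fails} for the usual $r$: a cuspidal automorphic representation can be principal series at every place, in which case $r(\iota(M))$ is a nonzero, infinite-length $T(\AA)$-module. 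Cuspidality says that one particular functional on $\iota(M)$ --- integration over $U(F)\backslash U(\AA)$ --- vanishes; it does not say that the abstract $U(\AA)$-coinvariants vanish. So $\Hom_{T(\AA)}(r(\iota(M)),\iota(N))$ is not zero for the reason you give, and showing that it vanishes when $\iota(N)$ is automorphic is a genuinely global statement (no way of assembling the local exponents of $\iota(M)$ yields an automorphic character of $T$) that your argument does not supply.

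The paper's route uses the constant term only where it is actually valid: as a map of algebras $\cI_G\ra\widehat{i}(\cI_T[\eta^{-1}])$ whose kernel $\cI_G^\cusp$ is an ideal, giving the product decomposition $\cI_G=\cI_G^\cusp\times\cI_G^\Eis$ of Proposition~\ref{prop:decomp_cusp_eis}. Every $\widehat{i}(N)$ is a module over $\widehat{i}(\cI_T[\eta^{-1}])$, hence $\cI_G^\cusp$ acts on it by zero; an irreducible $M$ with cuspidal realization lies in $\Mod^\cusp(G)=\Mod(\cI_G^\cusp)$ by Proposition~\ref{prop:decomp_cusp}, so $\Hom(M,i^\aut(N))=0$ and the adjunction finishes the proof. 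If you want to salvage your approach, the missing ingredient is exactly this disjointness of the cuspidal and Eisenstein parts --- classically, that a cuspidal representation never occurs as a subrepresentation of a global principal series induced from an automorphic character --- which requires more than the vanishing of the constant term integral.
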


\begin{remark}
    Note that the analogue of Claim~\ref{claim:aut_rest_kills_cusp} fails to hold for the usual parabolic restriction functor $r$. Indeed, there are cuspidal irreducible automorphic representations which are principle series at every place, and thus their usual parabolic restriction is non-zero. This illustrates the favorable properties of the automorphic parabolic restriction $r^\aut$.
\end{remark}

\begin{warning} \label{warn:eisenstein_killed_by_rest}
    While the functor $r^\aut$ kills all abstractly automorphic representations whose realization is cuspidal, the converse is not true. The problem is that there are irreducible automorphic representations that appear as subquotients in parabolic induction from automorphic characters, but never as sub-representations. Abstractly automorphic representations with such realizations are killed by $r^\aut$, despite clearly being ``Eisenstein'' in nature.
    
    These examples occur in the so-called \emph{anomalous}, or \emph{non-isobaric}, spectrum of $\GL(2)$. Specifically, if we let $\triv_T\in\Mod^\aut(T(\AA))$ be the trivial $T(\AA)/T(F)$-module, then the induction:
    \[
        V=i^\aut(\triv_T)
    \]
    is infinitely reducible. The object $V$ contains a unique irreducible sub-object, whose realization is the trivial $G$-module $\triv_G$. The remaining irreducible subquotients have realizations that are trivial in almost all places, but Steinberg in the rest. It is possible to show that the quotient of $V$ by its unique irreducible sub-object is killed by the automorphic parabolic restriction functor $r^\aut$.
\end{warning}

\begin{warning} \label{warn:supercuspidal_vs_cuspidal_notation}
    Warning~\ref{warn:eisenstein_killed_by_rest} causes an unfortunate clash of notation around the term ``cuspidal''.
    
    The situation mentioned in Warning~\ref{warn:eisenstein_killed_by_rest}, where some representations are subquotients of parabolically induced ones but are still killed by parabolic restriction, is not unique to the abstractly automorphic setting. For example, when one considers the representations of $p$-adic groups with coefficients in $\overline{\FF_\ell}$, $\ell\neq p$, a similar situation occurs.
    
    In such settings, it is customary to use the term ``cuspidal'' to refer to any representation killed by parabolic restriction, and reserve the term ``supercuspidal'' to refer to representations that have nothing at all to do with parabolic induction. See \cite{classification_of_cuspidal_reps_mod_ell} for a randomly chosen modern example, and compare the language of \cite{reps_for_gl_2_mod_ell} and \cite{langlands_for_gl_n_mod_ell}. Unfortunately, it is not customary to refer to anomalous representations, as in Warning~\ref{warn:eisenstein_killed_by_rest}, as ``cuspidal''. Moreover, it is also customary for cuspidal automorphic representations to not be referred to as ``supercuspidal''.
    
    This paper is aimed at primarily at researchers of automorphic representation theory. Therefore, the author has chosen to adopt the convention of this theory over its more esoteric counterparts. In particular, in this text, the term ``cuspidal'' is used to refer to representations that are wholly unrelated to parabolic induction, while the term ``anomalous'' is used to refer to representations killed by parabolic restriction yet are not cuspidal.
\end{warning}

\subsection{Bernstein Decomposition} \label{subsect:decomp}

In this subsection, we will describe a few additional properties of the category $\Mod^\aut(\GL_2(\AA))$. Specifically, we will discuss its decomposition into components, in a way analogous to the local Bernstein decomposition.

In the local theory of smooth $\GL_2$-modules over local fields, one decomposes the category of smooth $\GL_2$-modules into components. Each component is either supercuspidal, or induced from pairs of characters of $\GL_1$. This decomposition is done via the parabolic induction and restriction functors $i,r$, and supercuspidal components are killed by $r$. This is called the Bernstein decomposition.

It turns out that a similar decomposition holds for the global category $\Mod^\aut(\GL_2(\AA))$. Let
\[
    C(\GL_2(\AA))=C^\Eis(\GL_2(\AA))\coprod C^\cusp(\GL_2(\AA))
\]
be the disjoint union of the set $C^\Eis(\GL_2(\AA))$ with the set $C^\cusp(\GL_2(\AA))$. Here, $C^\Eis(\GL_2(\AA))$ is the set of unordered pairs of characters $\AA^\times/F^\times\ra\CC^\times$ up to continuous twists $\abs{\cdot}^{s_1}\otimes\abs{\cdot}^{s_2}$. The set $C^\cusp(\GL_2(\AA))$ consists of irreducible cuspidal automorphic representations up to continuous twist $\abs{\det(\cdot)}^s$. Using the functors $i^\aut,r^\aut$ from Subsection~\ref{subsect:parabolic} above, we will show:
\begin{theorem}[Theorem~\ref{thm:decom_to_comps}] \label{thm:decomp}
    There is a canonical decomposition
    \[
        \Mod^\aut(\GL_2(\AA))=\prod_{c\in C(\GL_2(\AA))}\Mod^\aut_c(\GL_2(\AA))
    \]
    compatible with the symmetric monoidal product $\oY$.
    
    Moreover, the restriction of $r^\aut$ to $\Mod^\aut_c(\GL_2(\AA))$ with $c\in C^\cusp(\GL_2(\AA))$ is $0$.
\end{theorem}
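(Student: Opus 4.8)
I would extract the decomposition from the action of the center $Z$ on $\Mod^\aut(\GL_2(\AA))$ (Claim~\ref{claim:cent}) and then deduce the vanishing of $r^\aut$ on cuspidal components from the adjunction $r^\aut\dashv i^\aut$ together with Theorem~\ref{thm:parabolic_ind}. Recall that $\Mod(\GL_2(\AA))$ (and likewise $\Mod(T(\AA))$) carries a locally finite block decomposition $\Mod(\GL_2(\AA))=\prod_{b}\Mod_{b}(\GL_2(\AA))$ over the connected components $b$ of $\mathrm{Spec}\,Z$. Inside $\mathrm{Spec}\,Z$ lies the \emph{automorphic locus} $Y$: the Zariski closure of the set of points corresponding to irreducible automorphic representations. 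The geometric core of the argument is the claim that, block by block, $Y$ is a \emph{finite} disjoint union of closed twist-orbits $\{\pi\otimes\abs{\det}^s\}$ of irreducible automorphic representations, and that these orbits are in canonical bijection with $C(\GL_2(\AA))$; write $Y=\bigsqcup_{c\in C(\GL_2(\AA))}Y_c$ for the resulting partition into connected pieces. Proving this is where essentially all the automorphic input is spent: strong multiplicity one (rigidity of cuspidal representations), the classification of the automorphic spectrum of $\GL(2)$ into cuspidal, residual and continuous parts, the finiteness of spaces of cusp forms of bounded level (so that each block meets only finitely many cuspidal orbits), the description of the Eisenstein orbits by pairs of grossencharacters modulo the Weyl swap and continuous twist, and the Ramanujan bound of Drinfeld for $\GL(2)$ (so that cuspidal orbits are neither limits of, nor accumulated by, Eisenstein ones).

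\textbf{From the center to the decomposition.} Granting the structure of $Y$, I would argue as follows. First, the $Z$-support of $\iota(M)$ is contained in $Y$ for every $M\in\Mod^\aut(\GL_2(\AA))$: by Theorems~\ref{thm:realization} and~\ref{thm:intro_abst_aut_irr_is_aut} (together with fully faithfulness and exactness of $\iota$), every irreducible subquotient of $\iota(M)$ is an irreducible automorphic representation and so has $Z$-support a closed point of $Y$, and a Jacobson-type density argument for the Bernstein center then promotes this to the whole support (alternatively, use that $\Mod^\aut(\GL_2(\AA))$ is generated under colimits by $\cS_G$, whose $Z$-support is $Y$ by the spectral decomposition of automorphic forms). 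Hence the $Z$-action on $\Mod^\aut(\GL_2(\AA))$ factors through the ring of functions on a scheme supported on $Y$; since $Y=\bigsqcup_cY_c$ is a disjoint union of its connected pieces, that ring is a product over $c$, and I obtain orthogonal idempotents $e_c\in\End(\id_{\Mod^\aut})$. Setting $\Mod^\aut_c(\GL_2(\AA))=e_c\cdot\Mod^\aut(\GL_2(\AA))$, I then have $\Mod^\aut(\GL_2(\AA))=\prod_c\Mod^\aut_c(\GL_2(\AA))$: the coproduct decomposition of each object is genuine because the block decomposition of $\Mod(\GL_2(\AA))$ is locally finite and each block meets only finitely many of the $Y_c$; $\mathrm{Hom}$-orthogonality of distinct factors and canonicity are automatic; and compatibility with $\oY$ is immediate from Claim~\ref{claim:cent_monoidal}, which makes $\oY$ commute with the $e_c$ and hence forces $\one_c\oY\one_{c'}=0$ for $c\neq c'$ and $\one_c\oY\one_c\cong\one_c$. (To descend the $e_c$ honestly into $\Mod^\aut(\GL_2(\AA))$ I would use that the essential image of $\iota$ is closed under direct summands, so that each $\iota(M)_b$, hence each $e_c\iota(M)$, is again of the form $\iota(M_c)$.)

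\textbf{Vanishing of $r^\aut$ on cuspidal components.} This part is then formal. Let $c\in C^\cusp(\GL_2(\AA))$, $M\in\Mod^\aut_c(\GL_2(\AA))$, and $N\in\Mod^\aut(T(\AA))$ arbitrary. By Theorem~\ref{thm:parabolic_ind}, $\iota(i^\aut(N))=i(\iota(N))$ is an honest parabolic induction; as ordinary parabolic induction preserves cuspidal supports, its $Z$-support meets only principal-series points, all of which lie in Eisenstein orbits, so $i^\aut(N)$ has vanishing component in $\Mod^\aut_c(\GL_2(\AA))$. Therefore $\mathrm{Hom}(M,i^\aut(N))=0$, and by adjunction $\mathrm{Hom}(r^\aut(M),N)=0$; taking $N=r^\aut(M)$ gives $r^\aut(M)=0$. (This is the argument of Claim~\ref{claim:aut_rest_kills_cusp}, upgraded to the whole component by means of the decomposition.)

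\textbf{Main obstacle.} The essential difficulty is concentrated entirely in the first paragraph: establishing that the automorphic locus inside each Bernstein variety is a finite disjoint union of closed twist-orbits indexed by $C(\GL_2(\AA))$ — that is, that the automorphic spectrum of $\GL(2)$ over a function field is discrete up to continuous twist and that its cuspidal and Eisenstein support loci are cleanly separated. Once that is in hand, everything else — the support estimate for objects of $\Mod^\aut(\GL_2(\AA))$, the production of the idempotents $e_c$, compatibility with $\oY$, and the computation of $r^\aut$ — follows formally from Theorems~\ref{thm:realization}, \ref{thm:intro_abst_aut_irr_is_aut} and~\ref{thm:parabolic_ind} and Claims~\ref{claim:cent} and~\ref{claim:cent_monoidal}.
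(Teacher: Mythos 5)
Your architecture --- cutting out components by idempotents coming from the center $Z$, supported on the connected pieces of an ``automorphic locus'' in $\mathrm{Spec}\,Z$ --- is genuinely different from the paper's, which never decomposes $\mathrm{Spec}\,Z$ directly. The paper instead decomposes the unit algebra $\cI_G$ itself: it splits off $\cI_G^\cusp$ as the kernel of the constant term map $\cI_G\ra\widehat{i}(\cI_T[\eta^{-1}])$ of Claim~\ref{claim:hat_i_I_T_through_I_G}, with complement produced via the Petersson inner product and the finite-dimensionality of the truncated spaces $S^\cusp_{<r}(\GL_2(F)\backslash\GL_2(\AA)/K)$ (Proposition~\ref{prop:decomp_cusp_eis}); it decomposes $\cI_G^\cusp$ by explicit sections $\Ind_{G^1}^G V\ra\cI_G^\cusp$ (Proposition~\ref{prop:decomp_cusp}); and it invokes the center only for the Eisenstein part (Proposition~\ref{prop:decomp_eis}). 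Your treatment of $r^\aut$ on cuspidal components matches Corollary~\ref{cor:aut_par_rest_kills_cusp} in substance.

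The genuine gap sits inside your ``geometric core'', and it is not covered by the classical inputs you list. To know that two distinct Eisenstein classes $c=\{\chi_1,\chi_2\}$ and $c'=\{\chi'_1,\chi'_2\}$ give disjoint (let alone clopen) loci $Y_c$, $Y_{c'}$, you must rule out that $\{\chi_{1,v},\chi_{2,v}\}=\{\chi'_{1,v},\chi'_{2,v}\}$ as unordered pairs at \emph{every} place $v$ while $c\neq c'$ globally --- that is, that one can ``re-mix'' the local components of a pair of grossencharacters, applying the Weyl swap at some places but not others, and land on a different pair of grossencharacters. The center only sees the local unordered pairs, so without this your idempotents $e_c$ fail to exist even set-theoretically. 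This is exactly Lemma~\ref{lemma:surj_of_center} and Remark~\ref{remark:recompose_characters}, and the paper proves it by applying the Chebotarev density theorem to ray class groups $F^\times\backslash\AA^\times/K$; none of strong multiplicity one, the classification of the spectrum, finiteness of spaces of cusp forms, or the Ramanujan bound yields it. A secondary issue: your framework requires every separating idempotent --- including the one splitting cuspidal from Eisenstein, and those splitting distinct cuspidal orbits within a single Bernstein block --- to be realized by (locally finite limits of) elements of $Z$ acting on the ambient Bernstein variety. The paper neither proves nor needs this for the cuspidal part, where the splitting comes from Petersson orthogonality and finite-dimensionality instead; you would have to either supply that central realization or fall back on decomposing $\cI_G$ by hand as the paper does.
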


In fact, note that an irreducible $M\in\Mod^\aut(\GL_2(\AA))$ always lies in a single component $\Mod^\aut_c(\GL_2(\AA))$ for some $c\in C(\GL_2(\AA))$. The decomposition above has the property that if the realization of $M$ is an irreducible cuspidal automorphic representation, then $c\in C^\cusp(\GL_2(\AA))$ is the component corresponding to $\iota(M)$. Likewise, if the realization of $M$ is a subquotient of the Eisenstein series parabolically induced from $\chi_1\otimes\chi_2$, then $c\in C^\Eis(\GL_2(\AA))$ corresponds to $\chi_1\otimes\chi_2$.

\begin{remark} \label{remark:subquot_of_eis_not_quot}
    As mentioned in Warning~\ref{warn:eisenstein_killed_by_rest}, due to the anomalous spectrum, there are some subquotients of parabolic inductions $i^\aut(\chi_1\otimes\chi_2)$ which cannot be expressed as sub-modules. In particular, for these specific representations, we have that $r^\aut$ is equal to $0$ despite the fact that these representations belong to Eisenstein components $c\in C^\Eis(\GL_2(\AA))$.
    
    A similar issue appears in the kind of Bernstein decomposition that occurs in the case of $p$-adic representations with coefficients in characteristic $\ell\neq p$, as in \cite{reps_for_gl_2_mod_ell}.
\end{remark}

\section{Speculation for \texorpdfstring{$\GL(n)$}{GL(n)}} \label{sect:GLn}

The goal of this section is to speculate about possible generalizations of the results of this paper, stated in Section~\ref{sect:GL2}, to other general linear groups $G=\GL_n$, with $n>2$. Other than Remark~\ref{remark:reductive_groups} at the very end of this section, we will not consider more general reductive groups. Unlike Section~\ref{sect:GL2}, nothing here is proven. Instead, this section merely contains speculation based on the author's experience in proving the results of Section~\ref{sect:GL2}. In fact, there might even be non-trivial obstructions for some of the speculation here! Hence, we ask that the reader treat the speculation of this section with the caution appropriate for educated guesses.

The text below is essentially a re-formulation of Section~\ref{sect:GL2}, only stated in greater generality. As a result, we will be much more succinct.

Let $F$ be a function field of characteristic $\neq 2$. Our guess is that for each $G=\GL_n(\AA)$ there should be a complete and co-complete Abelian symmetric monoidal category $\Mod^\aut(G)$, equipped with a colimit-preserving symmetric monoidal structure denoted by $\oY$. Of course, this claim is vacuous by itself. Instead, we conjecture that for every $G=\GL_n(\AA)$ there is such a category, with realization functors and compatibility properties as follows:
\begin{enumerate}
    \item There is a realization functor $\iota\co\Mod^\aut(G)\ra\Mod(G)$ to the category of smooth $G$-modules, satisfying the following:
    \begin{enumerate}
        \item The functor $\iota$ is fully faithful.
        \item The functor $\iota$ respects all limits and colimits. In particular, it is exact.
        \item The essential image of $\iota$ is closed under taking subquotients and contragradients.
        \item The space $\cS_G=S(G(F)\backslash G(\AA))$ of smooth and compactly supported functions on $G(F)\backslash G(\AA)$ lies in the essential image of $\iota$.
    \end{enumerate}
    Note that this implies that $\iota$ sends irreducible objects to irreducible objects, and that all irreducible automorphic representations lie in the essential image of $\iota$.
    
    \item The realization $\iota(M)$ of an irreducible object $M\in\Mod^\aut(\GL_n(\AA))$ is an irreducible automorphic representation, in the sense of being a subquotient of the contragradient $\widetilde{\cS_G}$ of $\cS_G$.

    \item Let $Z(G)$ denote the center of the category $\Mod(G)$. Then $Z(G)$ has a unique action on $\Mod^\aut(G)$ which is compatible with the realization functor $\iota\co\Mod^\aut(G)\ra\Mod(G)$.
    
    \item The symmetric monoidal structure $\oY$ is compatible with the action of the center $Z(G)$ on $\Mod^\aut(G)$.
    
    \item Let $M\in\Mod^\aut(G)$ be an irreducible object. Then the realization $\iota(M)$ is a generic automorphic representation if and only if $M$ admits a non-zero map from the unit of the symmetric monoidal structure $\oY$. Moreover, such a map induces an isomorphism:
    \[
        M\cong M\oY M.
    \]
    
    \item Let $\cI_G$ be the subspace of $\cS_G=S(G(F)\backslash G(\AA))$ which is orthogonal to all automorphic forms of residue of Eisenstein series type. Then the realization of the unit of the symmetric monoidal structure $\oY$ is canonically isomorphic to $\cI_G$.
    
    \item Let $P\subseteq G$ be a parabolic subgroup, and let $M$ be the corresponding Levi. Then $M\cong\prod\GL_{n_i}$. We note that the category of smooth $M$-modules is the Lurie tensor product
    \[
        \Mod(M)=\bigotimes\Mod(\GL_{n_i}(\AA)),
    \]
    and let
    \[
        \Mod^\aut(M)=\bigotimes\Mod^\aut(\GL_{n_i}(\AA))
    \]
    also be the Lurie tensor product. We define the realization functor 
    \[
        \iota\co\Mod^\aut(M)\ra\Mod(M)
    \]
    to be the tensor product of the realization functors for each $\GL_{n_i}$. Let
    \[
        i\co\Mod(M)\ra\Mod(G)
    \]
    be the parabolic induction functor along $P$. Then there is an essentially unique functor $i^\aut$ completing the diagram:
    \[\xymatrix{
        \Mod^\aut(M) \ar@{-->}[d]^{i^\aut} \ar[r]^\iota & \Mod(M) \ar[d]^{i}  \\
        \Mod^\aut(G) \ar[r]^\iota & \Mod(G).
    }\]
    
    \item The functor $i^\aut$ (or one of its twists) is non-unital right-lax with respect to the symmetric monoidal structure $\oY$.
    
    \item The functor $i^\aut\co\Mod^\aut(M)\ra\Mod^\aut(G)$ has a right adjoint $r^\aut$, satisfying the following:
    \begin{enumerate}
        \item The functor $r^\aut$ sends objects of finite length to finite length.
        \item If $M\in\Mod^\aut(G)$ is irreducible, and its realization $\iota(M)$ is a cuspidal automorphic representation, then $r^\aut(M)=0$.
    \end{enumerate}
    
    \item The category $\Mod^\aut(G)$ admits a decomposition
    \[
        \Mod^\aut(G)=\prod_{c\in C(G)}\Mod^\aut_c(G)
    \]
    compatible with the symmetric monoidal product $\oY$, for an appropriate set of components $C(G)$ indexed by cuspidal automorphic representations of $G$ and its subgroups.
\end{enumerate}

\begin{remark} \label{remark:reductive_groups}
    The above speculation is probably \emph{not} quite true for more general reductive groups. For example, in an upcoming paper, the author intends to investigate the case of quaternion groups $B$. In this case, the category $\Mod^\aut(B(\AA)^\times)$ turns out to not quite be symmetric monoidal, but instead be a self-dual module category for $\Mod^\aut(\GL_2(\AA))$. Instead, the author can speculate that there is a coloured operad encoding associative bi-linear operations between the various categories $\Mod^\aut(G)$ for more general reductive groups.
    
    That is, the author expects that instead of the bi-linear operation
    \[
        \oY\co\Mod^\aut(\GL_2(\AA))\otimes\Mod^\aut(\GL_2(\AA))\ra\Mod^\aut(\GL_2(\AA)),
    \]
    one has more sophisticated bi-linear operations relating abstractly automorphic representations for several \emph{different} groups. These operations should possess various associativity constraints, and include various functors induced by the theta correspondence.
\end{remark}

\section{Structure of the Paper} \label{sect:structure_of_paper}

The structure of this paper is as follows. In Part~\ref{part:local_theory}, we will establish the prerequisite local results for the global ones that will follow. Specifically, we will focus on describing the interaction of the parabolic induction functor with the symmetric monoidal structure $\oY$. Our main result will be Theorem~\ref{thm:hat_i_rlax}, which states that the parabolic induction functor (when appropriately normalized) is non-unital right-lax.

In Part~\ref{part:global_theory}, we will take the local theory described above and use it to investigate the category $\Mod^\aut(G)$. We will show that parabolic induction respects the property of being abstractly automorphic in Theorem~\ref{thm:i_aut_to_aut}, and show the decomposition of $\Mod^\aut(G)$ into components in Theorem~\ref{thm:decom_to_comps}.

\part{Local Theory} \label{part:local_theory}

\section{Introduction}

In this part, we will analyze the relationship of the parabolic induction functor with the symmetric monoidal structure $\oY$, focusing solely on the local case.

Specifically, let $F$ be a local field, let $G=\GL_2(F)$ be the general linear group over $F$, let $U=U(F)\subseteq G$ be the subgroup of upper triangular unipotent matrices, and let $T=T(F)\subseteq G$ be the subgroup of diagonal matrices. This data defines the (un-normalized) parabolic induction functor
\[
    i\co\Mod(T)\ra\Mod(G)
\]
between smooth $T$-modules and smooth $G$-modules. The left adjoint to $i$ is denoted by $r\co\Mod(G)\ra\Mod(T)$.

The functors $r,i$ play an important role in the study of the category $\Mod(G)$ (see, e.g., \cite{padic_reps_survey}). We think of $\Mod(T)$ as having the symmetric monoidal structure $\otimes_T$ given by the relative tensor product over $T$, and of $\Mod(G)$ as having the monoidal structure $\oY$ of \cite{abst_aut_reps_arxiv}. It is natural to try to understand the interaction of the adjoint pair $(r,i)$ with these symmetric monoidal structures.

It will turn out that in order to have good monoidal properties, we must first re-normalize the functor $i\co\Mod(T)\ra\Mod(G)$ into a new functor $\widehat{i}\co\Mod(T)\ra\Mod(G)$, which is a twist of $i$. This re-normalization is essentially an incarnation of the L-function that appears in the study of the intertwining operator for $\GL(2)$.

Once we have done so, we will be able to show our main result for this part: the functor $\widehat{i}\co\Mod(T)\ra\Mod(G)$ is non-unital right-lax.

\begin{remark}
    Recall the notion of a non-unital right-lax functor from Recollection~\ref{recall:right_lax}.
    
    In our case, we will use this property in Part~\ref{part:global_theory} because it implies that any object in the image of such a right-lax functor $F$ is automatically enhanced with the structure of a (non-unital) $F(\one_\C)$-module.
\end{remark}

After showing that $\widehat{i}$ is canonically non-unital right-lax, we will then show that $\widehat{i}$ is ``almost'' \emph{unital} right-lax, in the following sense. It turns out that the restriction
\[
    \widehat{i}[\eta^{-1}]\co\Mod(T)[\eta^{-1}]\ra\Mod(G)
\]
to an appropriate localization of $\Mod(T)$ (corresponding to formally inverting the L-function appearing in the normalization factor) is unital right-lax.

As an added bonus, we will construct the intertwining operator as a consequence of the multiplicative structure on $\widehat{i}$. See Section~\ref{sect:intertwining} for a precise formulation.

The structure of this part is as follows. In Section~\ref{sect:notation_and_i}, we formally introduce our notation as well as the functors $r,i$ and their basic properties. In Section~\ref{sect:mult_on_i}, we will define the data that gives the multiplication on $i$, as well as observe the need for the normalization factor. In Section~\ref{sect:hat_i}, we introduce the normalized parabolic induction functor $\widehat{i}$ with its multiplication map. Note that at this point we will not yet know that this multiplication is associative.

In Section~\ref{sect:unit_for_i}, we will discuss the unitality of the multiplication on $\widehat{i}$. We will show that $\widehat{i}$ cannot be unital unless some specific map $\eta$ is inverted in $\Mod(T)$. Additionally, in Section~\ref{sect:assoc}, we will finish the proof that $\widehat{i}$ is non-unital right-lax by showing that its multiplication is associative.

Finally, in Section~\ref{sect:intertwining}, we will show that the multiplication on $\widehat{i}$ induces a self-duality, which turns out to be the intertwining operator. This will let us give a relatively clean description of this intertwining operator and its properties, in terms of a morphism of Frobenius algebras. In particular, we completely avoid the process of analytic continuation.

\section{Prerequisites} \label{sect:notation_and_i}

We will follow the notation of \cite{abst_aut_reps_arxiv}. We begin by recalling this notation in Subsection~\ref{subsect:rep_prereq} below, which will introduce notation for the parabolic induction and restriction functors as well.

Afterwards, we will give some category-theoretical reminders in Subsection~\ref{subsect:cat_prereq}, specifically about the notions of Frobenius algebras and categories. This subsection will only be useful in Section~\ref{sect:intertwining}, where we use these notions to analyze the intertwining operator. Since the rest of the text has little to no dependence on these categorical notions, readers might want to skip Subsection~\ref{subsect:cat_prereq} on a first reading.

\subsection{Representation-Theoretical Prerequisites} \label{subsect:rep_prereq}

For this part, we let $F$ be a non-Archimedean local field of characteristic not equal to $2$. Denote by $\O\subseteq F$ the ring of integers, and let $q$ be the size of its residue field. Let $G=\GL_2(F)$, and select the following subgroups. We set
\[
    U=\left\{\begin{pmatrix}1 & * \\ & 1\end{pmatrix}\right\},
\]
the group of upper triangular unipotent matrices. The subgroup $U\subseteq G$ is normalized by the torus
\[
    T=\left\{\begin{pmatrix}* & \\ & *\end{pmatrix}\right\}\subseteq G.
\]

Denote the category of smooth left $G$-modules by $\Mod(G)=\Mod^L(G)$, and let
\[
    I_{RL}\co\Mod^L(G)\ra\Mod^R(G)
\]
denote the equivalence between smooth left and right $G$-modules given by
\[
    I_{RL}(\pi)(g)\cdot v=\pi(g^T)\cdot v
\]
for $v\in V$ and $(\pi,V)\in\Mod^L(G)$. Denote the inverse of $I_{RL}$ by $I_{LR}$.

For an algebraic variety $X$, we let $S(X)=S(X(F))$ be the space of locally constant functions with compact support. In particular, if $X$ carries a $G$-action, then $S(X)$ is an object of $\Mod(G)$.

We fix a non-trivial additive character $e\co F\ra\CC^\times$, and use it to define
\begin{align*}
    \theta & \co U\ra\CC^\times \\
    \begin{pmatrix}1 & b \\ & 1\end{pmatrix} & \mapsto e(b).
\end{align*}
We denote the functor of $\theta$-co-invariants by $\Phi^-\co\Mod(G)\ra\Vect$.

We give the category $\Mod(G)$ a symmetric monoidal structure as follows. Let $Y=S(\M_2(F)\times F^\times)$ be the $S_3\ltimes G^3$-module defined in \cite{abst_aut_reps_arxiv}, and $\oY$ the corresponding symmetric monoidal structure on $\Mod(G)$:
\[
    A\oY B=\big(I_{RL}(A)\otimes I_{RL}(B)\big)\otimes_{G\times G} Y.
\]
Here, the tensor product is taken over the first and third actions of $G$ on $Y$, with the $G$ action on the result $A\oY B$ coming from the remaining $G$-action on $Y$. Note that the order of the specific $G$-actions used to define $\oY$ is irrelevant, due to the symmetry under $S_3$.

We identify the unit of the symmetric monoidal structure $\oY$ with
\[
    \one_\Ydown=\left\{f\co G\ra\CC\,\middle|\, \substack{\text{$f$ is smooth and} \\ \text{compactly supported mod $U$}},\quad f(ug)=\theta(u)f(g)\right\},
\]
where $G$ acts by right-translation.

\begin{remark}
    For the sake of being self-contained, we explicitly recall some of the actions on $Y$.
    
    First, the actions of the first and third copies of $G$ on $Y$ are given by:
    \[
        (g_1,1,g_3)\cdot\Psi(x,y)=\Psi\left(g_1^{-1}x g_3^{-T},y\cdot\det(g_1g_2^T)\right)
    \]
    for $g_1,g_2\in G$, $y\in F^\times$, $x\in\M_2(F)$ and $\Psi(x,y)\in Y=S(\M_2(F)\times F^\times)$. These two actions are interchanged by the action of the permutation $(1,3)\in S_3$, which is given by:
    \[
        \Psi(x,y)\mapsto\Psi(x^T,y).
    \]
    
    Moreover, the remaining action of $G$ on $Y$ is given by:
    \[
        \left(1,\begin{pmatrix}a & b \\ & 1\end{pmatrix},1\right)\cdot\Psi(x,y)=\abs{a}\cdot e(by\det(x))\cdot\Psi(x,ay)
    \]
    and
    \[
        \left(1,\begin{pmatrix} & -1 \\ 1 & \end{pmatrix},1\right)\cdot\Psi(x,y)=\abs{y}^2\int_{\M_2(F)}\Psi(z,y)\cdot e\left(-y\cdot\left<x,z\right>\right)\d{z}.
    \]
    Here, the bi-linear pairing $\left<x,z\right>$ on $\M_2(F)$ is given by $\tr(x)\tr(z)-\tr(xz)$, and the measure $\d{z}$ on $\M_2(F)$ is normalized such that $e$ is self-dual.
\end{remark}

Recall that the functor $\Phi^-\co\Mod(G)\ra\Vect$ acts as a trace functor, in the sense that the pairing
\[
    \Phi^-(A\oY B)
\]
is naturally isomorphic to the bi-functor $I_{RL}(A)\otimes_G B$, where $\otimes_G$ is the usual relative tensor product over $G$. This bi-functor is a self-duality of $\Mod(G)$. This turns $\Mod(G)$ into a Frobenius algebra in the (higher categorical) sense of $\CC$-linear presentable categories. See also Subsection~\ref{subsect:cat_prereq} below.

Similarly, letting $\Mod(T)$ denote the category of smooth $T$-modules, we give $\Mod(T)$ a symmetric monoidal structure as follows. The product is given by the relative tensor product over $T$:
\[
    A\otimes_T B,
\]
with unit $\one_T$ given by $S(T)=S(F^\times\times F^\times)$.

Let us describe the parabolic induction and restriction functors, which are the main objects of research in this paper.
\begin{definition}
    Let $i\co\Mod(T)\ra\Mod(G)$ be the functor given by:
    \begin{equation*}
        i(V)=\left\{f\co G\ra V\,\middle|\, \text{$f$ is smooth},\quad
        f\left(\begin{pmatrix}a & b \\ & d\end{pmatrix}g\right)=\pi(a,d)\cdot f(g)\right\}
    \end{equation*}
    for $(\pi,V)\in\Mod(T)$. We refer to $i\co\Mod(T)\ra\Mod(G)$ as the \emph{parabolic induction functor}.
\end{definition}

\begin{remark} \label{remark:formulas_for_i}
    We can also define the functor $i\co\Mod(T)\ra\Mod(G)$ via the $G,T$-bi-module $i(\one_T)$, as
    \[
        i(V)=i(\one_T)\otimes_T V.
    \]
    
    Note that the bi-module $i(\one_T)$ is given by the object
    \begin{equation} \label{eq:two_exprs_for_i_one_T}
        S(G)_{/U}(1,-1)\xrightarrow{\sim}S(U\backslash G),
    \end{equation}
    where $V\mapsto V(1,-1)$ denotes the twist of the $T$-action by $(a,d)\mapsto\abs{a/d}$, and where $S(G)_{/U}$ denotes the co-invariants of $S(G)$ under the right action of $U$.
    
    The isomorphism between the two spaces in Equation~\eqref{eq:two_exprs_for_i_one_T} is given by:
    \[
        f(g)\mapsto\int_U f(ug)\d{u}.
    \]
    Indeed, this map respects the $T$-action:
    \[
        \abs{\frac{a}{d}}f\left(\begin{pmatrix}a & \\ & d\end{pmatrix}g\right)\mapsto\abs{\frac{a}{d}}\int_U f\left(\begin{pmatrix}a & \\ & d\end{pmatrix}ug\right)\d{u}=\int_U f\left(u\begin{pmatrix}a & \\ & d\end{pmatrix}g\right)\d{u}.
    \]
\end{remark}

\begin{remark} \label{remark:phi_minus_i_is_forgetful}
    Observe that the composition $\Phi^-\circ i\co\Mod(T)\ra\Vect$ is isomorphic to the forgetful functor:
    \[
        \Phi^-\circ i(V)\cong V,
    \]
    via the isomorphism:
    \[
        f(g)\mapsto\int f\left(\begin{pmatrix} & -1 \\ 1 & \end{pmatrix}\begin{pmatrix}1 & v \\ & 1\end{pmatrix}\right)e(-v)\d{v}.
    \]
    This can be thought of as a trace map on $i$.
\end{remark}

\begin{definition}
    We denote the left adjoint of the functor $i\co\Mod(T)\ra\Mod(G)$ by $r\co\Mod(G)\ra\Mod(T)$, and refer to $r$ as the \emph{parabolic restriction functor}.
\end{definition}

\begin{remark} \label{remark:map_to_i}
    The functor $r\co\Mod(G)\ra\Mod(T)$ is given by the formula
    \[
        r(V)=\prescript{}{U\backslash}{V},
    \]
    which carries a residual $T$-action because $T$ normalizes $U$. Here, the notation $\prescript{}{U\backslash}{V}$ refers to the co-invariants of $V$ under the left action of $U$.
\end{remark}

\begin{remark} \label{remark:tensor_i}
    Because of Remark~\ref{remark:formulas_for_i}, we also have the following fact, for all $A\in\Mod(T)$ and $B\in\Mod(G)$:
    \[
        \Phi^-(i(A)\oY B)=I_{RL}(i(A))\otimes_G B\cong A\otimes_T\left(\prescript{}{U^T\backslash}{B}(1,-1)\right).
    \]
    This is essentially a duality between parabolic induction with respect to $U$ and parabolic induction with respect to $U^T$. That is, the functors of parabolic induction with respect to $U$ and the twist by $(1,-1)$ of the parabolic restriction with respect to $U^T$ are dual under the pairing $\Phi^-(-\oY-)$.
\end{remark}

This can be re-stated more cleanly using the following notation for the normalized action of the Weyl group on $\Mod(T)$:
\begin{definition} \label{def:weyl_twist}
    For a $T$-module $(\pi,V)$, we denote by $V(w)$ the following twist of $V$:
    \begin{align*}
        (\pi,V) & \mapsto(\pi',V(w)) \\
        \pi'(a,d)\cdot v & =\abs{\frac{a}{d}}\pi(d,a)\cdot v.
    \end{align*}
\end{definition}
\begin{remark} \label{remark:i_dual_i_w}
    Remark~\ref{remark:tensor_i} boils down to the claim that the functors $i(V)$ and $i(V(w))$ are dual in the sense of Definition~\ref{def:duality_in_frob_cat} below.
    
    Concretely, if $F\co\Mod(T)\ra\Mod(G)$ and $F'\co\Mod(T)\ra\Vect$ are $\CC$-linear colimit preserving functors, then there is a one-to-one correspondence between natural morphisms:
    \[
        \Phi^-(i(A)\oY F(B))=I_{RL}(i(A))\otimes_G F(B)\ra F'(A\otimes_T B)
    \]
    and natural morphisms
    \[
        F(A)\ra i(F'(A)(w)).
    \]
    Note that $F'(A)$ automatically carries a $T$-module structure.
\end{remark}

\subsection{Categorical Prerequisites} \label{subsect:cat_prereq}

In this subsection, we provide category-theoretical background on the notion of Frobenius algebras, as well as relative Frobenius algebras. Since most of the rest of the text will not be dependent on this subsection, readers may wish to skip it on a first reading.

\begin{definition} \label{def:duality}
    Let $\C$ be a symmetric monoidal category. We say that a pairing
    \[
        \left<\cdot,\cdot\right>\co A\otimes_\C A'\ra\one_\C
    \]
    is a \emph{duality} between two objects $A,A'\in\C$ if the following composition is an isomorphism for all $B,C\in\C$:
    \[
        \Hom(B,C\otimes_\C A')\ra\Hom(B\otimes_\C A,C\otimes_\C A'\otimes_\C A)\xrightarrow{\left<\cdot,\cdot\right>}\Hom(B\otimes_\C A,C).
    \]
\end{definition}

For an object $A$, a pair $(A',\left<\cdot,\cdot\right>)$ as above is called \emph{duality data}. If $A$ has duality data, then it is said to be \emph{dualizable}. The duality data for a dualizable object $A$ is essentially unique, and we denote by $A^\vee$ the \emph{dual object} $A'$ of $A$.

\begin{definition} \label{def:frob_alg}
    Recall that a \emph{Frobenius algebra} in a symmetric monoidal category $\C$ is an algebra object $A\in\C$, with a \emph{trace map} $\lambda\co A\ra\one_\C$, satisfying that the composition of $\lambda$ with the multiplication of $A$ induces a self-duality:
    \[
        A\otimes_\C A\ra\one_\C
    \]
    of $A$.
\end{definition}

Additionally, let $\Pr^L$ be the category of locally presentable categories, with colimit preserving functors between them. The category $\Pr^L$ is a natural setting for the Adjoint Functor Theorem, in the sense that the local presentability condition guarantees that all functors in $\Pr^L$ admit right adjoints.

The category $\Pr^L$ is a symmetric monoidal category, with unit given by the category of sets. The monoidal structure of $\Pr^L$ is the \emph{Lurie tensor product}, defined by the universal property that a functor
\[
    \C\otimes\C'\ra\C''
\]
is a bi-functor which respects colimits separately in each variable.

Note that an algebra object in $\Pr^L$ is just a symmetric monoidal category where the symmetric monoidal structure respects colimits separately in each variable.
\begin{definition}
    A \emph{Frobenius algebra in categories} is a Frobenius algebra in $\Pr^L$.
\end{definition}

There are also relative versions of the above notions. Given any symmetric monoidal category $\C_0$, it is possible to talk about \emph{$\C_0$-linear symmetric monoidal categories}, and \emph{$\C_0$-linear Frobenius algebras in categories} in the obvious manner. The case where $\C_0=\Vect$, the category of $\CC$-vector spaces, will be especially important to us.

If we are given a category $\C$ which is itself a Frobenius algebra in categories, then it makes sense to define another kind of duality on it.
\begin{definition} \label{def:duality_in_frob_cat}
    Let $\C$ be a $\C_0$-linear Frobenius algebra in categories, with trace functor $\Lambda\co\C\ra\C_0$. We say that a pairing
    \[
        \left<\cdot,\cdot\right>\co \Lambda(A\otimes_\C A')\ra\one_{\C_0}
    \]
    is a \emph{relative duality} between two objects $A\in\C$ and $A'\in\C$ if the following composition is an isomorphism for all $B\in\C$ and $C\in\C_0$:
    \begin{multline*}
        \Hom_\C\left(B,C\otimes A'\right)\ra\Hom_\C\left(B\otimes A,C\otimes A'\otimes A\right)\ra \\
        \ra\Hom_{\C_0}\left(\Lambda(B\otimes A),\Lambda(C\otimes A'\otimes A)\right)\xrightarrow{(\spadesuit)} \\
        \xrightarrow{(\spadesuit)}\Hom_{\C_0}\left(\Lambda(B\otimes A),C\otimes\Lambda(A'\otimes A)\right)\xrightarrow{\left<\cdot,\cdot\right>}\Hom_{\C_0}\left(\Lambda(B\otimes A),C\right).
    \end{multline*}
    Here, we used the $\C_0$-linearity of $\Lambda$ for the transition $(\spadesuit)$.
\end{definition}

Note that in the definition above, we have abused notation and used the same symbol $\otimes$ for the symmetric monoidal structure of $\C_0$, the symmetric monoidal structure of $\C$ and the action of $\C_0$ on $\C$.



\begin{definition}
    Let $\C$ be a $\C_0$-linear Frobenius algebra in categories, with trace functor $\Lambda\co\C\ra\C_0$. A \emph{relative Frobenius algebra} in $\C$ is an algebra object $A\in\C$, with a \emph{trace map} $\lambda\co \Lambda(A)\ra\one_{\C_0}$, satisfying that the composition of $\lambda$ with the multiplication of $A$ induces a relative self-duality:
    \[
        \Lambda(A\otimes_\C A)\ra\one_{\C_0}
    \]
    of $A$.
\end{definition}

\begin{remark}
    For the rest of this paper, we will abuse notation and omit the word ``relative'' from the terms ``relative Frobenius algebra'' and ``relative duality''. Moreover, we will always be working with $\Vect$-linear Frobenius algebras and dualities.
\end{remark}

\section{Defining the Multiplication} \label{sect:mult_on_i}

In this section, we will give the construction which induces the data of being non-unital right-lax on (a normalization of) the parabolic induction functor $i\co\Mod(T)\ra\Mod(G)$. Let $\cL\in\Mod(T)$ be defined by:
\begin{definition} \label{def:normalization_factor}
    Let $\cL\in\Mod(T)$ be the space of functions $S(F\times F^\times)$, with $T$-action given by
    \[
        (a,d)\cdot F(\lambda,y)=\abs{\frac{a}{d}}F(d^{-1}\lambda,ady), \qquad\lambda\in F,\, y\in F^\times.
    \]
\end{definition}
\begin{remark} \label{remark:L_is_L_func}
    In the language of \cite{modules_of_zeta_integrals_arxiv}, one can give the space $\cL$ the following informal interpretation. Let us think of the spectrum of $T$ as parametrized by pairs of characters $\chi_1(a)\abs{a}^{s_1}\chi_2(d)\abs{d}^{s_2}$, and let us informally think of $\one_T$ as the ring of regular functions on this spectrum. One can think of elements of $\cL$ as functions on the spectrum of $T$ via the Mellin transform. Thought of like that, the module $\cL$ is generated by the Hecke L-function $L(\chi_1^{-1}\chi_2,s_2-s_1+2)$.
\end{remark}

Our main claim for this section is:
\begin{claim} \label{claim:un_normalized_mult_on_i}
    There is a canonical natural map
    \[
        m\co i(A)\oY i(B)\ra i(\cL\otimes_T A\otimes_T B).
    \]
\end{claim}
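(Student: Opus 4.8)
The strategy is to unwind both sides of the claimed map into explicit function spaces and write down the integral transform directly, then check it is well-defined (i.e.\ smooth and satisfies the correct equivariance). By Remark~\ref{remark:tensor_i}, applying the trace functor $\Phi^-$ to the target is the natural move: the bi-functor $\Phi^-(-\oY-)$ is a duality, so by the correspondence in Remark~\ref{remark:i_dual_i_w} a map $i(A)\oY i(B)\ra i(\cL\otimes_T A\otimes_T B)$ of $G$-modules is the same datum as a map of $T$-modules
\[
    \Phi^-\big(i(A)\oY i(B)\oY \,?\,\big)\ra \cL\otimes_T A\otimes_T B,
\]
or, more simply, one reduces to the case $A=B=\one_T$ and then tensors up: since $i(-)$, $\oY$, and $\otimes_T$ all preserve colimits, a natural transformation $m$ is determined by a single $(G;T,T)$-equivariant map
\[
    i(\one_T)\oY i(\one_T)\ra i(\cL),
\]
and the general $m$ is obtained by applying $-\otimes_T A\otimes_T B$ on the $T$-factors (using $i(\cL\otimes_T A\otimes_T B)\cong i(\cL)\otimes_T A\otimes_T B$ from Remark~\ref{remark:formulas_for_i}, and the compatibility of $\oY$ with the $T$-actions coming from parabolic induction).

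So the core of the proof is to produce the map $i(\one_T)\oY i(\one_T)\ra i(\cL)$. Here I would use the model $i(\one_T)\cong S(U\backslash G)$ of Equation~\eqref{eq:two_exprs_for_i_one_T}, write $i(\one_T)\oY i(\one_T) = (I_{RL}S(U\backslash G)\otimes I_{RL}S(U\backslash G))\otimes_{G\times G}Y$ using the explicit formulas for the three $G$-actions on $Y=S(\M_2(F)\times F^\times)$ recalled in the excerpt, and then cook up a function $G\ra\cL$ out of a pair of functions on $U\backslash G$ together with a Schwartz function on $\M_2(F)\times F^\times$. Concretely, given $\varphi_1,\varphi_2\in S(U\backslash G)$ and $\Psi\in Y$, one forms the function on $G$ whose value at $g$ is the Schwartz function on $F\times F^\times$ obtained by integrating $\varphi_1(h)\varphi_2(k)\Psi(\text{(matrix built from }h,k,g),y)$ against $h$ and $k$ over suitable coset spaces — the entries $\lambda\in F$, $y\in F^\times$ of $\cL$ arise precisely as the extra matrix/scalar coordinates in $\M_2(F)\times F^\times$ that are not consumed by the two $U\backslash G$ slots. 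One checks: (i) the resulting assignment kills the relations defining $\otimes_{G\times G}Y$ (this is where the specific Fourier-transform formula for the Weyl element on $Y$ and the self-dual measure get used); (ii) the output transforms on the left under $T\ltimes U$ by the character defining $i(\cL)$ — the modulus character $\abs{a/d}$ in Definition~\ref{def:normalization_factor} matching the $\abs{a/d}$ twists that appear in the $G$-actions on $Y$; (iii) it is smooth and the integrals converge because everything in sight is compactly supported modulo the relevant unipotent/torus directions.

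The step I expect to be the main obstacle is pinning down the normalization factor $\cL$ itself — i.e.\ seeing \emph{why} the target must be $i(\cL\otimes_T A\otimes_T B)$ rather than $i(A\otimes_T B)$, and getting the precise $T$-action in Definition~\ref{def:normalization_factor} to come out right. This is exactly the point flagged in the introduction as "the need for the normalization factor": the naive multiplication map into $i(A\otimes_T B)$ does not converge / is not well-defined, and one is forced to carry along the extra $S(F\times F^\times)$ worth of data. I would isolate this by first attempting the construction without $\cL$, identifying the obstruction to convergence as an integral over $F$ that fails to terminate, and then observing that retaining that integrand as a free variable $\lambda\in F$ (with its companion $y\in F^\times$) is exactly the module $\cL$, with the $T$-equivariance forced by tracking how $T$ acts on that leftover coordinate through the formulas for the $G$-actions on $Y$. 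After that, verifying canonicity (independence of the auxiliary choices, naturality in $A$ and $B$) and the $G$-equivariance of $m$ are comparatively routine diagram chases, left for the subsequent sections where associativity and unitality are addressed.
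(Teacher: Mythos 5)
Your overall architecture is close to the paper's: reduce to the universal case $A=B=\one_T$ via $i(V)\cong i(\one_T)\otimes_T V$, express $i(\one_T)\oY i(\one_T)$ through the kernel $Y$, and account for $\cL$ by the leftover coordinates $(\lambda,y)\in F\times F^\times$ of $\M_2(F)\times F^\times$ --- that last observation is essentially correct. The genuine gap is that the heart of the claim, the map itself, is never written down: ``integrating $\varphi_1(h)\varphi_2(k)\Psi(\cdot,y)$ against $h$ and $k$ over suitable coset spaces'' with an unspecified matrix built from $h,k,g$ leaves exactly the key formula open, and with it the well-definedness, equivariance and convergence checks you defer to items (i)--(iii). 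In the paper no integration over $h,k$ and no convergence argument is needed: one passes to co-invariants (equivalently, applies $r$ and uses the $(r,i)$-adjunction together with Remarks~\ref{remark:map_to_i} and~\ref{remark:tensor_i}), so the required datum is a $T\times T\times T$-equivariant map out of $\prescript{}{U^T\times U\times U^T\backslash}{Y}$ with the indicated twists, and that map is simply evaluation on the rank-one locus, $\Psi(x,y)\mapsto\Psi\left(\begin{pmatrix}0&\\&\lambda\end{pmatrix},y\right)$ (Construction~\ref{const:mult_i_one_T}). Invariance under the middle $U$ holds because the character $e(by\det(x))$ in the middle action is trivial where $\det(x)=0$; the outer $U^T$-invariances and the $T^3$-equivariance are immediate; tensoring with $A$ and $B$ over the outer $T$-actions then gives $r\left(i(A)\oY i(B)\right)\ra\cL\otimes_T A\otimes_T B$, whence the claim by adjunction.

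Relatedly, your plan to ``discover'' $\cL$ as the repair of a divergent integral misidentifies the mechanism at this stage: in the construction above nothing diverges, and the factor $\cL$ appears simply because the evaluation lands in Schwartz functions of $(\lambda,y)$ with $\lambda$ ranging over all of $F$ (the value at $\lambda=0$ is allowed), rather than in $\one_T=S(F^\times\times F^\times)$; the $T$-action of Definition~\ref{def:normalization_factor}, including the $\abs{a/d}$ twist, is read off directly from the $(1,-1)$ twists in the description $i(\one_T)\cong S(G)_{/U}(1,-1)$. The connection of $\cL$ to convergence, analytic continuation and the L-function only enters later (unitality and the intertwining operator), not in the existence of $m$. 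So as written your proposal is a reasonable plan with correct intuitions, but it is missing the construction that constitutes the proof, and the route it sketches toward that construction is both heavier and aimed at the wrong obstruction.
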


Up to the presence of the term $\cL$, this map is the data needed to give a non-unital right-lax structure on $i$. Of course, in order to actually have a right-lax structure, one must show that this data satisfies certain associativity axioms. We will dedicate the rest of this section to proving Claim~\ref{claim:un_normalized_mult_on_i}, as the first step in discussing the right-lax properties of $i$. Along the way, we will also prove a uniqueness result for this multiplicative structure, Proposition~\ref{prop:uniqueness_of_i_mult}, which will show that this choice of multiplication is canonical. Additionally, although the map $i$ will not be \emph{unital} right-lax, we will later use Proposition~\ref{prop:uniqueness_of_i_mult} to show that it does become unital after inverting a certain map.

This multiplication map of Claim~\ref{claim:un_normalized_mult_on_i} is induced via the following construction:
\begin{construction} \label{const:mult_i_one_T}
    We let
    \[
        \mu\co Y\ra\cL
    \]
    be given by
    \[
        \Psi(g,y)\mapsto\Psi\left(\begin{pmatrix}0 & \\ & \lambda\end{pmatrix},\,y\right).
    \]
\end{construction}

The natural map of Claim~\ref{claim:un_normalized_mult_on_i} is induced by the following claim, using Remarks~\ref{remark:map_to_i} and~\ref{remark:tensor_i}:
\begin{claim} \label{claim:i_mult_distribution}
    The map $\mu\co Y\ra\cL$ given in Construction~\ref{const:mult_i_one_T} descends to a map of $T\times T\times T$-modules:
    \[
        \mu\co\prescript{}{U^T\times U\times U^T\backslash}{Y}((1,-1),(0,0),(1,-1))\ra\cL.
    \]
    Here, the notation $-((1,-1),(0,0),(1,-1))$ denotes twisting the three residual $T$-actions, and the three $T$-actions on $\cL$ are the same one.
\end{claim}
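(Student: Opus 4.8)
The plan is to verify directly that the map $\mu\co Y\ra\cL$ of Construction~\ref{const:mult_i_one_T}, which currently sends $\Psi(g,y)\in Y = S(\M_2(F)\times F^\times)$ to the function $\lambda\mapsto\Psi\left(\begin{pmatrix}0 & \\ & \lambda\end{pmatrix},y\right)$, is (a) well-defined as a map into $\cL = S(F\times F^\times)$, (b) invariant under the relevant unipotent subgroups $U^T\times U\times U^T$ (so that it factors through the coinvariants), and (c) equivariant for the three twisted torus actions. First I would record the explicit formulas for the first and third $G$-actions on $Y$ and the ``middle'' $G$-action from the Recollection, then restrict all three to the relevant unipotent subgroups ($U^T$ for the outer two slots, $U$ for the middle one) and to the torus $T$ in each slot. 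Since $\begin{pmatrix}0 & \\ & \lambda\end{pmatrix}$ is a rank-one matrix supported on the lower-right entry, substituting a unipotent or torus element and tracking where the matrix argument goes is a short calculation; the key point is that left multiplication by lower-triangular unipotents $u^T\in U^T$ fixes the second column's bottom entry up to scaling already accounted for, and right multiplication by $u\in U$ (in the middle slot) acts through $e(by\det(x))$, which vanishes on rank-one $x$ since $\det\begin{pmatrix}0 & \\ & \lambda\end{pmatrix}=0$.

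More precisely, for well-definedness I would check that if $\Psi$ has compact support in $\M_2(F)\times F^\times$ then $\lambda\mapsto\Psi\left(\begin{pmatrix}0 & \\ & \lambda\end{pmatrix},y\right)$ has compact support in $F\times F^\times$ and is locally constant — immediate from the definition of $S(-)$ and the fact that $\lambda\mapsto\begin{pmatrix}0 & \\ & \lambda\end{pmatrix}$ is a closed embedding $F\hookrightarrow\M_2(F)$. For the unipotent invariance, I would plug the element $\left(1,\begin{pmatrix}1 & b\\ & 1\end{pmatrix},1\right)$ into the middle action formula $\Psi(x,y)\mapsto\abs{a}\,e(by\det(x))\,\Psi(x,ay)$ with $a=1$: on the subspace of rank-one $x$ of the special form above, $\det(x)=0$, so the character factor is trivial and $\Psi(x,y)$ is genuinely fixed. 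For the two copies of $U^T$ in the outer slots, I would use the first/third $G$-action formula $(g_1,1,g_3)\cdot\Psi(x,y)=\Psi(g_1^{-1}x g_3^{-T},y\det(g_1g_3^T))$ with $g_1$ (resp.\ $g_3$) a lower-triangular unipotent; one computes $g_1^{-1}\begin{pmatrix}0 & \\ & \lambda\end{pmatrix}g_3^{-T}$ and checks that, after the prescribed twist by $(1,-1)$ in each outer slot, the dependence on the unipotent parameters disappears — this is precisely the twist $(1,-1)$ built into the statement, so it should cancel the modulus character $\abs{a/d}$ coming from the change of the matrix entry $\lambda$. Finally, for the torus equivariance I would plug in $\left(\begin{pmatrix}a_1 & \\ & d_1\end{pmatrix},1,1\right)$ etc.\ and match the resulting transformation of $\lambda$ and $y$ against the $T$-action on $\cL$ from Definition~\ref{def:normalization_factor}, namely $(a,d)\cdot F(\lambda,y)=\abs{a/d}F(d^{-1}\lambda,ady)$, using that $\det\begin{pmatrix}a & \\ & d\end{pmatrix}=ad$; all three torus slots should land on the same action on $\cL$ as claimed.

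I expect the main obstacle to be bookkeeping rather than anything conceptual: getting the transposes, inverses, and the direction of left-versus-right actions exactly right when restricting the first- and third-$G$-actions to $U^T$, and confirming that the three separate twists $(1,-1)$, $(0,0)$, $(1,-1)$ are exactly what is needed for all the modulus characters to cancel and for the three torus actions to collapse to the single $T$-action on $\cL$. A convenient organizing device is to note that $\mu$ is essentially ``restriction of $\Psi$ along the orbit map $F\to\M_2(F)$, $\lambda\mapsto\begin{pmatrix}0&\\&\lambda\end{pmatrix}$, followed by remembering the $F^\times$-coordinate'', and the stabilizer of this rank-one locus inside the relevant parabolic is large enough to absorb the unipotent radicals — so the factorization through $\prescript{}{U^T\times U\times U^T\backslash}{Y}$ is forced by geometry once the character factor $e(by\det(x))$ is seen to die on the determinant-zero locus. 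Once the map is shown to be $T\times T\times T$-equivariant, Claim~\ref{claim:un_normalized_mult_on_i} follows by feeding it through the dualities of Remarks~\ref{remark:map_to_i} and~\ref{remark:tensor_i}, identifying the three coinvariant spaces with the bimodules computing $i(A)\oY i(B)$ and $i(\cL\otimes_T A\otimes_T B)$.
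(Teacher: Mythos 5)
Your proposal is correct and takes essentially the same route as the paper, whose proof simply records that the $U^T\times U\times U^T$-invariance and the $T\times T\times T$-equivariance are direct verifications — exactly the computation you outline, with the middle $U$-invariance coming from the fact that $\det\begin{pmatrix}0 & \\ & \lambda\end{pmatrix}=0$ kills the character factor $e(by\det(x))$. One small correction: the outer unipotent invariance holds on the nose (lower-triangular unipotents acting on the left and their inverse-transposes on the right fix $\begin{pmatrix}0 & \\ & \lambda\end{pmatrix}$ and contribute determinant $1$), so the twists $(1,-1)$ play no role there; they are needed only to absorb the modulus factor $\abs{a/d}$ in the torus-equivariance step, as you correctly note afterwards.
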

\begin{proof}
    It is easy to verify that $\mu$ is $U^T\times U\times U^T$-invariant. Equivariance under $T\times T \times T$ is also immediate.
\end{proof}

\begin{proof}[Proof of Claim~\ref{claim:un_normalized_mult_on_i}]
    Let $A$ and $B$ be two $T$-modules. We tensor the map
    \[
        \mu\co\prescript{}{U^T\times U\times U^T\backslash}{Y}((1,-1),(0,0),(1,-1))\ra\cL
    \]
    by $A$ and $B$, relative to the first and third $T$-actions, respectively. Using Remark~\ref{remark:formulas_for_i}, we obtain a natural map:
    \[
        r\left(i(A)\oY i(B)\right)\ra\cL\otimes_T A\otimes_T B.
    \]
\end{proof}

\subsection{Universal Property of the Multiplication}

When we later discuss the unitality of the right-lax structure on $i$, we will need a slightly stronger variant of Claims~\ref{claim:un_normalized_mult_on_i} and~\ref{claim:i_mult_distribution}. Specifically, it turns out that there is a unique map of the kind constructed in Claim~\ref{claim:i_mult_distribution}. This means that the multiplication map on $i$ is more-or-less unique.
\begin{proposition} \label{prop:uniqueness_of_i_mult}
    The map of $T$-modules
    \[
        \mu\co\prescript{}{U^T\times U\times U^T\backslash}{Y}((1,-1),(0,0),(1,-1))\otimes_{T\times T\times T}\one_T\ra\cL
    \]
    given by forcing the three $T$ actions on $\prescript{}{U^T\times U\times U^T\backslash}{Y}((1,-1),(0,0),(1,-1))$ to be the same is an isomorphism.
\end{proposition}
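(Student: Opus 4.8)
The plan is to compute the left-hand side explicitly and identify it with $\cL$, rather than to verify abstractly that $\mu$ is an isomorphism. First I would unwind the definition of $\prescript{}{U^T\times U\times U^T\backslash}{Y}$: recall $Y=S(\M_2(F)\times F^\times)$, and the three copies of $G$ act through the first, second, and third $G$-slots as recalled in the excerpt. The first and third $G$-actions are (up to transpose) by left/right multiplication on the matrix variable $x\in\M_2(F)$ together with a twist of the $F^\times$-variable by $\det$; the middle ($S_2$-fixed) copy of $G$ acts through the Weil-representation-type formulas (multiplication by $\abs a e(by\det x)$ and a Fourier transform). Taking $U^T$-coinvariants on the first slot, $U$-coinvariants on the second, and $U^T$-coinvariants on the third, and then forcing all three residual $T$-actions to agree (i.e. tensoring over $T\times T\times T$ with $\one_T=S(F^\times\times F^\times)$ along the diagonal), should collapse $Y$ down to functions of just two variables: the entry $\lambda$ in the lower-right corner of $x$ (this is exactly what $\mu$ records), and the variable $y\in F^\times$.

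The key computation is therefore: \emph{coinvariants of $S(\M_2(F)\times F^\times)$ under $U^T$ acting on the left of $x$ and $U^T$ acting on the right of $x$ are supported on the subvariety where the matrix has been reduced to its $(2,2)$-entry.} Concretely, $U^T$-coinvariance on one side kills dependence on one pair of matrix entries via an integration, the other $U^T$-coinvariance kills another pair, and the middle $U$-coinvariance (which, by the formula $e(by\det x)$, is essentially integration against $e(by\det x)$ over $b$) forces $\det x=0$ — i.e. $y\det x=0$, and since $y\in F^\times$ this forces the matrix to be rank $\leq 1$ and ultimately to lie in the orbit where only the lower-right entry survives. After these reductions one is left precisely with $S(F\times F^\times)$ in the variables $(\lambda,y)$, and one must check the residual $T$-action matches Definition~\ref{def:normalization_factor}, including the $\abs{a/d}$ factor — this factor is exactly accounted for by the twists $((1,-1),(0,0),(1,-1))$ and the modulus characters appearing when one passes from functions on $G$ to functions on $U\backslash G$ (as in Remark~\ref{remark:formulas_for_i}). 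Tracking these normalizations carefully is where one has to be attentive; the $\abs{a}$ in the middle $G$-action and the $\det$-twists in the outer $G$-actions all need to be bookkept against the explicit $T$-action on $\cL$.

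I expect the main obstacle to be the rank/support analysis of the matrix variable: showing that after the $U^T\times U\times U^T$-coinvariants (and the diagonalization of the $T$-actions) there is genuinely nothing left of $x$ except $\lambda$, with no extraneous distributional contributions from lower-dimensional orbits. In particular one must check that the Fourier-transform ingredient in the middle $G$-action doesn't reintroduce dependence, and that the coinvariants functor (which is exact but not separated in the naive sense) really does produce $S$ of the reduced variety and not something larger. A clean way to organize this is to stratify $\M_2(F)$ by the $U^T\times U^T$-orbits on which the relevant integrations are being performed, show all strata but the distinguished one contribute zero to the coinvariants once the middle $U$ and the diagonal $T$ are imposed, and then read off the surviving stratum as $F\times F^\times$. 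Once the underlying vector space is identified, $\mu$ is visibly the projection onto that stratum, hence an isomorphism; the equivariance was already checked in Claim~\ref{claim:i_mult_distribution}.
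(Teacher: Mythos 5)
Your proposal is correct and takes essentially the same route as the paper: take coinvariants under the middle $U$ to reduce to $S(\M_2(F)^{\det=0}\times F^\times)$, stratify by orbits of the remaining unipotent actions, observe that all strata other than the closed one (matrices supported in the $(2,2)$-entry) die once the three $T$-actions are identified, and read off the surviving stratum as $\cL$ with $\mu$ the restriction map. The only cosmetic difference is that the paper performs the middle-$U$ reduction first and then stratifies, whereas you interleave the two steps; the content is identical.
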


The meaning of Proposition~\ref{prop:uniqueness_of_i_mult} is that:
\begin{corollary} \label{cor:uniqueness_of_mult}
    Let $F\co\Mod(T)\ra\Mod(T)$ be any functor. Then any natural map:
    \[
        i(A)\oY i(B)\ra i\circ F(A\otimes_T B)
    \]
    factors uniquely through the map
    \[
        m\co i(A)\oY i(B)\ra i(\cL\otimes_T A\otimes_T B)
    \]
    of Claim~\ref{claim:un_normalized_mult_on_i}.
\end{corollary}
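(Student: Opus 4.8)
The statement to prove is Corollary~\ref{cor:uniqueness_of_mult}, which should be a formal consequence of Proposition~\ref{prop:uniqueness_of_i_mult} together with the adjunction $(r,i)$ and the ``duality'' Remarks~\ref{remark:map_to_i},~\ref{remark:tensor_i} and~\ref{remark:i_dual_i_w}. The first thing I would do is unwind what a natural transformation $i(A)\oY i(B)\ra i(F(A\otimes_T B))$ amounts to. By the $(r,i)$-adjunction, such a map is the same as a natural map $r(i(A)\oY i(B))\ra F(A\otimes_T B)$ in $\Mod(T)$. Now $r(i(A)\oY i(B)) = \prescript{}{U\backslash}{}\big(i(A)\oY i(B)\big)$; applying the computation behind Remark~\ref{remark:tensor_i} (with $i(A)$ in the first slot and $i(B)$ in the second, using that $r$ of a $\oY$-product of two parabolic inductions unfolds via $\Phi^-$ and the bi-module $Y$), this object is naturally identified with
\[
    \prescript{}{U^T\times U\times U^T\backslash}{Y}((1,-1),(0,0),(1,-1))\otimes_{T\times T\times T}(A\boxtimes\one_T\boxtimes B),
\]
where the middle $T$-action is the one that survives to give the residual $T$-action on $r(i(A)\oY i(B))$, and $A$, $B$ are tensored against the first and third $T$-actions. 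Here I would be careful to present this identification cleanly as a single natural isomorphism of functors $\Mod(T)\times\Mod(T)\to\Mod(T)$, since the bookkeeping of which of the three $T$-actions on $Y$ plays which role is exactly where an error could creep in.

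With that identification in hand, the corollary becomes a statement about maps out of this coend. The key point is that $\prescript{}{U^T\times U\times U^T\backslash}{Y}((1,-1),(0,0),(1,-1))$, as a functor in the three $T$-variables, is \emph{corepresentable along the diagonal}: Proposition~\ref{prop:uniqueness_of_i_mult} says precisely that coequalizing the three $T$-actions (i.e. tensoring with $\one_T$ over $T\times T\times T$ along the diagonal $T\hookrightarrow T\times T\times T$) recovers $\cL$, via $\mu$. I would phrase this as: for any $T$-module $W$, the space of $T\times T\times T$-equivariant maps from $\prescript{}{U^T\times U\times U^T\backslash}{Y}((1,-1),(0,0),(1,-1))$ into $W$ (with $W$ given the diagonal $T$-action) is canonically $\Hom_T(\cL,W)$, with the bijection given by precomposition with $\mu$. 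Then, tensoring against $A$ and $B$ and using that $-\otimes_T A$, $-\otimes_T B$ are the left adjoints implementing these coends, a natural map $r(i(A)\oY i(B))\ra F(A\otimes_T B)$ corresponds uniquely to a natural map $\cL\otimes_T A\otimes_T B\ra F(A\otimes_T B)$. Transporting back across the $(r,i)$-adjunction, a natural map $i(A)\oY i(B)\ra i(F(A\otimes_T B))$ corresponds uniquely to a natural map $i(\cL\otimes_T A\otimes_T B)\ra i(F(A\otimes_T B))$, i.e. it factors uniquely through $m$. I would also remark that $m$ is itself the image of the identity $\cL\otimes_T A\otimes_T B\ra\cL\otimes_T A\otimes_T B$ under this correspondence, which is why ``factors through $m$'' is the right formulation.

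**The main obstacle.** The genuinely hard input is Proposition~\ref{prop:uniqueness_of_i_mult} itself — that $\mu$ exhibits $\cL$ as the diagonal coequalizer of the three $T$-actions on the twisted $U$-coinvariants of $Y$; but that proposition is stated earlier in the excerpt, so for the purposes of this corollary I may assume it. Granting it, the remaining difficulty is purely organizational: making sure the chain of natural identifications
\[
    \Hom\big(i(A)\oY i(B),\,i F(A\otimes_T B)\big)
    \;\cong\;
    \Hom\big(r(i(A)\oY i(B)),\,F(A\otimes_T B)\big)
    \;\cong\;
    \Hom\big(\cL\otimes_T A\otimes_T B,\,F(A\otimes_T B)\big)
\]
is genuinely natural in $A$ and $B$ simultaneously, not just separately, and that the final arrow obtained from $\mathrm{id}$ is literally the map $m$ of Claim~\ref{claim:un_normalized_mult_on_i} (as opposed to $m$ composed with some automorphism). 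Concretely this means checking that the coend manipulations — pushing $-\otimes_T A$ and $-\otimes_T B$ past the corepresentability statement — commute, which is routine since each is a colimit and everything in sight preserves colimits in each variable ($\oY$, $i$ via its bi-module description, and the tensor products over $T$). I would handle this by doing the argument first at the level of the bi-$T$-module / corepresentability statement (no $A$, $B$), then deducing the $A,B$-parametrized version formally by tensoring, rather than trying to carry the parameters through from the start.
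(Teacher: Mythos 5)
Your proposal is correct and is exactly the (implicit) argument the paper intends: the corollary is stated as "the meaning of" Proposition~\ref{prop:uniqueness_of_i_mult}, and your chain — transpose across the $(r,i)$-adjunction, identify $r(i(A)\oY i(B))$ with the coend over $Y$ via Remarks~\ref{remark:map_to_i} and~\ref{remark:tensor_i}, and invoke the proposition's identification of the diagonal coequalizer with $\cL$ via $\mu$ — is precisely how that deduction goes. The care you flag about naturality in both variables simultaneously and about $m$ being the image of the identity is the right care to take, and is handled correctly.
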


\begin{remark}
    It will follow from Proposition~\ref{prop:mult_is_isom} below that, in fact, all functors $F\co\Mod(T)\ra\Mod(G)$ with natural map:
    \[
        i(A)\oY i(B)\ra F(A\otimes_T B)
    \]
    factor uniquely through the map
    \[
        m\co i(A)\oY i(B)\ra i(\cL\otimes_T A\otimes_T B).
    \]
\end{remark}

\begin{proof}[Proof of Proposition~\ref{prop:uniqueness_of_i_mult}]
    Note that the co-invariants of $Y$ under the middle action of $U$ are
    \[
        S(\M_2(F)^{\det=0}\times F^\times).
    \]
    
    Now, we can break this space into orbits under the action of the parabolic subgroup of lower triangular matrices. Specifically, we can give $S(\M_2(F)^{\det=0}\times F^\times)$ a filtration, with graded parts:
    \begin{multline*}
        \left\{\left(\begin{pmatrix}1 & \\ u & 1\end{pmatrix}\begin{pmatrix}\lambda & \\ & 0\end{pmatrix}\begin{pmatrix}1 & v \\ & 1\end{pmatrix},y\right)\right\},\\
        \left\{\left(\begin{pmatrix}1 & \\ u & 1\end{pmatrix}\begin{pmatrix} & \lambda \\ 0 & \end{pmatrix},y\right)\right\}\cup\left\{\left(\begin{pmatrix} & 0 \\ \lambda & \end{pmatrix}\begin{pmatrix}1 & v \\ & 1\end{pmatrix},y\right)\right\},\\
        \left\{\left(\begin{pmatrix}0 & \\ & \lambda\end{pmatrix},y\right)\right\}.
    \end{multline*}
    The first two graded parts have the wrong equivariance properties under the various actions of $T$. Thus, we are left with the last graded part, which gives $\mu$.
\end{proof}

The reader should note that the map of Claim~\ref{claim:un_normalized_mult_on_i} does not quite make $i$ into a non-unital right-lax functor, due to the presence of the term $\cL$. This will be handled in the next section.

\section{Normalized Parabolic Induction} \label{sect:hat_i}

In Section~\ref{sect:mult_on_i}, we defined a natural map
\[
    i(A)\oY i(B)\ra i(\cL\otimes_T A\otimes_T B).
\]
In order to have an actual right-lax functor, we need to modify $i$ to get rid of the extra factor of $\cL$. This will be the focus of this section.

The fact that $i$ needs to be normalized is not too surprising. When working with parabolic induction, one often finds the need to normalize the induction via an L-function. This normalization factor often appears in the functional equation for Eisenstein series in the global theory, and for the intertwining operator in the local theory. It turns out that this normalization factor precisely corresponds to the object $\cL$, in the sense of \cite{modules_of_zeta_integrals_arxiv}. See also Remark~\ref{remark:L_is_L_func}.

Let $\sHom_T\co\Mod(T)^\op\times\Mod(T)\ra\Mod(T)$ be the inner $\Hom$ functor of $\Mod(T)$, which is right adjoint to $\otimes_T$. We set:
\begin{definition}
    Define the \emph{normalized parabolic induction functor}
    \[
        \widehat{i}\co\Mod(T)\ra\Mod(G)
    \]
    by
    \[
        \widehat{i}(V)=i(\sHom_T(\cL,V)).
    \]
    
    We denote its left adjoint, the \emph{normalized parabolic restriction functor}, by $\widehat{r}\co\Mod(G)\ra\Mod(T)$.
\end{definition}

\begin{remark}
    The term ``normalized parabolic induction'' is used in the literature to refer to a different kind of normalization, which we do not use here.
    
    By choosing a square root of the cardinality $q$ of the residue field of $F$, one can simplify the Weyl twist of Definition~\ref{def:weyl_twist}. Letting $\chi=\chi_1\otimes\chi_2\co T\ra\CC^\times$, this means that the intertwining operator can be made to relate the parabolic induction of $\chi_1\otimes\chi_2$ with $\chi_2\otimes\chi_1$, instead of relating $\chi$ with $\chi(w)$.
    
    In this text, we will not make use of this kind of normalization, and we do not choose a square root of $q$.
\end{remark}

\begin{remark}
    By Theorem~A.1 of \cite{modules_of_zeta_integrals_arxiv}, there is a non-canonical isomorphism $\cL\cong\one_T$, meaning that the normalized functor is non-canonically isomorphic to the standard one: $i\cong\widehat{i}$.
\end{remark}

Claim~\ref{claim:un_normalized_mult_on_i} now immediately gives us:
\begin{claim} \label{claim:mult_on_i}
    There is a canonical natural map
    \[
        m\co\widehat{i}(A)\oY \widehat{i}(B)\ra \widehat{i}(A\otimes_T B).
    \]
\end{claim}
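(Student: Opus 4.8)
The plan is to deduce Claim~\ref{claim:mult_on_i} from Claim~\ref{claim:un_normalized_mult_on_i} by a purely formal manipulation: unwinding the definition $\widehat{i}(V) = i(\sHom_T(\cL, V))$ and feeding it the unnormalized multiplication, we just need to exhibit a natural map
\[
    \cL \otimes_T \sHom_T(\cL, A) \otimes_T \sHom_T(\cL, B) \ra \sHom_T(\cL, A \otimes_T B)
\]
and then apply $i$ to it, since Claim~\ref{claim:un_normalized_mult_on_i} gives
\[
    \widehat{i}(A) \oY \widehat{i}(B) = i(\sHom_T(\cL,A)) \oY i(\sHom_T(\cL,B)) \xrightarrow{m} i\!\left(\cL \otimes_T \sHom_T(\cL,A) \otimes_T \sHom_T(\cL,B)\right).
\]

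\textbf{Key steps.} First I would use the counit of the $(\otimes_T, \sHom_T)$-adjunction, $\cL \otimes_T \sHom_T(\cL, A) \ra A$, tensored over $\Mod(T)$ with $\sHom_T(\cL, B)$, to get a map $\cL \otimes_T \sHom_T(\cL,A) \otimes_T \sHom_T(\cL,B) \ra A \otimes_T \sHom_T(\cL,B)$. Second, I would then apply the natural map $\sHom_T(\cL,B) \ra \sHom_T(\cL, A \otimes_T B)$ obtained by tensoring inside the Hom with $A$ — or equivalently, adjoint to the composite $\cL \otimes_T A \otimes_T \sHom_T(\cL,B) \cong A \otimes_T (\cL \otimes_T \sHom_T(\cL,B)) \xrightarrow{\mathrm{counit}} A \otimes_T B$, where one uses the symmetry of $\otimes_T$ to move $A$ past $\cL$. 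Composing these gives the desired map into $\sHom_T(\cL, A \otimes_T B)$. Third, apply $i$ and precompose with the $m$ of Claim~\ref{claim:un_normalized_mult_on_i}; this produces the map $m$ of Claim~\ref{claim:mult_on_i}.

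\textbf{Main obstacle.} There is essentially no obstacle here — this is a formal consequence, which is exactly why the author writes ``immediately gives us''. The only point requiring minor care is bookkeeping: $\cL$ is being used once as the ``extra'' tensor factor coming out of $m$ and twice (dually) inside the $\sHom_T$'s defining $\widehat{i}$, so one needs the single counit map to cancel exactly one of the inner copies while leaving the other to absorb the $A$ (or $B$) factor. The symmetry of $\otimes_T$ under $\Mod(T)$ makes the choice of which copy to cancel immaterial. One might also remark — though it is not needed for the bare statement — that this construction is genuinely canonical precisely because of the uniqueness result Corollary~\ref{cor:uniqueness_of_mult}; associativity and the eventual (non-)unitality are deferred to later sections and play no role in proving Claim~\ref{claim:mult_on_i} itself.
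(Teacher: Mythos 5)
Your argument is correct and is precisely the formal unwinding the paper intends when it says that Claim~\ref{claim:un_normalized_mult_on_i} ``immediately gives'' Claim~\ref{claim:mult_on_i}: substitute $\sHom_T(\cL,-)$ into the unnormalized multiplication and then contract the extra factor of $\cL$ against one inner Hom via the counit, absorbing the result into the other via the standard map $A\otimes_T\sHom_T(\cL,B)\ra\sHom_T(\cL,A\otimes_T B)$. The paper supplies no further argument here, so there is nothing substantive to compare; your bookkeeping is the intended one.
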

In order for this to make $\widehat{i}$ into a non-unital right-lax functor, we still need to verify that this multiplication respects associativity. We will do so in Section~\ref{sect:assoc}.

\section{Unitality} \label{sect:unit_for_i}

Before discussing associativity, let us discuss the unitality of the multiplication on $\widehat{i}\co\Mod(T)\ra\Mod(G)$. Specifically, it turns out that while this functor cannot be unital right-lax, it can come very close to it. We begin by giving a counter-example showing that $\widehat{i}$ cannot be unital, and then show that this counter-example is essentially the only obstruction to the unitality of $\widehat{i}$.

The following is the motivating example for this section.
\begin{example} \label{example:hat_i_not_unital}
    Let $\triv_T(1,-1)$ be the one-dimensional representation of $T$, given by the character $(a,d)\mapsto \abs{a/d}$. This representation satisfies that
    \[
        i(\triv_T(1,-1))
    \]
    has a one-dimensional quotient $\triv_G$, with kernel given by a Steinberg representation $\St$.
    
    Now, the $T$-module $\triv_T(1,-1)$ is clearly a unital commutative algebra with respect to $\otimes_T$. This implies that if
    \[
        \widehat{i}\co\Mod(T)\ra\Mod(G)
    \]
    were unital right-lax, then $i(\triv_T(1,-1))$ would have been a unital commutative algebra. However, let us show that this is impossible.
    
    Indeed, suppose that $i(\triv_T(1,-1))$ was a unital algebra in $\Mod(G)$. Observe that all potential unit maps $\one_\Ydown\ra i(\triv_T(1,-1))$ factor through $\St$. This means that the following isomorphism, required by unitality:
    \[\xymatrix{
        \one_\Ydown\oY i(\triv_T(1,-1)) \ar[r] \ar[rd]_\sim & i(\triv_T(1,-1))\oY i(\triv_T(1,-1)) \ar[d] \\
        & i(\triv_T(1,-1))
    }\]
    factors through
    \[
        \St\oY i(\triv_T(1,-1)),
    \]
    which is $0$. Thus, we obtain a contradiction.
\end{example}

Despite Example~\ref{example:hat_i_not_unital}, the functor $\widehat{i}\co\Mod(T)\ra\Mod(G)$ is \emph{almost} unital right-lax. Essentially, we claim that Example~\ref{example:hat_i_not_unital} is the \emph{only} counter-example. The rest of this section is dedicated to clarifying this statement, formalized in Proposition~\ref{prop:hat_i_almost_unital}.

The key observation is that there is actually a canonical map $\one_\Ydown\ra i(\one_T)$ given as follows.
\begin{construction} \label{const:unit_hat_i_mult}
    Define the map
    \[
        \one_\Ydown\ra S(U\backslash G)\cong i(\one_T)
    \]
    via the formula
    \[
        W(g)\mapsto \int_F W\left(\begin{pmatrix}-1 & \\ & 1\end{pmatrix}\begin{pmatrix} & -1 \\ 1 & \end{pmatrix}^{-1}\begin{pmatrix}1 & b \\ & 1\end{pmatrix}g\right)\d{b}.
    \]
\end{construction}

\begin{remark}
    The map of Construction~\ref{const:unit_hat_i_mult} can also be obtained from the isomorphism $\Phi^-\circ i(V)\cong V$ of Remark~\ref{remark:phi_minus_i_is_forgetful}, by identifying $\Hom(\one_\Ydown,i(\one_T))$ with the roughening of $\Phi^-\circ i(\one_T)=\one_T$.
    
    Similarly, the presence of the factor $\begin{pmatrix}-1 & \\ & 1\end{pmatrix}$ is in order to be consistent with the identification of the functors $I_{RL}(\one_\Ydown)\otimes_G(-)$ and $\Phi^-(-)$ used in Remark~3.17 of \cite{abst_aut_reps_arxiv}. 
\end{remark}

Note that there is a canonical embedding $\eta\co\one_T=S(F^\times\times F^\times)\hookrightarrow\cL$. Our main result for this section is the following ``almost unitality'' result:
\begin{proposition} \label{prop:hat_i_almost_unital}
    The following diagram commutes:
    \[\xymatrix{
        \one_\Ydown\oY\widehat{i}(A) \ar[r] \ar[d]^= & \widehat{i}(\cL)\oY\widehat{i}(A) \ar[d]^m \\
        \widehat{i}(A) \ar[r]^-{\eta} & \widehat{i}(\cL\otimes_T A).
    }\]
    Here, the top horizontal map is induced from the map of Construction~\ref{const:unit_hat_i_mult} via the identification $i(\one_T)=\widehat{i}(\cL)$.
\end{proposition}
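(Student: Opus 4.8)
The plan is to prove the commutativity of the square by tracing both composites back to concrete formulas on the level of the defining $S_3 \ltimes G^3$-module $Y$, and in fact reducing everything to the uniqueness statement of Proposition~\ref{prop:uniqueness_of_i_mult}. The cleanest route is to observe that all four functors appearing in the diagram are colimit-preserving functors $\Mod(T) \to \Mod(G)$, natural in $A$, so by the universal property of $\widehat i$ (equivalently, by Corollary~\ref{cor:uniqueness_of_mult} applied after unwinding $\widehat i(A) = i(\sHom_T(\cL, A))$) it suffices to check the identity on the single generating object, and then to identify the two resulting maps $\one_\Ydown \oY \widehat i(A) \to \widehat i(\cL \otimes_T A)$ as both being computed by the same $T$-equivariant map out of a co-invariant space of $Y$.

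First I would unwind the right-hand composite. The multiplication $m$ of Claim~\ref{claim:mult_on_i} was, by Section~\ref{sect:hat_i}, nothing but the map $m$ of Claim~\ref{claim:un_normalized_mult_on_i} repackaged through $\sHom_T(\cL,-)$, and that map in turn came from the $\mu\co Y \to \cL$ of Construction~\ref{const:mult_i_one_T}, i.e.\ from $\Psi(g,y) \mapsto \Psi\bigl(\begin{pmatrix}0 & \\ & \lambda\end{pmatrix}, y\bigr)$, after passing to co-invariants under $U^T \times U \times U^T$ and twisting by $((1,-1),(0,0),(1,-1))$. Feeding in $\one_\Ydown = i(\one_T) = \widehat i(\cL)$ in the first slot and then specializing the first $T$-action via the canonical unit map of Construction~\ref{const:unit_hat_i_mult}, the right-hand composite becomes the map $\one_\Ydown \oY \widehat i(A) \to \widehat i(\cL \otimes_T A)$ induced by the composite of $\mu$ with the formula $W(g) \mapsto \int_F W\bigl(\begin{pmatrix}-1 & \\ & 1\end{pmatrix}\begin{pmatrix} & -1 \\ 1 & \end{pmatrix}^{-1}\begin{pmatrix}1 & b \\ & 1\end{pmatrix}g\bigr)\,\d b$ in the first variable. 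The left-hand composite, by contrast, is just the identity on $\widehat i(A)$ (using the unit isomorphism $\one_\Ydown \oY \widehat i(A) \xrightarrow{\sim} \widehat i(A)$, which by Remark~\ref{remark:phi_minus_i_is_forgetful} and the discussion after Construction~\ref{const:unit_hat_i_mult} is exactly the map built from $\Phi^- \circ i(V) \cong V$) followed by the canonical inclusion $\eta \co \one_T \hookrightarrow \cL$ induced on $\widehat i$.

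The heart of the argument is therefore a bookkeeping computation: both composites are colimit-preserving natural transformations from $\one_\Ydown \oY \widehat i(-)$ to $\widehat i(\cL \otimes_T -)$, and by Remark~\ref{remark:i_dual_i_w} (or directly by Proposition~\ref{prop:uniqueness_of_i_mult}) such transformations correspond to $T$-equivariant maps out of a fixed co-invariant quotient of $\one_\Ydown$ — so I would pin both down as the \emph{same} explicit map on $Y$. Concretely, I would take $W \in \one_\Ydown$, apply the $\Phi^-$-identification and then $\eta$ for the left composite, apply $\mu$ after the unit formula of Construction~\ref{const:unit_hat_i_mult} for the right composite, and verify that the integral $\int_F W\bigl(\begin{pmatrix}-1 & \\ & 1\end{pmatrix}\begin{pmatrix} & -1 \\ 1 & \end{pmatrix}^{-1}\begin{pmatrix}1 & b \\ & 1\end{pmatrix}\bigr)\,\d b$ appearing on the right coincides, under the normalization of $\Phi^-$ fixed in Remark~3.17 of \cite{abst_aut_reps_arxiv}, with the value of the $\Phi^- \circ i \cong \mathrm{id}$ isomorphism used on the left — the factor $\begin{pmatrix}-1 & \\ & 1\end{pmatrix}$ in Construction~\ref{const:unit_hat_i_mult} was chosen precisely to make these agree. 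The evaluation $\Psi \mapsto \Psi\bigl(\begin{pmatrix}0 & \\ & \lambda\end{pmatrix},y\bigr)$ in Construction~\ref{const:mult_i_one_T} is exactly what lands in the last graded piece of the orbit filtration in the proof of Proposition~\ref{prop:uniqueness_of_i_mult}, which is the same piece controlling the unit map, so no other graded contributions can appear and uniqueness closes the argument.

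The main obstacle will be precisely this compatibility of normalizations: keeping the various Weyl twists $(1,-1)$, the transpose conventions in $I_{RL}$ and the action formulas on $Y$, and the choice of Haar measures all consistent, so that the two explicit maps out of $Y$ literally agree rather than differing by a nonzero scalar or a twist. I expect the cleanest way to avoid this bookkeeping quagmire is to not compute anything on the nose, but instead to invoke Proposition~\ref{prop:uniqueness_of_i_mult}/Corollary~\ref{cor:uniqueness_of_mult} to conclude that both composites, being natural maps $\one_\Ydown \oY \widehat i(A) \to \widehat i(\cL \otimes_T A)$ through which any such map factors uniquely, must coincide up to a scalar, and then fix the scalar to be $1$ by testing against a single well-chosen vector (e.g.\ the image of the unit under $\Phi^- \circ i(\one_T) = \one_T$). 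This reduces the whole proof to one scalar computation, which is the only genuinely hands-on step.
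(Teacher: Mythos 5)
Your fallback argument --- that both composites are natural maps $\one_\Ydown\oY\widehat{i}(A)\ra\widehat{i}(\cL\otimes_T A)$ and hence must agree up to a scalar by Proposition~\ref{prop:uniqueness_of_i_mult}/Corollary~\ref{cor:uniqueness_of_mult}, to be fixed by one test vector --- does not work, and this is the genuine gap. Those uniqueness statements classify maps out of $i(A)\oY i(B)$ that are natural in \emph{both} $T$-module slots; in the unitality square the first slot is frozen at the fixed object $\one_\Ydown$, and the space of natural transformations $\one_\Ydown\oY\widehat{i}(-)\Rightarrow\widehat{i}(\cL\otimes_T-)$ is far from one-dimensional: composing the unit isomorphism $\one_\Ydown\oY\widehat{i}(A)\cong\widehat{i}(A)$ with the transformation induced by \emph{any} $T$-module map $\one_T\ra\cL$ already produces a large family. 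The ambiguity is therefore an element of $\Hom_T(\one_T,\cL)$, not a scalar, and no single evaluation can pin it down; indeed the whole content of the proposition is to identify \emph{which} map $\one_T\ra\cL$ (namely $\eta$) makes the square commute, which cannot be extracted from an abstract uniqueness argument.

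Your computational branch is closer to what the paper actually does, but it glosses over the step that carries the real difficulty: evaluating the unit isomorphism $\one_\Ydown\oY\widehat{i}(A)\xrightarrow{\sim}\widehat{i}(A)$ in coordinates requires the formula of Remark~3.17 of \cite{abst_aut_reps_arxiv}, which is only available after a cyclic permutation of the three slots of $Y$. The paper handles this by introducing the auxiliary map $\mu'$ (Construction~\ref{const:mu_prime}) as a candidate for the permuted $\mu$, proving Claim~\ref{claim:mu_prime_same_as_mu} (that $\F\circ\mu'\circ\tau(w,w,w)\circ\tau((3,2,1))=\mu$) --- and it is \emph{there}, for a map with full $U^T\times U\times U^T$ and triple-$T$ equivariance, that Proposition~\ref{prop:uniqueness_of_i_mult} legitimately applies, reducing the comparison to an explicit check on indicator functions near $\lambda=y=1$. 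Only then does the proof compute both composites on $\overline{Y}'$, obtaining Equations~\eqref{eq:unitality_anti_clockwise} and~\eqref{eq:unitality_clockwise}, whose discrepancy is precisely the explicit map $\eta$. Your outline neither confronts the permutation issue nor produces the exact formula for $\eta$, so as written it does not close.
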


Before we discuss the proof of Proposition~\ref{prop:hat_i_almost_unital}, let us describe its consequences.

\begin{remark} \label{remark:hat_i_unital_after_inversion}
    Let $\Mod(T)[\eta^{-1}]$ be the localization of $\Mod(T)$ given by inverting $\eta$. Then $\widehat{i}$ may be restricted to a functor $\widehat{i}[\eta^{-1}]\co\Mod(T)[\eta^{-1}]\ra\Mod(G)$. Now, Proposition~\ref{prop:hat_i_almost_unital} implies that once we show that the multiplication
    \[
        m\co\widehat{i}(A)\oY\widehat{i}(B)\ra\widehat{i}(A\otimes_T B)
    \]
    is associative in Section~\ref{sect:assoc}, then the restriction
    \[
        \widehat{i}[\eta^{-1}]\co\Mod(T)[\eta^{-1}]\ra\Mod(G)
    \]
    will be unital right-lax. This is Corollary~\ref{cor:hat_i_unital_after_inversion}.
\end{remark}

\begin{remark}
    Note that this bypasses the difficulty posed by Example~\ref{example:hat_i_not_unital} because the image of $\triv_T(1,-1)$ under the localization map
    \[
        \Mod(T)\ra\Mod(T)[\eta^{-1}]
    \]
    is $0$. In other words, we claim that Example~\ref{example:hat_i_not_unital} is the only obstruction to the unitality of $\widehat{i}$.
    
    In terms of \cite{modules_of_zeta_integrals_arxiv}, the meaning of inverting $\eta$ is the following. The space $\cL$ is essentially the space of zeta integrals defining the L-function $L(\chi_1^{-1}\chi_2,s_2-s_1+2)$ specified in Remark~\ref{remark:L_is_L_func}. The idea is that the L-function in question has just one pole when thought of as a function on the spectrum of $T$, and the counter-example $\triv_T(1,-1)$ lies in that pole. In particular, inverting $\eta$ kills the counter-example.
    
    Also note that the usual process of meromorphic continuation used to define the intertwining operator (and the L-function $L(\chi_1^{-1}\chi_2,s_2-s_1+2)$) also inverts the map $\eta$. As we will see below, that is essentially the purpose of the analytic continuation.
\end{remark}

\subsection{Proof of Unitality}

We dedicate the rest of this section to the proof of the unitality property Proposition~\ref{prop:hat_i_almost_unital}. The reader may wish to skip the remainder of this section on their first read-through.

We will prove Proposition~\ref{prop:hat_i_almost_unital} by computing the two compositions and comparing the results. The main difficulty which makes this not immediate is to evaluate the isomorphism $\one_\Ydown\oY\widehat{i}(A)\xrightarrow{\sim}\widehat{i}(A)$. This can be done by using the formula given in Remark~3.17 of \cite{abst_aut_reps_arxiv}. However, that formula can only be used after applying the action of a cyclic permutation on the inputs of $Y$ (recall that $Y$ is actually a $S_3\ltimes G^3$-module), which is complicated. Instead of doing this directly, we will use the following alternative strategy. The idea is to directly compute the composition of this permutation of $S_3$ with the map $\mu$ of Construction~\ref{const:mult_i_one_T}. We do this by giving a candidate $\mu'$ for this re-ordering of the inputs of $\mu$, and using the uniqueness principle of Proposition~\ref{prop:uniqueness_of_i_mult} to establish that these two maps are indeed the same.

We begin by constructing the desired re-ordering $\mu'$ of the arguments of $\mu$:
\begin{construction} \label{const:mu_prime}
    We let the map of $T\times T\times T$-modules
    \[
        \mu'\co\prescript{}{U\times U\times U^T\backslash}{Y}((-1,1),(-1,1),(0,0))\ra\cL(-1,2)
    \]
    be given by
    \[
        \Psi(g,y)\mapsto\abs{\lambda}\int\Psi\left(\begin{pmatrix}1 & b \\ & 1\end{pmatrix}\begin{pmatrix}0 & \\ & \lambda\end{pmatrix},\,y\right)\d{b}.
    \]
\end{construction}

\begin{definition} \label{def:partial_fourier}
    Let $\F\co\cL(w)(1,0)\ra\cL$ be the partial Fourier transform, normalized as:
    \[
        \F(F)(\lambda,y)=\abs{y}\int F(\alpha,y)e(-\alpha\lambda y)\d{\alpha}.
    \]
\end{definition}

Denote by $\tau\co S_3\ltimes G^3\ra\End(Y)$ the action on $Y$, so that $\tau((3,2,1))$ is the operation cyclically permuting the three $G$-actions on $Y$, and $\tau(w,w,w)$ denotes the combined application of the matrix $w=\begin{pmatrix} & -1 \\ 1 & \end{pmatrix}$ at all three $G$-actions. Our claim is that $\mu'$ can indeed recover the values of $\mu$ in the following sense:
\begin{claim} \label{claim:mu_prime_same_as_mu}
    The diagram:
    \[\xymatrix{
        Y \ar[d]^\mu \ar[rr]^{\tau((3,2,1))} & & Y \ar[rr]^{\tau(w,w,w)} & & Y \ar[d]^-{\mu'} \\
        \cL & & & & \cL \ar[llll]_\F
    }\]
    commutes.
\end{claim}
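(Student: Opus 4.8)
The plan is to verify Claim~\ref{claim:mu_prime_same_as_mu} by the same uniqueness mechanism used to establish Proposition~\ref{prop:uniqueness_of_i_mult}. That is, rather than computing both sides of the square directly, I would observe that the composition $\F\circ\mu'\circ\tau(w,w,w)\circ\tau((3,2,1))$ is, like $\mu$, a $T\times T\times T$-equivariant map out of $Y$ which factors through the appropriate triple $U$-type co-invariants with the twists $((1,-1),(0,0),(1,-1))$. Once one checks that this composite does land in (the correct twist of) $\prescript{}{U^T\times U\times U^T\backslash}{Y}$ with the right equivariance, Proposition~\ref{prop:uniqueness_of_i_mult} (or its corollary) forces the composite to be a scalar multiple of $\mu$, and a single normalization check on a convenient test function pins down the scalar to be $1$. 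So the structure is: (i) track how $\tau((3,2,1))$ and $\tau(w,w,w)$ transform the three $G$-actions and the attendant twists; (ii) check the resulting map is $U^T\times U\times U^T$-invariant after the permutation, so that it descends as claimed; (iii) apply uniqueness; (iv) fix the constant.

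For step (i), I would use the explicit formulas recalled in the excerpt: the first and third $G$-actions act on $x\in\M_2(F)$ by $x\mapsto g_1^{-1}xg_3^{-T}$ while scaling $y$, the permutation $(1,3)$ transposes $x$, and the ``middle'' action is the one involving the Fourier transform along $\M_2(F)$ and the character $e(by\det x)$. Cyclically permuting via $\tau((3,2,1))$ moves the Fourier-type action into a new slot, and then applying $w$ at all three slots is exactly what introduces the partial Fourier transform $\F$ of Definition~\ref{def:partial_fourier} (the matrix $w=\begin{pmatrix} & -1 \\ 1 & \end{pmatrix}$ acts by the full Fourier transform on $\M_2(F)$, and after restricting to the rank-$\le 1$ locus and the specific diagonal slice picked out by $\mu'$, the $\M_2$-Fourier transform collapses to the one-variable transform $\F(F)(\lambda,y)=\abs{y}\int F(\alpha,y)e(-\alpha\lambda y)\d\alpha$). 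The extra factor $\abs{\lambda}$ and the $b$-integration in Construction~\ref{const:mu_prime} are precisely the modulus factors and unipotent integrations produced by conjugating the evaluation-at-$\begin{pmatrix}0 & \\ & \lambda\end{pmatrix}$ map through $w$ and the permutation. The twists $((-1,1),(-1,1),(0,0))$ on $\mu'$ versus $((1,-1),(0,0),(1,-1))$ on $\mu$ should match up after one accounts for how the $T$-actions get relabeled by the permutation and how $w$ swaps the two diagonal entries (this is the content of the Weyl twist, Definition~\ref{def:weyl_twist}), together with the $\cL(-1,2)$ versus $\cL$ and $\cL(w)(1,0)$ bookkeeping, which is reconciled by $\F$.

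The main obstacle I expect is exactly this bookkeeping of twists and normalizations: making sure every modulus character $\abs{a/d}$, every factor of $\abs{y}$, and every measure normalization (the excerpt fixes $\d z$ on $\M_2(F)$ so that $e$ is self-dual) lines up so that the composite is literally $\mu$ and not $\mu$ times some unwanted power of $\abs{\cdot}$ or a nontrivial character. Getting the partial Fourier transform $\F$ to appear with the stated normalization $\abs{y}\int F(\alpha,y)e(-\alpha\lambda y)\d\alpha$ — rather than, say, $\abs{y}^2$ or with $e(+\alpha\lambda y)$ — requires carefully restricting the $\M_2(F)$-Fourier transform (which carries the $\abs{y}^2$ factor in the formula for $\tau(w)$) to the rank-one slice and integrating out the directions that become delta-functions. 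I would handle this by choosing an explicit factorizable test function $\Psi$ supported near the relevant diagonal slice, computing both $\F\circ\mu'\circ\tau(w,w,w)\circ\tau((3,2,1))$ and $\mu$ on it, and checking equality of the resulting elements of $\cL=S(F\times F^\times)$; since uniqueness already guarantees the two maps differ by a scalar, this single computation suffices and sidesteps having to verify the full equivariant identity by hand.
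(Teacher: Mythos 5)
Your overall strategy coincides with the paper's: apply the uniqueness result Proposition~\ref{prop:uniqueness_of_i_mult} to the composite $\F\circ\mu'\circ\tau(w,w,w)\circ\tau((3,2,1))$, and then identify it with $\mu$ by explicit evaluation. The gap is in your final step. Proposition~\ref{prop:uniqueness_of_i_mult} does \emph{not} force the composite to be a scalar multiple of $\mu$; it only shows that the composite factors as $\phi\circ\mu$ for some morphism of $T$-modules $\phi\co\cL\ra\cL$. Since $\cL$ is (non-canonically) isomorphic to $\one_T=S(T)$, the algebra $\End_T(\cL)$ contains the whole Bernstein center of $\Mod(T)$ --- in particular infinitely many idempotents cutting out components of the character variety of $T$ --- so it is very far from $\CC$. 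Hence agreeing with $\mu$ on a single test function cannot pin $\phi$ down to the identity: for example, the central idempotent projecting onto the Bernstein component containing your chosen test function passes that single check while not being the identity. So the sentence ``since uniqueness already guarantees the two maps differ by a scalar, this single computation suffices'' is where the argument would fail.

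The repair is what the paper actually does: after invoking uniqueness, one tests on the \emph{family} of indicator functions of arbitrarily small neighborhoods of $\lambda=y=1$ (taking $e$ unramified without loss of generality) and computes $\F\circ\mu'\circ\tau(w,w,w)\circ\tau((3,2,1))$ on each of them explicitly. The $T$-translates of these indicators span $\one_T\subseteq\cL$, so this shows $\phi$ restricts to the identity on $\one_T$; and since $\cL/\one_T$ is torsion over the center while $\cL\cong S(T)$ is torsion-free, there is no nonzero $T$-morphism $\cL/\one_T\ra\cL$, whence $\phi=\id$. Note that this costs essentially the same explicit computation (of $\tau((3,2,1))$, $\tau(w,w,w)$, $\mu'$ and $\F$ on indicators) that you were hoping to do only once, so no real work is saved by the scalar claim. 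Your items (i)--(ii), the equivariance and twist bookkeeping needed to see that the composite has the correct $U^T\times U\times U^T$-invariance and $T^3$-equivariance so that the uniqueness proposition applies at all, are correctly identified and match what the paper leaves implicit.
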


Before proving Claim~\ref{claim:mu_prime_same_as_mu}, let us begin by showing how it implies Proposition~\ref{prop:hat_i_almost_unital}.
\begin{proof}[Proof of Proposition~\ref{prop:hat_i_almost_unital}]
    It is sufficient to consider the case $A=\cL$. We will prove the claim by computing and comparing both compositions.
    
    We begin by identifying the domain of both compositions with the space $\overline{Y}$ of co-invariants of $Y$, taken with respect to the character $\theta$ of $U$ with respect to the first action, and taken with respect to the trivial character of $U^T$ with respect to the third action. The space $\overline{Y}$ is isomorphic to the space $\overline{Y}'$, where the $\theta$-co-invariants are taken with respect to the second action, and the $U^T$-co-invariants are taken with respect to the first action. That is, we have an isomorphism:
    \[\xymatrix{
        \overline{Y}' \ar[r]^\sim & \overline{Y}
    }\]
    given by $\tau((3,2,1))^{-1}$. We will compute the two compositions when they are pre-composed with this isomorphism, and composed with the co-unit corresponding to the adjunction $(r,i)$. That is, we work with the following diagram, where the maps between the second and third rows are not maps of $G$-modules, but merely maps of $T$-modules:
    \[\xymatrix{
        \overline{Y}' \ar[rr]^-{\tau((3,2,1))^{-1}} & & \one_\Ydown\oY i(\one_T) \ar[r] \ar[d]^= & i(\one_T)\oY i(\one_T) \ar[d]^m \\
        & & i(\one_T) \ar[r]^-{\eta} \ar[d] & i(\cL) \ar[d] \\
        & & \one_T \ar[r]^-{\eta} & \cL.
    }\]
    
    We begin with the anti-clockwise composition. When we pre-compose it by $\tau((3,2,1))^{-1}$, the isomorphism
    \[
        \one_\Ydown\oY i(\one_T)\cong i(\one_T)
    \]
    is identified with:
    \begin{align*}
        \overline{Y}' & \ra S(U\backslash G) \\
        \Psi(g,y) & \mapsto \int \Psi\left(\begin{pmatrix}1 & 0 \\ u & 1\end{pmatrix}g^{-T},\,\det(g)\right)\d{u}.
    \end{align*}
    Finally, by the adjunction defining $i$, this corresponds to the map:
    \[
        \overline{Y}'\ra\one_T
    \]
    given by:
    \begin{equation} \label{eq:unitality_anti_clockwise}
        \Psi(g,y)\mapsto\int\Psi\left(\begin{pmatrix}1 & 0 \\ u & 1\end{pmatrix}\begin{pmatrix}a^{-1} & \\ & d^{-1}\end{pmatrix},\,ad\right)\d{u}.
    \end{equation}
    This finishes the anti-clockwise composition.
    
    Let us now compute the clockwise composition. Consider $\Psi(g,y)\in\overline{Y}'$. By Claim~\ref{claim:mu_prime_same_as_mu}, applying the clockwise composition to it is the same as applying $\F\circ\mu'\circ\tau(w,w,w)$ to the distribution:
    \[
        \Psi'(g,y)=\Psi(g,\det(g)^{-1})\delta(y\det{g}-1).
    \]
    
    Let $f(g)=\Psi(g,\det(g)^{-1})$. We must apply $\F\circ\mu'\circ\tau(w,w,w)$ to the distribution $\Psi'$. We get:
    \begin{multline*}
        \mu'\circ\tau(w,w,w)(\Psi')(\lambda,y)= \\
        =\abs{\lambda}\int\abs{y}^2\int f(w^{-1}gw^{-1})e\left(-y\left<g,\begin{pmatrix}1 & u \\ & 1\end{pmatrix}\begin{pmatrix}0 & \\ & \lambda\end{pmatrix}\right>\right)\delta(y\det{g}-1)\d{g}\d{u}.
    \end{multline*}
    Simplifying this expression, we have
    \begin{align*}
        \mu'\circ & \tau(w,w,w)(\Psi')(\lambda,y)= \\
        & =\abs{\lambda y^2}\int f\left(\begin{pmatrix}a & b \\ c & d\end{pmatrix}\right)e\left(\lambda y(bu+d)\right)\delta(y(ad-bc)-1)\d{a}\d{b}\d{c}\d{d}\d{u}= \\
        & =\abs{y}\int f\left(\begin{pmatrix}a & 0 \\ c & d\end{pmatrix}\right)e\left(\lambda yd\right)\delta(yad-1)\d{a}\d{c}\d{d}= \\
        & =\int f\left(\begin{pmatrix}1/(yd) & 0 \\ c & d\end{pmatrix}\right)e\left(\lambda yd\right)\d{c}\frac{\d{d}}{\abs{d}}.
    \end{align*}
    Applying $\F$, we get
    \begin{align*}
        \F\circ\mu'\circ\tau(w,w,w)(\Psi') & (\lambda,y)= \\
        & =\abs{y}\int f\left(\begin{pmatrix}1/(yd) & 0 \\ c & d\end{pmatrix}\right)e\left(\alpha y(d-\lambda)\right)\d{c}\frac{\d{d}}{\abs{d}}\d{\alpha} \\
        & =\abs{\lambda}^{-1}\int f\left(\begin{pmatrix}1/(y\lambda) & 0 \\ c & \lambda\end{pmatrix}\right)\d{c} \\
        & =\abs{\lambda^2 y}^{-1}\int f\left(\begin{pmatrix}1 & 0 \\ c & 1\end{pmatrix}\begin{pmatrix}1/(y\lambda) & 0 \\ 0 & \lambda\end{pmatrix}\right)\d{c}.
    \end{align*}
    This corresponds to the map
    \[
        \overline{Y}'\ra\cL
    \]
    given by
    \begin{equation} \label{eq:unitality_clockwise}
        \Psi(g,y)\mapsto\abs{\lambda^2 y}^{-1}\int \Psi\left(\begin{pmatrix}1 & 0 \\ c & 1\end{pmatrix}\begin{pmatrix}1/(y\lambda) & 0 \\ 0 & \lambda\end{pmatrix},\,y\right)\d{c}.
    \end{equation}
    
    Clearly, the map~\eqref{eq:unitality_anti_clockwise} matches up with the map~\eqref{eq:unitality_clockwise} up to:
    \begin{align*}
        \eta\co\one_T & \ra\cL \\
        f(a,d) & \mapsto\abs{\lambda^2 y}^{-1}f(\lambda y,\lambda^{-1}),
    \end{align*}
    as we wanted to show.
\end{proof}

Finally, we end this section with the proof of Claim~\ref{claim:mu_prime_same_as_mu}.
\begin{proof}[Proof of Claim~\ref{claim:mu_prime_same_as_mu}]
    Because of the uniqueness result Proposition~\ref{prop:uniqueness_of_i_mult}, the composition
    \[
        \F\circ\mu'\circ\tau(w,w,w)\circ\tau((3,2,1))
    \]
    factors through $\mu$, and thus defines a map
    \[
        \cL\ra\cL.
    \]
    We must check that this map is the identity map. Therefore, it is sufficient to test this on the indicators of sufficiently small neighborhoods of $\lambda=y=1$.
    
    Thus, we suppose without loss of generality that the character $e\co F\ra\CC^\times$ is unramified. Pick some $\varepsilon\in\O$ of sufficiently small absolute value. Set $\Psi(g,y)$ to be the indicator function of the set:
    \[
        \left\{\left(\begin{pmatrix}a & b \\ c & d\end{pmatrix},y\right)\,\middle|\,\substack{a\in\varepsilon\O,\,b\in\varepsilon\O, \\ c\in\varepsilon\O,\,d\in 1+\varepsilon\O},\,y\in 1+\varepsilon\O\right\}.
    \]
    It is clear that
    \[
        \mu(\Psi)(\lambda,y)=\one_{1+\varepsilon\O}(\lambda)\one_{1+\varepsilon\O}(y).
    \]
    We must verify that the composition
    \[
        \F\circ\mu'\circ\tau(w,w,w)\circ\tau((3,2,1))(\Psi)
    \]
    gives the same function.
    
    Indeed, we obtain that 
    \[
        \frac{1}{\abs{\varepsilon}^2}\tau((3,2,1))(\Psi)
    \]
    is the indicator function of the set
    \[
        \left\{\left(\begin{pmatrix}a & b \\ c & d\end{pmatrix},y\right)\,\middle|\,\substack{a\in\varepsilon^{-1}\O,\,b\in\varepsilon\O, \\ c\in\varepsilon^{-1}\O,\,d\in 1+\varepsilon\O},\,y\in 1+\varepsilon\O\right\}.
    \]
    Moreover, applying $\tau(w,w,w)$, we get:
    \[
        \tau(w,w,w)\circ\tau((3,2,1))(\Psi)\left(\begin{pmatrix}a & b \\ c & d\end{pmatrix},y\right)=\begin{cases}
            \abs{\varepsilon}^2 e(d) & \substack{a\in\varepsilon\O,\,b\in\varepsilon^{-1}\O, \\ c\in\varepsilon\O,\,d\in\varepsilon^{-1}\O},\,y\in 1+\varepsilon\O \\
            0 & \text{otherwise}.
        \end{cases}
    \]
    We apply $\mu'$ to obtain:
    \[
        \mu'\circ\tau(w,w,w)\circ\tau((3,2,1))(\Psi)(\lambda,y)=\abs{\varepsilon}\one_{\varepsilon^{-1}\O}(\lambda)e(\lambda)\one_{1+\varepsilon\O}(y).
    \]
    Finally, using $\F$ gives:
    \[
        \one_{1+\varepsilon\O}(\lambda)\one_{1+\varepsilon\O}(y),
    \]
    as we wanted to show.
\end{proof}

\section{Associativity} \label{sect:assoc}

In this section, we will prove the associativity of the multiplication on
\[
    \widehat{i}\co\Mod(T)\ra\Mod(G)
\]
constructed in Claim~\ref{claim:mult_on_i}.

Specifically, our main theorem for this section is:
\begin{theorem} \label{thm:hat_i_rlax}
    The functor $\widehat{i}\co\Mod(T)\ra\Mod(G)$ is non-unital right-lax symmetric monoidal.
\end{theorem}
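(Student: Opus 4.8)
The plan is to establish that the multiplication map $m\co\widehat{i}(A)\oY\widehat{i}(B)\ra\widehat{i}(A\otimes_T B)$ of Claim~\ref{claim:mult_on_i} satisfies the associativity (pentagon) coherence, so that together with the natural symmetry inherited from the $S_3$-action on $Y$ it equips $\widehat{i}$ with a non-unital right-lax symmetric monoidal structure. Concretely, I would fix $A,B,C\in\Mod(T)$ and compare the two composites
\[
    \widehat{i}(A)\oY\widehat{i}(B)\oY\widehat{i}(C)\ra\widehat{i}(A\otimes_T B)\oY\widehat{i}(C)\ra\widehat{i}(A\otimes_T B\otimes_T C)
\]
and the one going through $\widehat{i}(B)\oY\widehat{i}(C)$ first; by symmetry it suffices to show both equal the ``totally symmetric'' triple multiplication. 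The key reduction is that, just as the binary $m$ came from a single map $\mu\co Y\ra\cL$ of $T^3$-modules (after the appropriate unipotent co-invariants and Weyl twists, Claims~\ref{claim:i_mult_distribution} and~\ref{claim:un_normalized_mult_on_i}), the ternary multiplications should come from maps out of a four-fold analogue of $Y$ — essentially co-invariants of a tensor product $Y\otimes_G Y$ along appropriate $U$ and $U^T$ subgroups — landing in $\cL\otimes_T\cL$ (or rather the single copy of $\cL$ surviving the normalization).

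The main tool will be the uniqueness principle, Proposition~\ref{prop:uniqueness_of_i_mult} and its Corollary~\ref{cor:uniqueness_of_mult}: any natural transformation $i(A)\oY i(B)\ra i\circ F(A\otimes_T B)$ factors uniquely through $m$. I would apply an iterated version of this. Both associativity composites are natural maps of the form $i(A)\oY i(B)\oY i(C)\ra i(G(A\otimes_T B\otimes_T C))$ for a functor $G$ built from $\cL$; I want to argue that the target module of ``universal'' such maps is again one-dimensional over the spectrum of $T$ in the sense of Proposition~\ref{prop:uniqueness_of_i_mult}, so that both composites are forced to agree once one checks they are both nonzero (equivalently, that they induce the identity on the relevant copy of $\cL^{\otimes ?}$). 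This is the same strategy used to prove Claim~\ref{claim:mu_prime_same_as_mu}: reduce an identity of maps $\cL^{\otimes?}\ra\cL^{\otimes?}$ to testing on indicator functions of small neighborhoods of the identity in $\M_2\times F^\times$ (or its relevant product), where one can just compute.

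So the concrete steps are: (1) write down the four-fold bimodule $\prescript{}{\ast\backslash}{(Y\otimes_G Y)}$ with its various twisted $T^4$-actions that corepresents natural maps $i(A)\oY i(B)\oY i(C)\ra i(\text{--})$, by the same orbit-decomposition argument as in the proof of Proposition~\ref{prop:uniqueness_of_i_mult}, identify the surviving graded piece, and conclude the relevant space of natural transformations is ``one-dimensional'' — this is the analogue of Proposition~\ref{prop:uniqueness_of_i_mult} in the ternary case; (2) deduce that the two associativity composites differ at most by an automorphism of the appropriate $\cL$-module, i.e.\ by a scalar function on $\Spec T$; (3) pin down that scalar to be $1$ by an explicit computation on indicator functions of small neighborhoods of $1$, exactly as in the proof of Claim~\ref{claim:mu_prime_same_as_mu}; (4) note that the normalization $\widehat{i}(V)=i(\sHom_T(\cL,V))$ converts the $\cL$-twisted multiplication on $i$ into a genuine (un-twisted) associative multiplication on $\widehat{i}$, using that $\cL$ itself acquires a commutative algebra structure compatible with $\mu$ (so that $\sHom_T(\cL,-)$ is right-lax monoidal); (5) combine with the $S_3$-symmetry of $Y$, which makes the binary $m$ symmetric, to get the full symmetric monoidal coherence. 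The hard part will be step (1)–(2): carefully bookkeeping the several $U$ and $U^T$ co-invariants and Weyl twists on $Y\otimes_G Y$ and running the orbit stratification to see that only one graded piece has the correct equivariance — the computation is a direct but lengthy elaboration of the proof of Proposition~\ref{prop:uniqueness_of_i_mult}, and getting the normalization factors to cancel so that the surviving piece is genuinely $\cL$ (and not some twist) is where one must be most careful.
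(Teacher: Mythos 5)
Your overall strategy---reduce associativity to a uniqueness principle for ternary multiplications in the spirit of Proposition~\ref{prop:uniqueness_of_i_mult}, and get the symmetry from the $S_3$-action on $Y$---is the right one, but the two steps carrying all the weight are not established, and as you have set them up they are substantially harder than what is actually needed. The paper never analyzes $Y\otimes_G Y$: it first applies the trace functor $\Phi^-$, under which the triple product collapses to a \emph{single} copy of $Y$, via the identification of $\Phi^-\bigl(\widehat{i}(A)\oY\widehat{i}(B)\oY\widehat{i}(C)\bigr)$ with $\bigl(\widehat{i}(A)\otimes\widehat{i}(B)\otimes\widehat{i}(C)\bigr)\otimes_{G\times G\times G}Y$. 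The ternary uniqueness statement (Proposition~\ref{prop:uniqueness_of_assoc}) is then proved by literally the same orbit stratification as Proposition~\ref{prop:uniqueness_of_i_mult}, with universal target $\cL_2\cong\cL\otimes_T\cL$ (note: \emph{not} a single surviving copy of $\cL$---the target allows double poles---so your worry about normalization factors is aimed at the wrong target space). Moreover, because the universal map $\mu''$ is manifestly invariant under transposition of $\M_2(F)$, \emph{every} natural ternary map is automatically $S_3$-equivariant (Proposition~\ref{prop:phi_minus_assoc_unique}); since the two associativity composites are obtained from one another by permuting inputs, they agree after $\Phi^-$ with no scalar ever being computed. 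Your steps (2)--(3), which require pinning down an endomorphism of the target by explicit evaluation on indicator functions, are thus replaced by a symmetry observation; conversely, your step (1)---a one-dimensionality result for $G$-module maps out of the triple $\oY$-product, i.e.\ an orbit analysis of unipotent co-invariants of $Y\otimes_G Y$---is not a ``direct elaboration'' of Proposition~\ref{prop:uniqueness_of_i_mult} (that space is no longer a space of functions on $\M_2(F)\times F^\times$, and you have not indicated how to stratify it), and it is precisely the computation that applying $\Phi^-$ is designed to avoid.

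Two further points. First, if one does work after $\Phi^-$ (as the paper does), there is an extra step your proposal omits entirely: equality after $\Phi^-$ only shows that the difference $m\circ(m\oY\id)-m\circ(\id\oY m)$ has image killed by $\Phi^-$, i.e.\ degenerate; the paper upgrades this to genuine equality by reducing via Remark~\ref{remark:formulas_for_i} to $A=B=C=\one_T$ and using that $\widehat{i}(\one_T)$ has no nonzero $\SL_2(F)$-invariant vectors. Second, your step (4) is stated backwards: for $\sHom_T(\cL,-)$ to be lax monoidal one would need a comultiplication on $\cL$, not an algebra structure, and in fact the convolution $\cL\otimes_T\cL\ra\widetilde{\cL}$ lands in the strictly larger $\cL_2$, so $\cL$ is not even closed under it. What the passage from $i$ to $\widehat{i}$ actually uses is only the evaluation $\cL\otimes_T\sHom_T(\cL,A)\ra A$ together with the canonical map $A\otimes_T\sHom_T(\cL,B)\ra\sHom_T(\cL,A\otimes_T B)$, and the associativity statement for $\widehat{i}$ already incorporates their coherence; it is not a separate formal reduction one can quote.
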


As discussed in Remark~\ref{remark:hat_i_unital_after_inversion} above, Theorem~\ref{thm:hat_i_rlax} immediately implies that:
\begin{corollary} \label{cor:hat_i_unital_after_inversion}
    The restriction $\widehat{i}[\eta^{-1}]\co\Mod(T)[\eta^{-1}]\ra\Mod(G)$ of $\widehat{i}$ is unital right-lax symmetric monoidal.
\end{corollary}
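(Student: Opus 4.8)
The plan is to deduce the corollary from Theorem~\ref{thm:hat_i_rlax} and Proposition~\ref{prop:hat_i_almost_unital} with essentially no further computation, as anticipated in Remark~\ref{remark:hat_i_unital_after_inversion}. Theorem~\ref{thm:hat_i_rlax} already supplies the associative, symmetry-compatible multiplication $m\co\widehat{i}(A)\oY\widehat{i}(B)\ra\widehat{i}(A\otimes_T B)$, so that $\widehat{i}$, and hence its restriction $\widehat{i}[\eta^{-1}]$ along $\Mod(T)\ra\Mod(T)[\eta^{-1}]$, is non-unital right-lax symmetric monoidal. Thus the only thing to add is a unit map together with the left and right unit axioms; the residual associator coherence is automatic once associativity and one unit axiom are known.

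First I would read the unit off Construction~\ref{const:unit_hat_i_mult}. The reason for passing to the localization is precisely that there $\eta\co\one_T\ra\cL$ is inverted, hence so is the natural transformation $\eta\co A\ra\cL\otimes_T A$ for every object $A$ of $\Mod(T)[\eta^{-1}]$; in particular $\one_T$, $\cL$ and the unit of $\Mod(T)[\eta^{-1}]$ get canonically identified, and via the identification $\widehat{i}(\cL)=i(\one_T)$ this produces a canonical isomorphism $\widehat{i}(\cL)\cong\widehat{i}(\one_T)$. I would then define the unit $u\co\one_\Ydown\ra\widehat{i}(\one_T)$ to be the map of Construction~\ref{const:unit_hat_i_mult}, viewed as landing in $\widehat{i}(\cL)$, postcomposed with this isomorphism.

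With $u$ defined this way, the left unit axiom falls out of Proposition~\ref{prop:hat_i_almost_unital}: postcomposing its commuting square with $\widehat{i}(\eta)^{-1}$ (legitimate once $\eta$ is inverted) turns the square into the statement that $\one_\Ydown\oY\widehat{i}(A)\xrightarrow{u\oY\mathrm{id}}\widehat{i}(\one_T)\oY\widehat{i}(A)\xrightarrow{m}\widehat{i}(A)$ is the canonical unit isomorphism of $\oY$. The right unit axiom then follows by conjugating the left one with the braiding, since $m$ is compatible with the symmetries by Theorem~\ref{thm:hat_i_rlax}; associativity of $m$ is again Theorem~\ref{thm:hat_i_rlax}; and the coherence linking $u$ to the associator is formal given associativity plus a unit axiom (or, if one prefers, it can be pinned down directly by the uniqueness principle of Corollary~\ref{cor:uniqueness_of_mult}, since any candidate must factor through $m$).

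The one point I expect to require genuine care rather than a citation is the bookkeeping around the monoidal localization $\Mod(T)[\eta^{-1}]$ itself: one must check that $\widehat{i}$ together with $m$ descends along $\Mod(T)\ra\Mod(T)[\eta^{-1}]$ to an honest right-lax functor out of the localized category — i.e.\ that the formula $\widehat{i}(V)=i(\sHom_T(\cL,V))$ interacts with $\eta$-local equivalences correctly and that the target of $m$ is the image of the localized tensor product — which is exactly where the identification $\widehat{i}(\cL)=i(\one_T)$ gets used. I expect this to be the main (and essentially only) obstacle; once it is dispatched, unitality of $\widehat{i}[\eta^{-1}]$ is the purely formal consequence of Proposition~\ref{prop:hat_i_almost_unital} and Theorem~\ref{thm:hat_i_rlax} sketched above, which is why the corollary is stated as immediate.
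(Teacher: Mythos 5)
Your proposal is correct and follows the paper's own route: the paper derives Corollary~\ref{cor:hat_i_unital_after_inversion} exactly as anticipated in Remark~\ref{remark:hat_i_unital_after_inversion}, namely by combining the associativity of $m$ from Theorem~\ref{thm:hat_i_rlax} with the ``almost unitality'' square of Proposition~\ref{prop:hat_i_almost_unital}, the unit being the map of Construction~\ref{const:unit_hat_i_mult} viewed in the localization where $\eta$ is invertible. Your extra bookkeeping about the localization is harmless (and essentially automatic, since $\eta$-local objects are closed under $\otimes_T$ with arbitrary objects, so restriction suffices), so your write-up is just a more detailed version of the paper's one-line deduction.
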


We dedicate the rest of this section to proving Theorem~\ref{thm:hat_i_rlax}.

Our strategy for proving Theorem~\ref{thm:hat_i_rlax} is as follows. We are essentially trying to show that the two multiplication maps
\[
    \widehat{i}(\one_T)\oY\widehat{i}(\one_T)\oY\widehat{i}(\one_T)\ra\widehat{i}(\one_T)
\]
are the same. We will do so by showing that applying $\Phi^-$ sends the two maps into the same one, which will follow immediately from the following result, Proposition~\ref{prop:phi_minus_assoc_unique}. Note that the triple-functors $\Phi^-\left(\widehat{i}(A)\oY\widehat{i}(B)\oY\widehat{i}(C)\right)$ and $\Phi^-\left(\widehat{i}(A\otimes_T B\otimes_T C)\right)$ are symmetric with respect to permutations of $A,B,C$.
\begin{proposition} \label{prop:phi_minus_assoc_unique}
    Let $\phi$ be a natural map of functors:
    \[
        \phi\co\Phi^-\left(\widehat{i}(A)\oY\widehat{i}(B)\oY\widehat{i}(C)\right)\ra\Phi^-\left(\widehat{i}(A\otimes_T B\otimes_T C)\right).
    \]
    Then $\phi$ commutes with the above action of $S_3$.
\end{proposition}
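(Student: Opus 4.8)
The plan is to reduce the proposition to the statement that the space $N$ of natural transformations from $\Phi^-(\widehat{i}(A)\oY\widehat{i}(B)\oY\widehat{i}(C))$ to $\Phi^-(\widehat{i}(A\otimes_T B\otimes_T C))$, regarded as natural transformations of functors $\Mod(T)^{\times 3}\ra\Vect$, is at most one-dimensional. Indeed, a map $\phi$ as in the statement commutes with $S_3$ precisely when it is a fixed vector for the $S_3$-action on $N$ coming from the permutation symmetries of the source and target functors noted just above; if $\dim N\le 1$, this action is given by a character of $S_3$, and to force that character to be trivial it suffices to exhibit one nonzero $S_3$-fixed element of $N$, since every other element is then a scalar multiple of it. (If $N=0$ there is nothing to prove.)

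To identify $N$, I would peel off the two outer factors one at a time, exactly as in Section~\ref{sect:mult_on_i}. Writing $\widehat{i}(V)=i(\sHom_T(\cL,V))$, the trace property of $\Phi^-$ together with Remarks~\ref{remark:tensor_i} and~\ref{remark:i_dual_i_w}, applied first to $\widehat{i}(A)$ and then to $\widehat{i}(B)$, rewrite $\Phi^-(\widehat{i}(A)\oY\widehat{i}(B)\oY\widehat{i}(C))$ as a relative tensor product over $T\times T\times T$ of the external product of $\sHom_T(\cL,A)$, $\sHom_T(\cL,B)$ and $\sHom_T(\cL,C)$ against an explicit subquotient $K$ of the iterated $\oY$-kernel $Y\otimes_G Y$ — the relevant co-invariants under copies of $U$ and $U^T$, carrying the appropriate $(1,-1)$-twists — while Remark~\ref{remark:phi_minus_i_is_forgetful} identifies the target with $\sHom_T(\cL,A\otimes_T B\otimes_T C)$. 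Since $\cL$ is invertible, a co-Yoneda argument in the three variables then identifies $N$ with a space of $T\times T\times T$-equivariant maps out of $K$ into an explicit $\cL$-twist placed along the diagonal $T\ra T\times T\times T$ — the analogue, for the triple product, of Claim~\ref{claim:i_mult_distribution}.

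I would then compute this last $\Hom$-space by an orbit analysis in the style of the proof of Proposition~\ref{prop:uniqueness_of_i_mult}: give the relevant $U$-, $U^T$-co-invariant matrix space a finite filtration by orbits of the lower-triangular parabolics, observe that all graded pieces but one have the wrong equivariance under the three copies of $T$, and conclude that the space is at most one-dimensional, generated by an explicit map $\mu^{(3)}$ that is an iterate of the map $\mu$ of Construction~\ref{const:mult_i_one_T}. This gives the required bound on $\dim N$, and it remains only to see that $\mu^{(3)}$ is $S_3$-fixed. But the whole chain of identifications is equivariant for the residual $S_3\subseteq S_3\ltimes G^3$ symmetry of $Y$ — the symmetry that makes $\oY$ symmetric monoidal and $\Phi^-(-\oY-\oY-)$ an $S_3$-symmetric functor — and this symmetry permutes the orbit filtration above; tracing it through shows the surviving graded piece, hence $\mu^{(3)}$, is given by a formula symmetric in its three arguments, so $\mu^{(3)}$ is fixed.

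I expect the main obstacle to be the bookkeeping in the second step: keeping precise track of the $(1,-1)$-twists and of which $U$- and $U^T$-co-invariants appear as the outer $\widehat{i}$'s are peeled off, and — most importantly — transporting the $S_3$-action on $Y$ (and on the iterate $Y\otimes_G Y$) faithfully across all the identifications, so that the final symmetry claim for $\mu^{(3)}$ is genuinely manifest. The orbit computation itself should then proceed just as in Proposition~\ref{prop:uniqueness_of_i_mult}.
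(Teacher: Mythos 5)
Your reduction rests on the claim that the space $N$ of natural transformations
\[
\Phi^-\left(\widehat{i}(A)\oY\widehat{i}(B)\oY\widehat{i}(C)\right)\ra\Phi^-\left(\widehat{i}(A\otimes_T B\otimes_T C)\right)
\]
is at most one-dimensional, and that is not true. What the orbit analysis gives (Proposition~\ref{prop:uniqueness_of_assoc}) is that the relevant co-invariant space of $Y$, after forcing the three $T$-actions to agree, is isomorphic to $\cL_2(-1,2)$ \emph{as a $T$-module}; the resulting map $\phi_0$ is universal in the sense that every $\phi$ factors through it (Corollary~\ref{cor:uniqueness_of_assoc}), not in the sense that $N$ is a line. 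Concretely, $N$ is a free rank-one module over the algebra of natural endomorphisms of the one-variable functor $D\mapsto\Phi^-\circ\widehat{i}(D)\cong\sHom_T(\cL,D)$, i.e.\ over an infinite-dimensional multiplier algebra of $\one_T$: post-composing $\phi_0$ with any element of the Bernstein center of $\Mod(T)$ yields a new, linearly independent element of $N$. So the character argument has no footing. The correct replacement --- and the paper's actual argument --- is to exploit the factorization itself: every $\phi$ equals $\psi\circ\phi_0$ where $\psi$ only sees the manifestly symmetric one-variable functor of $D=A\otimes_T B\otimes_T C$, so $S_3$-invariance of every $\phi$ reduces to $S_3$-invariance of $\phi_0$ alone.

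The second gap is your expectation that full $S_3$-invariance of the universal map becomes ``manifest'' after tracing the identifications. Only the transposition $\Psi(x,y)\mapsto\Psi(x^T,y)$ acts on $Y$ in a way directly visible in the formula for $\mu''$; the $3$-cycles act through Fourier transforms and Weyl elements, and verifying compatibility with them is a genuine computation (compare Claim~\ref{claim:mu_prime_same_as_mu}, which does exactly this for $\mu$ and is far from a formality). The paper sidesteps this: since $\sigma(\phi)$ is again a natural map of the same shape for every $\sigma\in S_3$, invariance of all such maps under one transposition forces invariance under all its conjugates, and those generate $S_3$; hence one only ever checks $\phi_0$ against the single manifest transposition. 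Relatedly, you should identify the source with a single copy of $Y$ carrying all three $G$-actions, namely $(\widehat{i}(A)\otimes\widehat{i}(B)\otimes\widehat{i}(C))\otimes_{G\times G\times G}Y$, rather than with a subquotient of $Y\otimes_G Y$, where even the transposition symmetry is not visible.
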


Before proving Proposition~\ref{prop:phi_minus_assoc_unique}, let us finish the details of the proof.
\begin{proof}[Proof of Theorem~\ref{thm:hat_i_rlax}]
    We have two compositions
    \[\xymatrix{
        \widehat{i}(A)\oY\widehat{i}(B)\oY\widehat{i}(C) \ar@<2pt>[rr]^{m\circ(m\oY\id)} \ar@<-2pt>[rr]_{m\circ(\id\oY m)} & & \widehat{i}(A\otimes_T B\otimes_T C)
    }\]
    that are given by re-ordering each other's inputs. Therefore, Proposition~\ref{prop:phi_minus_assoc_unique} implies that:
    \[
        \Phi^-(m\circ(m\oY\id))=\Phi^-(m\circ(\id\oY m)).
    \]
    
    Because $\Phi^-$ is exact, the image of $m\circ(m\oY\id)-m\circ(\id\oY m)$ is killed by $\Phi^-$ and is a degenerate representation. Because of Remark~\ref{remark:formulas_for_i}, it is enough to show that the two maps
    \[\xymatrix{
        \widehat{i}(\one_T)\oY\widehat{i}(\one_T)\oY\widehat{i}(\one_T) \ar@<2pt>[rr]^-{m\circ(m\oY\id)} \ar@<-2pt>[rr]_-{m\circ(\id\oY m)} & & \widehat{i}(\one_T)
    }\]
    are equal. However, the image of their difference is a degenerate representation, which must be $0$, as $\widehat{i}(\one_T)$ has no vector invariant to $\SL_2(F)$.
\end{proof}

The proof of Proposition~\ref{prop:phi_minus_assoc_unique} comprises the rest of this section. We will prove the proposition using a uniqueness result. That is, we will show that all natural maps of the requisite form factor through a specific, universal one. The specific map we obtain will be manifestly symmetric with respect to the $S_3$ action on $Y$, which will prove the result.

Our first goal is to explicitly write down the map we are after. To do so, we need an appropriate target space.
\begin{construction}
    Consider the multiplicative convolution product:
    \[
        \cL\otimes_T\cL\ra\widetilde{\cL},
    \]
    where $\widetilde{\cL}$ is the contragradient. Denote the image by $\cL_2$.
\end{construction}
\begin{remark}
    We have an isomorphism:
    \[
        \cL\otimes_T\cL\xrightarrow{\sim}\cL_2.
    \]
    Moreover, recall that $\cL$ consists of functions of $(\lambda,y)$ that are locally constant near $\lambda=0$. Then $\cL_2$ also allows functions that grow logarithmically as $\nu(\lambda)$ near $0$.
    
    Under the Mellin transform, the $T$-module $\cL_2$ allows functions that have a double pole at a specific location.
\end{remark}

We will now construct the desired universal map
\[
    \Phi^-\left(\widehat{i}(A)\oY\widehat{i}(B)\oY\widehat{i}(C)\right)\ra\Phi^-\left(\widehat{i}(A\otimes_T B\otimes_T C)\right).
\]
We will do so by identifying the functor $\Phi^-\left(\widehat{i}(A)\oY\widehat{i}(B)\oY\widehat{i}(C)\right)$ with
\[
    \left(\widehat{i}(A)\otimes\widehat{i}(B)\otimes\widehat{i}(C)\right)\otimes_{G\times G\times G}Y,
\]
which means that all we have to do is construct a map out of $Y$ with the appropriate equivariance properties.

Our candidate is the map:
\begin{construction}
    We let the map of $T\times T\times T$-modules
    \[
        \mu''\co\prescript{}{U\times U\times U\backslash}{Y}((-1,1),(-1,1),(-1,1))\ra\cL_2(-1,2)
    \]
    be given by
    \[
        \Psi(g,y)\mapsto\abs{\lambda}\int\Psi\left(\begin{pmatrix}1 & b \\ & 1\end{pmatrix}\begin{pmatrix}0 & \\ & \lambda\end{pmatrix}\begin{pmatrix}1 & \\ b' & 1\end{pmatrix},\,y\right)\d{b}\d{b'}.
    \]
\end{construction}
We now claim that
\begin{proposition} \label{prop:uniqueness_of_assoc}
    The map of $T$-modules
    \[
        \mu''\co\prescript{}{U\times U\times U\backslash}{Y}((-1,1),(-1,1),(-1,1))\otimes_{T\times T\times T}\one_T\ra\cL_2(-1,2)
    \]
    given by forcing the three $T$ actions on $\prescript{}{U\times U\times U\backslash}{Y}((-1,1),(-1,1),(-1,1))$ to be the same is an isomorphism.
\end{proposition}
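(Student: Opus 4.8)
The plan is to reduce Proposition~\ref{prop:uniqueness_of_assoc} to an orbit analysis analogous to that of Proposition~\ref{prop:uniqueness_of_i_mult}, now carried out one step ``deeper'' in the flag variety. First I would compute the coinvariants of $Y$ under the three copies of $U$ acting via the three $G$-actions. Since $Y=S(\M_2(F)\times F^\times)$ and the three $U$'s act by left/right unipotent translations on the matrix coordinate (twisted by the character on $F^\times$ coming from the normalization of $\oY$), the result is a space of distributions on $\M_2(F)\times F^\times$ supported in a union of Bruhat-type cells. Concretely, I expect $\prescript{}{U\times U\times U\backslash}{Y}$ to decompose, via a finite filtration by support, into graded pieces indexed by the relative position of the rank-stratification of $\M_2(F)$ with respect to the relevant unipotent subgroups. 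The rank-$2$ generic stratum and the intermediate strata will, just as in the proof of Proposition~\ref{prop:uniqueness_of_i_mult}, carry the wrong equivariance under the forced-diagonal $T$-action (they will fail to match the twist $(-1,1)$ on each factor, or will have support incompatible with $T$-finiteness after forcing the three $T$-actions equal), and will therefore be annihilated upon applying $-\otimes_{T\times T\times T}\one_T$. What survives is the deepest stratum, which is the one singled out by the formula for $\mu''$: the matrix $\begin{pmatrix}0 & \\ & \lambda\end{pmatrix}$ conjugated by the two one-parameter unipotent subgroups appearing in the construction.

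Second, I would check that on this surviving stratum the map $\mu''$ is precisely the identification of that graded piece with $\cL_2(-1,2)$. This is where the passage from $\cL$ to $\cL_2$ enters: because there are now \emph{two} unipotent integrations (over $b$ and over $b'$) rather than one, the push-forward along the two unipotent directions produces functions of $(\lambda,y)$ that are no longer merely locally constant near $\lambda=0$ but are allowed to grow like $\nu(\lambda)$ — exactly the space $\cL_2$ as described in the Remark following its construction. I would make this precise by writing the integral over $b,b'$ in coordinates, performing one of the integrations to recover a copy of $\cL$ and then observing that the second integration, against the residual variable, realizes the multiplicative convolution $\cL\otimes_T\cL\twoheadrightarrow\cL_2$; surjectivity onto $\cL_2$ (hence the map being onto) is then immediate from the definition of $\cL_2$ as the image of that convolution, and injectivity follows because the deepest stratum is a single orbit with trivial stabilizer in the relevant quotient, so the stratum-to-$\cL$-type identification is a bijection before forcing the $T$-actions and remains so after the (now faithful) diagonal specialization.

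The main obstacle I anticipate is the bookkeeping of the twists and of the $F^\times$-coordinate $y$ across the three $G$-actions: the three residual $T$-actions on $\prescript{}{U\times U\times U\backslash}{Y}$ must be identified correctly with their twists $(-1,1)$, and one must verify that after forcing them to coincide the only compatible stratum is the deepest one — in particular that the intermediate strata (rank-one matrices with one index on the ``wrong'' side) genuinely die, rather than contributing a correction term. This is the analogue of the sentence ``the first two graded parts have the wrong equivariance properties'' in the proof of Proposition~\ref{prop:uniqueness_of_i_mult}, but with three unipotent directions the stratification is longer and the equivariance computation correspondingly more delicate. I would handle it by the same device used there and in the proof of Claim~\ref{claim:mu_prime_same_as_mu}: rather than classify every stratum by hand, use the already-established uniqueness Proposition~\ref{prop:uniqueness_of_i_mult} twice (once for the $(m\oY\id)$ factorization and once for $(\id\oY m)$) to see that any map of the stated form must factor through $\mu''$, then verify on explicit indicator functions of small neighborhoods of $\lambda=y=1$ — together with one function detecting the logarithmic growth — that $\mu''$ composed with the tautological section is the identity on $\cL_2$. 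Once $\mu''$ is shown to be an isomorphism, the manifest $S_3$-symmetry of the formula for $\mu''$ (it is symmetric in the three inputs up to the harmless relabeling of the unipotent integration variables) yields Proposition~\ref{prop:phi_minus_assoc_unique}, and hence Theorem~\ref{thm:hat_i_rlax}, as already explained in the text.
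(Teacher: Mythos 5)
Your primary argument — stratifying the triple $U$-coinvariants of $Y$ by unipotent/Bruhat orbits, discarding the strata with the wrong $T$-equivariance after forcing the three actions to coincide, and identifying the surviving deepest stratum with $\cL_2(-1,2)$ via $\mu''$ — is exactly the paper's proof, which is stated simply as being ``essentially identical to the proof of Proposition~\ref{prop:uniqueness_of_i_mult}''. The fallback device you propose at the end (invoking Proposition~\ref{prop:uniqueness_of_i_mult} twice plus an indicator-function check) is unnecessary and weaker: iterating the two-fold uniqueness only controls maps arising from iterated multiplications, not arbitrary natural maps out of the triple coinvariant space, so it would not by itself establish the isomorphism claimed here.
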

\begin{proof}
    The proof is essentially identical to the proof of Proposition~\ref{prop:uniqueness_of_i_mult}.
\end{proof}

This immediately implies that the following construction is universal.
\begin{construction}
    Consider the composition
    \[
        \F_2\circ\mu''\circ\tau(w,w,w)\co\prescript{}{U^T\times U^T\times U^T\backslash}{Y}((1,-1),(1,-1),(1,-1))\ra\cL_2,
    \]
    with $\F_2\co\cL_2(w)(1,0)\xrightarrow{\sim}\cL_2$ the partial Fourier transform on $\cL_2$, given by:
    \[
        \F\otimes\F\co\cL(w)(1,0)\otimes_T\cL(w)(1,0)\xrightarrow{\sim}\cL\otimes_T\cL.
    \]
    Here, $\tau\co S_3\ltimes G^3\ra\End(Y)$ is the action on $Y$ and $\F\co\cL(w)(1,0)\xrightarrow{\sim}\cL$ is as in Definition~\ref{def:partial_fourier}.
    
    Applying the functor $\left(A\otimes B\otimes C\right)\otimes_{T\times T\times T}(-)$ to the composition $\F_2\circ\mu''\circ\tau(w,w,w)$ gives a natural map:
    \[
        \phi_0\co\Phi^-\left(\widehat{i}(A)\oY\widehat{i}(B)\oY\widehat{i}(C)\right)\ra\Phi^-\left(\widehat{i}(A\otimes_T B\otimes_T C)\right).
    \]
\end{construction}

It immediately follows from Proposition~\ref{prop:uniqueness_of_assoc} that:
\begin{corollary} \label{cor:uniqueness_of_assoc}
    Let $F\co\Mod(T)\ra\Vect$ be any functor. Then any natural map:
    \[
        \phi\co\Phi^-\left(\widehat{i}(A)\oY\widehat{i}(B)\oY\widehat{i}(C)\right)\ra F(A\otimes_T B\otimes_T C)
    \]
    factors uniquely through the map
    \[
        \phi_0\co\Phi^-\left(\widehat{i}(A)\oY\widehat{i}(B)\oY\widehat{i}(C)\right)\ra\Phi^-\left(\widehat{i}(A\otimes_T B\otimes_T C)\right)
    \]
    above.
\end{corollary}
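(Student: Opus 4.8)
The plan is to deduce this from Proposition~\ref{prop:uniqueness_of_assoc} by the same device that produces Corollary~\ref{cor:uniqueness_of_mult} out of Proposition~\ref{prop:uniqueness_of_i_mult}: one shows that $\phi_0$ is the \emph{universal} natural transformation of its kind, i.e.\ that it exhibits $X\mapsto\Phi^-(\widehat{i}(X))$ as the left Kan extension of $(A,B,C)\mapsto\Phi^-(\widehat{i}(A)\oY\widehat{i}(B)\oY\widehat{i}(C))$ along $(A,B,C)\mapsto A\otimes_T B\otimes_T C$.

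First I would bring the source functor into the form ``$A$, $B$, $C$ tensored against a $T\times T\times T$-module''. Combining the identification $\Phi^-(\widehat{i}(A)\oY\widehat{i}(B)\oY\widehat{i}(C))\cong(\widehat{i}(A)\otimes\widehat{i}(B)\otimes\widehat{i}(C))\otimes_{G\times G\times G}Y$ with the description $\widehat{i}(V)=i(\sHom_T(\cL,V))$ and three applications of the duality of Remarks~\ref{remark:tensor_i} and~\ref{remark:i_dual_i_w} (between $i$ and the $(1,-1)$-twist of restriction along $U^T$), one obtains a natural isomorphism
\[
\Phi^-\!\left(\widehat{i}(A)\oY\widehat{i}(B)\oY\widehat{i}(C)\right)\cong\left(\sHom_T(\cL,A)\otimes\sHom_T(\cL,B)\otimes\sHom_T(\cL,C)\right)\otimes_{T\times T\times T}\prescript{}{U^T\times U^T\times U^T\backslash}{Y}\big((1,-1),(1,-1),(1,-1)\big),
\]
where the three input modules are tensored against the three $T$-actions on $Y$ (which are permuted by the $S_3$-symmetry of $Y$). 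This is precisely the three-variable analogue of the computation of $r(i(A)\oY i(B))$ in the proof of Claim~\ref{claim:un_normalized_mult_on_i}; the functor is colimit preserving in each of $A$, $B$, $C$ because $\cL$ is dualizable (Theorem~A.1 of \cite{modules_of_zeta_integrals_arxiv}), which is what lets the duality remarks be applied.

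Next I would use the universal property of the threefold relative tensor product. A natural transformation out of a functor of the shape $(A,B,C)\mapsto(\cdots)\otimes_{T\times T\times T}N$ — $N$ a $T\times T\times T$-module, the inputs tensored against its three legs — into a functor of the shape $(A,B,C)\mapsto F(A\otimes_T B\otimes_T C)$ sees its source only through the quotient that forces the three $T$-actions on $N$ to coincide, hence factors uniquely through the arrow induced by $N\ra N\otimes_{T\times T\times T}\one_T$. By Proposition~\ref{prop:uniqueness_of_assoc}, composed with the isomorphisms $\tau(w,w,w)$ and $\F_2$, this last map is exactly $\F_2\circ\mu''\circ\tau(w,w,w)\co\prescript{}{U^T\times U^T\times U^T\backslash}{Y}((1,-1),(1,-1),(1,-1))\ra\cL_2$; feeding it through the rewriting of the first step — and using $\cL\cong\one_T$ to identify $\sHom_T(\cL,A)\otimes_T\sHom_T(\cL,B)\otimes_T\sHom_T(\cL,C)\otimes_T\cL_2$ with $\sHom_T(\cL,A\otimes_T B\otimes_T C)=\Phi^-(\widehat{i}(A\otimes_T B\otimes_T C))$ — the universal arrow becomes $\phi_0$. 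Existence and uniqueness of the factorization of $\phi$ follow.

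The only genuine work is in the bookkeeping of the first step: one must keep careful track of the modulus/Weyl twists $(1,-1)$, of the normalization factor $\cL$ built into $\widehat{i}$ and its cancellation against $\cL_2\cong\cL\otimes_T\cL$, and of the roles of the permutation $\tau(w,w,w)$ and the partial Fourier transform $\F_2$, so as to certify that the universal arrow supplied by Proposition~\ref{prop:uniqueness_of_assoc} is $\phi_0$ itself and not merely a twist of it. Once the source functor has been put in the form $(\cdots)\otimes_{T\times T\times T}N$, the reduction to Proposition~\ref{prop:uniqueness_of_assoc} is purely formal.
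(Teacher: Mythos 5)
Your proposal is correct and follows the same route the paper intends: the paper states the corollary as an immediate consequence of Proposition~\ref{prop:uniqueness_of_assoc} without further argument, and your write-up simply makes explicit the underlying mechanism (the source functor is tensoring against a $T\times T\times T$-module, so maps into functors of $A\otimes_T B\otimes_T C$ factor uniquely through the coequalization of the three $T$-actions, which the Proposition identifies with the target of $\phi_0$ up to the $\tau(w,w,w)$, $\F_2$, and $\cL$-twist bookkeeping). This matches the analogous deduction of Corollary~\ref{cor:uniqueness_of_mult} from Proposition~\ref{prop:uniqueness_of_i_mult}, and your careful tracking of the $\sHom_T(\cL,-)$ factors is, if anything, more precise than the paper's.
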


Finally, we can prove Proposition~\ref{prop:phi_minus_assoc_unique}:
\begin{proof}[Proof of Proposition~\ref{prop:phi_minus_assoc_unique}]
    We need to show that all natural maps $\phi$ as in the claim are symmetric under the action of $S_3$. We begin by observing that it is sufficient to show that they are all symmetric under the action of a single transposition.
    
    Indeed, the image $\sigma(\phi)$ of $\phi$ under any permutation $\sigma\in S_3$ is still such a natural map. Thus, if all such maps $\sigma(\phi)$ are invariant under a specific transposition, then they are invariant under all of its conjugates as well. But the conjugates of a transposition in $S_3$ generate it.
    
    Therefore, by Corollary~\ref{cor:uniqueness_of_assoc}, it is enough to show that the specific map $\phi_0$ is invariant under a specific transposition of our choice. We will show that $\phi_0$ is symmetric under the action of the transposition exchanging the left and right actions of $G$ on $Y$. Note that this is the transposition of $\M_2(F)$ inside $Y=S(\M_2(F)\times F)$.
    
    However, because $\mu''$ is clearly invariant under the transposition of $\M_2(F)$, the desired result is proven.
\end{proof}

\section{Intertwining Operator} \label{sect:intertwining}

Our goal for this section is to understand the intertwining operator in terms of the multiplicative structure on $\widehat{i}$. Let us begin by an informal explanation of our approach. In this section, we will make extensive use of the notions introduced in Subsection~\ref{subsect:cat_prereq}.

\subsection{Intertwining as Duality}

In this subsection, we will give an informal explanation of how the intertwining operator can arise out of the multiplicative structure of $\widehat{i}$. We will formalize and prove this explanation in Subsection~\ref{subsect:intertwining_construction} below.

The description of our approach is cleanest using the language of the \emph{Day convolution product} on the category of $\CC$-linear colimit preserving functors
\[
    \Fun^L(\Mod(T),\Mod(G)).
\]
Specifically, it turns out that one can assign $\Fun^L(\Mod(T),\Mod(G))$ a symmetric monoidal structure $\oDay$, satisfying the universal property that a map:
\[
    F\oDay F'\ra F''
\]
is the same as a natural morphism
\[
    F(A)\oY F'(B)\ra F''(A\otimes_T B).
\]
This symmetric monoidal structure has the property that its algebras are precisely the right-lax functors $\Mod(T)\ra\Mod(G)$.
 
For the moment, let us ignore the various normalization factors of $\cL$. Our constructions so far turn $\widehat{i}$ into a right-lax symmetric monoidal functor. This means that we have a unital, commutative and associative multiplication map
\[
    m_\mathrm{Day}\co\widehat{i}\oDay\widehat{i}\ra\widehat{i}.
\]
However, we also have a trace map on the algebra $\widehat{i}$, via Remark~\ref{remark:phi_minus_i_is_forgetful} (recall that we are ignoring factors of $\cL$, for the moment):
\[
    \Phi^-\circ\widehat{i}(A)\cong A.
\]
It makes sense to talk about trace maps, because the category
\[
    \Fun^L(\Mod(T),\Mod(G))
\]
carries a trace map of its own. That is, we observe that $\Fun^L(\Mod(T),\Mod(G))$ carries an action of the symmetric monoidal category $\Fun^L(\Mod(T),\Vect)$, equipped with its own Day convolution product. The trace functor
\[
    \Phi^-\co\Mod(G)\ra\Vect
\]
now turns $\Fun^L(\Mod(T),\Mod(G))$ into a Frobenius algebra in the ($2$-)category of $\Fun^L(\Mod(T),\Vect)$-linear presentable categories.

Thus, the functor $\widehat{i}$ becomes a commutative unital algebra with a trace map in the category $\Fun^L(\Mod(T),\Mod(G))$. In particular, it has a bi-linear pairing given by composing the multiplication map with the trace map:
\begin{equation} \label{eq:pairing_on_i}
    \Phi^-\circ(\widehat{i}\oDay\widehat{i})(A)\ra\Phi^-\circ\widehat{i}(A)\cong A.
\end{equation}
Now, Remark~\ref{remark:i_dual_i_w} implies that the functor $\widehat{i}$ is dualizable with respect to the pairing $\Phi^-(-\oDay-)$, and its dual is the functor $A\mapsto\widehat{i}(A(w))$.

Once again ignoring factors of $\cL$, the pairing \eqref{eq:pairing_on_i} induces a map between the functor $A\mapsto\widehat{i}(A)$ and its dual $A\mapsto\widehat{i}(A(w))$. This map turns out to be (a re-parametrization of) the intertwining operator for $\GL(2)$. Moreover, since $A\mapsto A(w)$ is a symmetric monoidal functor, the composition $A\mapsto\widehat{i}(A(w))$ is also an algebra with respect to $\oDay$. Our goal in this section will be to essentially show that this map between $A\mapsto\widehat{i}(A)$ and $A\mapsto\widehat{i}(A(w))$ is an isomorphism of algebras (up to $\cL$).

This will show that the trace pairing on $\widehat{i}$ is non-degenerate, and thus that it is a Frobenius algebra. We will also show that the intertwining operator respects the trace map, which will automatically give the usual result that it is self-inverse up to factors of L-functions.

\subsection{Construction of the Intertwining Operator} \label{subsect:intertwining_construction}

Let us re-state things more formally. The end result is made much more cumbersome by the presence of the normalization factors $\cL$. Therefore, before stating our main theorem for this section, Theorem~\ref{thm:inter}, we give the following corollary:
\begin{corollary} \label{cor:inter_map_of_algs_after_loc}
    The unital commutative algebra
    \[
        \widehat{i}[\eta^{-1},\eta(w)^{-1}]\in\Fun^L\left(\Mod(T)[\eta^{-1},\eta(w)^{-1}],\Mod(G)\right)
    \]
    along with the trace map $\Phi^-\circ\widehat{i}[\eta^{-1},\eta(w)^{-1}](A)\cong A$ is a Frobenius algebra.
    
    Moreover, the functor $\widehat{i}[\eta^{-1},\eta(w)^{-1}]$ is dual to its twist by $A\mapsto A(w)$ as a Frobenius algebra.
\end{corollary}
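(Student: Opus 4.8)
The plan is to assemble the corollary from pieces already established, all transported to the localization $\Mod(T)[\eta^{-1},\eta(w)^{-1}]$, where the inverted map $\eta\co\one_T\ra\cL$ becomes an isomorphism, so that $\cL\cong\one_T$ and hence $\widehat{i}\cong i$ canonically. Working there, Corollary~\ref{cor:hat_i_unital_after_inversion} makes $\widehat{i}[\eta^{-1},\eta(w)^{-1}]$ a unital commutative algebra in $\Fun^L(\Mod(T)[\eta^{-1},\eta(w)^{-1}],\Mod(G))$ for the Day convolution $\oDay$. The Weyl twist $A\mapsto A(w)$ of Definition~\ref{def:weyl_twist} is a symmetric monoidal autoequivalence of $\Mod(T)$ (immediate from its formula, with $A(w)(w)\cong A$), and a right-lax functor precomposed with a strong symmetric monoidal functor is again right-lax, so the twist $A\mapsto\widehat{i}[\eta^{-1},\eta(w)^{-1}](A(w))$ is also a unital commutative algebra there. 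Finally, Remark~\ref{remark:phi_minus_i_is_forgetful}, after the identification $\cL\cong\one_T$, supplies the trace $\lambda\co\Phi^-\circ\widehat{i}[\eta^{-1},\eta(w)^{-1}]\xrightarrow{\sim}\id$. It remains to show that the pairing ``multiply by $m$, then apply $\lambda$'' is perfect, and that the resulting self-duality is an isomorphism of algebras-with-trace.

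The pairing in question is
\[
    p\co\Phi^-\bigl(\widehat{i}(A)\oY\widehat{i}(B)\bigr)\xrightarrow{\Phi^-(m)}\Phi^-\circ\widehat{i}(A\otimes_T B)\xrightarrow{\lambda}A\otimes_T B.
\]
By the correspondence of Remark~\ref{remark:i_dual_i_w} (with $F=\widehat{i}$ and $F'=\id$), $p$ is the same datum as a natural transformation $\theta\co\widehat{i}(A)\ra\widehat{i}(A(w))$ --- the intertwining operator --- and one checks that $\widehat{i}$ with trace $\lambda$ is a Frobenius algebra whose dual is $A\mapsto\widehat{i}(A(w))$ if and only if $\theta$ is invertible. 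I would prove invertibility after the localization as follows: applying $\theta$ twice lands in $\widehat{i}(A(w)(w))\cong\widehat{i}(A)$, and the uniqueness principles of Proposition~\ref{prop:uniqueness_of_i_mult} and Corollary~\ref{cor:uniqueness_of_mult} --- together with their mirror images on the $w$-side, read off from Remark~\ref{remark:tensor_i} --- force the composite to be multiplication by a single explicit endomorphism of $\one_T$, namely the ratio of Hecke L-factors in the functional equation of Remark~\ref{remark:L_is_L_func}; this endomorphism is a unit times $\eta\cdot\eta(w)$, hence invertible precisely after inverting $\eta$ and $\eta(w)$. The partial-Fourier bookkeeping needed to pin this down is essentially the content of Claim~\ref{claim:mu_prime_same_as_mu}, which already computes how $\mu$ transforms under $\tau((3,2,1))$ and $\tau(w,w,w)$ on $Y$.

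With $\theta$ invertible, $\widehat{i}[\eta^{-1},\eta(w)^{-1}]$ is a Frobenius algebra with dual $A\mapsto\widehat{i}[\eta^{-1},\eta(w)^{-1}](A(w))$; it remains to see that this duality is an isomorphism of Frobenius algebras, i.e.\ that $\theta$ is compatible with the two multiplications and the two traces. Multiplicativity holds because the multiplication of the $w$-twisted functor is obtained from $m$ by transporting along the $\tau(w,w,w)$-action on $Y$, which is exactly the transformation built into $\theta$ through Remark~\ref{remark:i_dual_i_w}; this is once more a consequence of the $S_3$-symmetry of $Y$ and Claim~\ref{claim:mu_prime_same_as_mu}. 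Trace compatibility is visible from the $w$-symmetry of the isomorphism of Remark~\ref{remark:phi_minus_i_is_forgetful}, and together with the preceding it recovers for free the classical fact that the twofold composite of $\theta$ is the $\gamma$-factor, now automatically a unit.

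The main obstacle is the perfectness step: identifying exactly which element of $\one_T$ the twofold composite of $\theta$ multiplies by, and confirming that inverting $\eta$ and $\eta(w)$ is precisely what makes it invertible. Everything else is formal once the Day-convolution and relative-Frobenius language of Subsection~\ref{subsect:cat_prereq} is in place and one knows the Weyl twist is symmetric monoidal, but extracting this L-factor obstruction requires the explicit partial-Fourier-transform computation, the bulk of which is already carried out in Claim~\ref{claim:mu_prime_same_as_mu}.
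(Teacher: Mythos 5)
Your high-level skeleton matches the paper's: the corollary is exactly the statement that the intertwining operator $M$ is an invertible map of algebras-with-trace, which is the content of Theorem~\ref{thm:inter}. But the three steps you must actually prove are each left resting on tools that do not deliver them. First, your route to invertibility asks Proposition~\ref{prop:uniqueness_of_i_mult} and Corollary~\ref{cor:uniqueness_of_mult} to pin down the composite $\theta\circ\theta$ as multiplication by an explicit element of $\one_T$; but those results only classify natural maps \emph{out of} $i(A)\oY i(B)$, not natural transformations $\widehat{i}(A)\ra\widehat{i}(\cL\otimes_T\cL(w)\otimes_T A)$, so they say nothing about $\theta\circ\theta$. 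The paper instead \emph{derives} the self-inverse identity $M\circ M=\eta\otimes\eta(w)$ formally from the facts that $M$ respects the trace and the multiplication (Items~(2) and~(4) of Theorem~\ref{thm:inter}), since $M$ is by construction the duality induced by that trace and multiplication; invertibility after inverting $\eta,\eta(w)$ then falls out. So the logical order is the reverse of yours, and compatibility cannot be deferred to a ``remaining'' step.

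Second, trace compatibility is not ``visible from the $w$-symmetry'' of Remark~\ref{remark:phi_minus_i_is_forgetful} --- that remark only identifies $\Phi^-\circ i$ with the forgetful functor and carries no symmetry statement. This is the one genuinely computational input: the paper first extracts an explicit integral formula for $M$ (Claim~\ref{claim:explicit_intertwiner}) and then verifies the trace diagram by evaluating both compositions on the test functions $f_n$ supported on $U\cdot K_n$. Third, multiplicativity of $M$ is not a consequence of the $S_3$-symmetry of $Y$ or of Claim~\ref{claim:mu_prime_same_as_mu} (which concerns the unitality of $m$ against the canonical map $\one_\Ydown\ra i(\one_T)$, not the intertwiner); in the paper it is deduced from unit-compatibility using Proposition~\ref{prop:mult_is_isom}, the nontrivial fact that $m_{\mathrm{Day}}\co\widehat{i}\oDay\widehat{i}\ra\widehat{i}$ is an isomorphism, so that an algebra map is determined by its effect on the unit. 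Your proposal never invokes that proposition, and without it (or an equivalent) the multiplicativity step has no proof. In short: correct framing, but the perfectness, trace, and multiplicativity claims each need the specific arguments above rather than the cited symmetry and uniqueness principles.
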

Here, $\eta(w)$ is the image of $\eta$ under the functor $A\mapsto A(w)$ of Definition~\ref{def:weyl_twist}. Moreover, we assign an algebra structure and trace map to
\[
    \widehat{i}(w)[\eta^{-1},\eta(w)^{-1}]
\]
by transporting the algebra structure and trace functional of $\widehat{i}[\eta^{-1},\eta(w)^{-1}]$ along $A\mapsto A(w)$.

In order to state Theorem~\ref{thm:inter}, let us first more formally define the relevant Frobenius structures, and construct the intertwining operator.
\begin{definition} \label{def:day_conv}
    We denote the \emph{Day convolution product} on the categories of $\CC$-linear colimit preserving functors
    \[
        \Fun^L(\Mod(T),\Mod(G)),\qquad \Fun^L(\Mod(T),\Vect)
    \]
    by $\oDay$.
    
    We think of $\Fun^L(\Mod(T),\Mod(G))$ as a $\Fun^L(\Mod(T),\Vect)$-linear category.
\end{definition}

\begin{remark} \label{remark:day_conv}
    For the sake of being explicit, we can also give the following description. If
    \[
        F\co\Mod(T)\ra\Mod(G),\qquad F'\co\Mod(T)\ra\Mod(G)
    \]
    are given by
    \[
        A\mapsto M\otimes_T A,\qquad A\mapsto M'\otimes_T A
    \]
    respectively, then $F\oDay F'$ is given by
    \[
        A\mapsto (M\oY M')\otimes_{T\times T}\one_T\otimes_T A.
    \]
    
    It is possible to give a similar explicit description for the Day convolution $F\oDay F'$ when both of $F,F'$ lie in $\Fun^L(\Mod(T),\Vect)$, or when one lies in $\Fun^L(\Mod(T),\Vect)$ and the other lies in $\Fun^L(\Mod(T),\Mod(G))$. 
\end{remark}

\begin{remark}
    The map $m$ of Claim~\ref{claim:mult_on_i} gives an associative multiplication map:
    \[
        m_\mathrm{Day}\co\widehat{i}\oDay\widehat{i}\ra\widehat{i}.
    \]
    This turns $\widehat{i}$ into a non-unital commutative algebra in $\Fun^L(\Mod(T),\Mod(G))$.
\end{remark}

\begin{definition}
    We give the category $\Fun^L(\Mod(T),\Mod(G))$ the structure of a Frobenius algebra in $\Fun^L(\Mod(T),\Vect)$-linear presentable categories inherited from the Frobenius algebra structure of $\Mod(G)$.
\end{definition}

\begin{remark}
    More explicitly, we equip $\Fun^L(\Mod(T),\Mod(G))$ with the trace functor
    \[
        \Phi^-\circ-\co\Fun^L(\Mod(T),\Mod(G))\ra\Fun^L(\Mod(T),\Vect).
    \]
    This functor has the property that its composition with the multiplication
    \begin{multline*}
        \oDay\co \\
        \Fun^L(\Mod(T),\Mod(G))\otimes_{\Fun^L(\Mod(T),\Vect)}\Fun^L(\Mod(T),\Mod(G)) \\
        \ra\Fun^L(\Mod(T),\Mod(G))
    \end{multline*}
    induces a self-duality on $\Fun^L(\Mod(T),\Mod(G))$ relative to the base category $\Fun^L(\Mod(T),\Vect)$.
\end{remark}

We now construct the intertwining operator, keeping careful track of factors of $\cL$.
\begin{construction} \label{const:normalized_intertwining}
    The trace map:
    \[
        \Phi^-\circ\widehat{i}(V)\cong\sHom_T(\cL,V),
    \]
    and the multiplication
    \[
        m\co\widehat{i}(A)\oY\widehat{i}(B)\ra\widehat{i}(A\otimes_T B)
    \]
    induce a duality:
    \[
        M\co\widehat{i}(A)\ra\widehat{i}(\cL\otimes_T A(w))
    \]
    using Remark~\ref{remark:i_dual_i_w}.
\end{construction}

Our main theorem for this section is:
\begin{theorem} \label{thm:inter}
    The following diagrams commute.
    \begin{enumerate}
        \item \label{item:inter_self_inv} The intertwining operator is self-inverse:
        \[\xymatrix{
            \widehat{i}(A) \ar[r]^-M \ar@/_2pc/[rr]_{\eta\otimes\eta(w)} & \widehat{i}(\cL\otimes_T A(w)) \ar[r]^-M & \widehat{i}(\cL\otimes_T\cL(w)\otimes_T A).
        }\]
        \item \label{item:inter_trace} The intertwining operator does not affect traces:
        \[\xymatrix{
            \Phi^-\circ\widehat{i}(A) \ar[r] \ar[d]^{\Phi^-(M)} & \sHom_T(\cL,A) \ar[r]^-{\eta} & \sHom_T(\cL,\cL\otimes_T A) \ar[d]^{=} \\
            \Phi^-\circ\widehat{i}(\cL\otimes_T A(w)) \ar[rr] & & A(w).
        }\]
        \item \label{item:inter_unit} The image of the unit $\one_\Ydown$ is invariant to the intertwining operator:
        \[\xymatrix{
            \one_\Ydown \ar[d]^\id \ar[r] & \widehat{i}(\cL) \ar[rr]^-{\widehat{i}(\cL\otimes_T\eta(w))} & & \widehat{i}(\cL\otimes_T\cL(w)) \ar[d]^\id \\
            \one_\Ydown \ar[r] & \widehat{i}(\cL) \ar[rr]^-M & & \widehat{i}(\cL\otimes_T\cL(w)).
        }\]
        \item \label{item:inter_mult} The intertwining operator is an algebra map:
        \[\xymatrix{
            \widehat{i}(A)\oY\widehat{i}(B) \ar[dd]^{M\oY M} \ar[r]^m & \widehat{i}(A\otimes_T B) \ar[d]^M \\
            & \widehat{i}(\cL\otimes_T(A\otimes_T B)(w)) \ar[d]^{\eta} \\
            \widehat{i}(\cL\otimes_T A(w))\oY\widehat{i}(\cL\otimes_T B(w)) \ar[r]^m & \widehat{i}(\cL\otimes_T\cL\otimes_T(A\otimes_T B)(w)).
        }\]
    \end{enumerate}
\end{theorem}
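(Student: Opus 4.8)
The four items of Theorem~\ref{thm:inter} are identities between natural transformations of $\CC$-linear colimit-preserving functors built from $\widehat{i}$, the monoidal product $\oY$, the multiplication $m$ of Claim~\ref{claim:mult_on_i}, the trace isomorphism $\Phi^-\circ\widehat{i}(V)\cong\sHom_T(\cL,V)$, the embedding $\eta$, and the Weyl twist $A\mapsto A(w)$ (which, one checks from Definition~\ref{def:weyl_twist}, is a symmetric monoidal involution of $\Mod(T)$). My plan rests on three observations common to all four parts. First, each side of each diagram is a natural transformation of colimit-preserving functors, so it suffices to check the diagram after evaluating at $A=\one_T$ (and at $A=B=\one_T$ in item~\ref{item:inter_mult}), where every object in sight is an explicit space such as $S(U\backslash G)$, $\cL$, or a quotient of $Y$. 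Second, the universality statements of Sections~\ref{sect:mult_on_i} and~\ref{sect:assoc} — Corollary~\ref{cor:uniqueness_of_mult} with its following remark (using Proposition~\ref{prop:mult_is_isom}), and Proposition~\ref{prop:uniqueness_of_assoc} with Corollary~\ref{cor:uniqueness_of_assoc} — force the two legs of each diagram to factor through a single universal map ($m$, or $\phi_0$), so that their difference is pinned down by an endomorphism of $\cL$ or of $\cL_2$, and such an endomorphism is detected on the indicator of a small neighbourhood of $(\lambda,y)=(1,1)$, as in the proof of Claim~\ref{claim:mu_prime_same_as_mu}. Third, when a diagram lives in $\Mod(G)$ we first apply $\Phi^-$ and then descend, using — exactly as in the proof of Theorem~\ref{thm:hat_i_rlax} — that the image of the difference is a degenerate representation with no $\SL_2(F)$-invariant vector and therefore vanishes. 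I should also note that, after inverting $\eta$ and $\eta(w)$, all four statements reduce to the abstract facts that the pairing-induced self-duality of a unital commutative Frobenius algebra is self-inverse, respects the trace, fixes the unit, and is an algebra morphism; so the genuine content is the refined unlocalized form, and the plan above is precisely what is needed to upgrade the localized statement Corollary~\ref{cor:inter_map_of_algs_after_loc} to the unlocalized one.

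I would treat item~\ref{item:inter_mult} first, as it is the cleanest. In the Day-convolution language of Subsection~\ref{subsect:intertwining_construction}, the multiplication $m_{\mathrm{Day}}$ is commutative and associative by Theorem~\ref{thm:hat_i_rlax}, and since $A\mapsto A(w)$ is symmetric monoidal the twisted functor $A\mapsto\widehat{i}(\cL\otimes_T A(w))$ carries its own (non-unital, commutative) multiplication; the map $M$ of Construction~\ref{const:normalized_intertwining} is by construction ``apply $m$, then apply the trace'', so item~\ref{item:inter_mult} is the standard fact that such a comparison map intertwines the two multiplications, up to the bookkeeping factor of $\cL$ recorded by the $\eta$ on the right edge of the square. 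Concretely, both legs of the square are natural maps $\widehat{i}(A)\oY\widehat{i}(B)\to\widehat{i}(G(A\otimes_T B))$ for a functor $G\co\Mod(T)\to\Mod(T)$; by Corollary~\ref{cor:uniqueness_of_mult} and its following remark, both factor uniquely through $m$, so they agree as soon as the induced natural maps $\cL\otimes_T X\to\cL\otimes_T\cL(w)\otimes_T X$ agree, which in turn reduces by naturality to a comparison of two maps $\cL\to\cL_2(w)$; this follows from the symmetry of $\mu''$ under the transposition of $\M_2(F)$ inside $Y=S(\M_2(F)\times F^\times)$ — the very symmetry already exploited in the proof of Proposition~\ref{prop:phi_minus_assoc_unique}.

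Item~\ref{item:inter_unit} I would derive directly from the almost-unitality result Proposition~\ref{prop:hat_i_almost_unital}: the canonical map $\one_\Ydown\to i(\one_T)=\widehat{i}(\cL)$ of Construction~\ref{const:unit_hat_i_mult} is the unit for $m$ up to the discrepancy $\eta$, and since $M$ is assembled from the commutative multiplication $m$ and the trace, capping the image of $\one_\Ydown$ against $M$ returns it up to the twist $\eta(w)$; spelling out the two legs of the pentagon and invoking Proposition~\ref{prop:hat_i_almost_unital} once for each instance of $\eta$ gives the identity. Item~\ref{item:inter_trace} is then essentially the dual statement: the trace functional on $\widehat{i}$ is ``pair against $\one_\Ydown$'', the transported trace on the Weyl twist is ``pair against the image of $\one_\Ydown$'', and item~\ref{item:inter_unit} together with the compatibility of $M$ with $m$ from item~\ref{item:inter_mult} forces the two to coincide up to $\eta$; alternatively, I would simply evaluate item~\ref{item:inter_trace} at $A=\one_T$ and run the same partial-Fourier-transform computation as in the proof of Proposition~\ref{prop:hat_i_almost_unital}.

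Finally, item~\ref{item:inter_self_inv} is the zig-zag identity for the duality datum of Remark~\ref{remark:i_dual_i_w}: the composite of $M$ with the ``reverse'' $M$ (the one attached to $A(w)$, using $(A(w))(w)=A$) would be the identity for a perfect pairing, and here the trace pairing is perfect only after inverting $\eta$ and $\eta(w)$, so the failure is recorded by exactly $\eta\otimes\eta(w)$. The plan is to evaluate $M\circ M$ at $A=\one_T$, note by the universality of $m$ that it is determined by an endomorphism of a suitable twist of $\cL$, and compute this endomorphism to be multiplication by $\eta\otimes\eta(w)$ through the same flavour of explicit manipulation — change of variables on $\GL_2(F)$ together with the normalization of the partial Fourier transform $\F$ of Definition~\ref{def:partial_fourier} — used in the proof of Claim~\ref{claim:mu_prime_same_as_mu}. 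I expect this last computation, and more generally the consistent tracking of all the absolute-value factors, Weyl twists and $\cL$-normalizations through the partial Fourier transforms and the $S_3\ltimes G^3$-action on $Y$, to be the main obstacle: the abstract Frobenius-algebra formalism disposes of the localized statements for free, but pulling the normalizations back to the unlocalized level so that the correction terms come out to be \emph{exactly} $\eta$ and $\eta(w)$ — rather than some further twist — requires the kind of delicate explicit bookkeeping that occupies the rest of this part of the paper.
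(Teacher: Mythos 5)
Your plan assembles the right ingredients, but its logical organization is essentially the reverse of the paper's, and one of your proposed shortcuts does not close. The paper makes item~\eqref{item:inter_trace} the sole computational input: it is checked at $A=\cL$ on generators of $\one_T$ using the explicit formula for $M$ (Claim~\ref{claim:explicit_intertwiner}) and an explicit family of test functions; item~\eqref{item:inter_unit} then follows from item~\eqref{item:inter_trace} by the adjunction identifying $\Hom(\one_\Ydown,i(V))$ with the roughening of $\Phi^-\circ i(V)$; item~\eqref{item:inter_mult} follows from item~\eqref{item:inter_unit} because Proposition~\ref{prop:mult_is_isom} reduces ``respects multiplication'' to ``respects the unit''; and item~\eqref{item:inter_self_inv} is then formal from items~\eqref{item:inter_trace} and~\eqref{item:inter_mult}. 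You instead propose to prove items~\eqref{item:inter_unit} and~\eqref{item:inter_mult} first and deduce item~\eqref{item:inter_trace}, and to prove item~\eqref{item:inter_self_inv} by a separate explicit computation. That last choice costs you an unnecessary computation (the paper gets~\eqref{item:inter_self_inv} for free from the other items), but is not wrong.

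The genuine soft spot is your derivation of item~\eqref{item:inter_unit} from Proposition~\ref{prop:hat_i_almost_unital} alone. That proposition controls the composite ``unit, then $m$''; but $M$ is defined by composing $m$ with the trace identification $\Phi^-\circ\widehat{i}(V)\cong\sHom_T(\cL,V)$ and then dualizing via Remark~\ref{remark:i_dual_i_w}, whose canonical pairing is the $U^T$-coinvariants functional of Remark~\ref{remark:tensor_i}, not the $\theta$-coinvariants functional. Identifying the image of the unit under $M$ therefore requires comparing these two functionals, and that comparison is exactly the explicit computation the paper performs for item~\eqref{item:inter_trace}; it is not a formal consequence of almost-unitality. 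Likewise, the transposition-symmetry of $\mu''$ governs triple products and yields associativity, but it does not by itself give item~\eqref{item:inter_mult}, because both legs of that square involve the trace through $M$; the working route is the paper's, namely that Proposition~\ref{prop:mult_is_isom} reduces~\eqref{item:inter_mult} to~\eqref{item:inter_unit}. Your stated fallback --- evaluate item~\eqref{item:inter_trace} at $A=\one_T$ and run the explicit computation --- is the correct repair and is where the real content of the theorem lives, so the plan is salvageable once you accept that this one computation cannot be formalized away.
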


Before proving Theorem~\ref{thm:inter}, there are two things that we need. The first is to have an explicit description of the map
\[
    M\co\widehat{i}(A)\ra\widehat{i}(\cL\otimes_T A(w))
\]
of Construction~\ref{const:normalized_intertwining}. The second will be a variant of Proposition~\ref{prop:uniqueness_of_i_mult}.

We begin with the description of the intertwining operator $M$. The reader should note that the result is slightly different from the usual normalization of the intertwining operator. Specifically, we claim that this map is the Fourier transform of the usual expression:
\begin{claim} \label{claim:explicit_intertwiner}
    The map
    \[
        M\co i(\one_T)\ra i(\cL(w))
    \]
    is given explicitly by the formula:
    \begin{align*}
        \prescript{}{U\backslash}{i(\one_T)} & =\prescript{}{U\backslash}{S(U\backslash G)}\ra\cL(w) \\
        f(g) & \mapsto\abs{y}\int f\left(\begin{pmatrix} & \frac{1}{t} \\ -yt & \end{pmatrix}\begin{pmatrix}1 & u \\ & 1\end{pmatrix}\right)e\left(-y\lambda t\right)\abs{t}\d{t}\d{u},
    \end{align*}
\end{claim}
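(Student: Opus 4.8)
The plan is to unwind Construction~\ref{const:normalized_intertwining} into an explicit integral. The map in question is assembled from three ingredients, each with an explicit description: the multiplication $m\co i(A)\oY i(B)\ra i(\cL\otimes_T A\otimes_T B)$ of Claim~\ref{claim:un_normalized_mult_on_i}, induced by the evaluation map $\mu\co Y\ra\cL$, $\Psi(g,y)\mapsto\Psi\left(\begin{pmatrix}0 & \\ & \lambda\end{pmatrix},y\right)$, of Construction~\ref{const:mult_i_one_T}; the trace isomorphism $\Phi^-\circ i(V)\cong V$ of Remark~\ref{remark:phi_minus_i_is_forgetful}; and the duality correspondence of Remark~\ref{remark:i_dual_i_w}, which turns the pairing $\Phi^-\!\bigl(i(A)\oY i(B)\bigr)\ra\cL\otimes_T A\otimes_T B$ obtained by composing $m$ with the trace into a natural morphism $i(A)\ra i\bigl((\cL\otimes_T A)(w)\bigr)$; specializing $A=\one_T$ yields $M\co i(\one_T)\ra i(\cL(w))$. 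So the first step is to write each of these three maps in coordinates — using the isomorphism $S(G)_{/U}(1,-1)\xrightarrow{\sim}S(U\backslash G)$ of Remark~\ref{remark:formulas_for_i}, the explicit form of Remark~\ref{remark:tensor_i} identifying $\Phi^-\bigl(i(A)\oY i(B)\bigr)$ with $A\otimes_T B(w)$ through the big Bruhat cell, and the adjunction $(r,i)$ to turn a map into $i(-)$ into a $T$-linear map out of $\prescript{}{U\backslash}{i(\one_T)}=\prescript{}{U\backslash}{S(U\backslash G)}$ — and then compose them.

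Concretely, this becomes a big-cell computation. Using $G=P\sqcup PwU$, a function in $\prescript{}{U\backslash}{S(U\backslash G)}$ is evaluated on the big cell, whose points are parametrized by $\begin{pmatrix} & 1/t \\ -yt & \end{pmatrix}\begin{pmatrix}1 & u \\ & 1\end{pmatrix}$, as in the displayed formula. The evaluation at $\begin{pmatrix}0 & \\ & \lambda\end{pmatrix}$ coming from $\mu$, once transported through the duality of Remark~\ref{remark:i_dual_i_w} and the action of the Weyl element $w$ on $Y$ — which, by the definition of the $S_3\ltimes G^3$-module $Y$, is a Fourier transform over $\M_2(F)$ against the pairing $\left<\cdot,\cdot\right>$ — produces exactly the oscillatory factor $e(-y\lambda t)$ and the weight $\abs{t}$; this is the precise sense in which $M$ is ``the Fourier transform of the usual intertwining operator'', and it is also what forces the Weyl twist $(w)$ on the target, in accordance with Definition~\ref{def:weyl_twist}. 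The remaining $\abs{y}$ prefactor and the measure normalizations are accounted for by the $(1,-1)$-twists in Remarks~\ref{remark:formulas_for_i} and~\ref{remark:tensor_i}, the factor $e(-v)\d{v}$ in the trace isomorphism of Remark~\ref{remark:phi_minus_i_is_forgetful}, and the self-dual normalization of $\d{z}$ on $\M_2(F)$.

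The main obstacle is not conceptual but lies in the bookkeeping of all these twists and normalizations: the character $\theta$ of $U$ (which enters both $\Phi^-$ and the identification $I_{RL}(\one_\Ydown)\otimes_G(-)\cong\Phi^-(-)$), the various $\abs{a/d}$-twists, and the Fourier self-duality conventions must be kept consistent throughout so that the final constants emerge exactly as in the displayed formula and not off by a power of $\abs{\,\cdot\,}$ or a sign. To sidestep chasing the overall scalar by hand, the cleanest conclusion is to appeal to the uniqueness principle of Proposition~\ref{prop:uniqueness_of_i_mult}: natural maps of the shape in question span a one-dimensional space, so once the candidate integral is checked to be a well-defined $G$-equivariant morphism $i(\one_T)\ra i(\cL(w))$ of the correct $T$-weight, it suffices to compare it with $m$ composed with the trace on a single explicit test function — for instance the indicator of a small neighbourhood of the identity — exactly as was done in the proof of Claim~\ref{claim:mu_prime_same_as_mu}.
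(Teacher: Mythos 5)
Your main route coincides with the paper's: the proof is exactly the unwinding you describe, through $\mu$ of Construction~\ref{const:mult_i_one_T}, the trace isomorphism of Remark~\ref{remark:phi_minus_i_is_forgetful}, and the duality of Remark~\ref{remark:i_dual_i_w}. Concretely, the pairing induced by the multiplication is written as the explicit distribution
\[
    f(g^{-1})\mapsto\abs{y}^2\int f(g)\,\delta(y\det g-1)\,e\left(-y\left<g,\begin{pmatrix}0 & \\ & \lambda\end{pmatrix}\right>\right)\d{g},
\]
where the oscillatory kernel indeed arises from the Fourier-transform description of the $w$-action on $Y$, as you say, and a change of variables along the big Bruhat cell (lower unipotent times diagonal times upper unipotent, with $ad=1/y$) produces the displayed formula for $M$, constants included, in a few lines. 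So the ``bookkeeping'' you want to avoid is in fact the entire content of the paper's proof, and it is short.

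The one genuine flaw is in your closing shortcut. Proposition~\ref{prop:uniqueness_of_i_mult} does not say that natural maps of the relevant shape span a one-dimensional space; it says they factor \emph{uniquely through} $\mu$, so they are parametrized by $T$-module maps out of $\cL$, a space much larger than $\CC$. Hence your candidate integral and the map $M$ of Construction~\ref{const:normalized_intertwining} differ a priori by a $T$-endomorphism of $\cL$, and agreement on a single test function cannot force that endomorphism to be the identity. Note that when the paper uses this very trick (in the proof of Claim~\ref{claim:mu_prime_same_as_mu}), it tests on the whole family of indicators of all sufficiently small neighbourhoods of $\lambda=y=1$, so that $T$-equivariance pins the endomorphism down on the copy of $\one_T$ inside $\cL$, which is what suffices. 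To repair your argument, either upgrade the final comparison to such a $T$-generating family (and justify that a $T$-endomorphism of $\cL$ is determined by its restriction to $\one_T$), or drop the shortcut altogether and carry the unwinding through to the end: since the pairing defining $M$ is by construction induced by $\mu$, the direct computation already yields the normalization, which is precisely how the paper concludes.
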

\begin{proof}[Proof of Claim~\ref{claim:explicit_intertwiner}]
    Let us prove this. The pairing
    \[
        I_{RL}(i(A))\otimes_G i(B)\ra\cL\otimes_T A\otimes_T B
    \]
    induced by the multiplication is explicitly given by the map
    \[
        \prescript{}{U^T\times U^T\backslash}{S(G)}((1,-1),(1,-1))\ra\cL
    \]
    defined as:
    \begin{equation*}
        f(g^{-1})\mapsto\abs{y}^2\int f(g)\delta(y\det{g}-1)e\left(-y\left<g,\begin{pmatrix}0 & \\ & \lambda\end{pmatrix}\right>\right)\d{g}. 
    \end{equation*}
    
    Let us simplify this expression:
    \begin{align*}
        f(g^{-1}) & \mapsto\abs{y}^2\int f\left(\begin{pmatrix}a & b \\ c & d\end{pmatrix}\right)\delta(y(ad-bc)-1)e\left(-y\lambda a\right)\d{a}\d{b}\d{c}\d{d}= \\
        & =\abs{y}\int f\left(\begin{pmatrix}1 & \\ u & 1\end{pmatrix}\begin{pmatrix}a & \\ & \frac{1}{ya}\end{pmatrix}\begin{pmatrix}1 & v \\ & 1\end{pmatrix}\right)e\left(-y\lambda a\right)\abs{a}\d{a}\d{u}\d{v}.
    \end{align*}
    Thus, we get that $M\co i(A)\ra i(\cL(w))$ is induced from the map
    \[
        \prescript{}{U^T\backslash}{i(\one_T)}(1,-1)=\prescript{}{U^T\backslash}{S(U\backslash G)}(1,-1)\ra\cL
    \]
    given by
    \[
        f(g)\mapsto\abs{y}\int f\left(\begin{pmatrix}\frac{1}{a} & \\ & ya\end{pmatrix}\begin{pmatrix}1 & \\ v & 1\end{pmatrix}\right)e\left(-y\lambda a\right)\abs{a}\d{a}\d{v}.
    \]
    This gives the desired result.
\end{proof}

Finally, we need the following variant of the uniqueness result of Proposition~\ref{prop:uniqueness_of_i_mult}: 
\begin{proposition} \label{prop:mult_is_isom}
    The map
    \[
        m_\mathrm{Day}\co\widehat{i}\oDay\widehat{i}\ra\widehat{i}
    \]
    given by the multiplication is an isomorphism.
\end{proposition}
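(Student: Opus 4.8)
The plan is to prove Proposition~\ref{prop:mult_is_isom} by reducing it, via the explicit description of the Day convolution product in Remark~\ref{remark:day_conv}, to a statement about $T$-modules that can be attacked with the same orbit-by-orbit analysis used for Proposition~\ref{prop:uniqueness_of_i_mult}. Concretely, writing $\widehat{i}(A)=i(\sHom_T(\cL,A))$ and using that $i$ is given by tensoring with the bi-module $i(\one_T)$, the functor $\widehat{i}\oDay\widehat{i}$ is computed (as in Remark~\ref{remark:day_conv}) by the object $\big(i(\one_T)\oY i(\one_T)\big)\otimes_{T\times T}\one_T$, which by Remark~\ref{remark:formulas_for_i} and Remark~\ref{remark:tensor_i} is controlled by the co-invariants $\prescript{}{U^T\times U\times U^T\backslash}{Y}$ with the appropriate twists, exactly the object appearing in Proposition~\ref{prop:uniqueness_of_i_mult}. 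The map $m_\mathrm{Day}$ is, under this identification, precisely the map $\mu$ of Construction~\ref{const:mult_i_one_T} (with its $\sHom_T(\cL,-)$ bookkeeping). Thus the content is to check that $m_\mathrm{Day}$ is an isomorphism of functors, for which it suffices to check it on $\one_T$ (since all functors involved are colimit-preserving and $\one_T$ generates $\Mod(T)$), and there it becomes the assertion of Proposition~\ref{prop:uniqueness_of_i_mult} after untwisting the normalization by $\cL$.

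The key steps, in order, would be: first, spell out the identification $\widehat{i}\oDay\widehat{i}\cong$ (the functor represented by) $\big(i(\one_T)\oY i(\one_T)\big)\otimes_{T\times T}\one_T$ twisted by the two copies of $\sHom_T(\cL,-)$, using Remark~\ref{remark:day_conv}; second, apply Remark~\ref{remark:tensor_i} (equivalently the distribution-theoretic computation behind Claim~\ref{claim:i_mult_distribution}) to rewrite the relevant co-invariants of $i(\one_T)\oY i(\one_T)$ as $\prescript{}{U^T\times U\times U^T\backslash}{Y}$ with the twists $((1,-1),(0,0),(1,-1))$; third, invoke Proposition~\ref{prop:uniqueness_of_i_mult} directly: it says exactly that forcing the three $T$-actions to agree identifies this co-invariant space with $\cL$, and this identification is implemented by $\mu$, i.e. by $m_\mathrm{Day}$; fourth, bookkeep the $\cL$-normalization — since $\widehat{i}(A)=i(\sHom_T(\cL,A))$, the factor of $\cL$ produced by the multiplication on $i$ is precisely cancelled, so $m_\mathrm{Day}\co\widehat{i}\oDay\widehat{i}\ra\widehat{i}$ is an isomorphism rather than merely a map with an extra $\cL$. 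Finally, note that once we know it is an isomorphism on the generator $\one_T$, colimit-preservation of both sides extends this to all of $\Mod(T)$.

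I expect the main obstacle to be the second step: making the identification of $\big(i(\one_T)\oY i(\one_T)\big)$-type co-invariants with the co-invariant space $\prescript{}{U^T\times U\times U^T\backslash}{Y}((1,-1),(0,0),(1,-1))$ completely precise, including getting every twist by $(1,-1)$ and every side ($U$ versus $U^T$) correct, and checking that the map $m_\mathrm{Day}$ really does correspond to $\mu$ and not to some other map of the same shape. This is exactly the kind of computation that Claim~\ref{claim:i_mult_distribution} and its proof already carry out, so the work is to cite and reuse that analysis rather than repeat it; the proof proper can then be quite short, essentially ``apply Remark~\ref{remark:day_conv}, reduce to $\one_T$, and invoke Proposition~\ref{prop:uniqueness_of_i_mult}.'' One should also remark that the same argument, applied to the triple Day convolution, recovers Corollary~\ref{cor:uniqueness_of_assoc}'s universality statement, but that is not needed here.
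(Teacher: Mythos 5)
There is a genuine gap at the heart of your reduction. Proposition~\ref{prop:uniqueness_of_i_mult} does not compute the object $\left(\widehat{i}(\one_T)\oY\widehat{i}(\one_T)\right)\otimes_{T\times T}\one_T$ itself: it computes its coinvariants under the \emph{middle} copy of $U$ (that is the meaning of $\prescript{}{U^T\times U\times U^T\backslash}{Y}$), i.e.\ the parabolic restriction of the source of $m_\mathrm{Day}$, which is how the multiplication is constructed by adjunction in the first place. The Day convolution $\widehat{i}\oDay\widehat{i}$ evaluated at $\one_T$ is a full $G$-module on which the middle $G$-action survives, and identifying it with $\cL$ via $\mu$ conflates a $G$-module with its Jacquet-type coinvariants. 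What Proposition~\ref{prop:uniqueness_of_i_mult} actually buys is the \emph{uniqueness/universality} of the multiplication (this is exactly Corollary~\ref{cor:uniqueness_of_mult}); a universal map into parabolically induced targets need not be injective or surjective, since its kernel and cokernel can be degenerate, i.e.\ killed by the relevant coinvariants functor. So the step ``invoke Proposition~\ref{prop:uniqueness_of_i_mult} and untwist by $\cL$'' cannot yield bijectivity. (Your reduction to $A=\one_T$ via colimit-preservation is fine, and is also how the paper phrases the statement.)

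This is precisely where the paper's proof has to do real work, none of which appears in your outline. It first shows that $\Phi^-(m)$ is an isomorphism, and this is \emph{not} deduced from Proposition~\ref{prop:uniqueness_of_i_mult} but from the explicit intertwining computation of Claim~\ref{claim:explicit_intertwiner} (note also that $r\circ i$ and $\Phi^-\circ(\,\cdot\,\oY\,\cdot\,)$ involve the opposite-unipotent coinvariants, which by the geometric lemma have two graded pieces, so even ``$r(m)$ is an isomorphism'' is not immediate from the source computation). Knowing $\Phi^-(m)$ is an isomorphism only says the kernel and cokernel are degenerate representations, and the remaining two steps handle exactly that: injectivity is proved by showing the relevant coinvariant space has no $\SL_2(F)$-invariant vectors (here the orbit analysis of Proposition~\ref{prop:uniqueness_of_i_mult} is reused, but applied to $\prescript{}{U^T\times U\times U^T\backslash}{Y}\otimes_{T\times\{1\}\times T}\one_T$, i.e.\ after taking further coinvariants, to control the degenerate part of the kernel); surjectivity requires showing that $m$ still surjects onto the degenerate quotient $\widehat{i}(\one_T)_{/\SL_2(F)}=D=S(F^\times)$, which the paper does via the auxiliary $T$-module $N$, the factorization through $\widehat{i}(N)$, and the isomorphism $\overline{\eta}\co D\oY D\ra\widehat{i}(N)$. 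To repair your proof you would need to supply these three ingredients (or substitutes for them); the uniqueness statement alone is structurally incapable of giving the isomorphism.
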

\begin{remark}
    More explicitly, we claim that the map
    \[
        \left(\widehat{i}(\one_T)\oY\widehat{i}(\one_T)\right)\otimes_{T\times T}\one_T\ra\widehat{i}(\one_T)
    \]
    is an isomorphism.
\end{remark}
\begin{proof}[Proof of Proposition~\ref{prop:mult_is_isom}]
    This will follow by proving that the morphism
    \[
        m\co\left(\widehat{i}(\one_T)\oY\widehat{i}(\one_T)\right)\otimes_{T\times T}\one_T\ra\widehat{i}(\one_T)
    \]
    is both injective and surjective.
    
    We begin by observing that $\Phi^-(m)$ is an isomorphism. Indeed, this follows because the pairing
    \[
        \left(I_{RL}(\widehat{i}(\one_T))\otimes_G\widehat{i}(\one_T)\right)\otimes_{T\times T}\one_T\ra\sHom_T(\cL,\one_T)
    \]
    is an isomorphism, which can be seen by the computation of Claim~\ref{claim:explicit_intertwiner}.
    
    From this, it follows that the kernel and co-kernel of
    \[
        m\co\left(\widehat{i}(\one_T)\oY\widehat{i}(\one_T)\right)\otimes_{T\times T}\one_T\ra\widehat{i}(\one_T)
    \]
    are killed by $\Phi^-$.
    
    Let us begin by showing that $m$ is injective. We need to show that the $U$-co-invariants
    \[
        \prescript{}{U^T\times \{1\}\times U^T\backslash}{Y}\otimes_{T\times\{1\}\times T}\one_T
    \]
    have no $\SL_2(F)$-invariant vectors. Now, we note that it is enough to show that
    \[
        \prescript{}{U^T\times U\times U^T\backslash}{Y}\otimes_{T\times\{1\}\times T}\one_T
    \]
    has no vector invariant under (a twist of) $T\cap\SL_2(F)$. This space can be decomposed into orbits via the same argument as in Proposition~\ref{prop:uniqueness_of_i_mult}, which shows that there are no such invariant vectors.
    
    It remains to prove surjectivity. Because $\Phi^-(m)$ is surjective, it is enough to show that $m$ surjects onto the quotient:
    \[
        \widehat{i}(\one_T)_{/\SL_2(F)}=D,
    \]
    where $D=S(F^\times)$ is the $G$-module where $G$ acts via the determinant. Let $N$ be the $T$-module whose underlying space is also $S(F^\times)$, with $T$-action:
    \[
        (a,d)\cdot f(y)=\abs{\frac{a}{d}}f(ady).
    \]
    Then we have a factorization:
    \[\xymatrix{
        \widehat{i}(\one_T)\oY\widehat{i}(\one_T) \ar[d]^m \ar@{=}[r] & \widehat{i}(\one_T)\oY\widehat{i}(\one_T) \ar[d]^\eta & \\
        \widehat{i}(\one_T) \ar[r] & \widehat{i}(N) \ar[r] & D.
    }\]
    We observe that the map $\eta$ is induced from:
    \begin{align*}
        \eta\co\prescript{}{U^T\times U\times U^T\backslash}{Y} & \ra N \\
        \Psi(g,y) & \mapsto\Psi(0,y),
    \end{align*}
    and thus factors through
    \[
        \overline{\eta}\co D\oY D\ra\widehat{i}(N).
    \]
    However, it is easy to verify that this last map $\overline{\eta}$ is in fact an isomorphism. This means that $m$ projects onto $\widehat{i}(N)$, and in particular onto $D$.
\end{proof}

We can now prove our main result about the intertwining operator.
\begin{proof}[Proof of Theorem~\ref{thm:inter}]
    Item~\eqref{item:inter_unit} follows by adjunction from Item~\eqref{item:inter_trace}.
    
    Moreover, Item~\eqref{item:inter_mult} can be tested on $A=B=\one_T[\eta^{-1},\eta(w)^{-1}]$. Because of Proposition~\ref{prop:mult_is_isom}, it follows that the map $M$ respects the multiplication if it respects the unit map. Therefore, Item~\eqref{item:inter_mult} follows from Item~\eqref{item:inter_unit}.
    
    Additionally, since the multiplication and trace maps induce the duality $M$, we see that Item~\eqref{item:inter_self_inv} follows from the combination of Items~\eqref{item:inter_trace} and~\eqref{item:inter_mult}.
    
    Hence, it remains to show that the diagram of Item~\eqref{item:inter_trace} commutes. This can be checked on $A=\cL$. Moreover, since for $A=\cL$ all maps in the diagram of Item~\eqref{item:inter_trace} are maps of $T$-modules, it will suffice to verify that the two compositions are equal on elements of $\Phi^-\circ\widehat{i}(\cL)=\Phi^-\circ i(\one_T)\cong\one_T$ that generate $\Phi^-\circ\widehat{i}(\cL)=\one_T$.
    
    Indeed, suppose without loss of generality that $e$ is unramified. Let
    \[
        f_n(g)\in i(\one_T)=S(U\backslash G)
    \]
    be given by the indicator function of the set $U\cdot K_n$, with $n\geq 1$ and
    \[
        K_n=\left\{\begin{pmatrix}a & b \\ c & d\end{pmatrix}\,\middle|\,\substack{\abs{a-1}\leq q^{-n},\,\abs{b}\leq q^{-n} \\
        \abs{c}\leq q^{-n},\,\abs{d-1}\leq q^{-n}}\right\}.
    \]
    Going along the upper horizontal composition in Item~\eqref{item:inter_trace}, the image of $f_n$ is the function:
    \begin{equation*}
        \int f_n\left(\begin{pmatrix} & -a \\ d & \end{pmatrix}\begin{pmatrix}1 & v \\ & 1\end{pmatrix}\right)e(-v)\d{v}=\begin{cases}e(-1/d) & \abs{ad-1}\leq q^{-n},\,\abs{d}=q^{-n} \\
        0 & \text{otherwise.}\end{cases}
    \end{equation*}
    
    On the other hand, the image of the composition along the left vertical map followed by the lower horizontal map is (using Claim~\ref{claim:explicit_intertwiner}):
    \begin{multline*}
        \int f_n\left(\begin{pmatrix}a & \\ & d\end{pmatrix}\begin{pmatrix} & \frac{1}{t} \\ -t & \end{pmatrix}\begin{pmatrix}1 & u \\ & 1\end{pmatrix}\begin{pmatrix} & -1 \\ 1 & \end{pmatrix}\begin{pmatrix}1 & v \\ & 1\end{pmatrix}\right)e(-v-t)\abs{t}\d{t}\d{u}\d{v}= \\
        =\int f_n\left(\begin{pmatrix}\frac{a}{t} & \frac{av}{t} \\ -dtu & dt-dtuv\end{pmatrix}\right)e(-v-t)\abs{t}\d{t}\d{u}\d{v}= \\
        =\int f_n\left(\begin{pmatrix}* & * \\ u & w\end{pmatrix}\right)e\left(-\frac{w-dt}{u}-t\right)\abs{d\cdot u}^{-1}\d{t}\d{u}\d{w}= \\
        =\int f_n\left(\begin{pmatrix}* & * \\ d & w\end{pmatrix}\right)e\left(-w/d\right)\abs{d}^{-1}\d{w}= \\
        =\begin{cases}e(-1/d) & \abs{ad-1}\leq q^{-n},\,\abs{d}=q^{-n} \\
        0 & \text{otherwise.}\end{cases}
    \end{multline*}
    This matches what we wanted to show.
\end{proof}

\part{Global Theory} \label{part:global_theory}

\section{Introduction}

In this part, we will take the local theory developed in Part~\ref{part:local_theory}, and use it to study the global category $\Mod^\aut(\GL_2(\AA))$ of abstractly automorphic representations.

Let us give more details. From now on, we let $F$ denote a global function field. Set $G=\GL_2(\AA)$, and let $T=T(\AA)\subseteq G$ be its subgroup of diagonal matrices. Then one can define a parabolic induction functor
\[
    i\co\Mod(T)\ra\Mod(G),
\]
with a left adjoint $r\co\Mod(G)\ra\Mod(T)$ called parabolic restriction.

We wish to use the adjoint pair $(r,i)$ to study automorphic representations of $\GL_2(\AA)$. We will present two problematic aspects of a na\"ive implementation of this theory, and then show how the notion of abstractly automorphic representations can help rectify them.

Before we do so, let us note the part of the theory that functions well. The parabolic induction functor $i$ is fairly well behaved. It sends ``most'' irreducible representations to irreducible representations. Moreover, the functor $i$ respects automorphicity: if $(\chi,W)$ is an irreducible automorphic $T$-module (i.e., a character $\chi\co T(F)\backslash T(\AA)\ra\CC^\times$), then $i(W)$ is an automorphic representation of $G$.

However, the parabolic restriction functor $r$ is much more badly behaved. If $(\pi,V)$ is an irreducible automorphic representation which is principal series at all places, then $r(V)$ is an infinite dimensional representation (given by a tensor product of the two-dimensional Jacquet modules of $V$ over all places). In particular, the $T$-module $r(V)$ cannot be said to be automorphic in any reasonable sense.

This picture is fairly unsatisfactory, especially in light of how useful the functors $(r,i)$ are for the study of local representation theory.

Moreover, there is another unsatisfactory aspect of the global theory of automorphic representations. As mentioned in the beginning of Part~\ref{part:introduction}, there is an analogy between the local theory of $p$-adic representations and the global theory of automorphic representations. Both theories admit notions of parabolic induction, cuspidality, constant terms and intertwining operators.

This analogy between the local and global theory is fairly imprecise. Our goal for this part is to rectify these two gaps: the lack of good parabolic restriction and the inability to discuss the cuspidal automorphic spectrum in quite the same terms as the supercuspidal local spectrum. We will do so using the notion of abstract automorphicity.

Indeed, we will see below that while the category $\Mod(G)$ is far too big to be of use, its full subcategory $\Mod^\aut(G)$ is extremely well-behaved with respect to questions of cuspidality and parabolic restriction. Having described the issues we intend to tackle, let us dedicate the rest of this section to informally outlining our approach.

We begin by observing that objects of the category $\Mod(T)$ also have a clear notion of being automorphic: a $T$-module $V$ is abstractly automorphic if and only if it is invariant under the rationals $T(F)\subseteq T(\AA)$. With this notion, it turns out that the parabolic induction functor $i\co\Mod(T)\ra\Mod(G)$ indeed respects automorphicity, and restricts to a functor:
\[
    i^\aut\co\Mod^\aut(T)\ra\Mod^\aut(G).
\]

Moreover, it turns out that $i^\aut$ has a left adjoint functor
\[
    r^\aut\co\Mod^\aut(G)\ra\Mod^\aut(T),
\]
which we refer to as \emph{automorphic parabolic restriction}. The functor $r^\aut$ is extremely well-behaved: it kills irreducible cuspidal automorphic representations, and respects finite length. Moreover, automorphic parabolic restriction acts in a way analogous to the local case. If $i(W)$ is an irreducible representation parabolically induced from $(\chi,W)$ with $\chi=\chi_1\otimes\chi_2$ a product of automorphic characters, then $r\circ i(W)$ is a two-dimensional $T$-module, corresponding to $\chi_1\otimes\chi_2$ and $\chi_2\otimes\chi_1$ (up to twist).

The functor $r^\aut$, along with its interaction with the symmetric monoidal structure $\oY$ on $\Mod^\aut(G)$, allows us to decompose the category $\Mod^\aut(G)$ into \emph{components}, in analogy to the local classification. The cuspidal automorphic representations take the role of the supercuspidal representations, and Eisenstein series take the role of principal series representations. The only new phenomenon occurs at the anomalous part of the spectrum, where some objects are killed by $r^\aut$ despite belonging to an Eisenstein component (see also Remark~\ref{remark:subquot_of_eis_not_quot}). The representations referred to as ``cuspidal'' in \cite{reps_for_gl_2_mod_ell} are a local example where a similar situation occurs.

More precisely, it turns out that we have an equivalence
\begin{equation} \label{eq:decomp_mod_G}
    \Mod^\aut(G)=\prod_{c\in C(G)}\Mod^\aut_c(G),
\end{equation}
with $C(G)$ being the disjoint union of the set $C^\Eis(G)$ of unordered pairs of characters $\AA^\times/F^\times\ra\CC^\times$ up to continuous twists $\abs{\cdot}^{s_1}\otimes\abs{\cdot}^{s_2}$, with the set $C^\cusp(G)$ of irreducible cuspidal automorphic representations up to continuous twist $\abs{\det(\cdot)}^s$.

The structure of this part is as follows. In Section~\ref{sect:globalization}, we will make the necessary adjustments to turn the local results of Part~\ref{part:local_theory} about the interaction of $(r,i)$ with the symmetric monoidal structures into global results. In Section~\ref{sect:i_aut_aut}, we will show that the parabolic induction functor sends automorphic representations to automorphic representations. In Section~\ref{sect:decomp}, we will establish the decomposition~\eqref{eq:decomp_mod_G}, and in Section~\ref{sect:results_of_decomp} we will give some corollaries, fulfilling our promises from Section~\ref{sect:GL2}.

\section{Globalization} \label{sect:globalization}

In this section, we will discuss what needs to be done in order to lift the data of $\widehat{i}$ being a non-unital right-lax symmetric monoidal functor at every place to global data.

Therefore, for the rest of this paper, we change our notation. We denote by $F$ a \emph{global} function field, and for every place $v$ we let the subscript $v$ denote the local constructions of the previous part over the local field $F_v$. This means that $T_v$, $G_v$, $\one_{\Ydown,v}$, $\cL_v$, $\widehat{i}_v$ etc. all denote the corresponding local constructions.

For the rest of this paper, we let $T=T(\AA)$, $G=G(\AA)$, and let $\Mod(T)$ and $\Mod(G)$ be the corresponding categories of smooth left modules. We let $\cL=S(\AA\times\AA^\times)$, with $T=T(\AA)$-action given by
\[
    (a,d)\cdot F(\lambda,y)=\abs{\frac{a}{d}}F(d^{-1}\lambda,ady)
\]
as in Definition~\ref{def:normalization_factor}. We similarly define the functor
\[
    \widehat{i}\co\Mod(T)\ra\Mod(G)
\]
as
\[
    \widehat{i}(A)=i(\sHom_T(\cL,A)),
\]
where $i\co\Mod(T)\ra\Mod(G)$ is the usual parabolic induction functor.

Our main claim for this section is
\begin{theorem} \label{thm:i_hat_rlax_global}
    The functor $\widehat{i}\co\Mod(G)\ra\Mod(T)$ is non-unital right-lax symmetric monoidal.
\end{theorem}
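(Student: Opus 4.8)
The plan is to deduce the global statement from the local one, Theorem~\ref{thm:hat_i_rlax}, by assembling the local right-lax structures via a restricted tensor product over all places of $F$. First I would set up the local--global dictionary: recall from \cite{abst_aut_reps_arxiv} that, as $\CC$-linear presentable symmetric monoidal categories, $\Mod(G)=\bigotimes'_v\Mod(G_v)$ and $\Mod(T)=\bigotimes'_v\Mod(T_v)$, with $\oY=\bigotimes_v\oY_v$, $\otimes_T=\bigotimes_v\otimes_{T_v}$, and with the unramified objects $\one_{\Ydown,v}$, $\one_{T_v}$ etc.\ serving as the distinguished objects spanning the restricted tensor product. Likewise $Y=\bigotimes'_v Y_v$ as an $S_3\ltimes G^3$-module and $\cL=\bigotimes'_v\cL_v$; since $\cL_v$ is (non-canonically) isomorphic to the unit $\one_{T_v}$ it is dualizable, so $\sHom_{T_v}(\cL_v,-)=\cL_v^\vee\otimes_{T_v}-$ preserves colimits, and therefore the colimit-preserving functor $\widehat{i}$ is identified with $\bigotimes'_v\widehat{i}_v$. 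It follows that the Day convolution category $\Fun^L(\Mod(T),\Mod(G))$, with its product $\oDay$, is the restricted tensor product of its local analogues, and that ``$\widehat{i}$ is non-unital right-lax symmetric monoidal'' is exactly the assertion that $\widehat{i}$ is a non-unital commutative algebra object there.

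Next I would assemble the multiplication. By Theorem~\ref{thm:hat_i_rlax}, each $\widehat{i}_v$ is such an algebra, with multiplication $m_v$ induced from $\mu_v\co Y_v\ra\cL_v$ of Construction~\ref{const:mult_i_one_T}. To glue the $m_v$ into a global map one must check that $\mu_v$ carries distinguished vectors to distinguished vectors at all but finitely many $v$: at an unramified place $Y_v=S(\M_2(\O_v)\times\O_v^\times)$ has distinguished vector $\one_{\M_2(\O_v)\times\O_v^\times}$, and the formula $\mu_v(\Psi)(\lambda,y)=\Psi\!\left(\begin{pmatrix}0 & \\ & \lambda\end{pmatrix},y\right)$ sends it to $\one_{\O_v\times\O_v^\times}$, which is precisely the distinguished vector of $\cL_v$. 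Hence $\mu=\bigotimes'_v\mu_v\co Y\ra\cL$ is well defined on the restricted tensor products, and the purely formal argument of Claim~\ref{claim:un_normalized_mult_on_i} (using the global analogues of Remarks~\ref{remark:map_to_i} and~\ref{remark:tensor_i}, which hold because $\oY=\bigotimes_v\oY_v$) then produces a natural transformation $m\co\widehat{i}(A)\oY\widehat{i}(B)\ra\widehat{i}(A\otimes_T B)$, necessarily equal to $\bigotimes'_v m_v$.

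Finally, I would check the coherences. The associativity square and the symmetry hexagon for $m$ are diagrams in $\Fun^L(\Mod(T),\Mod(G))=\bigotimes'_v\Fun^L(\Mod(T_v),\Mod(G_v))$, and a diagram in a restricted tensor product commutes if and only if it commutes at every place; since $m=\bigotimes'_v m_v$ and each $m_v$ satisfies these coherences by Theorem~\ref{thm:hat_i_rlax}, so does $m$. Thus $\widehat{i}$ is a non-unital commutative algebra in $(\Fun^L(\Mod(T),\Mod(G)),\oDay)$, i.e.\ a non-unital right-lax symmetric monoidal functor.

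I expect the only real work to be in the first step: verifying, against the local--global formalism of \cite{abst_aut_reps_arxiv}, that $\oY$, $\cL$, $\widehat{i}$ and the Day convolution category genuinely are the restricted tensor products of their local versions, \emph{compatibly with the chosen spherical objects}. Once that bookkeeping is in place, all the substance is local (Theorem~\ref{thm:hat_i_rlax}), the extra ingredient needed to make the gluing legal is the one-line unramified computation above, and no global analytic input is required --- in particular no analytic continuation, since the relevant L-factor has already been absorbed into the object $\cL$.
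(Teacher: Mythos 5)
Your proposal is correct and follows essentially the same route as the paper, whose entire argument for Theorem~\ref{thm:i_hat_rlax_global} is the remark that one need only check that the multiplication respects the distinguished vectors, all other coherences being inherited from the local Theorem~\ref{thm:hat_i_rlax} via the restricted tensor product. Your explicit unramified computation that $\mu_v$ sends $\one_{\M_2(\O_v)\times\O_v^\times}$ to $\one_{\O_v\times\O_v^\times}$ is exactly the ``checked directly'' step the paper leaves to the reader.
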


This essentially boils down to establishing that the multiplication on $\widehat{i}$ respects the distinguished vectors, which can be checked directly.

It remains to discuss the issue of unitality. Here, it turns out that:
\begin{warning}
    The local map $\one_{\Ydown,v}\ra i_v(\one_{T,v})$ does \emph{not} respect the distinguished vectors.
\end{warning}
In particular, there is no global map $\one_\Ydown\ra i(\one_T)$. However, this makes some amount of sense, as this is not the map we are after anyway: we want $\widehat{i}$ to be the right-lax functor, not $i$. So, consider the localization $\Mod(T)[\eta^{-1}]$ given by inverting the maps
\[
    \eta_v\co\one_{T,v}\hookrightarrow\cL_v
\]
at every place. Then we can uniquely complete the diagram
\[\xymatrix{
    \one_{\Ydown,v} \ar[r] \ar@{-->}[rd] & \widehat{i}_v(\cL_v[\eta_v^{-1}]) \\
    & \widehat{i}_v(\one_{T,v}[\eta_v^{-1}]). \ar[u]^{\eta_v}
}\]
It is easy to verify that the resulting map $\one_{\Ydown,v}\ra\widehat{i}_v(\one_{T,v}[\eta_v^{-1}])$ respects the distinguished vectors, and therefore we obtain:
\begin{theorem} \label{thm:i_hat_unital_rlax_global}
    The restriction $\widehat{i}[\eta^{-1}]\co\Mod(T)[\eta^{-1}]\ra\Mod(G)$ of $\widehat{i}$ is unital right-lax symmetric monoidal.
\end{theorem}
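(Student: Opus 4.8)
The plan is to reduce the statement to its local counterpart, Corollary~\ref{cor:hat_i_unital_after_inversion}, place by place. Recall that $\Mod(T)$, $\Mod(G)$ and the functor $\widehat{i}$ are assembled as restricted tensor products over the places of $F$, namely $\Mod(T)=\bigotimes'_v\Mod(T_v)$, $\Mod(G)=\bigotimes'_v\Mod(G_v)$ and $\widehat{i}=\bigotimes'_v\widehat{i}_v$, with respect to the distinguished vectors fixed in \cite{abst_aut_reps_arxiv} together with, at almost all $v$, a choice of isomorphism carrying $\widehat{i}_v$ of the distinguished object to the distinguished object. Consequently, to equip $\widehat{i}$ with a right-lax symmetric monoidal structure --- unital or not --- it suffices to equip each $\widehat{i}_v$ with one in such a way that, at almost all $v$, the multiplication maps (and, in the unital case, also the unit map) are compatible with the distinguished-vector data. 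The non-unital structure is precisely Theorem~\ref{thm:i_hat_rlax_global}: the local multiplications $m_v$ of Claim~\ref{claim:mult_on_i} respect the distinguished vectors and are associative and commutative by Theorem~\ref{thm:hat_i_rlax}. For the unital refinement, one sets $\Mod(T)[\eta^{-1}]=\bigotimes'_v\Mod(T_v)[\eta_v^{-1}]$, with distinguished object the image of $\one_{T,v}$ --- equivalently, via the isomorphism $\eta_v$, of $\cL_v$ --- so that $\widehat{i}$ restricts to $\widehat{i}[\eta^{-1}]$, and by Corollary~\ref{cor:hat_i_unital_after_inversion} each $\widehat{i}_v[\eta_v^{-1}]$ is \emph{unital} right-lax. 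The only remaining point is therefore to check that the local unit maps are compatible with the distinguished-vector data at almost all $v$.

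I would then make the local unit map explicit. By construction (see Remark~\ref{remark:hat_i_unital_after_inversion} and the diagram in the statement), the unit of $\widehat{i}_v[\eta_v^{-1}]$ is the unique factorization $u_v\co\one_{\Ydown,v}\ra\widehat{i}_v(\one_{T,v}[\eta_v^{-1}])$ of the canonical map $\one_{\Ydown,v}\ra i_v(\one_{T,v})=\widehat{i}_v(\cL_v)$ of Construction~\ref{const:unit_hat_i_mult} through the isomorphism $\eta_v\co\widehat{i}_v(\one_{T,v}[\eta_v^{-1}])\xrightarrow{\ \sim\ }\widehat{i}_v(\cL_v[\eta_v^{-1}])$. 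I must then show that, for almost all $v$, this composite $u_v$ sends the distinguished vector of $\one_{\Ydown,v}$ to the distinguished vector of $\widehat{i}_v(\one_{T,v}[\eta_v^{-1}])$. Note that the canonical map of Construction~\ref{const:unit_hat_i_mult} by itself does \emph{not} have this property --- as the warning preceding the statement records --- so this renormalization is the whole content of the theorem.

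Finally, the verification at an unramified place $v$ is an explicit one-variable computation, and I expect it to be the main obstacle. There the distinguished vector $W_v^\circ\in\one_{\Ydown,v}$ is the normalized $K_v$-fixed vector (supported on $U_vK_v$, with value $1$ on $K_v$), and the distinguished vectors of $i_v(\one_{T,v})=S(U_v\backslash G_v)=\widehat{i}_v(\cL_v)$ and of $\widehat{i}_v(\one_{T,v})$ are the corresponding normalized $K_v$-fixed vectors. Substituting $W_v^\circ$ into the integral of Construction~\ref{const:unit_hat_i_mult} and evaluating the resulting integral over $F_v$ --- a local Tate-type zeta integral, of the same flavour as the Gindikin--Karpelevich computation of the unramified intertwining operator --- yields the $K_v$-fixed vector of $S(U_v\backslash G_v)$ times a scalar $c_v\in\CC^\times$. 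One then checks that $c_v$ is exactly the factor by which the isomorphism $\eta_v$ rescales distinguished vectors; this is where Theorem~A.1 of \cite{modules_of_zeta_integrals_arxiv} and the identification of $\cL_v$ with the relevant local L-factor (Remark~\ref{remark:L_is_L_func}) enter. Hence post-composition with $\eta_v^{-1}$ rescales $c_v$ to $1$, so $u_v$ respects the distinguished vectors; at the finitely many ramified places there is nothing to check. The local unit maps $u_v$ thus glue to a global unit $\one_\Ydown\ra\widehat{i}[\eta^{-1}](\one_T[\eta^{-1}])$, and the unit compatibility axioms, being equalities of natural transformations, hold globally because they hold at every place by Corollary~\ref{cor:hat_i_unital_after_inversion}. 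Everything other than the unramified integral is routine bookkeeping about restricted tensor products.
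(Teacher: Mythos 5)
Your proposal follows essentially the same route as the paper: assemble the global data from the local one via distinguished vectors, note that the unit map of Construction~\ref{const:unit_hat_i_mult} fails to respect them, and observe that its unique factorization through $\eta_v$ after inverting $\eta$ does respect them at almost all places, so Corollary~\ref{cor:hat_i_unital_after_inversion} upgrades Theorem~\ref{thm:i_hat_rlax_global} to a unital structure. The paper leaves the distinguished-vector check at unramified places as a direct verification, which is exactly the computation you sketch (though the discrepancy you describe as a scalar $c_v$ is better viewed as a mismatch of distinguished vectors that is absorbed by the corresponding mismatch in $\eta_v$, rather than a literal scalar).
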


\begin{warning}
    Despite the suggestive notation, there is no actual map
    \[
        \eta\co\one_T\ra\cL
    \]
    that we are inverting for the localization
    \[
        \Mod(T)\ra\Mod(T)[\eta^{-1}].
    \]
    Indeed, the maps $\eta_v\co\one_{T,v}\ra\cL_v$ do not define a global map, as they do not respect the distinguished vectors. In the classical theory, this is worked around via analytic continuation, see also Remark~\ref{remark:analytic_continuation}. However, we will not need to do this.
\end{warning}

\begin{remark}
    There is a similar issue with regard to the trace map on $\widehat{i}$ and the intertwining operator. Indeed, the local trace map
    \[
        \Phi_v^-\circ\widehat{i}_v(V)\cong\sHom_{T_v}(\cL_v,V)
    \]
    of Section~\ref{sect:intertwining} does not turn into a global map, as it does not respect the distinguished vectors. However, after inverting $\eta_v(w)\co\one_{T,v}(w)\ra\cL_v(w)$ at every place $v$, we can uniquely complete the diagram:
    \[\xymatrix{
        \Phi_v^-\circ\widehat{i}_v(V) \ar[r] \ar@{-->}[rd] & \sHom_{T_v}(\cL_v,V[\eta_v(w)^{-1}]) \\
        & \sHom_{T_v}(\cL_v\otimes_{T_v}\cL_v(w),V[\eta_v(w)^{-1}]). \ar[u]^{\eta_v}
    }\]
    It is easy to verify that the resulting map respects the distinguished vectors, giving a natural map:
    \[
        \Phi^-\circ\widehat{i}(A)\ra\sHom_{T}(\cL\otimes_T\cL(w),A[\eta(w)^{-1}])
    \]
    and an intertwining operator:
    \[
        M\co\widehat{i}(A)\ra\widehat{i}(A(w)[\eta^{-1}]).
    \]
\end{remark}

\section{Parabolic Induction and Automorphicity} \label{sect:i_aut_aut}

Our goal in this section is to show that the parabolic induction functor
\[
    \widehat{i}\co\Mod(T)\ra\Mod(G)
\]
is well-behaved with respect to the property of being automorphic. We formalize this using the following notation:
\begin{definition}
    Let $\Mod^\aut(T)$ be the category of smooth $T(\AA)/T(F)$-modules.
\end{definition}
Note that there is a forgetful functor
\[
    \Mod^\aut(T)\ra\Mod(T),
\]
induced by restriction along the map $T(\AA)\ra T(\AA)/T(F)$. This functor is fully faithful, and its essential image consists of the objects of $\Mod(T)$ which are invariant under the action of $T(F)\subseteq T(\AA)$. We will usually identify $\Mod^\aut(T)$ with its essential image in $\Mod(T)$. We refer to objects of $\Mod^\aut(T)$ as \emph{abstractly automorphic} $T(\AA)$-modules.

We give $\cI_T=S(T(F)\backslash T(\AA))$ the structure of a unital commutative algebra using the convolution product over $T$.
\begin{remark}
    The category $\Mod^\aut(T)$ is equivalent to the category $\Mod(\cI_T)$ of $\cI_T$-modules in $\Mod(T)$.
\end{remark}

Let $\cI_G\subseteq S(\GL_2(F)\backslash\GL_2(\AA))$ be the space of compactly supported smooth automorphic functions which are orthogonal to one-dimensional characters. Recall from \cite{abst_aut_reps_arxiv} that $\cI_G$ acquires a unique commutative algebra structure from the canonical unit map $\one_\Ydown\ra\cI_G$ induced by taking Whittaker functions. Also recall that the category $\Mod(\cI_G)$ of $\cI_G$-modules in $\Mod(G)$ is the category $\Mod^\aut(G)$ of \emph{abstractly automorphic} $G$-modules, and that the forgetful functor $\Mod^\aut(G)\ra\Mod(G)$ is fully-faithful.

Our main theorem for this section is thus:
\begin{theorem} \label{thm:i_aut_to_aut}
    The functor $\widehat{i}\co\Mod(T)\ra\Mod(G)$ sends abstractly automorphic objects to abstractly automorphic objects.
\end{theorem}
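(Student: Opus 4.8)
The plan is to reduce the global statement to the local results of Part~\ref{part:local_theory} together with the defining description of abstract automorphicity in terms of $\cI_G$- and $\cI_T$-module structures. Since $\Mod^\aut(T)=\Mod(\cI_T)$ and $\Mod^\aut(G)=\Mod(\cI_G)$, and since a $G$-module lies in the essential image of the forgetful functor precisely when it carries an $\cI_G$-action extending its $G$-action, it suffices to produce, for every $A\in\Mod^\aut(T)$, a natural $\cI_G$-module structure on $\widehat{i}(A)$ compatible with the $G$-action. The mechanism for this is exactly the non-unital right-lax structure from Theorem~\ref{thm:i_hat_rlax_global}: applied to the (non-unital) algebra object, this shows that $\widehat{i}$ carries $\cI_T$-modules to $\widehat{i}(\cI_T)$-modules. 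So the first step is to identify $\widehat{i}(\cI_T)$ — or rather to relate it to $\cI_G$ — and to check that the multiplication $\cI_G\oY\widehat{i}(A)\ra\widehat{i}(\cI_T\otimes_T A)\cong\widehat{i}(A)$ one obtains is unital in the appropriate sense.

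First I would recall that $\cI_T=S(T(F)\backslash T(\AA))$ is the unit of $\oY$ on $\Mod^\aut(T)$, and that $i^\aut$ (once defined) must carry this unit to the unit $\cI_G$ of $\oY$ on $\Mod^\aut(G)$; this is forced by the fact that $i^\aut$ is right-lax. Concretely, $\cI_T$ is a unital commutative algebra in $\Mod(T)$ via convolution, and I want to see that the non-unital right-lax structure on $\widehat{i}$ makes $\widehat{i}(\cI_T)$ into an algebra receiving a canonical map from $\one_\Ydown$, hence from $\cI_G$; one then checks that this map $\cI_G\ra\widehat{i}(\cI_T)$ is an isomorphism onto $\cI_G\subseteq S(\GL_2(F)\backslash\GL_2(\AA))$, using Claim~\ref{claim:unit} and the explicit description of $\cI_G$ as the compactly supported automorphic functions orthogonal to determinant characters. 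The subtlety with unitality — the non-existence of a global map $\one_\Ydown\ra i(\one_T)$, which is why we pass to $\Mod(T)[\eta^{-1}]$ — is handled because $\cI_T$ itself already has a unit for convolution, and because, as the Globalization section explains, after inverting $\eta$ the map $\one_{\Ydown,v}\ra\widehat i_v(\one_{T,v}[\eta_v^{-1}])$ does respect distinguished vectors; combined with Theorem~\ref{thm:i_hat_unital_rlax_global} this gives a genuine unit $\cI_G\ra\widehat i(\cI_T)$.

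The key steps, in order, are: (1) use Theorem~\ref{thm:i_hat_rlax_global} to equip $\widehat{i}(A)$ with a $\widehat i(\cI_T)$-module structure for each $\cI_T$-module $A$; (2) show $\widehat i(\cI_T)$ is, as an algebra, the automorphic unit $\cI_G$, so that $\widehat i(A)$ becomes an $\cI_G$-module; (3) check this $\cI_G$-action is compatible with the $G$-action in the way required for membership in the essential image of $\Mod^\aut(G)\ra\Mod(G)$, i.e.\ the $\cI_G$-module structure is the one coming from genuine automorphicity; and (4) conclude that $\widehat i$ restricts to a functor $\Mod^\aut(T)\ra\Mod^\aut(G)$, which together with the commuting square in Theorem~\ref{thm:parabolic_ind} identifies it with $i^\aut$. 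Step (2) — pinning down $\widehat i(\cI_T)$ as precisely $\cI_G$ with its standard algebra structure, rather than merely some algebra mapping to it — is where I expect the real work to be, since it requires unwinding the definition of $\cI_G$ via Whittaker functions (the map $\one_\Ydown\ra\cI_G$) and matching it against the multiplication map $m$ on $\widehat i$; the issue of distinguished vectors at almost all places, and the fact that the honest map only exists after inverting $\eta$, is exactly the place where one must be careful. The remaining compatibilities are formal, following from naturality of the right-lax structure and the fully-faithfulness of the realization functors.
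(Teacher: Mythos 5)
Your overall strategy is the right one and close in spirit to the paper's: use the right-lax structure on $\widehat{i}$ to make $\widehat{i}(\cI_T[\eta^{-1}])$ a unital algebra, and relate its unit map to the Whittaker unit $\one_\Ydown\ra\cI_G$ by an explicit (constant-term) computation. But your step (2) asserts something false. The map $\cI_G\ra\widehat{i}(\cI_T[\eta^{-1}])$ is the constant term map; it kills the cuspidal ideal $\cI_G^\cusp$ and its image is only $\cI_G^\Eis$ (Proposition~\ref{prop:decomp_cusp_eis} and Remark~\ref{remark:eis_is_image}), so it is neither injective nor an isomorphism onto $\widehat{i}(\cI_T[\eta^{-1}])$, and $\widehat{i}(\cI_T)$ is certainly not ``the automorphic unit $\cI_G$ as an algebra.'' Fortunately, an isomorphism is not what is needed: to place $\widehat{i}(\cI_T[\eta^{-1}])$ in $\Mod^\aut(G)=\Mod(\cI_G)$ it suffices that its unit map $\one_\Ydown\ra\widehat{i}(\cI_T[\eta^{-1}])$ \emph{factor through} $\one_\Ydown\twoheadrightarrow\cI_G$, which makes it a unital $\cI_G$-algebra; this factorization is exactly what the paper proves (Claim~\ref{claim:hat_i_I_T_through_I_G}), by unwinding the unit into the Eisenstein sum over $U_2(F)\backslash\GL_2(F)$.

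The second gap is the passage from the single object $\cI_T[\eta^{-1}]$ to a general $A\in\Mod^\aut(T)$. Globally the right-lax structure on $\widehat{i}$ is only \emph{non-unital} before inverting $\eta$ (there is no global map $\one_\Ydown\ra i(\one_T)$), and the unit $\cI_G\ra\widehat{i}(\cI_T[\eta^{-1}])$ only exists after localization; so the action you would extract from $m$ for a general $\cI_T$-module $A$ is a map $\cI_G\oY\widehat{i}(A)\ra\widehat{i}(\cI_T[\eta^{-1}]\otimes_T A)$, which does not land back in $\widehat{i}(A)$, and a merely non-unital $\cI_G$-action in any case does not certify membership in $\Mod(\cI_G)$. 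The paper avoids this entirely by a closure argument: the full subcategory $\C\subseteq\Mod^\aut(T)$ of objects sent into $\Mod^\aut(G)$ is closed under colimits (since $\widehat{i}$ preserves them and $\Mod^\aut(G)$ is closed under them) and under subobjects (since $\Mod^\aut(G)$ is); as $\cI_T$ generates $\Mod^\aut(T)$ under colimits and $\cI_T\hookrightarrow\cI_T[\eta^{-1}]$, the single case $\cI_T[\eta^{-1}]\in\C$ suffices. You should incorporate this reduction; without it your argument only treats $A$ on which $\eta$ already acts invertibly.
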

In other words, we claim that $\widehat{i}$ descends to a functor
\[
    \widehat{i}^\aut\co\Mod^\aut(T)\ra\Mod^\aut(G).
\]

\begin{remark}
    Because $\widehat{i}$ and $i$ are non-canonically isomorphic, Theorem~\ref{thm:i_aut_to_aut} shows that the un-normalized functor $i$ also respects automorphicity, and restricts to:
    \[
        i^\aut\co\Mod^\aut(T)\ra\Mod^\aut(G).
    \]
\end{remark}

\begin{remark} \label{remark:aut_par_rest}
    The fully-faithful forgetful functors $\Mod^\aut(T)\ra\Mod(T)$ and $\Mod^\aut(G)\ra\Mod(G)$ admit left adjoints, given by $\cI_T\otimes_T(-)$ and $\cI_G\oY(-)$, respectively. In particular, the functor $\widehat{i}^\aut$ immediately admits a left-adjoint, given by the co-invariants under the action of $T(F)^\times$:
    \begin{align*}
        \widehat{r}^\aut\co\Mod^\aut(G) & \ra\Mod^\aut(T), \\
        V & \mapsto\widehat{r}(V)_{/T(F)^\times}.
    \end{align*}
\end{remark}

To prove Theorem~\ref{thm:i_aut_to_aut}, it is sufficient to prove the following proposition:
\begin{proposition} \label{prop:hat_i_I_T_is_aut}
    The object $\widehat{i}(\cI_T[\eta^{-1}])$ lies in $\Mod^\aut(G)$.
\end{proposition}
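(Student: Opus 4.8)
The plan is to realize $\widehat{i}(\cI_T[\eta^{-1}])$, as a $\GL_2(\AA)$-module, as a \emph{submodule} of $\widetilde{\cS_G}$, the contragredient of $\cS_G=S(\GL_2(F)\backslash\GL_2(\AA))$, and then quote Theorem~\ref{thm:realization}. Indeed, part~(4) of that theorem puts $\cS_G$ in the essential image of $\iota$, part~(3) then puts $\widetilde{\cS_G}$ there as well (closure under contragredients), and a second application of part~(3) (closure under subquotients) shows that any $\GL_2(\AA)$-submodule of $\widetilde{\cS_G}$ lies in the essential image of $\iota$ --- i.e.\ in $\Mod^\aut(G)$. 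It is important here that we produce an actual embedding and not merely a surjection onto a submodule, since the essential image of $\iota$ is \emph{not} closed under extensions.

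Concretely: by definition $\widehat{i}(\cI_T[\eta^{-1}])=i(\sHom_T(\cL,\cI_T[\eta^{-1}]))$, and since $\cL$ is invertible, $\sHom_T(\cL,\cI_T[\eta^{-1}])\cong\cL^{-1}\otimes_T\cI_T[\eta^{-1}]$ is a localization of the abstractly automorphic module $\cI_T$, hence is itself abstractly automorphic; call it $B$. Thus $\widehat{i}(\cI_T[\eta^{-1}])$ is the ordinary parabolic induction $i(B)$ of an abstractly automorphic $T(\AA)$-module. For such $B$ I would use the Eisenstein map
\[
    \mathrm{Eis}\co i(B)\ra\widetilde{\cS_G},\qquad f\mapsto\Big(g\mapsto\textstyle\sum_{\gamma\in P(F)\backslash\GL_2(F)}f(\gamma g)\Big),
\]
where automorphic forms are viewed inside $\widetilde{\cS_G}$ via integration against compactly supported functions. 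This sum converges on the part of $B$ supported on characters of large real part, and on the rest of $i(B)$ the map is obtained by meromorphic continuation --- which in our framework is nothing but the formal operation of inverting $\eta$, cf.\ Remark~\ref{remark:hat_i_unital_after_inversion}; this is exactly why $\mathrm{Eis}$ is defined on all of $i(B)$ with $B$ already $\eta$-inverted. The map is $\GL_2(\AA)$-equivariant.

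It then remains to show that $\mathrm{Eis}$ is \emph{injective} on $i(B)=\widehat{i}(\cI_T[\eta^{-1}])$. The kernel of the unnormalized Eisenstein map is concentrated along the unique pole of the Hecke $L$-function $L(\chi_1^{-1}\chi_2,s_2-s_1+2)$ carried by $\cL$ (Remark~\ref{remark:L_is_L_func}) --- equivalently, along the $\triv_T(1,-1)$-direction of Example~\ref{example:hat_i_not_unital}, which is the anomalous locus of Warning~\ref{warn:eisenstein_killed_by_rest} --- and this is precisely the direction that the localization $\Mod(T)\to\Mod(T)[\eta^{-1}]$ annihilates. Away from that locus the normalized intertwining operator $M$ of Section~\ref{sect:intertwining} is an isomorphism onto its image and Eisenstein series are linearly independent, which forces injectivity. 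Granting this, $\widehat{i}(\cI_T[\eta^{-1}])\cong\mathrm{Eis}(i(B))$ is a $\GL_2(\AA)$-submodule of $\widetilde{\cS_G}$, and Theorem~\ref{thm:realization} finishes the argument.

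The main obstacle is this injectivity statement: the claim that the \emph{only} automorphic-form-theoretic degeneracy of parabolically induced data is exactly the one already discarded by inverting $\eta$. Everything else is formal bookkeeping around Theorem~\ref{thm:realization}. (A more categorical route repackages the same input: by Theorem~\ref{thm:i_hat_unital_rlax_global} the functor $\widehat{i}[\eta^{-1}]$ is unital right-lax symmetric monoidal, so $\widehat{i}(\cI_T[\eta^{-1}])$ is a commutative algebra under $\one_\Ydown$; one then identifies its structure map with the canonical $\one_\Ydown\to\cI_G$ followed by an algebra map $\cI_G\to\widehat{i}(\cI_T[\eta^{-1}])$, exhibiting the target directly as an $\cI_G$-module --- but checking that this algebra map exists uses the same analytic facts about Eisenstein series.)
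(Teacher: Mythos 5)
There is a genuine gap at the step you yourself flag as the crux: the injectivity of the Eisenstein map $\mathrm{Eis}\co\widehat{i}(\cI_T[\eta^{-1}])\ra\widetilde{\cS_G}$. This is not merely unproven; it is false as stated. By the standard theory of pseudo-Eisenstein series, the constant term of $\mathrm{Eis}(f)$ along $P$ is (up to normalization) $f+Mf$, and a form orthogonal to cusp forms and concentrated on the continuous spectrum is determined by its constant term; consequently $\mathrm{Eis}(f)$ and $\mathrm{Eis}(Mf)$ agree, and $\mathrm{Eis}$ annihilates the large subspace spanned by elements of the form $f-Mf$. This kernel is governed by the functional equation of Remark~\ref{remark:eisenstein_functional_eqs}, which identifies the $\chi$ and $\chi(w)$ parts of the induced data; it has nothing to do with the pole of $L(\chi_1^{-1}\chi_2,s_2-s_1+2)$ and is not removed by inverting $\eta$. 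So the object does not embed into $\widetilde{\cS_G}$ this way, and the route through parts (3)--(4) of Theorem~\ref{thm:realization} collapses. (There are also unaddressed issues in even defining $\mathrm{Eis}$ on all of $i(B)$: elements of the localization $\cI_T[\eta^{-1}]$ are no longer compactly supported functions on $T(F)\backslash T(\AA)$, so the sum over $P(F)\backslash\GL_2(F)$ is not locally finite, and ``inverting $\eta$'' is a categorical localization, not literally meromorphic continuation.)

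The route you relegate to a parenthetical is the paper's actual proof, and your reason for demoting it is mistaken: exhibiting $\widehat{i}(\cI_T[\eta^{-1}])$ as a $\cI_G$-module requires no analytic input about Eisenstein series. By Theorem~\ref{thm:i_hat_unital_rlax_global} the object is a unital commutative algebra under $\one_\Ydown$, and since $\Mod^\aut(G)$ is by definition the category of $\cI_G$-modules, it suffices to check that the unit map $\one_\Ydown\ra\widehat{i}(\cI_T[\eta^{-1}])$ factors through the quotient $\one_\Ydown\twoheadrightarrow\cI_G$; this is Claim~\ref{claim:hat_i_I_T_through_I_G}. That factorization is verified by a purely formal unfolding of the adjunction defining $i$: the unit map is computed to send $W$ to $\sum_{\gamma\in U_2(F)\backslash\GL_2(F)}\int_{\AA/F}W(\gamma\,\cdot\,)\d{u}$, i.e.\ the constant term of $g\mapsto\sum_\gamma W(\gamma g)$, and every sum involved is locally finite because $W$ is compactly supported modulo $U$. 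No injectivity, convergence, or continuation enters. The direction of the map is the whole point: the paper uses the constant-term map $\cI_G\ra\widehat{i}(\cI_T[\eta^{-1}])$ \emph{into} the induced object, for which the module-theoretic conclusion needs no nondegeneracy, whereas your Eisenstein map goes \emph{out of} it and is therefore useless unless it is injective.
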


Indeed, this implies the theorem:
\begin{proof}[Proof of Theorem~\ref{thm:i_aut_to_aut}]
    Denote by $\C\subseteq\Mod^\aut(T)$ the full subcategory whose objects are sent by $\widehat{i}$ to $\Mod^\aut(G)$. We wish to prove that $\C=\Mod^\aut(T)$. Note that $\widehat{i}$ is exact and respects colimits. Therefore, since $\Mod^\aut(G)$ is closed under colimits, it follows that $\C$ is closed under colimits as well.
    
    Moreover, since $\Mod^\aut(G)$ is closed under taking sub-objects, it follows that $\C$ has this property as well.
    
    It therefore remains to show that $\cI_T\in\C$. However, Proposition~\ref{prop:hat_i_I_T_is_aut} implies that $\cI_T[\eta^{-1}]$ is in $\C$. Since $\cI_T$ is a sub-object of $\cI_T[\eta^{-1}]$, we are done.
\end{proof}

It remains to show Proposition~\ref{prop:hat_i_I_T_is_aut}. Because Theorem~\ref{thm:i_hat_unital_rlax_global} implies that $\widehat{i}(\cI_T[\eta^{-1}])$ is a unital algebra object in $\Mod(G)$, the following claim will finish the proof:
\begin{claim} \label{claim:hat_i_I_T_through_I_G}
    The unit map
    \[
        \one_\Ydown\ra\widehat{i}(\cI_T[\eta^{-1}])
    \]
    factors through the unit map of $\cI_G$:
    \[
        \one_\Ydown\ra\cI_G.
    \]
\end{claim}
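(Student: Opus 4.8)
We want to show that the unit map $\one_\Ydown\ra\widehat{i}(\cI_T[\eta^{-1}])$ factors through the unit map $\one_\Ydown\ra\cI_G$ of the algebra $\cI_G$.

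**The plan.** The strategy is to exploit the universal property of $\cI_G$ as the algebra of compactly supported automorphic functions orthogonal to one-dimensional characters. Recall from \cite{abst_aut_reps_arxiv} that the unit map $\one_\Ydown\ra\cI_G$ is the initial map from $\one_\Ydown$ to an algebra whose underlying $G$-module lies in $\Mod^\aut(G)$ and kills the one-dimensional characters; equivalently, $\cI_G$ represents the functor sending an algebra $A$ equipped with an algebra map from $\one_\Ydown$ to the question of whether $A$ is abstractly automorphic with no $\SL_2$-fixed vectors in the relevant sense. So it suffices to show two things: (1) the map $\one_\Ydown\ra\widehat{i}(\cI_T[\eta^{-1}])$ lands in an algebra, which we already have from Theorem~\ref{thm:i_hat_unital_rlax_global} (localized $\widehat{i}$ is unital right-lax, so it sends the unital algebra $\cI_T[\eta^{-1}]$ to a unital algebra in $\Mod(G)$, and the unit map is the composite $\one_\Ydown\ra\widehat{i}(\one_T[\eta^{-1}])\ra\widehat{i}(\cI_T[\eta^{-1}])$); and (2) this algebra, or rather the image of the unit map, is "automorphic enough" — concretely that the algebra map $\one_\Ydown\ra\widehat{i}(\cI_T[\eta^{-1}])$ factors through $\cI_G$ because the target, while possibly not itself automorphic yet (that is what we are ultimately proving via Proposition~\ref{prop:hat_i_I_T_is_aut}), receives a map from $\one_\Ydown$ that is forced to annihilate one-dimensional characters.

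**Carrying it out.** First I would unwind what the unit map $\one_\Ydown\ra\widehat{i}(\cI_T[\eta^{-1}])$ is explicitly. Using $\widehat{i}(\cL)=i(\one_T)=S(U\backslash G)$ and Construction~\ref{const:unit_hat_i_mult}, together with the localization square defining $\one_{\Ydown,v}\ra\widehat{i}_v(\one_{T,v}[\eta_v^{-1}])$, the global unit map is given by an explicit integral of Whittaker functions over $F\backslash\AA$ (the global analogue of the $\d b$-integral in Construction~\ref{const:unit_hat_i_mult}), landing in a space of $T(F)$-invariant functions on $U\backslash G$. Then I would check directly that composing this map with the projection of $\cI_G$ onto a one-dimensional character $\chi(\det(g))$ — i.e. integrating against $\chi^{-1}(\det(g))$ over the automorphic quotient — vanishes. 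This is a standard unfolding computation: the integral of a Whittaker function of a genericity-type vector against a character of the determinant is zero because the Whittaker integral over the unipotent already kills the would-be constant-term contribution. Concretely, $i(\one_T)$ has no $\SL_2(\AA)$-fixed vector (this was used in the proof of Theorem~\ref{thm:hat_i_rlax}), and so the composite $\one_\Ydown\ra\widehat{i}(\cI_T[\eta^{-1}])\ra(\text{one-dim char})$ must be zero; hence by the defining universal property of $\cI_G$ as the quotient of $\cI_G'$ (or the appropriate sub of $\cS_G$) orthogonal to characters, the unit map factors through $\cI_G$.

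**The main obstacle.** The delicate point is that $\widehat{i}(\cI_T[\eta^{-1}])$ is \emph{a priori} only known to be an object of $\Mod(G)$, not of $\Mod^\aut(G)$ — that is precisely Proposition~\ref{prop:hat_i_I_T_is_aut}, which the present claim is meant to establish. So I cannot directly invoke the adjunction $\Hom_{\Mod^\aut(G)}$. The resolution is that $\cI_G$ is characterized not as "the universal automorphic algebra" but via a more primitive property: the unit map $\one_\Ydown\ra\cI_G$ is the cokernel (in algebras, or after applying a suitable roughening) of the sum of all maps from $\one_\Ydown$ that factor through a one-dimensional character, as constructed in \cite{abst_aut_reps_arxiv}. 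Thus what I really need is the purely local-to-global statement that the global unit map $\one_\Ydown\ra\widehat{i}(\cI_T[\eta^{-1}])$ annihilates every one-dimensional character — and this is a finite, explicit computation with Whittaker integrals, built place-by-place from the fact that $\widehat{i}_v(\one_{T,v})$ has no $\SL_2(F_v)$-invariants, exactly as in the last line of the proof of Theorem~\ref{thm:hat_i_rlax}. Once that vanishing is in hand, the factorization through $\cI_G$ is formal, and combined with Theorem~\ref{thm:i_hat_unital_rlax_global} it upgrades $\widehat{i}(\cI_T[\eta^{-1}])$ to an $\cI_G$-module, i.e. an object of $\Mod^\aut(G)$, proving Proposition~\ref{prop:hat_i_I_T_is_aut}.
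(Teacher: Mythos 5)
There is a genuine gap, located exactly at what you call ``the main obstacle.'' The factorization you must produce is through the surjection $\one_\Ydown\twoheadrightarrow\cI_G$, which is the Poincar\'e series map $W(g)\mapsto\sum_{\gamma\in U_2(F)\backslash\GL_2(F)}W(\gamma g)$. A map out of $\one_\Ydown$ factors through this surjection if and only if it annihilates its kernel, and that kernel is governed by global $\GL_2(F)$-rationality (which Whittaker functions have vanishing Poincar\'e series), not by orthogonality to one-dimensional characters. Your substitute criterion --- the unit map kills every one-dimensional character, hence the factorization is ``formal'' --- fails: the identity map of $\one_\Ydown$ composes to zero with every map to a one-dimensional character (no such character is generic, so $\Hom(\one_\Ydown,\chi\circ\det)=0$), yet it certainly does not factor through $\cI_G$. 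Correspondingly, $\cI_G$ does not have the universal property you ascribe to it; orthogonality to characters only cuts $\cI_G$ out of $\cS_G$ \emph{after} automorphy is in hand, and automorphy is the entire content of the present claim. The local input you invoke (no $\SL_2(F_v)$-invariants in $\widehat{i}_v(\one_{T,v})$) is a place-by-place statement and cannot detect a global rationality condition.

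The step your argument is missing is an explicit unfolding. The paper computes, via the adjunction defining $i$, that the unit map sends $W\in\one_\Ydown$ to a sum over $\alpha,\delta\in F^\times$ of integrals of $W$ over the adelic unipotent, and then recognizes this expression as the constant term $\int_{\AA/F}(\cdot)\,\d{u}$ applied to the Poincar\'e series $\sum_{\gamma\in U_2(F)\backslash\GL_2(F)}W(\gamma g)$. Since the Poincar\'e series is precisely the unit map $\one_\Ydown\ra\cI_G$, the desired factorization is then manifest, and the induced map $\cI_G\ra\widehat{i}(\cI_T[\eta^{-1}])$ is identified with the constant term map (which is what makes Remark~\ref{remark:eisenstein_functional_eqs} work). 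Your opening move --- unwinding the unit map into an explicit Whittaker integral --- is the right one, but it must be completed by this unfolding into a $\GL_2(F)$-sum rather than by a vanishing-against-characters argument.
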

\begin{remark}
    In fact, our proof of Claim~\ref{claim:hat_i_I_T_through_I_G} will show that the unique map completing the diagram
    \[\xymatrix{
        \one_\Ydown \ar[r] \ar[rd] & \cI_G \ar@{-->}[d] \\
        & \widehat{i}(\cI_T[\eta^{-1}])
    }\]
    is the usual constant term map we are familiar with from the theory of Eisenstein series.
\end{remark}

\begin{remark} \label{remark:eisenstein_functional_eqs}
    By Corollary~\ref{cor:inter_map_of_algs_after_loc}, the image of the unit map $\one_\Ydown\ra\widehat{i}(\cI_T[\eta^{-1}])$ is invariant under the intertwining operator:
    \[
        M\co\widehat{i}(\cI_T[\eta^{-1}])\ra\widehat{i}(\cI_T(w)[\eta^{-1},\eta(w)^{-1}])\cong\widehat{i}(\cI_T[\eta^{-1},\eta(w)^{-1}]).
    \]
    Thus, because we interpret the map $\cI_G\ra\widehat{i}(\cI_T[\eta^{-1}])$ of Claim~\ref{claim:hat_i_I_T_through_I_G} as the constant term map, we automatically get the functional equation for Eisenstein series.
\end{remark}

\begin{remark} \label{remark:analytic_continuation}
    It is interesting to note that the above gives a description of the functional equation for Eisenstein series which makes no mention of analytic continuation. It is worthwhile to have an explanation for how this can happen.
    
    The idea is that the role of analytic continuation is to allow a canonical trivialization of the normalization factor $\cL$, which represents an L-function, as in Remark~\ref{remark:L_is_L_func}.
    
    That is, by applying the Mellin transform, one can think of elements of $\cI_T=S(T(\AA)/T(F))$ as analytic functions of two complex variables $(s_1,s_2)\in\CC^2$, parametrized by $\chi_1\abs{\cdot}^{s_1}\otimes\chi_2\abs{\cdot}^{s_2}$. If we consider the extension $\cI_T\subseteq\cI'_T$ corresponding to functions that are analytic only on some right half plane $\Re{s_1}\gg 0$, $\Re{s_2}\gg 0$, then there is a canonical isomorphism
    \[
        \cI'_T\otimes_{\cI_T}\cL\xrightarrow{\sim}\cI'_T.
    \]
    Thus, analytic continuation trivializes $\cL$. See also Section~5 of \cite{modules_of_zeta_integrals_arxiv}.
    
    In our case, by keeping track of the normalization factors, we have successfully avoided the need for analytic continuation.
\end{remark}

\begin{proof}[Proof of Claim~\ref{claim:hat_i_I_T_through_I_G}]
    This is a straightforward verification. Indeed, the map
    \[
        \one_\Ydown\ra\widehat{i}(\cI_T[\eta^{-1}])
    \]
    is given via the map
    \[
        \one_\Ydown\otimes\cL\ra i(\cI_T[\eta^{-1}])
    \]
    which is adjoint to the composition:
    \begin{equation*}
        \prescript{}{U\backslash}{\left(\one_\Ydown\otimes\cL\right)}\ra\one_T[\eta^{-1}]\ra\cI_T[\eta^{-1}].
    \end{equation*}
    
    Now, by unwinding the definitions, we see that the map $\prescript{}{U\backslash}{\left(\one_\Ydown\otimes\cL\right)}\ra\one_T[\eta^{-1}]$ above sends $W(g)\otimes F(\lambda,y)\in\prescript{}{U\backslash}{\left(\one_\Ydown\otimes\cL\right)}$ (with $W(g)$ being left $\theta$-equivariant) to the convolution:
    \begin{equation*}
         \int_{\AA^\times}\int_{\AA^\times}\int_\AA W\left(\begin{pmatrix} & -1 \\ 1 & \end{pmatrix}\begin{pmatrix}1 & u \\ & 1\end{pmatrix}\begin{pmatrix}a^{-1} & \\ & d^{-1}\end{pmatrix}\right)\abs{\frac{a}{d}}F(d^{-1}\lambda,ady)\d{u}\dtimes{a}\dtimes{d}.
    \end{equation*}
    Therefore, it remains to show that the map sending $W(g)\in\one_\Ydown$ to
    \begin{equation} \label{eq:unit_for_i_global}
        \sum_{\alpha,\delta\in F^\times}\int_\AA W\left(\begin{pmatrix} & -1 \\ 1 & \end{pmatrix}\begin{pmatrix}1 & u \\ & 1\end{pmatrix}\begin{pmatrix}\alpha a & \\ & \delta d\end{pmatrix}\right)\d{u}
    \end{equation}
    factors through $\one_\Ydown\ra\cI_G$.
    
    Indeed, we observe that the expression appearing in Equation~\eqref{eq:unit_for_i_global} is the same as:
    \[
        \sum_{\gamma\in U_2(F)\backslash\GL_2(F)}\int_{\AA/F} W\left(\gamma\begin{pmatrix}1 & u \\ & 1\end{pmatrix}\begin{pmatrix}a & \\ & d\end{pmatrix}\right)\d{u}.
    \]
    Since the map $\one_\Ydown\ra\cI_G$ is given by $W(g)\mapsto\sum W(\gamma g)$, we are done.
\end{proof}

\section{Decomposition Into Components} \label{sect:decomp}

Our goal in this section is to decompose the category $\Mod^\aut(G)$ of abstractly automorphic representations into components, in a manner analogous to the local classification.

We will do so as follows. We will begin by separating the category $\Mod^\aut(G)$ into a cuspidal part and an Eisenstein part. We will then break down the cuspidal part into components indexed by irreducible cuspidal automorphic representations up to continuous twist. Finally, we will decompose the Eisenstein part of the category as well.

We begin by separating out the cuspidal part of the category. Let $\cI_G^\cusp\subseteq\cI_G$ be the kernel of the constant term map
\[
    \cI_G\ra\widehat{i}(\cI_T[\eta^{-1}])
\]
constructed in Claim~\ref{claim:hat_i_I_T_through_I_G}. Denote the subspace of $\cI_G$ which is orthogonal to $\cI_G^\cusp$ with respect to the Petersson inner product by $\cI_G^\Eis$. We claim that:
\begin{proposition} \label{prop:decomp_cusp_eis}
    The subspaces $\cI_G^\cusp,\cI_G^\Eis\subseteq\cI_G$ are ideals, and we have a decomposition
    \[
        \cI_G=\cI_G^\cusp\times\cI_G^\Eis
    \]
    of algebras.
\end{proposition}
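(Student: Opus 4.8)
The plan is to establish the three assertions in sequence: first that $\cI_G^\cusp$ is an ideal, then that $\cI_G^\Eis$ is an ideal, and finally that $\cI_G$ splits as the product. For the first assertion, the key point is that the constant term map $\cI_G\ra\widehat{i}(\cI_T[\eta^{-1}])$ of Claim~\ref{claim:hat_i_I_T_through_I_G} is a map of algebras, or at least of $\cI_G$-modules. Indeed, $\widehat{i}$ is (unital) right-lax by Theorem~\ref{thm:i_hat_unital_rlax_global}, so $\widehat{i}(\cI_T[\eta^{-1}])$ is a unital commutative algebra in $\Mod(G)$, hence an object of $\Mod^\aut(G)$ by Proposition~\ref{prop:hat_i_I_T_is_aut}, i.e.\ an $\cI_G$-algebra via the unit map $\one_\Ydown\ra\cI_G\ra\widehat{i}(\cI_T[\eta^{-1}])$. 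The constant term map is then the canonical $\cI_G$-algebra map, in particular a morphism of $\cI_G$-modules, so its kernel $\cI_G^\cusp$ is an $\cI_G$-submodule of $\cI_G$, i.e.\ an ideal. First I would spell this out carefully, since it is the cleanest route and bypasses any hands-on manipulation of automorphic forms.

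Next I would treat $\cI_G^\Eis$, the orthogonal complement of $\cI_G^\cusp$ under the Petersson pairing. The key input here is that the Petersson inner product on $\cI_G$ is, up to the trace functional $\Phi^-$ and the self-duality $\Phi^-(-\oY-)$, compatible with the algebra structure — concretely, $\langle a\cdot x, y\rangle = \langle x, a^\ast\cdot y\rangle$ for a suitable involution, or more simply $\langle a x, y\rangle = \langle x, ay\rangle$ if the relevant pairing is already symmetrized under the $S_3$-action on $Y$ as in the local setup. Given such compatibility, the orthogonal complement of an ideal is automatically an ideal: if $x\perp\cI_G^\cusp$ and $a\in\cI_G$, then $\langle ax, c\rangle = \langle x, ac\rangle = 0$ for all $c\in\cI_G^\cusp$ since $ac\in\cI_G^\cusp$. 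So the second assertion reduces to pinning down the precise equivariance of the Petersson pairing under the $\cI_G$-action, which I expect is either cited from \cite{abst_aut_reps_arxiv} or follows from the trace-functor formalism recalled in Subsection~\ref{subsect:rep_prereq}.

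Finally, for the direct product decomposition, it suffices to show $\cI_G^\cusp\cap\cI_G^\Eis=0$ and $\cI_G^\cusp + \cI_G^\Eis = \cI_G$; equivalently, that the Petersson form is non-degenerate on $\cI_G$ and restricts to a non-degenerate form on $\cI_G^\cusp$ (so that its perp is a genuine complement). Non-degeneracy of the Petersson form on $\cI_G^\cusp$ is the classical statement that the space of cusp forms carries a non-degenerate inner product — this is where I expect the main obstacle to lie, as it needs the finiteness/convergence input that cusp forms are rapidly decreasing so the pairing converges and is positive-definite (or at least non-degenerate) on the cuspidal part. Once that is granted, the decomposition of $\cI_G^\cusp$ out of $\cI_G$ as a direct summand follows formally, and the complementary summand is then forced to be $\cI_G^\Eis$; since both summands are ideals, the product is a product \emph{of algebras}, with the idempotents giving the two factors. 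I would organize the write-up so that the algebra-map property of the constant term map does the work for the first claim, the pairing-compatibility does the work for the second, and a citation to the non-degeneracy of the Petersson form on cusp forms closes the third.
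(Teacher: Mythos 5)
Your handling of the two ideal statements is the same as the paper's: $\cI_G^\cusp$ is an ideal because the constant term map of Claim~\ref{claim:hat_i_I_T_through_I_G} is a map of ($\cI_G$-)algebras, and $\cI_G^\Eis$ is an ideal because the Petersson pairing respects the multiplication on $\cI_G$ (the paper cites Remark~4.21 of \cite{abst_aut_reps_arxiv} for precisely this compatibility). The gap is in the third step. You reduce the splitting to the assertion that the Petersson form is non-degenerate (indeed positive definite) on $\cI_G^\cusp$ and claim that ``once that is granted, the decomposition follows formally.'' It does not: in an infinite-dimensional inner product space, a subspace on which the form is non-degenerate need not be complemented by its orthogonal complement --- a dense proper subspace of a pre-Hilbert space has zero orthogonal complement, even though the form restricted to it is positive definite. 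So non-degeneracy on the cuspidal part does not by itself give $\cI_G^\cusp+\cI_G^\Eis=\cI_G$; what is needed is a finiteness statement guaranteeing that the orthogonal projection onto the cuspidal part exists and stays inside $\cI_G$.

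That finiteness statement is exactly the input the paper uses: for every $r>1$ and every compact open $K\subseteq\GL_2(\AA)$, the space $S_{<r}^\cusp(\GL_2(F)\backslash\GL_2(\AA)/K)$ of $K$-smooth cusp forms supported on $1/r<\abs{\det(g)}<r$ is finite dimensional. Since any $f\in\cI_G$ is $K$-smooth for some $K$ and compactly supported, hence supported in such a determinant range, one can orthogonally project $f$ onto this finite-dimensional space (where positive definiteness is unproblematic), and $f$ minus its projection lies in $\cI_G^\Eis$, giving $\cI_G=\cI_G^\cusp\oplus\cI_G^\Eis$. Note also that your worry about convergence and rapid decay of cusp forms is beside the point in this setting: every element of $\cI_G$ is compactly supported, so all Petersson integrals converge trivially; the genuine issue is finite-dimensionality of the fixed-level, bounded-support cuspidal blocks, not convergence. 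With that finiteness lemma substituted for the ``follows formally'' step, your outline becomes the paper's proof.
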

\begin{proof}
    Because the constant term map is an algebra map, then $\cI_G^\cusp$ is an ideal. Moreover, because the Petersson inner product respects the multiplication on $\cI_G$ (see Remark~4.21 of \cite{abst_aut_reps_arxiv}), we see that $\cI_G^\Eis$ is an ideal as well. It remains to show that
    \[
        \cI_G^\cusp\oplus\cI_G^\Eis
    \]
    is all of $\cI_G$.
    
    Indeed, for every $r>1$ and every compact open set $K\subseteq\GL_2(\AA)$, let
    \[
        S_{<r}^\cusp(\GL_2(F)\backslash\GL_2(\AA)/K)
    \]
    be the space of $K$-smooth cusp forms supported on $g\in G$ with $1/r<\abs{\det(g)}<r$. The desired result follows because $S_{<r}^\cusp(\GL_2(F)\backslash\GL_2(\AA)/K)$ is finite dimensional.
\end{proof}

This shows that $\Mod^\aut(G)$ decomposes into a cuspidal part $\Mod^\cusp(G)=\Mod(\cI_G^\cusp)$ and an Eisenstein part $\Mod^\Eis(G)=\Mod(\cI_G^\Eis)$:
\[
    \Mod^\aut(G)=\Mod^\cusp(G)\times\Mod^\Eis(G).
\]

\begin{remark} \label{remark:eis_is_image}
    As an additional consequence of Proposition~\ref{prop:decomp_cusp_eis}, we see that $\cI_G^\Eis$ is the image of the map $\cI_G\ra\widehat{i}(\cI_T[\eta^{-1}])$.
\end{remark}

We also get the following corollary for free:
\begin{corollary} \label{cor:aut_par_rest_kills_cusp}
    Let $M\in\Mod^\cusp(G)$. Then $\widehat{r}^\aut(M)=0$.
\end{corollary}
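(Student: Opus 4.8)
The plan is to deduce the vanishing purely formally from the decomposition $\Mod^\aut(G)=\Mod^\cusp(G)\times\Mod^\Eis(G)$ and the adjunction $(\widehat{r}^\aut,\widehat{i}^\aut)$, once we know that automorphic parabolic induction lands in the Eisenstein summand. Concretely, I would first establish that $\widehat{i}^\aut\co\Mod^\aut(T)\ra\Mod^\aut(G)$ takes values inside the direct factor $\Mod^\Eis(G)$. Granting this, the corollary is immediate: for $M\in\Mod^\cusp(G)$ and an arbitrary $A\in\Mod^\aut(T)$, the adjunction of Remark~\ref{remark:aut_par_rest} gives
\[
    \Hom_{\Mod^\aut(T)}\bigl(\widehat{r}^\aut(M),A\bigr)\cong\Hom_{\Mod^\aut(G)}\bigl(M,\widehat{i}^\aut(A)\bigr)=0,
\]
the right-hand group vanishing because $M$ and $\widehat{i}^\aut(A)$ lie in orthogonal direct factors of $\Mod^\aut(G)$. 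Taking $A=\widehat{r}^\aut(M)$ forces the identity morphism of $\widehat{r}^\aut(M)$ to be zero, hence $\widehat{r}^\aut(M)=0$.

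So the content is to show that $\widehat{i}^\aut$ factors through $\Mod^\Eis(G)$, and here I would re-run the argument from the proof of Theorem~\ref{thm:i_aut_to_aut}. Let $\C'\subseteq\Mod^\aut(T)$ be the full subcategory of objects whose image under $\widehat{i}^\aut$ lies in $\Mod^\Eis(G)$. Since $\Mod^\Eis(G)$ is a direct factor of $\Mod^\aut(G)$, it is closed under colimits and subquotients; because $\widehat{i}$ is exact and colimit-preserving, $\C'$ inherits these closure properties. Hence, exactly as in the proof of Theorem~\ref{thm:i_aut_to_aut}, it suffices to check that $\cI_T[\eta^{-1}]\in\C'$, since $\cI_T$ is a subobject of $\cI_T[\eta^{-1}]$ and generates $\Mod^\aut(T)=\Mod(\cI_T)$ under colimits and subquotients.

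For this remaining point I would argue as follows. By Theorem~\ref{thm:i_hat_unital_rlax_global}, the object $\widehat{i}(\cI_T[\eta^{-1}])$ is a unital algebra in $\Mod(G)$, and by Claim~\ref{claim:hat_i_I_T_through_I_G} its unit map $\one_\Ydown\ra\widehat{i}(\cI_T[\eta^{-1}])$ factors through $\one_\Ydown\ra\cI_G$; thus it carries a $\cI_G$-algebra structure via the constant term map $\cI_G\ra\widehat{i}(\cI_T[\eta^{-1}])$, and this is precisely the $\cI_G$-module structure realizing it as an object of $\Mod^\aut(G)$. By construction $\cI_G^\cusp$ is the kernel of the constant term map, so $\cI_G^\cusp$ acts by zero on $\widehat{i}(\cI_T[\eta^{-1}])$; equivalently, using Proposition~\ref{prop:decomp_cusp_eis} and Remark~\ref{remark:eis_is_image}, the $\cI_G$-module structure factors through $\cI_G^\Eis=\cI_G/\cI_G^\cusp$, which says exactly that $\widehat{i}(\cI_T[\eta^{-1}])\in\Mod(\cI_G^\Eis)=\Mod^\Eis(G)$.

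The main obstacle is the step identifying $\widehat{i}^\aut$ as landing in $\Mod^\Eis(G)$, and within it the only delicate matter is bookkeeping of module structures --- confirming that the abstractly automorphic $G$-module structure on $\widehat{i}(\cI_T[\eta^{-1}])$ is the one induced from its $\cI_G$-algebra structure along the constant term map. But this identification is already implicit in the way automorphicity of $\widehat{i}$ was established in Theorem~\ref{thm:i_aut_to_aut}, so no genuinely new input is required; everything else is formal manipulation of adjunctions and orthogonal summands.
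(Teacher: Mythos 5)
Your proposal is correct and follows essentially the same route as the paper: the paper's proof also reduces to showing that $\cI_G^\cusp$ kills $\widehat{i}(\cI_T[\eta^{-1}])$, which holds because $\cI_G^\cusp$ is by definition the kernel of the constant term map $\cI_G\ra\widehat{i}(\cI_T[\eta^{-1}])$ and that map is a map of algebras, and then concludes by the generation argument and the adjunction. Your write-up merely makes explicit the formal steps (orthogonality of the two factors, taking $A=\widehat{r}^\aut(M)$) that the paper leaves implicit.
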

\begin{proof}
    We claim that each $\widehat{i}(N)$ is killed by the action of $\cI_G^\cusp$. Indeed, it suffices to prove this for $N=\cI_T[\eta^{-1}]$. The fact that $\cI_G\ra\widehat{i}(\cI_T[\eta^{-1}])$ is an algebra map now finishes the proof.
\end{proof}
\begin{warning}
    Note that unlike the local case, the property $\widehat{r}^\aut(M)=0$ does not characterize the cuspidal representations. Indeed, because automorphic parabolic induction can have subquotients that are neither sub-objects nor quotients (this is part of the \emph{anomalous}, or \emph{non-isobaric}, spectrum), it follows that there are objects $M\in\Mod^\Eis(G)$ with $\widehat{r}(M)=0$. See also Warning~\ref{warn:eisenstein_killed_by_rest}.
\end{warning}

\subsection{Cuspidal Components}

We are left with the task of decomposing each of $\cI_G^\cusp$ and $\cI_G^\Eis$ separately. We begin by decomposing $\cI_G^\cusp$ as follows.

Let $G^1\subseteq G$ be the subgroup of all $g\in G$ with $\abs{\det(g)}=1$. Let $(\pi,V)$ be any irreducible cuspidal automorphic representation of $G^1$, with distinguished generic vector
\[
    v^\gen\in\Phi^-V.
\]
Consider the $G$-module given by induction $\Ind_{G^1}^G V$. Spectrally, the $G$-module $\Ind_{G^1}^G V$ contains all twists $\pi\times\abs{\det(\cdot)}^s$.

\begin{remark} \label{remark:explicit_G_1_induction}
    Choose a place $\nu$ of $F$ of degree $1$, and a uniformizer $\pi_\nu$. This choice induces an isomorphism $\Ind_{G^1}^G V=S(q^\ZZ)\otimes V$, where $q$ is the size of the base field of $F$, and $S(q^\ZZ)$ is the space of compactly supported functions on the discrete set $q^\ZZ$. The group $G$ acts on $S(q^\ZZ)\otimes V$ via:
    \begin{multline*}
        g\cdot(f(y)\otimes v)= \\
        =f(y\abs{\det{g}})\otimes\pi\left(i_v\!\begin{pmatrix}\pi_\nu^{-\log_q(y)} & \\ & 1\end{pmatrix}\cdot g\cdot i_v\!\begin{pmatrix}\pi_\nu^{\log_q(y)+\log_q\abs{\det{g}}} & \\ & 1\end{pmatrix}\right)\cdot v,
    \end{multline*}
    for $g\in G$, $f\in S(q^\ZZ)$, $y\in q^\ZZ$ and $v\in V$, where $i_\nu\co \GL_2(F_\nu)\ra\GL_2(\AA)$ is the embedding corresponding to the place $\nu$..
\end{remark}

We observe that the choice of $v^\gen$ is adjoint to a surjective map $\one_\Ydown\twoheadrightarrow\Ind_{G^1}^G V$, which turns $\Ind_{G^1}^G V$ into a unital commutative algebra. The fact that $V$ is automorphic means that this is in fact an algebra over $\cI_G$:
\[
    \cI_G\ra \Ind_{G^1}^G V.
\]

Let $C^\cusp(G)$ be the set of isomorphism classes of irreducible cuspidal automorphic representations $(\pi,V)$ of $G^1$ up to conjugation by $G$. Observe that $C^\cusp(G)$ is the same as the set of isomorphism classes of irreducible cuspidal automorphic representations of $G$ up to continuous twists $V\mapsto V\otimes\abs{\det(\cdot)}^s$. We now claim that:
\begin{proposition} \label{prop:decomp_cusp}
    The above construction induces an isomorphism of algebras:
    \[
        \cI_G^\cusp\xrightarrow{\sim}\prod_{(\pi,V)\in C^\cusp(G)}\Ind_{G^1}^G V.
    \]
\end{proposition}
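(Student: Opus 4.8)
The plan is to identify $\cI_G^\cusp$ with the space of compactly-supported cusp forms and then to decompose that space spectrally using the finiteness result already invoked in the proof of Proposition~\ref{prop:decomp_cusp_eis}. First I would recall that $\cI_G^\cusp\subseteq\cI_G=S(\GL_2(F)\backslash\GL_2(\AA))$ is, by construction, the kernel of the constant term map, i.e.\ the space of smooth compactly supported \emph{cusp} forms on $\GL_2(F)\backslash\GL_2(\AA)$ (this uses the Remark after Claim~\ref{claim:hat_i_I_T_through_I_G} identifying $\cI_G\to\widehat{i}(\cI_T[\eta^{-1}])$ with the classical constant term). The target of the map we wish to construct, $\prod_{(\pi,V)}\Ind_{G^1}^G V$, is — by the description in Remark~\ref{remark:explicit_G_1_induction} — the space $\prod_{(\pi,V)} S(q^\ZZ)\otimes V$, i.e.\ the ``algebra of matrix coefficients / Whittaker functions'' for all cuspidal $V$, with its convolution-algebra structure coming from the surjections $\one_\Ydown\twoheadrightarrow\Ind_{G^1}^G V$. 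So the statement to prove is that the map $\cI_G^\cusp\to\prod \Ind_{G^1}^G V$ built from the family of algebra maps $\cI_G\to\Ind_{G^1}^G V$ is an isomorphism.

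The key steps, in order. (1) Injectivity: a cusp form in $\cI_G^\cusp$ killed by every projection to $\Ind_{G^1}^G V$ is a cusp form orthogonal (under Petersson) to the matrix coefficients of every irreducible cuspidal $V$; since cusp forms decompose discretely, such a form is zero. Here I would lean on the standard fact (or its abstract-automorphic avatar from \cite{abst_aut_reps_arxiv}) that the cuspidal part of $L^2(\GL_2(F)\backslash\GL_2(\AA)^1)$ is a discrete sum of irreducible cuspidal automorphic representations, each with finite multiplicity — and in fact multiplicity one for $\GL_2$. (2) The map lands in the restricted product (finitely many nonzero components): a fixed $K$-smooth cusp form supported in $1/r<|\det g|<r$ lies in the finite-dimensional space $S_{<r}^\cusp(\GL_2(F)\backslash\GL_2(\AA)/K)$ from Proposition~\ref{prop:decomp_cusp_eis}, hence only finitely many of the cuspidal $V$ contribute. (3) Surjectivity: given a finite collection of vectors $w_i\in S(q^\ZZ)\otimes V_i=\Ind_{G^1}^G V_i$, I must produce a single compactly supported cusp form $\varphi\in\cI_G^\cusp$ projecting to each $w_i$; by strong approximation plus the finite-dimensionality in step (2), for fixed $K$ and fixed support window the space $S_{<r}^\cusp(\GL_2(F)\backslash\GL_2(\AA)/K)$ is a finite product $\prod \big(\Ind_{G^1}^G V\big)^{K}_{<r}$ over the finitely many relevant cuspidal $V$, and letting $K\to\{1\}$ and $r\to\infty$ gives the full product. (4) Algebra structure: the maps $\cI_G\to\Ind_{G^1}^G V$ are algebra maps by construction (the surjection $\one_\Ydown\twoheadrightarrow\Ind_{G^1}^G V$ adjoint to $v^\gen$ induces the commutative algebra structure), so the assembled map is an algebra map; being a bijection of algebras it is an isomorphism of algebras. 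It remains only to check that $\cI_G^\cusp$ — a priori only an ideal of $\cI_G$ — does not acquire a ``unit'' problem, but a product of algebras need not be unital here and none is claimed.

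The main obstacle I anticipate is step (3), surjectivity, and more precisely pinning down the claim that $S_{<r}^\cusp(\GL_2(F)\backslash\GL_2(\AA)/K)$ decomposes as a \emph{finite direct product} of the fixed-level subspaces $\big(\Ind_{G^1}^G V\big)^K_{<r}$. This is where one genuinely uses that cusp forms are a discrete direct sum of irreducibles (no continuous spectrum survives), that each occurs with multiplicity one for $\GL_2$ so the $V$-component is canonically identified rather than just up to scalar, and that for fixed level and fixed support window only finitely many $V$ have nonzero $K$-fixed vectors — the latter being exactly the finite-dimensionality cited in Proposition~\ref{prop:decomp_cusp_eis}. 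Everything else (injectivity, the algebra-map check, passing to the colimit over $K$ and $r$) is formal once this structural input is in hand. I would organize the write-up so that this finite-product decomposition is stated as the one substantive lemma, with the rest of the argument a short formal wrapper around it.
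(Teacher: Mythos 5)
Your proposal is correct and takes essentially the same route as the paper: the paper's proof likewise rests on the finite-dimensionality of $S_{<r}^\cusp(\GL_2(F)\backslash\GL_2(\AA)/K)$ from Proposition~\ref{prop:decomp_cusp_eis} together with orthogonality of distinct cuspidal representations (with multiplicity one implicit), the only cosmetic difference being that the paper exhibits an explicit section $\Ind_{G^1}^G V\ra\cI_G^\cusp$ in the coordinates of Remark~\ref{remark:explicit_G_1_induction} to get surjectivity, where you instead invoke the discrete decomposition of the fixed-level, bounded-determinant cusp forms. No gap.
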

\begin{remark} \label{remark:prod_is_coprod}
    Note that the infinite product in Proposition~\ref{prop:decomp_cusp} is taken in $\Mod(G)$, and is therefore also equal to an infinite co-product. That is, this product is actually a direct sum.
\end{remark}
\begin{proof}[Proof of Proposition~\ref{prop:decomp_cusp}]
    For every cuspidal $(\pi,V)\in C^\cusp(G)$, we have a section
    \[
        \Ind_{G^1}^G V\ra\cI_G^\cusp
    \]
    given in the notation of Remark~\ref{remark:explicit_G_1_induction} by
    \[
        \phi(g)\otimes f(y)\mapsto\phi\left(g\cdot i_\nu\!\begin{pmatrix}\pi_\nu^{\log_q\abs{\det{g}}} & \\ & 1\end{pmatrix}\right)f(\abs{\det{g}}),
    \]
    where $\phi(g)$ is an automorphic form in the space of $V$.
    
    Together with the finite dimensionality of the space
    \[
        S_{<r}^\cusp(\GL_2(F)\backslash\GL_2(\AA)/K)
    \]
    from the proof of Proposition~\ref{prop:decomp_cusp_eis} and the orthogonality of the different cuspidal representations, the claim follows.
\end{proof}

\subsection{Eisenstein Components}

It remains to decompose the Eisenstein part of the category $\Mod^\aut(G)$. Our strategy for doing so is as follows. It is immediate that $\widehat{i}(\cI_T[\eta^{-1}])$ decomposes into a direct product over components, because $\cI_T$ does so as well. Because we have a map of algebras $\cI_G\ra\widehat{i}(\cI_T[\eta^{-1}])$, we will be able to define sub-algebras of $\cI_G$ corresponding to each component separately. Thus, it will remain to show that every element $\phi\in\cI_G$ is the direct sum of its projections corresponding to each component. This is a property of $\cI_G$ as a $G$-module, which can be verified because these projections can be done via elements of the center of $\Mod(G)$.

Let us formalize this. Let $C^\Eis(G)$ be the set of automorphic characters $\chi\co T(F)\backslash T(\AA)\ra\CC^\times$, up to continuous twists $\abs{\cdot}^{s_1}\otimes\abs{\cdot}^{s_2}$ and twists by the action of $w$, as in Definition~\ref{def:weyl_twist}. We can decompose the category $\Mod^\aut(T)$ into components:
\[
    \Mod^\aut(T)=\prod_{c\in C^\Eis(G)}\Mod^\aut_c(T).
\]
In particular, we have a decomposition of algebras:
\[
    \widehat{i}(\cI_T[\eta^{-1}])=\prod_{c\in C^\Eis(G)}\widehat{i}(\cI_T^c[\eta^{-1}])
\]
as well. Note that as in Remark~\ref{remark:prod_is_coprod}, this product is actually a direct sum.

Recall from Remark~\ref{remark:eis_is_image} that $\cI_G^\Eis$ is isomorphic to the image of the algebra map
\[
    \cI_G\ra\widehat{i}(\cI_T[\eta^{-1}]).
\]
Define the algebras $\cI_G^c$ for $c\in C^\Eis(G)$ via the pullback square:
\[\xymatrix{
    \cI_G^c \ar[r] \ar[d] & \cI_G^\Eis \ar@{^{(}->}[d] \\
    \widehat{i}(\cI_T^c[\eta^{-1}]) \ar@{^{(}->}[r] & \widehat{i}(\cI_T[\eta^{-1}]).
}\]
We claim that:
\begin{proposition} \label{prop:decomp_eis}
    The map:
    \[
        \prod_{c\in C^\Eis(G)}\cI_G^c\ra\cI_G^\Eis
    \]
    is an isomorphism of algebras.
\end{proposition}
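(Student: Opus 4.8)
The strategy is the one sketched just before the statement: embed $\cI_G^\Eis$ into the ambient direct sum $\bigoplus_{c}\widehat{i}(\cI_T^c[\eta^{-1}])$ via the constant term map, and show that the splitting of that direct sum is inherited by $\cI_G^\Eis$ because it is implemented by the center of $\Mod(G)$. Let me first dispose of the formalities. The map in question is an algebra homomorphism: each $\cI_G^c$ is a (non-unital) algebra, being the pullback of the algebra diagram defining it, and for $c\neq c'$ the ideals $\cI_G^c,\cI_G^{c'}$ land in the mutually annihilating summands $\widehat{i}(\cI_T^c[\eta^{-1}])$ and $\widehat{i}(\cI_T^{c'}[\eta^{-1}])$ of $\widehat{i}(\cI_T[\eta^{-1}])$, hence annihilate one another inside $\cI_G^\Eis$ as well. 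Also $\prod_{c}\cI_G^c$ coincides with the coproduct $\bigoplus_{c}\cI_G^c$ — and in particular the map is defined on it — since at any finite level only finitely many components $c\in C^\Eis(G)$ contribute, for conductor reasons, exactly as in Remark~\ref{remark:prod_is_coprod}. So it remains to prove bijectivity.

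By Remark~\ref{remark:eis_is_image}, $\cI_G^\Eis$ is the image of the constant term map $\cI_G\ra\widehat{i}(\cI_T[\eta^{-1}])$ of Claim~\ref{claim:hat_i_I_T_through_I_G}; since $\widehat{i}$ preserves colimits and $\cI_T[\eta^{-1}]=\bigoplus_{c}\cI_T^c[\eta^{-1}]$, the target is the honest direct sum $\bigoplus_{c}\widehat{i}(\cI_T^c[\eta^{-1}])$ and $\cI_G^\Eis$ embeds into it. Each $\cI_G^c$ embeds into the $c$-th summand, so injectivity of $\prod_{c}\cI_G^c\ra\cI_G^\Eis$ is immediate. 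For surjectivity, fix $\psi\in\cI_G^\Eis$; its image is a finite sum $\sum_{c}\psi_c$ with $\psi_c$ in the $c$-th summand, and it suffices to prove each $\psi_c$ again lies in $\cI_G^\Eis$, since then $\psi_c\in\cI_G^\Eis\cap\widehat{i}(\cI_T^c[\eta^{-1}])=\cI_G^c$ and $(\psi_c)_c\mapsto\psi$.

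This last point is where the center enters. The claim is that, after restriction to any finite level, the decomposition $\widehat{i}(\cI_T[\eta^{-1}])=\bigoplus_{c}\widehat{i}(\cI_T^c[\eta^{-1}])$ is the block decomposition for the action of the center $Z$ of $\Mod(G)$: the decomposition $\cI_T[\eta^{-1}]=\bigoplus_{c}\cI_T^c[\eta^{-1}]$ is the block decomposition for the center of $\Mod(T)$ (the indexing set $C^\Eis(G)$ being precisely pairs of automorphic characters up to continuous twist and the Weyl swap, which is what that center detects at finite level), and parabolic induction matches the $Z$-action on $\widehat{i}(V)$ with the central action on $V$ through the Satake/Harish-Chandra map $Z\ra Z(\Mod(T))$, whose image is large enough to separate the finitely many blocks relevant at a given level. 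The constant term map $\cI_G\ra\widehat{i}(\cI_T[\eta^{-1}])$ is a morphism of $G$-modules, hence automatically $Z$-equivariant; so, at the level of $\psi$, where the relevant blocks are cut out by an idempotent $e_c\in Z$, we have $\psi_c=e_c\cdot\psi$. Since $e_c$ acts on $\cI_G$ as an object of $\Mod(G)$ and $\cI_G^\Eis\subseteq\cI_G$ is a $G$-module direct summand — hence $Z$-stable — by Proposition~\ref{prop:decomp_cusp_eis}, we get $\psi_c\in\cI_G^\Eis$, which finishes the proof.

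The main obstacle is the penultimate step: justifying that the $C^\Eis(G)$-decomposition on the torus side is genuinely visible to the center of $\Mod(G)$ after parabolic induction — equivalently, that the Bernstein center of $\GL_2(\AA)$ separates, at each finite level, the finitely many Eisenstein components that occur there — and therefore that the corresponding block projectors act on $\cI_G$ itself. Once this is in hand, the rest (the pullback bookkeeping, the identification of $\prod$ with $\bigoplus$, injectivity, and the algebra structure) is routine. I would present the center compatibility as a short lemma about parabolic induction and central characters, then assemble the four steps above.
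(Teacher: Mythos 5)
Your architecture is the same as the paper's: embed $\cI_G^\Eis$ into $\bigoplus_{c}\widehat{i}(\cI_T^c[\eta^{-1}])$ via the constant term map and show that the block idempotents $e_c$ are implemented by the center of $\Mod(G)$, so that they preserve the image of $\cI_G^\Eis$. The bookkeeping (injectivity, identification of product with coproduct, the algebra structure, $Z$-stability of $\cI_G^\Eis$) is fine. But the step you defer to ``a short lemma about parabolic induction and central characters'' is not the formal compatibility of the Bernstein center with parabolic induction --- that part is indeed routine --- and it is precisely where the proof actually lives. The issue is this: the part of the center of $\Mod(G)$ that acts on $\widehat{i}(\cI_T[\eta^{-1}])$ through the torus is the subalgebra $\one_T^{\{w_v=1\}}$ of functions on $T(\AA)$ that are invariant under the Weyl twist at \emph{each place separately}, whereas the idempotents $e_c$ cutting out the components $c\in C^\Eis(G)$ live in $\cI_T^{w=1}$, the functions on $T(F)\backslash T(\AA)$ invariant only under the \emph{simultaneous} global Weyl action. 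Your claim that the image of the center ``is large enough to separate the finitely many blocks relevant at a given level'' is exactly the assertion that $\one_T^{\{w_v=1\}}\ra\cI_T^{w=1}$ is surjective, and this is a genuine global arithmetic statement, not a consequence of any local compatibility: a priori, two distinct pairs of automorphic characters $\chi$ and $\chi'$ could be related by applying local Weyl twists at some set of places without satisfying $\chi=\chi'$ or $\chi=\chi'(w)$, in which case the everywhere-locally-Weyl-invariant center could not separate their components (see Remark~\ref{remark:recompose_characters}).

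The paper isolates this as Lemma~\ref{lemma:surj_of_center} and proves it by reducing to indicator functions of pairs of cosets in a ray class group $F^\times\backslash\AA^\times/K$ and invoking the Chebotarev Density Theorem to produce a place $v$ whose uniformizer realizes the required class, so that a single local Weyl swap at $v$ implements the global swap. Without this input your argument is incomplete at its central point; with it, your proof becomes the paper's.
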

\begin{proof}
    Consider the composition
    \[
        \cI_G^\Eis\ra\widehat{i}(\cI_T[\eta^{-1}])=\prod_{c\in C^\Eis(G)}\widehat{i}(\cI_T^c[\eta^{-1}]).
    \]
    It is sufficient to show that for each $c\in C^\Eis(G)$, the projection:
    \[
        \cI_G^\Eis\hookrightarrow\widehat{i}(\cI_T[\eta^{-1}])=\prod_{c'\in C^\Eis(G)}\widehat{i}(\cI_T^{c'}[\eta^{-1}])\twoheadrightarrow\widehat{i}(\cI_T^c[\eta^{-1}])
    \]
    has the same image as its pre-composition with $\cI_G^c\hookrightarrow\cI_G^\Eis$. We will do this by showing that the image of $\cI_G^\Eis$ in $\widehat{i}(\cI_T[\eta^{-1}])$ is closed under the idempotent:
    \[
        e_c\co\widehat{i}(\cI_T[\eta^{-1}])\ra\widehat{i}(\cI_T^c[\eta^{-1}])\ra\widehat{i}(\cI_T[\eta^{-1}]).
    \]
    Indeed, we claim that $e_c$ is given by the action of an element of the center of the category $\Mod(G)$ on $\widehat{i}(\cI_T[\eta^{-1}])$.
    
    This can be shown as follows. Consider the commutative diagram:
    \[\xymatrix{
        \one_T^{\{w_v=1\}} \ar@{^{(}->}[r] \ar[d] & \one_T \ar[d] \\
        \cI_T^{w=1} \ar@{^{(}->}[r] & \cI_T,
    }\]
    Here, $\one_T^{\{w_v=1\}}$ is the sub-algebra of $\one_T=S(T)$ consisting of functions which are separately invariant under the Weyl action of Definition~\ref{def:weyl_twist} at every place. Moreover, $\cI_T^{w=1}$ is the sub-algebra of $\cI_T$ consisting of functions that are invariant under the simultaneous Weyl action at all places. Recall that $\one_T^{\{w_v=1\}}$ is a sub-algebra of the center of $\Mod(G)$.
    
    Since the idempotent $e_c$ is given by averaging against elements of $\cI_T^{w=1}$, it is sufficient to show that the morphism $\xymatrix@1{\one_T^{\{w_v=1\}} \ar[r] & \cI_T^{w=1}}$ is onto. This is the statement of Lemma~\ref{lemma:surj_of_center} below.
\end{proof}

\begin{lemma} \label{lemma:surj_of_center}
    The map
    \[\xymatrix{
        \one_T^{\{w_v=1\}} \ar[r] & \cI_T^{w=1}
    }\]
    from the proof of Proposition~\ref{prop:decomp_eis} is onto.
\end{lemma}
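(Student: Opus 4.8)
The plan is to pass to the spectral (Mellin) side, where both algebras become rings of regular functions on parameter spaces and the map of the lemma becomes restriction of functions along a morphism $j$; the lemma then says that $j$ is a closed immersion, which I would reduce to an approximation statement for $\GL_1$.

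Concretely, applying the Mellin transform identifies $\one_T = S(T(\AA))$ with the ring of regular functions on the pro-variety of all quasicharacters of $T(\AA)=\prod_v T_v$ (restricted product), and the local Weyl twists identify $\one_T^{\{w_v=1\}}$ with $\mathcal O\big(\prod_v \mathrm{Sym}^2_w(\widehat{F_v^\times})\big)$, where $\mathrm{Sym}^2_w$ denotes the quotient by the involution of Definition~\ref{def:weyl_twist}. Similarly $\cI_T = S(T(F)\backslash T(\AA))$ becomes $\mathcal O$ of the closed subvariety of automorphic quasicharacters, $\cI_T^{w=1}$ becomes $\mathcal O\big(\mathrm{Sym}^2_w(\widehat{T(F)\backslash T(\AA)})\big)$, and the map $\one_T\to\cI_T$ (sum over $T(F)$) becomes pullback. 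So the lemma is equivalent to surjectivity of
\[
  \mathcal O\Big(\prod_v \mathrm{Sym}^2_w(\widehat{F_v^\times})\Big)\;\longrightarrow\;\mathcal O\big(\mathrm{Sym}^2_w(\widehat{T(F)\backslash T(\AA)})\big),
\]
i.e.\ to the assertion that the morphism $j$ sending an unordered pair of automorphic characters (up to twist) to the family of its unordered local components is, in the appropriate sense, a closed immersion. Since everything factors as (a profinite set of ramification data) $\times$ (a pro-torus of unramified-twist directions), and restriction of regular functions to a closed subtorus-coset of a pro-torus is surjective, it suffices to prove: (i) $j$ is injective on the set $C^\Eis(G)$ of Eisenstein components, and (ii) distinct components already have disjoint images after restricting to the local components at a suitable \emph{finite} set of places, so that their idempotents $e_c$ can be separated by a Weyl-invariant regular function.

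For (i): suppose two twist-families of pairs of automorphic characters have the same family of unordered local components at every place. Matching these component by component records, at each place, whether the two characters are taken in the same or the opposite order; let $S$ be the set of places where the order is reversed. Then $\alpha'/\alpha$ is a well-defined automorphic character which is trivial on $\prod_{v\notin S}F_v^\times$. By weak/strong approximation for $\GL_1$ — $F^\times\cdot\prod_{v\notin S}F_v^\times$ is dense enough in $\AA^\times$ once one uses the whole twist-family rather than a single pair — this ratio is forced to be a power of $\lvert\cdot\rvert$, hence a twist, so the two families define the same component (and if $S$ is everything, the two pairs differ by the global swap). For (ii), the only ambiguity between two components of the same ramification type is a nontrivial character of the finite group $\mathrm{Pic}^0$, which is nonzero on the class of some place; running over finitely many places whose Frobenii detect all of the finitely many relevant characters (Chebotarev) one obtains, after restriction to those places, disjoint images, and then a Weyl-invariant separating function by interpolation in the corresponding (symmetrized) Hecke operators.

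The step I expect to be the real obstacle is (ii): upgrading set-theoretic disjointness of the components' images to genuine separation by a Weyl-invariant regular function inside the non-Noetherian ring $\mathcal O\big(\prod_v\mathrm{Sym}^2_w(\widehat{F_v^\times})\big)$. This rests on the finiteness statement that two distinct Eisenstein components are already distinguished by the unordered local components at finitely many places — which, together with the approximation input of step (i), is the heart of the lemma.
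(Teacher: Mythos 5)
Your spectral reformulation correctly identifies the content of the lemma with the ``no re-mixing'' statement of Remark~\ref{remark:recompose_characters}, but there are two genuine gaps. First, step (i) is not a consequence of weak or strong approximation. If $S$ is the set of places at which the matching reverses the order, then weak approximation does show that $F^\times\cdot\prod_{v\notin S}F_v^\times$ is dense in $\AA^\times$ when $S$ is \emph{finite}; but for $S$ infinite and co-infinite this subgroup need not be dense. Its closure is the common kernel of all Hecke characters trivial on $F_v^\times$ for every $v\notin S$, and nontrivial such characters exist: if $\mu$ is the quadratic character of an extension $E/F$ and $S$ contains all inert places, then $\mu$ itself is trivial on $F_v^\times$ for all $v\notin S$. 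So ruling out a re-mixing genuinely requires using the second half of the data (that the ratio agrees with $\chi_2/\chi_1$ on $\prod_{v\in S}F_v^\times$) together with a Chebotarev-type input on the ray class group; ``dense enough once one uses the whole twist-family'' is an assertion, not an argument. Second, step (ii) --- upgrading set-theoretic separation of components to surjectivity on Weyl-invariant regular functions --- is, as you say yourself, the real obstacle, and it is left open. Note that it is entangled with (i): the target is invariants under a \emph{single} global involution while the source is invariants under $\prod_v\ZZ/2$, so a $w$-invariant function on a global component extends to a $\{w_v\}$-invariant function only once one knows that the $\prod_v\ZZ/2$-orbit of that component meets the automorphic locus only in the component and its global twist.

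The paper's proof is shorter and stays on the function side. It reduces to the indicator function of a single pair of cosets $(xKF^\times,yKF^\times)$ for a compact $K\subseteq\AA^\times$, and invokes Chebotarev for the ray class group $F^\times\backslash\AA^\times/K$ only to produce \emph{one} place $v$ whose uniformizer $\pi_v$ represents the class $xy^{-1}$. The lift is then taken to be supported on $(\pi_v yK,\,yK)\cup(yK,\,\pi_v yK)$: since the two coordinates differ only at the single place $v$, every local twist $w_u$ with $u\neq v$ fixes this function, while $w_v$ swaps the two terms with the correct modulus factor, and the sum over $T(F)$ recovers $f$. If you want to salvage your approach, this device --- choosing coset representatives (equivalently, generators of the component's coordinate ring) concentrated at one place --- is exactly what makes both of your steps (i) and (ii) go through, and you would need to import it rather than approximation.
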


\begin{remark} \label{remark:surj_of_center_explicit}
    To be as explicit as possible, let us give the following equivalent formulation of Lemma~\ref{lemma:surj_of_center}.
    
    Let $f\co T(F)\backslash T(\AA)\ra\CC$ be a smooth and compactly supported function such that it is invariant under the Weyl action:
    \[
        f(a,d)=\abs{\frac{a}{d}}f(d,a).
    \]
    Then there is a smooth and compactly supported function $f'\co T(\AA)\ra\CC$ such that:
    \begin{enumerate}
        \item The function $f'$ lifts $f$:
        \[
            f(a,d)=\sum_{\alpha\in F^\times}\sum_{\delta\in F^\times}f'(\alpha a,\delta d).
        \]
        \item The function $f'$ is locally invariant under the Weyl action at every place $v$ separately:
        \[
            f'(a,d)=\abs{\frac{a_v}{d_v}}_v f'(ad_v/a_v,da_v/d_v),
        \]
        where $a_v$ and $d_v$ denote the components of $a$ and $d$ at $v$.
    \end{enumerate}
\end{remark}

\begin{remark} \label{remark:recompose_characters}
    The point of Lemma~\ref{lemma:surj_of_center} is to show that one cannot ``re-mix'' the local components of two automorphic characters into a different pair of automorphic characters.
    
    Indeed, by going to the spectral picture, Lemma~\ref{lemma:surj_of_center} says the following. Consider a pair of automorphic characters $\chi,\chi'\co T(F)\backslash T(\AA)\ra\CC^\times$, and suppose that by applying the Weyl twist locally at some of the places of $F$, one can turn $\chi$ into $\chi'$. Then either $\chi=\chi'$ or $\chi=\chi'(w)$.
\end{remark}

\begin{proof}[Proof of Lemma~\ref{lemma:surj_of_center}]
    Let us prove the explicit form of Lemma~\ref{lemma:surj_of_center} given by Remark~\ref{remark:surj_of_center_explicit}.
    
    Consider a compact subgroup $K\subseteq\AA^\times$. Let $x,y\in F^\times\backslash\AA^\times/K$ be any two elements. It is enough to show the claim for $f$ supported on the pair of cosets $(x,y)$:
    \[
        f(a,d)=\one_{(xKF^\times,yKF^\times)}(a,d)+\abs{\frac{x}{y}}\one_{(yKF^\times,xKF^\times)}(a,d).
    \]
    
    By the Chebotarev Density Theorem applied to the ray class group
    \[
        F^\times\backslash\AA^\times/K,
    \]
    there is a place $v$ of $F$ such that $\pi_v$ and $xy^{-1}$ are equivalent in the ray class group, where $\pi_v\in F_v^\times$ is a uniformizer. We choose:
    \[
        f'(a,d)=\one_{(\pi_vyK,yK)}(a,d)+\abs{\frac{x}{y}}\one_{(yK,\pi_vyK)}(a,d).
    \]
\end{proof}

Let $C(G)=C^\cusp(G)\coprod C^\Eis(G)$. We summarize our results as follows.
\begin{theorem} \label{thm:decom_to_comps}
    The category $\Mod^\aut(G)$ decomposes into a direct product:
    \[
        \Mod^\aut(G)=\prod_{c\in C(G)}\Mod^\aut_c(G).
    \]
\end{theorem}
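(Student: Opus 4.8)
The plan is to assemble the algebra decompositions already proven --- Propositions~\ref{prop:decomp_cusp_eis}, \ref{prop:decomp_cusp} and~\ref{prop:decomp_eis} --- into a single decomposition of the commutative algebra $\cI_G$, and then pass to module categories. First I would combine these three results: Proposition~\ref{prop:decomp_cusp_eis} gives $\cI_G = \cI_G^\cusp \times \cI_G^\Eis$ as commutative algebras, Proposition~\ref{prop:decomp_cusp} gives $\cI_G^\cusp \cong \prod_{(\pi,V) \in C^\cusp(G)} \Ind_{G^1}^G V$, and Proposition~\ref{prop:decomp_eis} gives $\cI_G^\Eis \cong \prod_{c \in C^\Eis(G)} \cI_G^c$. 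Setting $\cI_G^c := \Ind_{G^1}^G V$ for a cuspidal component $c = (\pi,V)$ and recalling $C(G) = C^\cusp(G) \coprod C^\Eis(G)$, this yields an isomorphism of commutative algebras
\[
    \cI_G \;\xrightarrow{\sim}\; \prod_{c \in C(G)} \cI_G^c .
\]
By Remark~\ref{remark:prod_is_coprod} (and the same observation for the Eisenstein factors), every product here is formed in $\Mod(G)$ and hence agrees with the corresponding coproduct, so in particular $\cI_G \cong \bigoplus_{c \in C(G)} \cI_G^c$ as $G$-modules.

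Next I would pass to modules. Recall $\Mod^\aut(G) = \Mod(\cI_G)$, the category of $\cI_G$-modules in $(\Mod(G),\oY)$, and define $\Mod^\aut_c(G) := \Mod(\cI_G^c)$. Each $\cI_G^c$ is a unital commutative algebra sitting in $\cI_G$ as an ideal, distinct factors annihilate one another, and --- since $\cI_G = \bigoplus_c \cI_G^c$ --- the $c$-th coordinate of the unit of $\cI_G$ is the unit of $\cI_G^c$. Hence for any $M \in \Mod(\cI_G)$ the action map $\cI_G \oY M \to M$ is split by the unit and, because $\oY$ preserves colimits, has source $\cI_G \oY M = \bigoplus_c(\cI_G^c \oY M)$ with the splitting landing in this coproduct; this exhibits $M$ as the coproduct of the pairwise-orthogonal submodules $M_c$, namely the images of $\cI_G^c \oY M \to M$, each canonically an $\cI_G^c$-module. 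The assignment $M \mapsto (M_c)_c$ is then the desired equivalence $\Mod^\aut(G) = \prod_{c \in C(G)} \Mod^\aut_c(G)$, and since the decomposition of $\cI_G$ is one of commutative algebras, it is automatically compatible with $\oY$ on both sides.

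The hard part is not here: all the genuine content sits in the three propositions invoked above (finite-dimensionality of bounded-support cusp-form spaces and orthogonality of cuspidal representations for Propositions~\ref{prop:decomp_cusp_eis} and~\ref{prop:decomp_cusp}, and Lemma~\ref{lemma:surj_of_center} via Chebotarev for Proposition~\ref{prop:decomp_eis}). At this stage only bookkeeping remains, with one point that must not be skipped: the coproduct-versus-product subtlety of Remark~\ref{remark:prod_is_coprod}. For an infinite product of ordinary rings the analogue of the module-category decomposition genuinely fails, and what rescues the present situation is precisely that all the products in the first step remain coproducts in $\Mod(G)$, so that the unit of $\cI_G$ is visible as a morphism into $\bigoplus_c \cI_G^c$. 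The one thing to check carefully is therefore that the combined product $\prod_{c \in C(G)} \cI_G^c$ retains this property --- which it does, being a product of the very modules that already appear in Propositions~\ref{prop:decomp_cusp} and~\ref{prop:decomp_eis}.
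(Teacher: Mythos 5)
Your proposal is correct and follows exactly the route the paper intends: the paper states Theorem~\ref{thm:decom_to_comps} as a direct summary of Propositions~\ref{prop:decomp_cusp_eis}, \ref{prop:decomp_cusp} and~\ref{prop:decomp_eis} with no further argument, and your write-up simply supplies the standard (and correct) bookkeeping for passing from the coproduct decomposition of the unital algebra $\cI_G$ to the decomposition of its module category, including the genuinely relevant product-versus-coproduct point of Remark~\ref{remark:prod_is_coprod}.
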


\section{Corollaries} \label{sect:results_of_decomp}

In this section, we will give two structural results about the category $\Mod^\aut(G)$, which follow as corollaries of the heavy Theorem~\ref{thm:decom_to_comps}.

The first result, Theorem~\ref{thm:abst_aut_irr_is_aut}, will be that irreducible objects in $\Mod^\aut(G)$ are indeed automorphic in the classical sense. We will prove this in Subsection~\ref{subsect:abst_aut_irr_is_aut}. The second result, Theorem~\ref{thm:fin_len_rest}, will be that automorphic parabolic restriction $\widehat{r}^\aut$ respects finite-length. We will prove this in Subsection~\ref{subsect:fin_len_rest}.

\subsection{Irreducible Abstractly Automorphic Objects} \label{subsect:abst_aut_irr_is_aut}

In this subsection, we will show that all irreducible abstractly automorphic representations are in fact irreducible automorphic representations in the classical sense. This means that they are irreducible sub-quotients of the space of smooth functions on the automorphic quotient $\GL_2(F)\backslash\GL_2(\AA)$.

\begin{theorem} \label{thm:abst_aut_irr_is_aut}
    Every irreducible $V\in\Mod^\aut(G)$ is an irreducible automorphic representation in the classical sense.
\end{theorem}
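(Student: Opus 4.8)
The plan is to reduce, via the decomposition $\Mod^\aut(G)=\prod_{c\in C(G)}\Mod^\aut_c(G)$ of Theorem~\ref{thm:decom_to_comps}, to an analysis of the irreducible objects of the module category $\Mod(\cI_G^c)$ of each component algebra. Two preliminary remarks will be used constantly: the nondegenerate Petersson pairing embeds $\cS_G$ as a sub-$G$-module of $\widetilde{\cS_G}$, so it suffices to realize $\iota(V)$ as a subquotient of $\cS_G$ or of any $G$-module classically known to consist of automorphic forms; and $i(\chi')$, for $\chi'$ an automorphic character of $T$, has finite length with every composition factor automorphic (the classical theory of Eisenstein series). Note also that if $V$ admits a nonzero map from the unit $\cI_G$ of $\oY$ then the map is surjective (as $V$ is irreducible), so $\iota(V)$ is a quotient of $\iota(\cI_G)=\cI_G\subseteq\cS_G$; by Claim~\ref{claim:idem} this settles the generic case, and the substance lies in the non-generic irreducibles.

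Suppose first that $V$ lies in a cuspidal component, $c=(\pi,V_0)\in C^\cusp(G)$, so $\cI_G^c\cong\Ind_{G^1}^G V_0$ by Proposition~\ref{prop:decomp_cusp}. Here I would show that every irreducible object $M$ of $\Mod(\cI_G^c)$ has realization $\iota(M)\cong V_0\otimes\abs{\det(\cdot)}^s$ for some unramified twist $\abs{\det(\cdot)}^s$. This uses the explicit form $\Ind_{G^1}^G V_0\cong S(q^\ZZ)\otimes V_0$ of Remark~\ref{remark:explicit_G_1_induction}: the factor $S(q^\ZZ)$ records the twist $s$, while the factor $V_0$ — equipped with the multiplication induced by the distinguished generic vector $v^\gen$ — is, once twisted to $V_0\otimes\abs{\det(\cdot)}^s$, an irreducible cuspidal hence generic automorphic $G$-representation, so it is $\oY$-idempotent by Claim~\ref{claim:idem} and its module category has $V_0\otimes\abs{\det(\cdot)}^s$ as its only irreducible. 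In other words $\Mod(\cI_G^c)$ is the category of quasi-coherent sheaves on the torus of unramified $\abs{\det(\cdot)}$-twists, and its irreducibles realize to the twists of $V_0$, which are irreducible cuspidal automorphic representations; in particular $\iota(V)$ is automorphic.

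Now suppose $V$ lies in an Eisenstein component $c\in C^\Eis(G)$. As $V$ is irreducible it has finite length, so by Theorem~\ref{thm:fin_len_rest} (applied to $r^\aut$, which is isomorphic to $\widehat r^\aut$) the automorphic $T$-module $r^\aut(V)$ has finite length. If $r^\aut(V)\neq 0$ then, since $T(\AA)/T(F)$ is abelian, $r^\aut(V)$ has a finite filtration whose subquotients are automorphic characters $\chi_j$ of $T$. The unit of the adjunction $r^\aut\dashv i^\aut$ gives a map $V\to i^\aut(r^\aut(V))$ whose adjunct is the identity of $r^\aut(V)$; since $r^\aut(V)\neq 0$ this map is nonzero, hence injective as $V$ is irreducible. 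Applying the exact functor $i^\aut$ to the filtration, $\iota(V)$ embeds in a $G$-module with a finite filtration by the honest parabolic inductions $i(\chi_j')$, and therefore the irreducible $\iota(V)$ is automorphic.

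Finally, consider $c\in C^\Eis(G)$ with $r^\aut(V)=0$ — the anomalous, non-isobaric spectrum. I would first argue that such a $c$ must be the component of $\triv_T$: locally $i_v(\chi_v)$ is reducible only when $\chi_{1,v}/\chi_{2,v}=\abs{\cdot}_v^{\pm 1}$, and a global unramified idele class character equals $\abs{\cdot}_v$ locally at some place only if it equals $\abs{\cdot}$ globally, so in every other Eisenstein component $i(\chi')$ is irreducible at each place and $r^\aut$ of an irreducible is nonzero. Then, by Warning~\ref{warn:eisenstein_killed_by_rest}, $\iota(V)$ is a composition factor of $i(\triv_T)=\iota(i^\aut(\triv_T))$ — trivial at almost every place and Steinberg at the rest — and since $i(\triv_T)$ again consists of Eisenstein series (with their residues and values), $\iota(V)$ is automorphic. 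I expect the main obstacle to be exactly this Eisenstein analysis: establishing that the component of $\triv_T$ is the only one with irreducibles killed by $r^\aut$, and handling the anomalous representations honestly — they are killed by $r^\aut$ yet genuinely Eisenstein, and they interact delicately with the localization at $\eta$ and with the L-function normalization $\cL$. By comparison the cuspidal case is routine once the description of $\Mod(\cI_G^c)$ is in place.
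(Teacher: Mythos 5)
Your high-level structure (decompose into components via Theorem~\ref{thm:decom_to_comps}, then treat cuspidal and Eisenstein components separately) matches the paper, but there are genuine gaps at exactly the points the paper identifies as the substance of the proof. In the cuspidal case, the assertion that the module category of $V_0\otimes\abs{\det(\cdot)}^s$ ``has $V_0\otimes\abs{\det(\cdot)}^s$ as its only irreducible'' does not follow from Claim~\ref{claim:idem}: idempotency $W\cong W\oY W$ of the algebra says nothing about the classification of its irreducible modules. This classification is the content of the paper's Lemma~\ref{lemma:module_is_isom}, proved place-by-place, and the hard case is a place where $V_0$ is Steinberg: one must compute $\St\oY\binom{\triv_G}{\St}\cong 0$ and $\St\oY\binom{\St}{\triv_G}\cong\St$ to see that every $\St$-module is a direct sum of Steinbergs, thereby ruling out a module whose local component is the one-dimensional representation with the same central character (which would not be automorphic). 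Your remark that the cuspidal case is ``routine once the description of $\Mod(\cI_G^c)$ is in place'' begs the question --- that description \emph{is} the difficulty.

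In the Eisenstein case you diverge from the paper, which argues directly that the action of the center (via $\cI_G^c\subseteq\widehat{i}(\cI_T^c[\eta^{-1}])$) exhibits any irreducible $V$ in an Eisenstein component as a constituent of $\widehat{i}^\aut(\chi)$ --- uniformly covering the anomalous part --- and then invokes \cite{what_is_aut_rep}. Your split by whether $r^\aut(V)$ vanishes is workable when $r^\aut(V)\neq 0$, though it leans on Theorem~\ref{thm:fin_len_rest}, whose proof in the paper already presupposes that Eisenstein irreducibles are constituents of $\widehat{i}^\aut(\chi)$, so you would need to de-circularize. More seriously, in the anomalous case you cite Warning~\ref{warn:eisenstein_killed_by_rest} to conclude that $\iota(V)$ is a composition factor of $i(\triv_T)$; that Warning is an informal description, not a proof, and the statement that $V$ is a constituent of a parabolic induction is precisely what must be established. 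Your claim that the only anomalous component is that of $\triv_T$ is also false: every component of the form $[\chi_0\otimes\chi_0]$ contains anomalous subquotients (twists of Steinberg-type representations by $\chi_0\circ\det$), and a global character can agree with $\abs{\cdot}_v$ at one place without being $\abs{\cdot}$ globally.
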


\begin{proof}
    Let $V$ be as above. We must prove that $V$ is a constituent of the contragradient of the space $S(\GL_2(F)\backslash\GL_2(\AA))$. Note that because $V$ is irreducible, it lies in some component $c\in C(G)$.
    
    If $c$ is Eisenstein, then the action of the center of the category $\Mod^\aut(G)$ on $V$ shows that it is a constituent of a representation of the form $\widehat{i}^\aut(\chi)$, for $\chi\in\Mod^\aut(T)$ irreducible. However, by the main result of \cite{what_is_aut_rep}, this means that $V$ itself is automorphic in the classical sense.
    
    If $c$ is cuspidal, then we note that the action of the center of $G$ shows that $V$ is a module of an irreducible cuspidal automorphic representation $W$ of $G$ with respect to $\oY$. Working place-by-place, the claim follows from Lemma~\ref{lemma:module_is_isom}.
\end{proof}

Recall that in a symmetric monoidal Abelian category $\C$, where the monoidal structure is right-exact, a surjection $\one_\C\twoheadrightarrow A$ determines a unique algebra structure on $A$.

\begin{lemma} \label{lemma:module_is_isom}
    Let $v$ be a place of $F$. Let $W$ be a generic irreducible representation of $\GL_2(F_v)$ and fix a surjection $\one_\Ydown\twoheadrightarrow W$. Let $V\in\Mod(\GL_2(F_v))$ be an irreducible $W$-module with the algebra structure on $W$ induced by $\one_\Ydown\twoheadrightarrow W$. Then $V$ is non-canonically isomorphic to $W$.
\end{lemma}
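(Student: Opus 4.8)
The plan is to work entirely locally over $F_v$ and exploit the structure theory of $\oY$-algebras that is already implicit in the earlier parts of the paper, especially Claim~\ref{claim:idem}. First I would recall that a generic irreducible $W$ equipped with a surjection $\one_\Ydown\twoheadrightarrow W$ is idempotent for $\oY$, i.e. the multiplication induces an isomorphism $W\oY W\xrightarrow{\sim}W$; this is the local analogue of Claim~\ref{claim:idem}, and it follows from the fact that $\Phi^-(W)$ is one-dimensional (genericity), so that $\Phi^-(W\oY W)\cong I_{RL}(W)\otimes_G W$ is one-dimensional, forcing the surjection $W\oY W\twoheadrightarrow W$ to be an isomorphism. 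Consequently the category $\Mod_W(\GL_2(F_v))$ of $W$-modules is a full subcategory of $\Mod(\GL_2(F_v))$ closed under $\oY$, and the free functor $X\mapsto W\oY X$ is idempotent with essential image exactly $\Mod_W(\GL_2(F_v))$.

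Next I would use this to pin down $V$. Since $V$ is a $W$-module, the structure map $W\oY V\to V$ is surjective (apply the unit-type map $\one_\Ydown\oY V\to V$, which is an isomorphism, and factor through $W\oY V$ using $\one_\Ydown\twoheadrightarrow W$); because $W\oY V\cong V$ as above, in fact $V$ lies in the essential image of $W\oY(-)$, so $V\cong W\oY V$ canonically. The task is then to show $W\oY V\cong W$ whenever $V$ is irreducible. Here I would compute $\Phi^-(W\oY V)\cong I_{RL}(W)\otimes_G V$. The right-hand side is the pairing from Remark~\ref{remark:tensor_i}-style duality: $\Phi^-(-\oY-)$ is the trace pairing making $\Mod(\GL_2(F_v))$ a Frobenius algebra in categories, so $I_{RL}(W)\otimes_G V$ computes $\Hom$ against the dual (contragredient) of $W$; genericity of $W$ and irreducibility of $V$ should force this space to be at most one-dimensional, and non-zero precisely when $V\cong \widetilde{W}^{\,\vee}$, which (using that $W$ is self-dual-up-to-twist in the relevant sense, or re-tracing the identification) identifies $V$ with $W$ itself. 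The non-canonical nature of the isomorphism is exactly the one-dimensional ambiguity in choosing a generator of this $\Hom$-line.

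The cleanest route, which I would actually pursue, avoids the last delicate computation by an abstract argument: in the idempotent-algebra setting, $W$ is the unit of the symmetric monoidal category $\Mod_W(\GL_2(F_v))$, and $V$ is an irreducible object of this category that receives the structure map $W\oY V\to V$ as an isomorphism; so it suffices to show $\Mod_W(\GL_2(F_v))$ has, up to isomorphism, a unique irreducible object. This in turn follows because $W\oY(-)$ sends $\one_\Ydown$ to $W$ and $\one_\Ydown$ generates $\Mod(\GL_2(F_v))$ under colimits and subquotients together with the exactness and monoidality of $W\oY(-)$; any irreducible $W$-module is then a subquotient of $W\oY X$ for some $X$ built from $\one_\Ydown$, hence a subquotient of a (co)product of copies of $W$, hence — by irreducibility and the fact that $\mathrm{End}_{\Mod_W}(W)=\CC$ (genericity again gives $\Hom(\one_\Ydown, W)$ one-dimensional, and $\Phi^-\circ(W\oY-)$ is faithful on $\Mod_W$) — isomorphic to $W$. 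The main obstacle is verifying that $\mathrm{End}(W)=\CC$ and that $\Phi^-$ restricted to $W$-modules is faithful enough to detect non-vanishing of subquotients; both reduce to the one-dimensionality of $\Phi^-(W)$ for generic $W$, i.e. uniqueness of the Whittaker model, which is classical (Gelfand–Kazhdan). Once that is in hand, the rest is formal manipulation of the idempotent algebra $W$.
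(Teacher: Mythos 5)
Your reduction works smoothly only up to the point that actually carries the content of the lemma. In your second paragraph, the computation $\Phi^-(W\oY V)\cong I_{RL}(W)\otimes_G V$ does show that this space is at most one-dimensional and vanishes unless $V\cong W$ (after unwinding the transpose--contragredient identification); but when it vanishes you may only conclude $\Phi^-(V)=0$, i.e.\ that $V$ is a degenerate irreducible, hence a character $\chi\circ\det$. Ruling out that such a character can be a unital $W$-module is precisely the difficult case: when $W$ is (a twist of) $\St$, the character $\chi\circ\det$ has the same action of the Bernstein center, so no trace-pairing or central-character argument can see the difference, and your proposal never addresses it. The paper isolates exactly this case and settles it by the explicit computations $\St\oY\binom{\triv_G}{\St}\cong 0$ and $\St\oY\binom{\St}{\triv_G}\cong\St$, which give $\St\oY\triv_G=0$ and show that $\St$ projectively generates the category of $\St$-modules, so that every $\St$-module is of the form $\St^{\oplus I}$; the supercuspidal and irreducible principal series cases are the easy ones, disposed of by the center.

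Your ``cleanest route'' does not repair this, because it rests on two unproven premises, one of which is false. You invoke the \emph{exactness} of $W\oY(-)$ to pass subquotient relations through the functor (``any irreducible $W$-module is \dots\ a subquotient of a (co)product of copies of $W$''); but $W\oY(-)$ is only right-exact, and in the crucial case it is genuinely not left-exact: applying $\St\oY(-)$ to the inclusion $\St\hookrightarrow\binom{\triv_G}{\St}$ gives $\St\cong\St\oY\St$ mapping to $\St\oY\binom{\triv_G}{\St}\cong 0$. So a subquotient of $\one_\Ydown^{\oplus I}$ need not yield, after applying $W\oY(-)$, a subquotient of $W^{\oplus I}$, and the chain of reductions breaks exactly where the anomalous extensions of $\triv_G$ and $\St$ live. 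In addition, the assertion that $\one_\Ydown$ generates $\Mod(\GL_2(F_v))$ under colimits and subquotients is not established, and it is needed precisely for the non-generic characters you must reach; without it the argument does not even get started. To complete the proof along the lines you propose, you would in any case have to compute how $W\oY(-)$ acts on the length-two extensions between $\St$ and $\triv_G$ --- which is the content of the paper's argument.
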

\begin{proof}
    The claim is easy to see by the action of the center when $W$ is irreducible principle series or supercuspidal. It remains to resolve the case where $W$ is Steinberg. Assume without loss of generality that $W=\St$ is the Steinberg representation with trivial central character, and denote by $\triv_G$ the trivial $G$-module.
    
    Observe that by Example~3.54 of \cite{abst_aut_reps_arxiv}, the unit map $\one_\Ydown\ra\St$ induces an isomorphism:
    \[
        \St\cong\St\oY\St.
    \]
    It follows that any $M\in\GL_2(F_v)$ is an $M$-module if and only if it satisfies
    \[
        M\cong\St\oY M.
    \]
    
    Moreover, one can observe that:
    \begin{align*}
        \St\oY\binom{\triv_G}{\St} & \cong 0 \\
        \St\oY\binom{\St}{\triv_G} & \cong\St,
    \end{align*}
    where $\binom{A}{B}$ denotes the unique non-trivial extension with quotient $A$ and sub-object $B$. This implies that the category of $\St$-modules is projectively generated by the single object $\St$, and thus that every $\St$-module is of the form:
    \[
        \St^{\bigoplus I}
    \]
    for some set $I$.
\end{proof}

\begin{remark}
    Na\"ively, one might have guessed that the category $\Mod^\aut(G)$ is simply given by the subcategory of $\Mod(G)$ where the center of the category acts in a certain way. That is, that an irreducible object $V\in\Mod(G)$ can be tested for abstract automorphicity by only considering the action of the center of $\Mod(G)$ on it. However, note that there are cuspidal automorphic representations which are locally Steinberg in some place $v$. In these cases, replacing the local Steinberg representation at $v$ with the corresponding one-dimensional representation keeps the action of the center the same, but the representation is no longer automorphic. This is the difficulty that Lemma~\ref{lemma:module_is_isom} overcomes.
\end{remark}

\subsection{Finite-Length of Automorphic Parabolic Restriction} \label{subsect:fin_len_rest}

In this subsection, we will show that the functor $\widehat{r}^\aut\co\Mod^\aut(G)\ra\Mod^\aut(T)$ sends finite-length representations to finite-length representations. This will fulfill a debt from Section~\ref{sect:GL2}, where we asserted without proof that $\widehat{r}^\aut$ is well-behaved.

\begin{theorem} \label{thm:fin_len_rest}
    The functor $\widehat{r}^\aut\co\Mod^\aut(G)\ra\Mod^\aut(T)$ sends finite-length representations to finite-length representations.
\end{theorem}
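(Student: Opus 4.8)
The plan is to reduce to the case of an irreducible $M$ and then to separate the cuspidal and Eisenstein components. Since $\widehat{r}^\aut$ is a left adjoint it is right-exact, and finite length in $\Mod^\aut(T)$ is preserved under quotients and extensions; so, filtering a finite-length $M$ by a composition series and inducting on its length, it suffices to treat the case where $M$ is irreducible. Such an $M$ lies in a single component $c\in C(G)$ by Theorem~\ref{thm:decom_to_comps}. If $c\in C^\cusp(G)$ then $\widehat{r}^\aut(M)=0$ by Corollary~\ref{cor:aut_par_rest_kills_cusp}, and there is nothing to prove.

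So suppose $c\in C^\Eis(G)$. By Theorem~\ref{thm:abst_aut_irr_is_aut} the realization $\pi:=\iota(M)$ is an irreducible automorphic representation, and the argument of that theorem (through the action of the center of $\Mod^\aut(G)$) realizes $\pi$ as a subquotient of $i(\chi)$ for some irreducible automorphic character $\chi=\chi_1\otimes\chi_2$ of $T(\AA)/T(F)$. Because $i\cong\widehat{i}$ non-canonically, $\widehat{r}^\aut$ is isomorphic to the functor $N\mapsto(\iota(N)_{U(\AA)})_{/T(F)^\times}$; hence it is enough to show that $\widehat{r}^\aut(M)\cong(\pi_{U(\AA)})_{/T(F)^\times}$ is a finite-length module over $\cI_T=S(T(F)\backslash T(\AA))$.

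Now I would write $\pi=\bigotimes'_v\pi_v$, so that $\pi_{U(\AA)}=\bigotimes'_v(\pi_v)_{U_v}$. Each $(\pi_v)_{U_v}$ is a subquotient of the two-dimensional Jacquet module $(i_v(\chi_v))_{U_v}$, whose Jordan--H\"older constituents are the characters $\chi_v$ and $\chi_v(w)$, so it is finite-dimensional. If $\chi_1/\chi_2=\abs{\cdot}^{\pm1}$ (the anomalous case) then $\pi_v$ is one-dimensional or a twist of $\St$ at every place, each $(\pi_v)_{U_v}$ is one-dimensional, $\pi_{U(\AA)}$ is a single character of $T(\AA)$, and we are done at once. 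Otherwise, for all but finitely many $v$ the representation $\pi_v$ equals the irreducible unramified principal series $i_v(\chi_v)$, and by Gindikin--Karpelevich the image of its spherical vector in $(\pi_v)_{U_v}$ has nonzero component along each of the two eigenlines $\chi_v,\chi_v(w)$, hence generates $(\pi_v)_{U_v}$ over $T_v$. Tensoring this distinguished vector at the good places with a finite generating set of $\bigotimes_{v\in S}(\pi_v)_{U_v}$ at the finite bad set $S$ exhibits $\pi_{U(\AA)}$ as a finitely generated $S(T(\AA))$-module, whence $\widehat{r}^\aut(M)=\cI_T\otimes_{S(T(\AA))}\pi_{U(\AA)}$ is a finitely generated $\cI_T$-module.

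Finally, I would bound the support of this finitely generated module, which is enough since the component of $\cI_T$ in which it lives is a Noetherian (indeed Laurent-polynomial-type) algebra. By the adjunction between $\widehat{r}^\aut$ and $\widehat{i}^\aut$ together with Frobenius reciprocity, for an automorphic character $\psi$ one has $\Hom(\widehat{r}^\aut(M),\psi)=\Hom_G(\pi,i(\psi))=\Hom_{T(\AA)}(\pi_{U(\AA)},\psi)$, which vanishes unless $\psi_v\in\{\chi_v,\chi_v(w)\}$ at every place $v$; by the rigidity of automorphic characters (Remark~\ref{remark:recompose_characters}, i.e.\ Lemma~\ref{lemma:surj_of_center}) this forces $\psi\in\{\chi,\chi(w)\}$. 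A finitely generated module over our Noetherian algebra whose maps to the residue fields at all closed points vanish outside a finite set is supported on that finite set, and hence has finite length. I expect the last two steps to be the main obstacle: it is easy to see that at most two characters occur in the cosocle of $\widehat{r}^\aut(M)$, but upgrading this to genuine finiteness needs both the finite-generation input of the previous paragraph and the global rigidity of Lemma~\ref{lemma:surj_of_center}; the anomalous case and the finitely many ramified places also require a little bookkeeping.
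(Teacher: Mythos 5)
Your argument is correct in outline and rests on the same two essential inputs as the paper's proof---(i) that at almost every place the local Jacquet module of $\pi_v$ is generated over $T_v$ by its distinguished vector, and (ii) the global rigidity of automorphic characters, Lemma~\ref{lemma:surj_of_center}---but your endgame is genuinely different. The paper packages both inputs into a single surjection $\one_T\otimes_{\one_T^{\{w_v=1\}}}\left(\chi\middle|_{\one_T^{\{w_v=1\}}}\right)\twoheadrightarrow\widehat{r}(V)$; taking $T(F)$-coinvariants and applying Lemma~\ref{lemma:surj_of_center} exhibits $\widehat{r}^\aut(V)$ as a quotient of $\cI_T\otimes_{\cI_T^{w=1}}\left(\chi\middle|_{\cI_T^{w=1}}\right)$, which is at most two-dimensional because $\cI_T$ is free of rank $2$ over $\cI_T^{w=1}$. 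You instead prove finite generation of $\pi_{U(\AA)}$ over $S(T(\AA))$, hence of $\widehat{r}^\aut(M)$ over $\cI_T$, and then run a cosocle/support argument: adjunction plus rigidity shows every character quotient is $\chi$ or $\chi(w)$, and Nakayama over the Noetherian component algebra confines the support to finitely many closed points. Your route is more robust (it would survive with weaker control of the local Jacquet modules, at the price of importing some commutative algebra), while the paper's computation is sharper, showing directly that $\widehat{r}^\aut(V)$ is covered by the extension $\binom{\chi}{\chi(w)}$, i.e.\ is at most two-dimensional.

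One claim does need repair. In the non-anomalous case you assert that $\pi_v=i_v(\chi_v)$ is irreducible unramified principal series for all but finitely many $v$. This can fail: if, say, $\chi_1/\chi_2=\nu\abs{\cdot}$ with $\nu$ a nontrivial quadratic character, then $\chi_{1,v}/\chi_{2,v}=\abs{\cdot}_v$ at the infinitely many places split in the corresponding extension, $i_v(\chi_v)$ is reducible there, and the unramified constituent $\pi_v$ may be one-dimensional at infinitely many places even though $\chi$ is not globally anomalous. The damage is nil: at such places $(\pi_v)_{U_v}$ is one-dimensional and is still generated by the (nonzero) image of the distinguished vector, so your finite-generation step and everything after it go through. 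But the correct case division is by the local dimension of $\widehat{r}_v(V_v)$ at each place (as in the paper's description of the restricted product), not by ``globally anomalous versus not''; Gindikin--Karpelevich is only needed in the two-dimensional case, which is exactly the generation statement the paper asserts for its distinguished vectors.
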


\begin{proof}
    Because $\widehat{r}^\aut$ is right-exact, it is sufficient to show that $\widehat{r}^\aut(V)$ has finite-length for every irreducible $V$.
    
    If $V$ is cuspidal, then $\widehat{r}^\aut(V)$ is $0$ by Corollary~\ref{cor:aut_par_rest_kills_cusp}. It remains to check the case where $V$ belongs to an Eisenstein component. We know that $V$ is a subquotient of some $\widehat{i}^\aut(\chi)$, with $\chi\co T(F)\backslash T(\AA)\ra\CC^\times$. We want to show that the space of co-invariants:
    \[
        \widehat{r}^\aut(V)=\widehat{r}(V)_{/T(F)}
    \]
    is finite-dimensional.
    
    To do this, we need a description of the space $\widehat{r}(V)$. It is given by the restricted product
    \[
        {\bigotimes_v}'\widehat{r}_v(V_v).
    \]
    Each $\widehat{r}_v(V_v)$ is either one of the characters $\chi_v$ or $\chi_v(w_v)$, or it decomposes as:
    \[\xymatrix{
        0 \ar[r] & \chi_v(w_v) \ar[r] & \widehat{r}_v(V_v) \ar[r] & \chi_v \ar[r] & 0,
    }\]
    with $\chi_v(w_v)$ being the Weyl twist of $\chi_v$, as in Definition~\ref{def:weyl_twist}.
    
    Note that there is some additional data needed to understand the restricted product giving $\widehat{r}(V)$, which is the distinguished vector of each local representation $\widehat{r}_v(V_v)$. All we need to know is that at the places where $\widehat{r}_v(V_v)$ is two-dimensional, it is generated as a $T(F_v)$-module by its distinguished vector.
    
    Armed with this description, we can state an informal summary of the idea of the proof. We want to show that $\widehat{r}(V)_{/T(F)}$ is finite-dimensional. Essentially, we want to decompose it into a sum of characters of $T(F)\backslash T(\AA)$. If we were to decompose each object in the restricted tensor product:
    \[
        {\bigotimes_v}'\widehat{r}_v(V_v),
    \]
    then we would see that $\widehat{r}(V)$ is composed only of characters that are either $\chi_v$ or $\chi_v(w_v)$ at every place. However, we want to show that only a finite number of these combinations are \emph{automorphic}, and therefore descend to the quotient $T(F)\backslash T(\AA)$. Indeed, as discussed in Remark~\ref{remark:recompose_characters}, this is the point of Lemma~\ref{lemma:surj_of_center}.
    
    Going back to the formal discussion, we observe that our description of $\widehat{r}(V)$ can be summarised by stating that there is a surjection of $T(\AA)$-modules: 
    \[
        \one_T\otimes_{\one_T^{\{w_v=1\}}}\left(\chi\middle|_{\one_T^{\{w_v=1\}}}\right)\twoheadrightarrow\widehat{r}(V)
    \]
    where $\left(\chi\middle|_{\one_T^{\{w_v=1\}}}\right)$ is the restriction of $\chi\co\one_T\ra\CC$ along $\one_T^{\{w_v=1\}}\ra\one_T$. Here, as in Lemma~\ref{lemma:surj_of_center}, the space $\one_T^{\{w_v=1\}}$ is the space of smooth and compactly supported functions on $T$ that are invariant under the Weyl action $w_v$ at every place $v$ of $F$. This means that:
    \[
        \widehat{r}(V)_{/T(F)}
    \]
    is covered by:
    \[
        \cI_T\otimes_{\one_T^{\{w_v=1\}}}\left(\chi\middle|_{\one_T^{\{w_v=1\}}}\right)\cong\cI_T\otimes_{\cI_T^{w=1}}\left(\chi\middle|_{\cI_T^{w=1}}\right),
    \]
    where the isomorphism follows via Lemma~\ref{lemma:surj_of_center}. However, $\cI_T$ is free of rank $2$ over $\cI_T^{w=1}$, meaning that $\widehat{r}(V)_{/T(F)}$ is at most two-dimensional.
\end{proof}

\begin{remark}
    In the course of the proof of Theorem~\ref{thm:fin_len_rest}, we have in fact shown that $\widehat{r}^\aut(V)$ is covered by an extension
    \[
        \binom{\chi}{\chi(w)}
    \]
    of some character $\chi\co T(F)\backslash T(\AA)\ra\CC^\times$ by its twist $\chi(w)$.
\end{remark}

\Urlmuskip=0mu plus 1mu\relax
\bibliographystyle{alphaurl}
\bibliography{L_func}

\end{document}